\ifdefined\synctex
\fi
\documentclass{amsart}

\usepackage{amsmath}
\usepackage{amssymb}
\usepackage{amsthm}
\usepackage{mathtools}
\usepackage{xcolor}
\usepackage{hyperref}
\usepackage{comment}

\renewcommand{\keywordsname}{Keywords}
\makeatletter
\renewcommand{\@setkeywords}{%
  {\itshape\keywordsname:}\enspace\@keywords\@addpunct.}
\makeatother

\newcommand{\R}{\mathbb{R}}
\newcommand{\C}{\mathbb{C}}
\newcommand{\N}{\mathbb{N}}
\newcommand{\calK}{\mathcal{K}}
\newcommand{\calD}{\mathcal{D}}

\newcommand{\la}{\ensuremath{\langle}}
\newcommand{\ra}{\ensuremath{\rangle}}

\newcommand{\Op}{\ensuremath{\mathrm{Op}}}

\newcommand{\TV}{\ensuremath{\mathrm{TV}}}

\newcommand{\supp}{\ensuremath{\mathrm{supp}}}
\newcommand{\dist}{\ensuremath{\mathrm{dist}}}
\newcommand{\Lip}{\ensuremath{\mathrm{Lip}}}

\numberwithin{equation}{section}
\newtheorem{thm}{Theorem}[section]
\newtheorem{lem}[thm]{Lemma}
\newtheorem{prop}[thm]{Proposition}

\newtheorem{rmk}[thm]{Remark}

\usepackage[letterpaper,top=2cm,bottom=2cm,left=2cm,right=2cm,marginparwidth=1.75cm]{geometry}

\title[Focusing NLS large data scattering]{Scattering for the focusing $H^{1/2}$-critical nonlinear Schr\"odinger equation with large data}

\author{Qiuye Jia}
\address{Australian National University}
\email{Qiuye.Jia@anu.edu.au}

\subjclass{35Q55, 35B40}
\keywords{Focusing nonlinear Schr\"odinger equation, Compact attractor,
Localized scattering, Concentration compactness, Soliton resolution}

\begin{document}

\begin{abstract}
In this article we prove that the solution to the focusing $H^{1/2}$-critical nonlinear Schr\"odinger equation in dimension $d\geq 5$ scatters outside finitely many arbitrarily small spacetime cones.
We will discuss the relationship between Theorem~\ref{thm:finite-bad-directions} and the resolution of solitons. There are two key ingredients in the proof: the localized below-ground-state scattering in Theorem~\ref{thm:cone-scattering},
and the characterization in Theorem~\ref{thm:measure-convergence} of the asymptotic behaviour of the residual part (i.e., after subtracting the scattering part).

As a byproduct, we also establish an upgrading mechanism: if the solution is below the ground state (resp. small) along a time sequence in a spacetime cone,
then it is also below ground state (resp. small) uniformly for large times in a shrunk spacetime cone.
We expect this to be useful in future works using concentration compactness and our spacetime cone approach.
\end{abstract}

\maketitle

\tableofcontents

\section{Introduction}\label{sec:introduction}

\subsection{The setup and main results}

In this article, we study the forward-global solutions with uniformly bounded $H^1$ norm to the focusing nonlinear Schr\"odinger equation
\begin{align}
\label{eq:NLS}
\begin{split}
 & (i\partial_t+\Delta)u=-|u|^{4/(d-1)}u \qquad\text{for }t\geq 0,\ x\in\R^d, \\
 & u_0=u|_{t=0} \in H^1(\R^d), \quad d\geq5.
 \end{split}
\end{align}
The critical scaling level of this equation is $H^{1/2}(\R^d)$ and the actual scaling critical quantity we will use is mass times energy, as defined below. We adopt the convention $\Delta=\sum_{i=1}^d\partial_{x_i}^2$.
Throughout the paper, we assume the solution is uniformly bounded in $H^1$:
\begin{align}
\label{eq:uniform-H1}
 \sup_{t\geq 0}\|u(t)\|_{H^1}<\infty.
\end{align}

For $f\in H^1(\R^d)$, we define its mass and energy (associated with our particular NLS) by
\begin{align}
\label{eq:def-mass-energy}
\begin{split}
 M(f)& := \int_{\R^d}|f|^2\,dx, \quad P(f):=\Im \int_{\R^d}\overline{f}\nabla f \,dx, \\
 E(f)& := \frac{1}{2} \int_{\R^d}|\nabla f|^2\,dx -\frac{d-1}{2(d+1)} \int_{\R^d} |f|^{2(d+1)/(d-1)}\,dx .
\end{split}
\end{align}

The aim of this paper is to show that with arbitrarily large (but fixed) mass or energy, the solution to \eqref{eq:NLS} still `almost scatters'. By `scatter', as usual, we mean that $u(t)$ converges to a solution to the free linear Schr\"odinger equation.
More concretely, it is known that the weak limit $e^{-it\Delta}u$ as $t\to\infty$ exists and we will denote it by $u_+$, which is the `radiation data'. 
See also Theorem~\ref{thm:compact-attractor} below for how it lies in a decomposition of $u$.
Then
\begin{align}
\label{eq:v,r-def}
  v(t)=e^{it\Delta}u_+, \quad r(t)=u(t)-v(t)
\end{align}
are the free part and the soliton part (or residual part, in the context below) of our solution $u(t)$. Then `$u(t)$ scatters' means $r(t) \to 0$.

However, there is a well-known obstruction for this statement to be true: the `soliton' mentioned above in the name of $r(t)$. Let $Q$ be the unique positive radial decaying solution of (see e.g. \cite{Kwong1989Uniqueness,Weinstein1983Sharp})
\begin{align}
\label{eq:ground-state}
-\Delta Q+Q-|Q|^{4/(d-1)}Q =0,
\end{align}
which is called the `ground state soliton'. Then $e^{it}Q(x)$ solves \eqref{eq:NLS} as well but it does not scatter.
There is a long tradition of avoiding this obstruction and obtaining scattering by imposing a threshold condition stating that the solution is `smaller' than $Q$.
The threshold governing scattering versus soliton behaviour of solutions to \eqref{eq:NLS} is:
\begin{align}
\label{eq:Theta-def}
 \Theta & :=M(Q)E(Q).
\end{align}
Indeed, this serves as a threshold to give scattering versus soliton dichotomy. See Theorem~\ref{thm:blow-thre-sc} and more discussion on the more complicated situation when it is on or above the threshold in Section~\ref{subsec:review-future}. 

The aim of this article is to show that even when the solution is above this threshold in the sense that $M(u)E(u) > M(Q)E(Q)$, we can still say that $u$ scatters except on finitely many directions of velocities of soliton components. To this end, it is more convenient to make statements using spacetime cones.
For $\Omega\subset\R^d$, we call a region like
\begin{equation}
\label{eq:C-Omega-def}
C(\Omega) = \{(t,x)\in\R\times\R^d:t\geq T,\ x/t\in\Omega\}, \quad T \gg 1
\end{equation}
a spacetime cone; thus a cutoff $\chi(x/t)$ localizes to a fixed region in the velocity variable $x/t$.

Our first result, Theorem~\ref{thm:cone-scattering}, is the below ground-state scattering localized to a spacetime cone. 
See Section~\ref{subsec:review-future} for a discussion of the global version of this result proved previously.

\begin{thm}
\label{thm:cone-scattering}
Let $d\geq5$, and let $u\in C(\R_{\geq 0};H^1(\R^d))$ be a forward-global solution of \eqref{eq:NLS} satisfying \eqref{eq:uniform-H1}.
Let $u_+\in H^1(\R^d)$ be the radiation state of $u$. Let $\chi_1,\chi_2\in C_c^\infty(\R^d)$ be real-valued and satisfy
\begin{align}
\label{eq:cutoff-nesting}
 0\leq\chi_1,\chi_2\leq1,\qquad \chi_2&=1\quad\text{on an open neighborhood of }\supp\chi_1.
\end{align}
Fix $T_0\geq1$ and $\delta\in(0,1)$, and assume that $\chi_2(x/t)u(t)$ is below threshold in the sense that for every $t\geq T_0$, we have:
\begin{align}
\label{eq:localized-mass-energy}
 M(\chi_2(x/t)u(t))E(\chi_2(x/t)u(t)) &\leq(1-\delta)M(Q)E(Q), \\
\label{eq:localized-mass-gradient}
 \|\chi_2(x/t)u(t)\|_{L^2}\|\nabla(\chi_2(x/t)u(t))\|_{L^2} &<\|Q\|_{L^2}\|\nabla Q\|_{L^2}.
\end{align}
Then it scatters in a slightly shrunk cone:
\begin{align}
\label{eq:cone-scattering}
\lim_{t\to\infty}\|\chi_1(x/t)(u(t)-e^{it\Delta}u_+)\|_{H^1}&=0.
\end{align}
\end{thm}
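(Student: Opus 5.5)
We argue by contradiction, combining a localized Kenig--Merle style concentration-compactness argument with the compact-attractor decomposition of Theorem~\ref{thm:compact-attractor}. By that theorem, $u(t)=v(t)+r(t)$ with $v(t)=e^{it\Delta}u_+$, and the residual $r(t)$ is asymptotically confined to a compact attractor: along any time sequence $t_n\to\infty$, after translations $x_n$, the profiles $r(t_n,\cdot+x_n)$ are precompact in $H^1$ modulo a remainder tending to zero in the relevant norms. The claim \eqref{eq:cone-scattering} therefore reduces to showing that $\|\chi_1(x/t)r(t)\|_{H^1}\to0$.

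Suppose not. Then there are $t_n\to\infty$ and $\varepsilon>0$ with $\|\chi_1(x/t_n)r(t_n)\|_{H^1}\geq\varepsilon$. Extract a nonzero compact profile $W$ and centers $x_n$ with $x_n/t_n\to\xi\in\supp\chi_1$ such that $r(t_n,\cdot+x_n)\to W$ in $H^1_{loc}$. Since $\chi_2=1$ on a neighborhood of $\supp\chi_1$ and $t_n\to\infty$, the cutoff $\chi_2(x/t_n)$ is identically $1$ on balls of radius $\sim t_n$ around $x_n$. The radiation part $v$ disperses: it tends to zero in $L^{2(d+1)/(d-1)}$, and in $H^1$ on any ball of radius $o(t)$ it is asymptotically decoupled from $W$. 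The profile decomposition (Pythagorean expansion of mass, of $\|\nabla\cdot\|_{L^2}^2$, and of the potential energy) then gives
\[
M(W)\le \liminf M(\chi_2 u(t_n)),\qquad \|\nabla W\|_{L^2}\le\liminf\|\nabla(\chi_2u(t_n))\|_{L^2},
\]
together with an analogous inequality for the energy up to $o(1)$ errors. Here we use that the potential energy of $v$ vanishes, so that the kinetic part is superadditive while the potential part is subadditive after cutoff. From \eqref{eq:localized-mass-energy} and \eqref{eq:localized-mass-gradient}, a Gagliardo--Nirenberg argument yields $M(W)E(W)\le(1-\delta)M(Q)E(Q)$ and $\|W\|_{L^2}\|\nabla W\|_{L^2}<\|Q\|_{L^2}\|\nabla Q\|_{L^2}$. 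By the global below-threshold scattering result (Theorem~\ref{thm:blow-thre-sc}), the nonlinear solution $U$ with data $W$ then scatters in both time directions, with a Strichartz bound depending only on $\delta$.

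The contradiction comes from the fact that $r$ has no scattering component. Run the equation from time $t_n$ using a localized stability/perturbation argument. On the cone around $x_n$, $u(t_n+s)$ is approximated by $U(s,\cdot-x_n)$ plus the linear evolution of the remaining pieces, with errors controlled by finite speed in the velocity variable, i.e. commutator estimates for $\chi_2(x/t)$ that decay like $1/t$. Since $U$ scatters forward, $r(t_n+s)$ near $x_n$ behaves like a free wave for large $s$. This contradicts the characterization that $r$ carries no radiation, namely $e^{-it\Delta}r(t)\rightharpoonup0$ and the residual stays compact. Equivalently, one can argue backward in time: $U$ scatters backward, so the profile $W$ would have to come from radiation, which lives entirely in $v$. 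Taking $s$ large and using that $W\ne0$ gives the contradiction.

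The main obstacle is this localized perturbation step. The nonlinear evolution is not local, so the cutoff must be transported along the cone with commutator errors $[i\partial_t+\Delta,\chi_2(x/t)]$. These errors are of size $t^{-1}\nabla$ and $t^{-2}$, which is only borderline integrable in time at the $H^{1/2}$-critical level. To handle them we would use the uniform $H^1$ bound \eqref{eq:uniform-H1} and a long-time perturbation lemma over windows of length $o(t_n)$, which are long enough for $U$ to disperse but short enough that the commutator errors stay small. The remaining ingredient is checking that the energy inequality genuinely transfers to the profile, which requires the strict nesting of the cutoffs.
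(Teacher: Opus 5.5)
Your skeleton agrees with the paper's. You reduce to $\|\chi_1(x/t)r(t)\|_{H^1}\to0$ and take the decomposition of Theorem~\ref{thm:compact-attractor} along $t_n$. You observe that a nonzero profile must have $x_{j,n}/t_n\to v_j\in\supp\chi_1$, where $\chi_2\equiv1$ on a neighborhood. You then use the mass/kinetic/potential decoupling with \eqref{eq:threshold-energy} and \eqref{eq:threshold-gap} to get $M(W)E(W)\leq(1-\delta)\Theta$ and $\|W\|_{L^2}\|\nabla W\|_{L^2}\leq\sqrt{1-\delta}\,\|Q\|_{L^2}\|\nabla Q\|_{L^2}$, so $U=S(\cdot)W$ scatters by Theorem~\ref{thm:blow-thre-sc}.

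The gap is the final contradiction.
\begin{itemize}
\item Weak convergence $e^{-it\Delta}r(t)\rightharpoonup0$ contradicts nothing. A free wave launched from $x_n$ at time $t_n$ satisfies $e^{-it\Delta}e^{i(t-t_n)\Delta}\tau_{x_n}U_+=e^{-it_n\Delta}\tau_{x_n}U_+\rightharpoonup0$.
\item Compactness of the residual does give a contradiction, but only in one of two ways. Either you use the uniform backward decay of $\calK$ in Lemma~\ref{lem:tao-decay}, or you follow the dispersing wave over windows whose length tends to infinity. The latter is exactly the localized long-time perturbation step you leave open.
\item As described, that step has real difficulties. The cutoff $\chi_2(x/t)$ does not commute with the nonlinearity. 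The commutator term $2t^{-1}\nabla\chi_2(x/t)\cdot\nabla u$ is only $O(t^{-1})$ in $L^2$, so it loses a derivative relative to $H^1$-level stability theory. The other profiles in the cone evolve nonlinearly, not linearly as you assume.
\item The backward-in-time variant is only heuristic, since no perturbative comparison survives over $[0,t_n]$.
\end{itemize}

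The missing idea is that you never need to follow $u$ after time $t_n$. Theorem~\ref{thm:compact-attractor} gives the profiles as elements $w_j\in\calK$, and $\calK$ is forward invariant under $S(t)$. Hence $U(s)=S(s)W\in\calK$ exactly for all $s\geq0$. Lemma~\ref{lem:tao-decay} then gives $\|e^{-is\Delta}U(s)\|_{L^r}\leq\sup_{f\in\calK}\|e^{-is\Delta}f\|_{L^r}\to0$ for $2<r<2d/(d-2)$. So the forward scattering state of $U$ is $0$, contradicting $M(U_+)=M(W)>0$.

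This is Proposition~\ref{prop:profile-threshold}: every nonzero $w\in\calK$ has $M(w)E(w)\geq\Theta$. It is how the paper finishes. The decoupled bound $M(w_j)E(w_j)\leq(1-\delta)\Theta$ forces $w_j=0$ whenever $v_j\in\supp\chi_1$, and the remaining profiles are annihilated by $\chi_1(x/t_n)$. Since $t_n$ was arbitrary, \eqref{eq:cone-scattering} follows.

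Your threshold transfer already supplies both hypotheses of Theorem~\ref{thm:blow-thre-sc} for $W$. So with this one ingredient your argument closes in a few lines, with no cutoff commutators or perturbation theory at all.
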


With $r(t)$ as in \eqref{eq:v,r-def}, we will use its mass density and current as densities to define measures to identify the asymptotic behaviour of the solution.
Concretely, we let $X=\R^d\cup\{\infty\}$ be the one-point compactification \footnote{Introducing $X$ is completely for the convenience of writing. One can use $\R^d$ and consider the case with velocity going to infinity by a separate discussion.} of $\R^d$,
which one should think of as parametrizing the asymptotic directions. Then we define $\nu_t$, $J_t$ by:
\begin{align}
\label{eq:nu_t,J_t-def}
 \int_Xh\,d\nu_t :=\int_{\R^d}h(x/t)|r(t,x)|^2\,dx, \qquad \int_Xh\,dJ_t :=\int_{\R^d}h(x/t)\Im(\overline{r}\nabla r)(t,x)\,dx.
\end{align}
Our Theorem~\ref{thm:measure-convergence} below shows that they indeed converge in the weak sense as $t\to\infty$ and is the technically key step for us to identify those non-scattering directions. 
The intuition that we should expect this can be seen from `assuming' a soliton resolution type expansion. See the discussion at the beginning of Section~\ref{sec:atomic}.

\begin{thm}
\label{thm:measure-convergence}
Let $d\geq5$, let $M$, $E$, and $P$ be as in \eqref{eq:def-mass-energy}, let $Q$ be as in \eqref{eq:ground-state},
and let $\Theta$ be as in \eqref{eq:Theta-def}. Let $u\in C(\R_{\geq 0};H^1(\R^d))$ solve
\begin{align*}
 (i\partial_t+\Delta)u=-|u|^{4/(d-1)}u
\end{align*}
and satisfy \eqref{eq:uniform-H1}.
Let $u_+$ be its radiation state, let $r(t)$ be as in \eqref{eq:v,r-def}, put $m_\sharp:=M(u)-M(u_+)$,
and assume $m_\sharp>0$. Define $\nu_t$ and $J_t$ by \eqref{eq:nu_t,J_t-def}. Then there are $L\geq1$, distinct $y_1,\ldots,y_L\in\R^d$, and numbers $a_\ell>0$ such that
\begin{align}
\label{eq:measure-convergence-limit-form}
 (\nu_t,J_t) \rightharpoonup(\mu_\infty,\frac{y}{2}\mu_\infty),  \quad \mu_\infty = \sum_{\ell=1}^La_\ell\delta_{y_\ell}
 \quad \text{ as }t\to\infty.
\end{align}
\end{thm}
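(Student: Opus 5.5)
The idea is to combine the structure of the residual $r$ (compact attractor, Theorem~\ref{thm:compact-attractor}) with local conservation laws in the cone variable $x/t$, and then to use the localized threshold scattering of Theorem~\ref{thm:cone-scattering} to force atomicity.

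\emph{Step 1: compactness and transport.} The family $(\nu_t,J_t)$ is bounded on $X$, since $\|r(t)\|_{H^1}$ is uniformly bounded by \eqref{eq:uniform-H1}. Moreover $\nu_t(X)\to m_\sharp$, because $\|v(t)\|_{L^2}=\|u_+\|_{L^2}$ and the cross term $\la v(t),r(t)\ra$ vanishes by the weak convergence $e^{-it\Delta}u\rightharpoonup u_+$. Hence subsequential weak limits exist. For $h\in C^\infty_c(\R^d)$ we compute $\frac{d}{dt}\int h(x/t)|r|^2\,dx$. Since $u$ and $v$ satisfy local mass conservation with nonlinear terms that are real multiples of $u$, we get the identity
\begin{align*}
\frac{d}{dt}\int_X h\,d\nu_t=\frac1t\int_X\Big(2\nabla h\cdot \xi - (y\cdot\nabla h)\Big)\,d\Big(\nu_t,\ J_t\Big)(y)+\frac1t\,o(1).
\end{align*}
Concretely, the right-hand side is $t^{-1}\big[\int 2\nabla h\,dJ_t-\int y\cdot\nabla h\,d\nu_t\big]$ plus cross terms involving $v$. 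These cross terms should be $o(1/t)$ after time averaging, by dispersion of $v$ and the $t^{-1}$ decay of $\nabla$ in $x/t$. Changing variables to $s=\log t$ turns this into an $O(1)$-speed flow. By Theorem~\ref{thm:compact-attractor} (the attractor is compact modulo symmetries and consists of soliton-like profiles), a virial/momentum argument gives the ``local Galilean'' relation $J\approx\frac{y}{2}\nu$ in the limit: on a soliton moving with velocity $w$ the current equals $\frac{w}{2}$ times the mass, and its position satisfies $x/t\to w$. I would prove $J_t-\frac{y}{2}\nu_t\rightharpoonup0$ from the attractor description together with the smallness of the $r$-part outside the profiles. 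With this relation the drift term above vanishes in the limit, so $\int h\,d\nu_t$ has derivative $o(1/t)$ in a time-averaged sense. That alone gives only slow variation in $\log t$. To upgrade this to genuine convergence I would use a monotonicity or Lyapunov quantity such as $\int |y|^2\,d\nu_t$ localized, or argue that the limit set in $\log t$ is connected and consists of measures with the atomic structure of Step 2, hence is discrete, hence a single point.

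\emph{Step 2: atomicity.} Let $\mu$ be any limit point and $y_0\in\supp\mu$. If $\mu$ gave mass below a fixed quantum $\epsilon_0$ to a small ball $B(y_0,\rho)$, we would pick $\chi_2$ supported there. For $u$ in the cone we have $u=v+r$, and $v$ decays locally in $x/t$ only in the $\dot H^{1/2}$-critical sense. By the upgrading mechanism from the abstract, smallness along a sequence upgrades to smallness for all large $t$ in a shrunk cone. Then \eqref{eq:localized-mass-energy} and \eqref{eq:localized-mass-gradient} hold, and Theorem~\ref{thm:cone-scattering} gives $\chi_1 r\to0$ in $H^1$, contradicting $y_0\in\supp\mu$. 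Thus every atom carries mass at least $\epsilon_0\sim M(Q)$-scale (quantitatively from $\Theta$ and the $H^1$ bound), and the support must be discrete. The same argument excludes diffuse parts, since any diffuse piece has small mass on small balls. Finiteness follows from $\mu(X)\le m_\sharp$, giving $L\le m_\sharp/\epsilon_0$. The point $\infty$ is excluded by the uniform $H^1$ bound, since velocity is bounded in the $L^2$ sense by the momentum bound $|J_t|\lesssim$ const. Finally $L\ge1$ because $\mu(X)=m_\sharp>0$.

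\emph{Main obstacle.} The hardest step is Step 1's passage from subsequential to full convergence as $t\to\infty$, which rules out atoms slowly drifting or exchanging mass in $\log t$. I would first try to show that each atom's mass $a_\ell$ and location are nearly constant on dyadic intervals, using the flux identity with $h$ localized near an atom. The flux through an annulus around $y_\ell$ is controlled by $\nu_t$ of the annulus, which tends to $0$ by atomicity. Combined with the quantization $a_\ell\ge\epsilon_0$, this should freeze the configuration for large $t$.
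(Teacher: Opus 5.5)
\paragraph{Step 1 assumes what has to be proved.} The relation $J_t-\frac{y}{2}\nu_t\rightharpoonup0$ cannot be read off Theorem~\ref{thm:compact-attractor}. Along a subsequence you do get $\nu_{t_n}\rightharpoonup\sum_jM(w_j)\delta_{y_j}$ and $J_{t_n}\rightharpoonup\sum_jP(w_j)\delta_{y_j}$, with $y_j=\lim_n x_{j,n}/t_n$. But a decomposition at a single time does not tie $P(w_j)$ to the time-averaged velocity $x_{j,n}/t_n$ of the center. The profiles also carry no persistent labels and may interact infinitely often. So the heuristic ``a soliton of velocity $w$ has $x/t\to w$ and current $\frac{w}{2}$ times its mass'' cannot be applied.

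In a subsequential limit family from Theorem~\ref{thm:subsequential-family}, \eqref{eq:mass-difference} says an isolated atom of constant mass $a$ and momentum $b$ moves by $\frac{d}{d\tau}y=\frac{2b}{a}-y$. Hence $J=\frac{y}{2}\mu$ at all limit points is equivalent to every limit family being constant in $\tau=\log t$, which is the core of what must be shown. The paper obtains this relation only at one specially chosen limit point (Proposition~\ref{prop:exposed-rigidity}). For $\mu_\infty$ it comes \emph{after} convergence is proved, by inserting the constant family into \eqref{eq:mass-difference}.

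\paragraph{The upgrade from slow variation to convergence is missing.} Even granting that relation, it only gives slow variation in $\log t$, as you note, and neither proposed upgrade closes this.
\begin{itemize}
\item Atomic pairs with boundedly many atoms of mass bounded below do not form a discrete set, since atom locations and masses vary continuously. Connectedness of $\Omega_\sharp$ (Proposition~\ref{prop:connected-omega}) rules out hopping (Remark~\ref{rmk:no-hopping}) but not a slow continuous drift.
\item The annulus-flux argument conserves cluster masses but not locations.
\item Near-constancy on each dyadic interval accumulates over infinitely many such intervals.
\item The quantum $a_\ell\geq\epsilon_0$ does nothing against drift.
\end{itemize}
What is missing is an approximately monotone quantity with summable errors, compared against an exposed point. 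The paper proceeds as follows.
\begin{itemize}
\item It chooses a strictly convex polynomial $\Psi$ with a unique maximizer $\mu_*$ of $\int\Psi\,d\mu$ on $\Omega_\sharp$ (Proposition~\ref{prop:convex-exposure}).
\item It shows the only limit point over $\mu_*$ is $(\mu_*,\frac{y}{2}\mu_*)$.
\item It uses the virial action $A_q$ of \eqref{eq:Aq-def}, with $q$ convex and affine near the atoms of $\mu_*$. The Heisenberg derivative has symbol $\frac12(2\xi-y)^TD^2q(y)(2\xi-y)\geq0$.
\item The free contribution is integrable in $dt/t$. The nonlinear term is summable by localized Strichartz estimates on dyadic intervals (Proposition~\ref{prop:interval-charge}). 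This uses the evacuation of the residual from $\{D^2q\neq0\}$ given by Proposition~\ref{prop:interval-evacuation}.
\item Along first exit times from a small ball around that point, this gives $Q'+Q\geq Q_*$ for $\tau\leq0$ with $Q(0)\leq Q_*-\delta_0$. The resulting exponential growth contradicts Proposition~\ref{prop:supp-mu-bound}.
\end{itemize}

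\paragraph{Step 2 is circular and unnecessary.} The upgrading mechanism (Theorems~\ref{thm:upgrade-I} and~\ref{thm:upgrade-II}) is proved in the paper using the very theorem you are proving. Moreover, atomicity of every subsequential limit follows directly from the profile decomposition (Propositions~\ref{prop:subseq-atomicity} and~\ref{prop:atomic-and-bound}). The lower bound on atom masses comes from Proposition~\ref{prop:profile-threshold}.
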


Next we discuss the estimates upgrading a condition along a sequence of times to a uniform estimate. The substantive new step is the threshold-free Theorem~\ref{thm:upgrade-I}.
\begin{thm}
\label{thm:upgrade-I}
Let $d\geq5$, and let $u\in C(\R_{\geq 0};H^1(\R^d))$ solve
\begin{align*}
 (i\partial_t+\Delta)u=-|u|^{4/(d-1)}u
\end{align*}
and satisfy \eqref{eq:uniform-H1}.
Let $u_+$ be its radiation state and let $r(t)$ be as in \eqref{eq:v,r-def}. Suppose that $\chi_0,\chi_1\in C_c^\infty(\R^d)$ satisfy $0\leq\chi_i\leq1$ for $i=0,1$ and
\begin{align*}
 \supp(\chi_0)\Subset\{y:\chi_1(y)=1\}.
\end{align*}
Let $T_n\to\infty$, and assume
\begin{align}
\label{eq:cone-upgrade-sequential}
 \|\chi_1(x/T_n)r(T_n)\|_{H^1(\R^d)}\to 0.
\end{align}
Then
\begin{align}
\label{eq:uniform-chi0-r-tend-0}
 \lim_{n\to\infty}\sup_{t\geq T_n} \|\chi_0(x/t)r(t)\|_{H^1(\R^d)}=0.
\end{align}
\end{thm}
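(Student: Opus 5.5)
We will propagate the smallness at time $T_n$ forward by a localized perturbative argument in the velocity variable. We choose an intermediate cutoff $\tilde\chi\in C_c^\infty$ with $\supp\chi_0\Subset\{\tilde\chi=1\}$ and $\supp\tilde\chi\Subset\{\chi_1=1\}$. The idea is that, after time $T_n$, the part of $u$ living in the cone $\{x/t\in\supp\tilde\chi\}$ is, up to small errors, generated by data at time $T_n$ in the cone $\{\chi_1=1\}$. There $u(T_n)\approx v(T_n)$ in $H^1$ by \eqref{eq:cone-upgrade-sequential}.

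\textbf{Step 1: small-data evolution.} Set $w_n(T_n):=\chi_1(x/T_n)u(T_n)$, and let $w_n$ be the NLS evolution from time $T_n$. Since $v(t)=e^{it\Delta}u_+$ is a free wave, its dispersive Strichartz norms on $[T_n,\infty)$ tend to $0$ as $n\to\infty$ (for instance $\|e^{it\Delta}u_+\|_{L^q_tW^{1,r}_x([T_n,\infty))}\to0$ for non-endpoint admissible pairs). Hence $w_n(T_n)=\chi_1(x/T_n)v(T_n)+o_{H^1}(1)$ has small linear evolution in the critical Strichartz space on $[T_n,\infty)$. We must check that the linear evolution of $\chi_1(x/T_n)v(T_n)$ is close to that of $v(T_n)$ restricted to the cone; this is a standard propagation estimate for free waves localized in $x/t$. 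By the small-data theory (in the $H^{1/2}$-critical Strichartz norms, with $d\ge5$ ensuring enough room), $w_n$ is global on $[T_n,\infty)$ with $\sup_{t\ge T_n}\|w_n(t)-e^{i(t-T_n)\Delta}w_n(T_n)\|_{H^1}\to0$. Moreover $e^{i(t-T_n)\Delta}w_n(T_n)$ agrees with $v(t)$ on $\{x/t\in\supp\tilde\chi\}$ up to $o(1)$ in $H^1$, uniformly in $t\ge T_n$. Consequently $\sup_{t\ge T_n}\|\tilde\chi(x/t)(w_n(t)-v(t))\|_{H^1}\to0$.

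\textbf{Step 2: finite speed of propagation in the cone.} We compare $u$ and $w_n$ on the shrunk cone, and this is the step we expect to be the main obstacle, since Schrödinger has no strict finite speed. We will use a localized energy/mass identity for $\psi(t,x)=\tilde\chi(x/t)(u-w_n)$, or better for $\tilde\chi(x/t)u$ directly compared to $\tilde\chi(x/t)w_n$. The key mechanism is that $\partial_t(\tilde\chi(x/t))+$ transport terms yield commutators of the form $t^{-1}\nabla\tilde\chi(x/t)\cdot(\nabla-i x/(2t))$. These are controlled because, outside the free part, the residual $r$ is bounded in $H^1$ uniformly, and $(x/t-2\cdot\text{velocity})$ weighted quantities decay in the region where $\nabla\tilde\chi\neq0$. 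Concretely, the difference $u-w_n$ at time $T_n$ is supported (essentially) where $\chi_1(x/T_n)<1$, i.e., outside the velocity region containing $\supp\tilde\chi$ with a positive gap. We would run a Morawetz/virial-type monotonicity for the weighted mass $\int\theta(x/t)|u-w_n|^2$ with $\theta$ increasing toward the boundary. This should show that mass cannot travel across the gap except through high-frequency pieces, which are controlled by the uniform $H^1$ bound and the interpolation $\|f\|_{L^2(|\xi|>N)}\lesssim N^{-1}$. Combined with Strichartz/local smoothing for the nonlinear interaction (the nonlinearity is local, so $\tilde\chi(x/t)|u|^{4/(d-1)}u$ involves only $u$ in the cone, where $u$ is close to $w_n$ by a bootstrap), we obtain $\sup_{t\ge T_n}\|\chi_0(x/t)(u-w_n)\|_{L^2}\to0$.

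\textbf{Step 3: upgrading to $H^1$ and conclusion.} The $L^2$ smallness is upgraded to $H^1$ by interpolation with the uniform $H^1$ bound \eqref{eq:uniform-H1}, provided we have slightly more regularity, or alternatively by repeating the bootstrap with the localized energy density instead of mass. In the latter case we use that the localized energy flux is again governed by commutators with $\nabla\tilde\chi(x/t)$ of size $t^{-1}$, whose time integral we control via the localized Morawetz estimate. Combining the steps gives $\|\chi_0(x/t)r(t)\|_{H^1}\le\|\chi_0(u-w_n)\|_{H^1}+\|\chi_0(w_n-v)\|_{H^1}\to0$ uniformly for $t\ge T_n$, which is \eqref{eq:uniform-chi0-r-tend-0}.
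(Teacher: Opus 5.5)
Your Step 1 is essentially sound: since $e^{-it\Delta}\chi(x/t)e^{it\Delta}=\chi(x/t+2D)\to\chi(2D)$ strongly, the localized free wave behaves as you say, and small-data theory controls $w_n$. The gap is Step 2. It carries the entire difficulty of the theorem, and the mechanism you invoke there is not available for the residual.

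The data $u(T_n)-w_n(T_n)=(1-\chi_1(x/T_n))u(T_n)$ contains all of $r(T_n)$ outside the $\chi_1$-cone. Nothing in the hypotheses ties the momentum of that part of $r$ to its position $x/T_n$. Take a soliton-like piece of $r(T_n)$ centred near $y_0T_n$ with $\chi_1(y_0)<1$, but with momentum per unit mass $v/2$ for some $v$ in the interior of $\supp\chi_0$. It moves as $x(t)/t\approx v+(y_0-v)T_n/t$, so it enters the $\chi_0$-cone at times $t\sim CT_n$, at bounded frequency. The uniform $H^1$ bound only controls frequencies $|\xi|\gtrsim N$ and cannot exclude this.

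Correspondingly, the weighted-mass flux $t^{-1}\int\nabla\theta(x/t)\cdot\big(2j(f)-(x/t)|f|^2\big)\,dx$ has no sign unless you already know the outgoing relation $2j\approx(x/t)|f|^2$ on the transition region. For $v$ this relation is stationary phase, but for $r$ at times $t>T_n$ it is essentially the conclusion you want. A bootstrap does not close either. Even if $\|\tilde\chi(x/t)r(t)\|_{H^1}\leq\epsilon$ is assumed on $[T_n,s]$, the available bound on the flux is only $O(\epsilon t^{-1})$, because the $r$--$v$ cross terms are no better. Its time integral therefore grows like $\epsilon\log(s/T_n)$. This logarithmic accumulation is precisely a slow drift of residual mass in $x/t$, which is what has to be ruled out.

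The paper supplies this missing global input through Theorem~\ref{thm:measure-convergence}. That theorem shows $\nu_t$ converges to a fixed atomic measure $\mu_\infty=\sum_\ell a_\ell\delta_{y_\ell}$ with $J_\infty=\frac{y}{2}\mu_\infty$. It is proved by the convex-exposure and rigidity argument combined with the positive-commutator monotonicity of $A_q$ (Proposition~\ref{prop:Aq-no-drop}, whose symbol satisfies $Va=\frac12(2\xi-y)^TD^2q(2\xi-y)\geq0$). That monotonicity is only usable on time intervals where $r$ is already known to be small on the region where $D^2q\neq0$, which is why it sits inside a contradiction argument rather than serving as a direct propagation estimate.

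Given Theorem~\ref{thm:measure-convergence}, the statement follows quickly by contradiction:
\begin{enumerate}
\item If the conclusion fails, then $m_\sharp>0$ by Proposition~\ref{prop:residual-mass-positive}.
\item Testing $\nu_{T_n}$ against $\chi_1^2$ forces $\chi_1(y_\ell)=0$ for every atom.
\item Lemma~\ref{lem:first-entry} and Proposition~\ref{prop:active-cells} give $s_n>T_n$ with $\|\chi_0(x/s_n)r(s_n)\|_{L^2}\geq m_*>0$.
\item Hence $\chi_0(y_\ell)\neq0$ for some $\ell$, so $\chi_1(y_\ell)=1$, a contradiction.
\end{enumerate}

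Your Step 3 also needs repair. There is no $H^{1+}$ bound to interpolate with. Instead, the frequency tightness of $r$ from Proposition~\ref{prop:residual-frequency-tightness} converts $L^2$ control into $H^1$ control, exactly as in Proposition~\ref{prop:active-cells}.
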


The reason that we want to have such a result upgrading the smallness along a time sequence to the uniform smallness is that the decompositions given by Theorem~\ref{thm:compact-attractor} need not carry persistent profile labels. The profiles $w_j$ and the centers $x_{j,n}$ in \eqref{eq:profile-decomposition} are extracted along a subsequence, and Theorem~\ref{thm:compact-attractor} does not prevent infinitely many collisions with various possible outcomes. 
On the other hand, our Theorem~\ref{thm:upgrade-I} does not trace an individual profile through those interactions.
It instead uses the convergence of the measures \eqref{eq:nu_t,J_t-def} given by Theorem~\ref{thm:measure-convergence}.

We will use Theorems~\ref{thm:measure-convergence} and \ref{thm:upgrade-I} to prove Theorem~\ref{thm:upgrade-II} below. See Section~\ref{sec:upgrade-II} for details.
It upgrades a threshold bound available only along a sequence of times to one valid for all large times.
We will use the upgrading mechanism (more concretely, Theorem~\ref{thm:upgrade-II}) in the proof of our main result, i.e., Theorem~\ref{thm:finite-bad-directions}.

\begin{thm}
\label{thm:upgrade-II}
Let $d\geq5$, and let $u\in C(\R_{\geq 0};H^1(\R^d))$ solve
\begin{align*}
 (i\partial_t+\Delta)u=-|u|^{4/(d-1)}u
\end{align*}
satisfying \eqref{eq:uniform-H1}. Let $u_+\in H^1(\R^d)$ be the radiation state of $u$.
Fix $\chi_0,\chi_1,\chi_2\in C_c^\infty(\R^d)$ satisfying $0\leq\chi_i\leq1$ for $i=0,1,2$ and
\begin{align}
\label{eq:condition-chi-012}
 \supp(\chi_0)\Subset\{y:\chi_1(y)=1\}, \qquad \supp(\chi_1)\Subset\{y:\chi_2(y)=1\}.
\end{align}
Suppose that $\delta\in(0,1)$, $t_n\to\infty$, and
\begin{align}
\label{eq:sequential-threshold}
 M(\chi_2(x/t_n)u(t_n))E(\chi_2(x/t_n)u(t_n)) \leq(1-\delta)M(Q)E(Q)
\end{align}
for every $n$. Then there is $T\geq1$ such that, for every $t\geq T$,
\begin{align}
\label{eq:ME-chi0-below-threshold}
 M(\chi_0(x/t)u(t))E(\chi_0(x/t)u(t)) \leq(1-\delta/2)M(Q)E(Q).
\end{align}
After increasing $T$,
\begin{align}
\label{eq:M-kinetic-below-threshold}
 \|\chi_0(x/t)u(t)\|_{L^2} \|\nabla(\chi_0(x/t)u(t))\|_{L^2} <\|Q\|_{L^2}\|\nabla Q\|_{L^2}.
\end{align}
\end{thm}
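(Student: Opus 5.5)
We will prove Theorem~\ref{thm:upgrade-II} by splitting $u=v+r$ with $v(t)=e^{it\Delta}u_+$, and by using the atomic structure of the residual from Theorem~\ref{thm:measure-convergence} together with the uniform smallness from Theorem~\ref{thm:upgrade-I}. If $m_\sharp=0$, then $\nu_t\to0$. Let $\mu_\infty=\sum_\ell a_\ell\delta_{y_\ell}$ be the limit measure (or $\mu_\infty=0$ if $m_\sharp=0$). The plan has three steps.

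\emph{Step 1: classify atoms relative to $\supp\chi_1$.} Take an atom $y_\ell$ lying in the closed set $\{\chi_2=1\}\supset\supp\chi_1$. We want to show that the sequential threshold \eqref{eq:sequential-threshold} rules out such atoms, except possibly for ones carrying subthreshold mass-energy. The cleaner route we intend is the following. Suppose first that no atom lies in $\supp\chi_1$. Then $\nu_{t_n}$ gives vanishing mass near $\supp\chi_1$. We upgrade this mass vanishing to $H^1$ vanishing of $\chi_1'(x/t_n)r(t_n)$, for an intermediate cutoff $\chi_1'$ with $\chi_0\prec\chi_1'\prec\chi_1$. The upgrade uses the uniform $H^1$ bound \eqref{eq:uniform-H1} and an interpolation/compactness argument. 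We expect to need the profile decomposition of Theorem~\ref{thm:compact-attractor}: profiles centered at velocities in $\supp\chi_1$ would produce atoms there, so the residual has no profiles there and is $H^1$-small. Theorem~\ref{thm:upgrade-I} then yields $\sup_{t\ge T}\|\chi_0(x/t)r(t)\|_{H^1}\to0$.

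\emph{Step 2: the case of atoms inside $\supp\chi_1$.} Suppose some $y_\ell\in\supp\chi_1$. Along $t_n$, the compact attractor decomposition localized by $\chi_2$ exhibits a nonzero residual profile in the region where $\chi_2=1$. We want to argue that a profile in the compact attractor is a nonscattering global solution. Such a solution must have mass-energy at least $\Theta$; indeed, by Theorem~\ref{thm:blow-thre-sc} a subthreshold one with gradient condition scatters. Combined with the asymptotic orthogonality (Pythagorean expansion) of mass and energy under the cutoff $\chi_2(x/t_n)$, this contradicts \eqref{eq:sequential-threshold}. The contradiction requires checking the product inequality $M E\ge \Theta$ for a sum via orthogonality. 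We expect this to be the main obstacle: mass and energy decouple additively, but the product $ME$ is not additive, so one must use that each piece has nonnegative energy. We will obtain that nonnegativity from the sharp Gagliardo--Nirenberg inequality in the subthreshold regime. Thus Step 2 is vacuous, and we are always in the situation of Step 1.

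\emph{Step 3: transfer to the full solution.} With $\|\chi_0(x/t)r(t)\|_{H^1}\to0$ uniformly, we have $\chi_0(x/t)u(t)=\chi_0(x/t)v(t)+o_{H^1}(1)$, so it remains to control the free part. The free wave satisfies $v(t,x)\approx (2it)^{-d/2}e^{i|x|^2/4t}\widehat{u_+}(x/2t)$. Hence $M(\chi_0(x/t)v(t))$ and $E(\chi_0(x/t)v(t))$ converge to limits given by $\widehat{u_+}$ on the set $\{\xi: 2\xi\in\supp\chi_0\}$. The potential term tends to $0$ by dispersion. Monotonicity $\chi_0\le\chi_2$ and the inequality $|\nabla(\chi f)|\approx\chi|\nabla f|$ up to $O(1/t)$ show that the localized mass and kinetic energy for $\chi_0$ are at most those for $\chi_2$, up to $o(1)$. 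Along $t_n$, the $\chi_2$-quantity equals the free contribution plus a nonnegative residual contribution (again by orthogonality), and it is at most $(1-\delta)\Theta$. Therefore, for large $t$, the $\chi_0$-limit is at most $(1-\delta)\Theta+o(1)\le(1-\delta/2)\Theta$, which gives \eqref{eq:ME-chi0-below-threshold}. For \eqref{eq:M-kinetic-below-threshold}, the mass-kinetic product of $\chi_0 v$ converges to a limit that is forced below $\|Q\|_{L^2}\|\nabla Q\|_{L^2}$. This is because the potential energy vanishes in the limit, so $ME$ bounds $\tfrac12 M\|\nabla\cdot\|^2$, and $(1-\delta)\Theta<\tfrac12\|Q\|_2^2\|\nabla Q\|_2^2$ by the Pohozaev identities. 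Increasing $T$ then makes the strict inequality hold for all $t\ge T$.
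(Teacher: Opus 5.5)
Your architecture matches the paper's proof. You extract profiles along subsequences of $t_n$ and show that no nonzero profile sits at a velocity in $\supp\chi_1$. This gives $\|\chi_1(x/t_n)r(t_n)\|_{H^1}\to0$, after which you invoke Theorem~\ref{thm:upgrade-I} and compare free-wave limits via $\chi_0\le\chi_2$ to get $m_0k_0/2\le m_2k_2/2\le(1-\delta)\Theta$. Routing this through the atoms of $\mu_\infty$ and an intermediate cutoff is unnecessary but harmless.

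The gap is exactly at the step you flag as the main obstacle. You plan to get nonnegativity of the energy of each piece of $\chi_2(x/t_n)u(t_n)$ from sharp Gagliardo--Nirenberg ``in the subthreshold regime''. But \eqref{eq:threshold-energy} yields $E(f)\geq0$ only when $s(f)\leq1$, and Theorem~\ref{thm:upgrade-II} has no gradient hypothesis analogous to \eqref{eq:localized-mass-gradient}. It assumes only the product bound \eqref{eq:sequential-threshold}, and only along $t_n$. There is no way to manufacture $s\le1$ for the pieces:
\begin{enumerate}
\item the product bound is vacuous whenever the localized energy is negative;
\item no energy-trapping or continuity argument in time is available, since nothing is known between the $t_n$;
\item nonzero attractor profiles need not have $s(w)\le1$. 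A boosted soliton $w=e^{ix\cdot\xi}Q$ has $\|\nabla w\|_{L^2}^2=\|\nabla Q\|_{L^2}^2+|\xi|^2\|Q\|_{L^2}^2$, so $s(w)>1$, yet $E(w)>0$.
\end{enumerate}
If a profile term could carry negative energy, both key steps break. In Step~2, $E(w_j)\le E_{\mathrm{seq}}$ fails, so nonzero profiles at velocities in $\supp\chi_1$ are no longer excluded. In Step~3, $E_{\mathrm{seq}}\ge k_2/2$ fails, so the final comparison is lost.

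The missing ingredient is a positivity property of the attractor, not of the localized data. By Proposition~\ref{prop:profile-threshold}, every $w\in\calK\setminus\{0\}$ satisfies $E(w)>|P(w)|^2/(2M(w))\geq0$ and $M(w)E(w)\geq\Theta$. This is proved by passing to the rest frame and using that $S(t)w$ is global and bounded in $H^1$, which rules out nonpositive rest-frame energy. The cutoff multiplies a profile at velocity $y_j$ by $a_j=\chi_2(y_j)\in[0,1]$, so you also need, for $0\le a\le1$,
\[
E(aw)=a^2E(w)+\tfrac{d-1}{2(d+1)}\bigl(a^2-a^{2(d+1)/(d-1)}\bigr)\|w\|_{L^{2(d+1)/(d-1)}}^{2(d+1)/(d-1)}\geq a^2E(w)\geq0 .
\]
This handles profiles at velocities where $0<\chi_2<1$.

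A related point concerns your Step~2 justification that a profile has $ME\ge\Theta$, via Theorem~\ref{thm:blow-thre-sc}. That covers only profiles with $s(w)<1$. The case $s(w)>1$ needs the other half of the dichotomy (finite-time blow-up or unbounded gradient), excluded by the uniform $H^1$ bound on $S(t)w$. This too is packaged in Proposition~\ref{prop:profile-threshold}. With these inputs every term in the mass and energy decouplings is nonnegative, and your Steps~1--3 go through as in the paper.
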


We now state our main result, which says that the solution to \eqref{eq:NLS} scatters except near finitely many directions.
\begin{thm}
\label{thm:finite-bad-directions}
Let $d\geq5$, and let $u\in C(\R_{\geq 0};H^1(\R^d))$ solve
\begin{align*}
 (i\partial_t+\Delta)u=-|u|^{4/(d-1)}u
\end{align*}
and satisfy \eqref{eq:uniform-H1}.
Let $u_+\in H^1(\R^d)$ be the radiation state of $u$. Then there is an integer $N\geq 0$ and distinct $v_1,\ldots,v_N\in\R^d$.
Let $V=\{v_1,\ldots,v_N\}$. For every $\rho>0$, there is $\chi_\rho\in C^\infty(\R^d)$, with $0\leq\chi_\rho\leq1$ and $\nabla\chi_\rho\in L^\infty(\R^d)$, satisfying
\begin{align*}
 \chi_\rho(y)&=0 &&\text{if }\dist(y,V)\leq\rho,\\
 \chi_\rho(y)&=1 &&\text{if }\dist(y,V)\geq2\rho,
\end{align*}
and
\begin{align}
\label{eq:scattering-away-finite-directions}
 \lim_{t\to\infty} \|\chi_\rho(x/t)(u(t)-e^{it\Delta}u_+)\|_{H^1(\R^d)}=0.
\end{align}
Thus $\chi_\rho\equiv1$ when $N=0$, and \eqref{eq:scattering-away-finite-directions} is then precisely forward scattering. More concretely, we have
\begin{align}
\label{eq:finite-bad-directions-bound}
 N &\leq \frac{\|u(0)\|_{L^2}\sup_{t\geq 0}\|\nabla u(t)\|_{L^2}} {\sqrt{2M(Q)E(Q)}} =\frac{\sqrt d\,\|u(0)\|_{L^2} \sup_{t\geq 0}\|\nabla u(t)\|_{L^2}} {\|Q\|_{L^2}\|\nabla Q\|_{L^2}}.
\end{align}
Thus the exceptional spacetime cones about the finitely many velocity directions may be chosen with arbitrarily small aperture.
\end{thm}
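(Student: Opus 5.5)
Take $V=\{y_1,\ldots,y_L\}$, the support of $\mu_\infty$ from Theorem~\ref{thm:measure-convergence}, when $m_\sharp>0$, and $V=\emptyset$ (so $N=0$) when $m_\sharp=0$. In the latter case $M(r(t))=\nu_t(X)\to 0$. The mass then vanishes, and the $H^1$ convergence will follow from the same local argument run on all of $\R^d$, replacing $\chi_\rho$ by large compactly supported cutoffs together with the behaviour at infinity. The main task is to show that $\chi_\rho(x/t)r(t)\to0$ in $H^1$. After that we bound $N$.

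\textbf{Scattering away from $V$.} Cover $\{\dist(y,V)\ge\rho\}$ by finitely many small balls $B$, plus a neighbourhood of $\infty$ treated by the same argument on $X$. Take a partition of unity subordinate to this cover, so it suffices to show $\chi_1(x/t)r(t)\to0$ in $H^1$ for every $\chi_1$ supported in such a ball with $\dist(B,V)>\rho/2$. Choose nested cutoffs $\chi_1\prec\chi_2\prec\chi_3$, all supported away from $V$.

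By Theorem~\ref{thm:measure-convergence}, $\nu_t(\chi_3^2)\to\mu_\infty(\chi_3^2)=0$, so $\|\chi_3(x/t)r(t)\|_{L^2}\to0$. The uniform $H^1$ bound gives $\|\nabla(\chi_3(x/t)r(t))\|_{L^2}\lesssim1$. Hence the product of localized mass and gradient of $r$ tends to zero.

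Next we pass from $r$ to $u=v+r$. By dispersion and the fact that $u_+\in H^1$, the free part $\chi_3(x/t)v(t)$ has $L^{2(d+1)/(d-1)}$ norm tending to zero, and the cross terms in the mass--energy decouple asymptotically. I expect this to show that $M(\chi_3u)E(\chi_3u)\le(1-\delta)\Theta$ and the gradient condition \eqref{eq:localized-mass-gradient} hold along some sequence, and in fact for all large $t$. The one real difficulty is the free part's own contribution $M(\chi v)E(\chi v)$, which need not be small. Here I would shrink the balls: $\chi(x/t)v(t)$ is asymptotically $(2it)^{-d/2}e^{i|x|^2/4t}\hat u_+(x/2t)$ localized, so its mass--energy on a ball of velocity radius $\epsilon$ is $\lesssim\|\hat u_+\|^2_{L^2(B_\epsilon)}\cdot\|\langle\xi\rangle\hat u_+\|^2_{L^2(B_\epsilon)}$. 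By absolute continuity this is uniformly small once $\epsilon$ is small. If I only have the bound along a sequence, Theorem~\ref{thm:upgrade-II} upgrades it to all large $t$ with $\chi_2$. Theorem~\ref{thm:cone-scattering}, applied with $(\chi_1,\chi_2)$, then gives \eqref{eq:scattering-away-finite-directions} locally, and summing over the finitely many pieces gives the result.

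\textbf{Bound on $N$.} For each $y_\ell$, localize with a cutoff $\psi_\ell$ concentrated near $y_\ell$, with the supports of the $\psi_\ell$ disjoint. We claim each localized piece must carry at least threshold mass-gradient product:
\[
\liminf_t\|\psi_\ell(x/t)u\|_{L^2}\|\nabla(\psi_\ell(x/t)u)\|_{L^2}\ge\|Q\|_{L^2}\|\nabla Q\|_{L^2}/\sqrt d .
\]
Suppose instead that it eventually lies below threshold. By the sharp Gagliardo--Nirenberg inequality this forces $M E\le(1-\delta)\Theta$ along a sequence. Theorems~\ref{thm:upgrade-II} and~\ref{thm:cone-scattering} would then make $r\to0$ near $y_\ell$, contradicting $a_\ell>0$.

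The normalization $\sqrt{2\Theta}=\|Q\|\|\nabla Q\|/\sqrt d$ comes from the Pohozaev identities. Summing over $\ell$, the localized masses add up to at most $\|u(0)\|_{L^2}^2$ and the localized gradients to at most $\sup_t\|\nabla u\|^2$. Cauchy--Schwarz then gives $N\sqrt{2\Theta}\le\|u(0)\|_{L^2}\sup_t\|\nabla u\|_{L^2}$. Getting exactly this constant requires careful localization at threshold, and that is the step I expect to be the main obstacle.
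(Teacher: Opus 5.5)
Your bound on $N$ is essentially the paper's argument. The paper proves $\mathcal{A}(v_j)\ge\Theta$ at each atom by exactly your contradiction: a sequential below-threshold bound near $v_j$, upgraded by Theorem~\ref{thm:upgrade-II} and fed into Theorem~\ref{thm:cone-scattering}, contradicts $a_j>0$. It then runs the disjoint-cutoff Cauchy--Schwarz count of Proposition~\ref{prop:finite-bad-directions} with $c_0=\Theta$.

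The step you flag as the main obstacle is not one. For fixed disjoint cutoffs your contradiction gives $\liminf_t\|\psi_\ell(x/t)u\|_{L^2}\|\nabla(\psi_\ell(x/t)u)\|_{L^2}\ge\sqrt{2\Theta}$ exactly. The cutoff-derivative error is $O(t^{-1})$. Passing from the mass--gradient product to $ME$ needs only $E(f)\le\frac12\|\nabla f\|_{L^2}^2$, not sharp Gagliardo--Nirenberg.

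For \eqref{eq:scattering-away-finite-directions} you take a longer route. It is sound on bounded velocity sets but does not cover the region near $y=\infty$.

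On a small ball away from $V$ the argument works as follows:
\begin{itemize}
\item $\|\chi_3(x/t)r(t)\|_{L^2}\to0$.
\item $M(\chi_3(x/t)v(t))\to m_3$, and $m_3$ is small uniformly in the centre by absolute continuity of $|\widehat{u}_+|^2$.
\item $\|\nabla(\chi_3(x/t)u(t))\|_{L^2}$ stays bounded, so $M(\chi_3(x/t)u(t))\,\|\nabla(\chi_3(x/t)u(t))\|_{L^2}^2$ is eventually small.
\end{itemize}
Hence both hypotheses of Theorem~\ref{thm:cone-scattering} hold for all large $t$. This also handles the mixed term $M(\chi_3v)\|\nabla(\chi_3r)\|_{L^2}^2$, which your sketch does not address. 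It also makes Theorem~\ref{thm:upgrade-II} unnecessary here.

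However, $\{\dist(y,V)\ge\rho\}$ is unbounded, and Theorem~\ref{thm:cone-scattering} is stated only for $\chi_1,\chi_2\in C_c^\infty(\R^d)$. Your phrase ``the same argument on $X$'' is an extension that would require re-running its proof for cutoffs continuous on $X$. Your $m_\sharp=0$ case rests on the same unaddressed ``behaviour at infinity''.

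The underlying point is that $L^2$-smallness of $r$ in a velocity region does not by itself give $H^1$-smallness; you need the compact-attractor structure. The paper uses that structure directly, which makes your threshold and partition-of-unity detour unnecessary. Since $\nu_t\rightharpoonup\mu_\infty$ along the whole trajectory with atoms only on $V\subset\R^d$, apply Proposition~\ref{prop:interval-evacuation} with $K=V$, $A_n=n$, $B_n=n+1$. Through the profile decomposition, every nonzero profile has limiting velocity in $V$. Hence $\chi_\rho(x/t)r(t)\to0$ in $H^1$ on all of $\supp\chi_\rho$ at once, including near infinity.

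For $m_\sharp=0$ the paper combines $\|P_{\le R}r(t)\|_{H^1}\le(1+R^2)^{1/2}\|r(t)\|_{L^2}\to0$ with the frequency tightness of Proposition~\ref{prop:residual-frequency-tightness}.
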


\begin{rmk}
One can view this as a soliton resolution type result (characterizing all possibilities of non-scattering solitons remains a very difficult step) on the velocity level.
This groups all non-scattering solutions with the same asymptotic velocity into a single cluster, and we will need to characterize the sublinear dynamics of these non-scattering solutions in future works.

Our expectation is that the method can be extended to the intercritical range (i.e., between mass-critical and energy-critical).
On the other hand, the extension to the mass-critical nonlinearity (like $|u|^{4/d}u$) and the energy critical case (like $|u|^{4/(d-2)}u$) would require some essentially new ingredient, in particular the compact attractor result like Theorem~\ref{thm:compact-attractor} in these settings.
Also, there is an extra scaling parameter to deal with in these settings.
\end{rmk}

\begin{proof}
Let $r(t)$ be as in \eqref{eq:v,r-def}, put $m_\sharp=M(u)-M(u_+)$, and let $\Theta$ be as in \eqref{eq:Theta-def}.
By Proposition~\ref{prop:residual-limits}, $M(r(t))\to m_\sharp\geq 0$. If $m_\sharp=0$, then, for every $R>0$,
\begin{align}
\label{eq:r(t)-H1-bound-1}
 \limsup_{t\to\infty}\|r(t)\|_{H^1} \leq\limsup_{t\to\infty}\|P_{>R}r(t)\|_{H^1},
\end{align}
because $\|P_{\leq R}r(t)\|_{H^1}\leq(1+R^2)^{1/2}\|r(t)\|_{L^2}\to0$. 
Using Proposition~\ref{prop:residual-frequency-tightness}, we know $r(t)\to0$ in $H^1$. Thus the solution itself scatters and we may take $N=0$ and $\chi_\rho\equiv1$.

Now we consider the case $m_\sharp>0$. By Theorem~\ref{thm:measure-convergence}, we have
\begin{align}
\label{eq:finite-direction-atomic-limit}
 \nu_t\rightharpoonup\mu_\infty =\sum_{j=1}^Na_j\delta_{v_j}, \qquad a_j>0,\quad v_j\in\R^d,
\end{align}
where the $v_j$ are distinct. Fix $\chi$ as in Proposition~\ref{prop:finite-bad-directions} and define $\mathcal{A}$ by \eqref{eq:directional-mass-energy}.
We first prove \eqref{eq:atomic-directions-above-threshold}:
\begin{align}
\label{eq:atomic-directions-above-threshold}
 \mathcal{A}(v_j)\geq\Theta \qquad(1\leq j\leq N).
\end{align}
If instead $\mathcal{A}(v_j)<\Theta$, the definition \eqref{eq:directional-mass-energy} gives $\rho_0>0$, $\delta\in(0,1)$, and $t_n\to\infty$ such that, with
\begin{align*}
 \psi_3(y)=\chi\Big(\frac{y-v_j}{\rho_0}\Big),
\end{align*}
one has
\begin{align*}
 M(\psi_3(x/t_n)u(t_n))E(\psi_3(x/t_n)u(t_n)) \leq(1-\delta)\Theta.
\end{align*}
Choose $\psi_0,\psi_1,\psi_2\in C_c^\infty(\R^d)$ satisfying $0\leq\psi_i\leq1$ for $i=0,1,2$, $\psi_0(v_j)=1$, and
\begin{align*}
 \supp\psi_0\Subset\{\psi_1=1\},\qquad \supp\psi_1\Subset\{\psi_2=1\},\qquad \supp\psi_2\Subset\{\psi_3=1\}.
\end{align*}
Then Theorem~\ref{thm:upgrade-II}, applied to $\psi_1,\psi_2,\psi_3$ (being $\chi_0,\chi_1,\chi_2$ there respectively), gives the two eventual below-threshold hypotheses of Theorem~\ref{thm:cone-scattering} for $\psi_0,\psi_1$.
Hence
\begin{align*}
 \|\psi_0(x/t)r(t)\|_{H^1}\to 0.
\end{align*}
On the other hand, \eqref{eq:finite-direction-atomic-limit} gives
\begin{align*}
 \|\psi_0(x/t)r(t)\|_{L^2}^2 =\int\psi_0^2\,d\nu_t  \to\int\psi_0^2\,d\mu_\infty\geq a_j>0,
\end{align*}
a contradiction. This proves \eqref{eq:atomic-directions-above-threshold}.

Taking $c_0=\Theta$ in Proposition~\ref{prop:finite-bad-directions}, we obtain the first bound in \eqref{eq:finite-bad-directions-bound}.
Then \eqref{eq:ground-state-identities} gives the last equality.

Set $V=\{v_1,\ldots,v_N\}$. For a prescribed $\rho>0$, choose $\chi_\rho$ as in Theorem~\ref{thm:finite-bad-directions}; then $\dist(\supp\chi_\rho,V)\geq\rho$.
Since the full trajectory in \eqref{eq:finite-direction-atomic-limit} converges to a measure supported on $V$, Proposition~\ref{prop:interval-evacuation}, applied with $A_n=n$, $B_n=n+1$, and $K=V$, yields
\begin{align*}
 \sup_{n\leq t\leq n+1}\|\chi_\rho(x/t)r(t)\|_{H^1}\to 0.
\end{align*}
This is \eqref{eq:scattering-away-finite-directions}.
\end{proof}

\subsection{Links to the existing literature}
\label{subsec:review-future}

The threshold in \eqref{eq:localized-mass-energy}--\eqref{eq:localized-mass-gradient} is the ground-state threshold of the intercritical focusing NLS.
See Weinstein's work \cite{Weinstein1983Sharp} on its relationship with the sharp Gagliardo--Nirenberg inequality and the characterization and uniqueness given by Kwong \cite{Kwong1989Uniqueness}.

Such a threshold for scattering arose in the pioneering work treating the energy-critical NLS by Kenig and Merle \cite{KenigMerle2006}, where the concentration-compactness and rigidity method was introduced to prove below-ground-state scattering in the radial case.
The below ground-state scattering in the non-radial case for the energy-critical NLS in $d\geq5$ is proven by Killip and Visan \cite{Killip-Visan-H1critical}.
In the intercritical range (i.e., mass supercritical and energy subcritical) the sharp form of the threshold is due to Holmer and Roudenko \cite{HolmerRoudenko2008}, for radial data and for the three-dimensional cubic equation, which is \eqref{eq:NLS} with $d=3$. Afterwards, Duyckaerts, Holmer, and Roudenko \cite{DHR-3d-sc} removed the radial assumption in that dimension.

Fang, Xie, and Cazenave \cite{Fang-Xie-Cazenave-scattering-11} prove below ground-state scattering in general dimension and for every intercritical power, and Guevara \cite{Guevara12} also proved below ground-state scattering in that generality. We use this below ground-state scattering theory in Proposition~\ref{prop:profile-threshold}.
See also \cite{AkahoriNawa2013} for more discussions.

A second route on this question uses the Morawetz estimate.
Dodson and Murphy proved below ground-state scattering first for the three-dimensional cubic equation \cite{DodsonMurphy2017} with radial data,
and then, through an interaction Morawetz estimate, for non-radial data in every dimension $d\geq3$ for the $\dot{H}^{1/2}$-critical equation \cite{DodsonMurphy2018}, which has the same type of nonlinearity as \eqref{eq:NLS}; we use this result as Theorem~\ref{thm:blow-thre-sc}.
The two-dimensional radial case is treated in \cite{AroraDodsonMurphy2020}. Any of \cite{Fang-Xie-Cazenave-scattering-11,Guevara12,DodsonMurphy2018} would serve equally well as the input to Section~\ref{sec:below_threshold_scattering}.

Our Theorem~\ref{thm:cone-scattering} is the analogue of the global below ground-state scattering results, localized to a spacetime cone as in \eqref{eq:C-Omega-def}.
On the other hand, in the global results \cite{DHR-3d-sc,Fang-Xie-Cazenave-scattering-11,Guevara12,DodsonMurphy2018}, the threshold hypothesis is imposed on the initial data globally and yields scattering of the solution as a whole.
Our Theorem~\ref{thm:cone-scattering} aims to allow large initial data and include soliton components while deriving scattering in the region that is away from these soliton components.
So Theorem~\ref{thm:cone-scattering} fits in a more general picture described by the soliton resolution conjecture.
In our setting, no global threshold assumption is made: the solution is arbitrary,
subject only to a uniform $H^1$ bound. Finally, the hypothesis is asked for at all large times, whereas concentration compactness produces information only along sequences of times.
Theorem~\ref{thm:upgrade-II} closes precisely that gap, which is why it is isolated as a separate statement.
After obtaining the localized scattering in Theorem~\ref{thm:cone-scattering}, there can only be finitely many directions on which scattering fails, which is our Theorem \ref{thm:finite-bad-directions}.

On the other hand, starting with the work of Duyckaerts and Merle \cite{Duychaets-Merle-thre-H1NLS}, there have been many successes in characterizing the dynamics of NLS solutions that are on or above the threshold determined by the ground-state soliton.
The works that are most closely related to our current setting are the works of Duyckaerts and Roudenko \cite{Duychaetz-Roudenko-threshold-3Dcubic} and the work of Nakanishi and Schlag \cite{Nakanishi-Schalg-above-thre-3Dcubic}.

The limitation $d\geq 5$ that we have in our main results comes from Theorem~\ref{thm:compact-attractor}, which is \cite[Theorem~1.28]{Tao2007Attractor}.
It is used to give a profile decomposition for us. See also \cite{Tao2004Radial}\cite{Tao2008Potential}\cite{TaoSolitonsStable}.

We should also mention that the spacetime cone approach is inspired by the work of Gell-Redman, Gomes and Hassell \cite{GRGH-LS,GRGH-NLS},
and the long tradition of microlocal analysis.
Although these works are not directly used on a technical level, they have provided conceptual guidance to the author.

\subsection{The structure of the paper}
\label{subsec:the_structure_of_the_paper}
The structure of the paper is as follows.
We recall some facts from the existing literature in Sections~\ref{sec:preliminaries}--\ref{sec:thresholds} on linear and nonlinear solutions (mostly the compact attractor and the ground state soliton).
Then we prove Theorem~\ref{thm:cone-scattering} in Section~\ref{sec:below_threshold_scattering}.
In Sections~\ref{sec:atomic}--\ref{sec:commutators} we study the asymptotic behaviour of the measure using some parts of our solution as the density.
This will be the key step to identify those directions on which we could potentially have solitons.
Afterwards, Section~\ref{sec:limiting-measure} proves Theorem~\ref{thm:measure-convergence}.
Finally, we will prove our byproduct: the estimates upgrading the smallness or the below threshold property along a time sequence to a uniform property in Sections~\ref{sec:upgrade-I} and~\ref{sec:upgrade-II}.

\subsection*{Acknowledgements}
The project grew out of an ongoing project with Andrew Hassell.
The author is very grateful to Andrew Hassell for many helpful conversations and sharing the idea of using spacetime cones.
The author is supported by the Australian Research Council through grant FL220100072.

\section{Preliminaries: Radiation and compact attractor}
\label{sec:preliminaries}

\subsection{Free radiation}
\label{subsec:free_radiation}
We begin by recording the standard estimates for the free Schr\"odinger equation that are used throughout the paper.
The estimates in Proposition~\ref{prop:strichartz} are standard; see \cite[Theorem~2.3]{Tao-06-book} or \cite[Chapter~2]{Cazenave-03-book}.
\begin{prop}[Dispersive and Strichartz estimates]
\label{prop:strichartz}
Let $d\geq2$. For every $t\neq0$, we have
\begin{align}
\label{eq:dispersive}
 \|e^{it\Delta}\phi\|_{L^\infty(\R^d)} \leq(4\pi|t|)^{-d/2}\|\phi\|_{L^1(\R^d)} \qquad(\phi\in L^1(\R^d)).
\end{align}
Then interpolation and the unitarity of $e^{it\Delta}$ on $L^2(\R^d)$ give:
\begin{align}
\label{eq:dispersive-interpolated}
 \|e^{it\Delta}\phi\|_{L^p(\R^d)} \leq C_{d,p}|t|^{-d(\frac{1}{2}-\frac{1}{p})}\|\phi\|_{L^{p'}(\R^d)} \qquad(\phi\in L^{p'}(\R^d)),
\end{align}
where $2\leq p\leq \infty$ and $\frac{1}{p'}+\frac{1}{p}=1$.
We say that a pair $(q,r)$ is admissible if
\begin{align}
\label{eq:admissible}
 \frac{2}{q}+\frac{d}{r}=\frac{d}{2}, \quad 2\leq q,r\leq\infty,
\end{align}
excluding $(q,r)=(2,\infty)$ when $d=2$.
Then, for all admissible $(q,r)$ and $(\tilde{q},\tilde{r})$, every interval $I\subset\R$, and every $t_0\in I$,
\begin{align}
\label{eq:strichartz-homogeneous}
 \|e^{it\Delta}f\|_{L_t^qL_x^r(I\times\R^d)} &\leq C_{d,q}\|f\|_{L^2(\R^d)}, \\
\label{eq:strichartz-inhomogeneous}
 \Big\|\int_{t_0}^te^{i(t-s)\Delta}F(s)\,ds\Big\| _{L_t^qL_x^r(I\times\R^d)} &\leq C_{d,q,\tilde{q}} \|F\|_{L_t^{\tilde{q}'}L_x^{\tilde{r}'}(I\times\R^d)},
\end{align}
with constants independent of $I$ and $t_0$.
\end{prop}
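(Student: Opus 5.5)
The dispersive estimate \eqref{eq:dispersive} will be derived from the explicit kernel of the free propagator, and everything else will follow from it by standard functional analysis. For $t\neq0$ and $\phi$ Schwartz, the Fourier transform, together with the Gaussian integral continued analytically to imaginary variance, gives
\begin{align*}
 e^{it\Delta}\phi(x)=(4\pi i t)^{-d/2}\int_{\R^d}e^{i|x-y|^2/(4t)}\phi(y)\,dy .
\end{align*}
Taking absolute values inside the integral yields \eqref{eq:dispersive} for Schwartz $\phi$, and density extends it to $L^1$. Since $e^{it\Delta}$ is unitary on $L^2$ by Plancherel, the Riesz--Thorin theorem interpolates between the $L^1\to L^\infty$ and $L^2\to L^2$ bounds and gives \eqref{eq:dispersive-interpolated}, with decay exponent $d(\frac12-\frac1p)$.

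For the homogeneous estimate \eqref{eq:strichartz-homogeneous} I will use the $TT^*$ argument. Let $T f=e^{it\Delta}f$. Then
\begin{align*}
 TT^*F(t)=\int e^{i(t-s)\Delta}F(s)\,ds .
\end{align*}
In the non-endpoint range $q>2$, I apply \eqref{eq:dispersive-interpolated} with $p=r$. The admissibility relation \eqref{eq:admissible} turns the decay exponent into $2/q$, so
\begin{align*}
 \|TT^*F(t)\|_{L^r}\lesssim\int|t-s|^{-2/q}\|F(s)\|_{L^{r'}}\,ds .
\end{align*}
Hardy--Littlewood--Sobolev in time then bounds $TT^*$ from $L^{q'}_tL^{r'}_x$ to $L^q_tL^r_x$, because $1+\frac1q=\frac1{q'}+\frac2q$ and $0<2/q<1$. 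This gives the boundedness of $T$. The case $q=\infty$, $r=2$ is simply unitarity.

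The inhomogeneous estimate \eqref{eq:strichartz-inhomogeneous} for a general pair of admissible exponents comes from composing $T$ with $T^*$ for the two different pairs. This handles the untruncated integral over $s$. The truncation to $s<t$, or to $s$ between $t_0$ and $t$, is then handled by the Christ--Kiselev lemma whenever $\tilde q'<q$. In the remaining diagonal cases one instead reruns the HLS argument directly on the truncated kernel, since $|t-s|^{-2/q}$ dominates it. None of the constants depend on $I$ or $t_0$, because the estimates are proved on all of $\R$ with $F$ extended by zero.

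The main obstacle is the endpoint $q=2$, $r=\frac{2d}{d-2}$, which is allowed here since $d\geq3$ is the case of interest (and $d=2$ excludes $(2,\infty)$). There HLS fails, since the time kernel is $|t|^{-1}$. I would follow Keel--Tao: decompose $|t-s|\sim2^j$ dyadically, prove bilinear estimates for each piece with a gain or loss of $2^{j\beta(a,b)}$ for nearby exponents, and sum using real (bilinear) interpolation. Since this is a standard cited result, in the paper I would simply refer to \cite{Tao-06-book} for this step rather than reproduce it.
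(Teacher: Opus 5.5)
Your argument is correct and is exactly the standard proof in the sources the paper cites in place of a proof (\cite[Theorem~2.3]{Tao-06-book}, \cite[Chapter~2]{Cazenave-03-book}). It runs through the explicit kernel, Riesz--Thorin, $TT^*$ with Hardy--Littlewood--Sobolev in time, Christ--Kiselev for the truncation, and Keel--Tao at the endpoint.

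One small correction: since $\tilde q'\leq 2\leq q$, the only case not covered by Christ--Kiselev is the double endpoint $q=\tilde q=2$. There your suggested HLS rerun fails, because the kernel is $|t-s|^{-1}$. So that truncated estimate must also come from Keel--Tao, whose bilinear argument handles the retarded form over $s<t$ directly.
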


We next record some other properties of the linear solutions.
\begin{lem}\label{lem:diagonal-radiation}
Let $d\geq5$. There is a constant $C_d$ such that
\begin{align}
\label{eq:diagonal-radiation-estimate}
 \|e^{it\Delta}f\|_{L^{2(d+1)/(d-1)}(\R_t\times\R_x^d)} \leq C_d\|f\|_{H^1(\R^d)}
\end{align}
for every $f\in H^1(\R^d)$. In particular,
\begin{align}
\label{eq:diagonal-radiation-integrability}
 \int_\R\|e^{it\Delta}f\|_{L^{2(d+1)/(d-1)}}^{2(d+1)/(d-1)}\,dt<\infty
\end{align}
and
\begin{align}
\label{eq:weighted-free-integrability}
 \lim_{T\to\infty}\int_T^\infty \|e^{it\Delta}f\|_{L^{2(d+1)/(d-1)}}^{2(d+1)/(d-1)}\frac{dt}{t}=0.
\end{align}
\end{lem}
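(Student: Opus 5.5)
The estimate \eqref{eq:diagonal-radiation-estimate} is a Strichartz estimate at a non-admissible diagonal exponent, and I would obtain it from Sobolev embedding combined with an admissible Strichartz bound. Set $p=2(d+1)/(d-1)$. The admissible pair with time exponent $q=p$ has spatial exponent $r$ determined by
\begin{align*}
\frac{d}{r}=\frac{d}{2}-\frac{2}{p}=\frac{d}{2}-\frac{d-1}{d+1},\qquad\text{so}\qquad r=\frac{2d(d+1)}{d^2-d+2}.
\end{align*}
Since $p>2$ we have $r<p$. We then need a fractional derivative $s\in[0,1]$ with $\dot W^{s,r}\hookrightarrow L^p$, that is, $\frac{s}{d}=\frac1r-\frac1p$. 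A direct computation gives
\begin{align*}
\frac1r-\frac1p=\frac{d^2-d+2}{2d(d+1)}-\frac{d-1}{2(d+1)}=\frac{1}{d(d+1)},
\end{align*}
so $s=\frac{1}{d+1}\in(0,1)$. This is consistent with scaling: the diagonal space is exactly $\dot H^{s}$-critical with $s=\frac d2-\frac{d+2}{p}=\frac1{d+1}$.

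The steps, for $f\in H^1$, are as follows. First apply Sobolev embedding in $x$ at each time $t$:
\begin{align*}
\|e^{it\Delta}f\|_{L^p_x}\lesssim\||\nabla|^{s}e^{it\Delta}f\|_{L^r_x}.
\end{align*}
Next take the $L^p_t(\R)$ norm and use that $|\nabla|^s$ commutes with $e^{it\Delta}$. The homogeneous Strichartz estimate \eqref{eq:strichartz-homogeneous} with the admissible pair $(p,r)$, taken with $I=\R$ (allowed since the constants are independent of $I$), then gives the bound $\lesssim\||\nabla|^sf\|_{L^2}$. Finally, $\||\nabla|^sf\|_{L^2}\le\|f\|_{H^1}$ by Plancherel, since $|\xi|^{2s}\le 1+|\xi|^2$. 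Admissibility needs $2\le p$ and $2\le r$, which are clear, and we are in $d\ge5$, away from the $(2,\infty)$ endpoint. Homogeneous Sobolev embedding requires $1<r<p<\infty$, which also holds. This proves \eqref{eq:diagonal-radiation-estimate}.

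The two consequences are immediate. Estimate \eqref{eq:diagonal-radiation-integrability} is just the $p$-th power of \eqref{eq:diagonal-radiation-estimate}. For \eqref{eq:weighted-free-integrability}, on $[T,\infty)$ with $T\ge1$ we have $1/t\le1$. The weighted integral is therefore bounded by the tail $\int_T^\infty\|e^{it\Delta}f\|_{L^p}^p\,dt$, which tends to $0$ as $T\to\infty$ by dominated convergence, since the full integral over $\R$ is finite. (In fact the decay is even faster because of the extra factor $1/T$.)

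The only step that needs care is the exponent bookkeeping, namely checking that $s=1/(d+1)\le1$ and that $(p,r)$ is admissible. There is no genuine obstacle; the restriction $d\ge5$ is not actually needed here beyond matching the paper's standing assumption.
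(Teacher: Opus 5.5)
Your proposal is correct and matches the paper's proof: the same admissible pair $\big(2(d+1)/(d-1),\,2d(d+1)/(d^2-d+2)\big)$, the Strichartz estimate applied to $|\nabla|^{1/(d+1)}f$, and homogeneous Sobolev embedding in $x$. The only cosmetic difference is that you bound $\|f\|_{\dot H^{1/(d+1)}}$ by $\|f\|_{H^1}$ through Plancherel, where the paper uses the interpolation inequality $\|f\|_{\dot H^{s}}\leq\|f\|_{L^2}^{1-s}\|\nabla f\|_{L^2}^{s}$.
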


\begin{proof}
Set $ s_*=\frac{1}{d+1}$, $r_*=\frac{2d(d+1)}{d^2-d+2}$. Then $(2(d+1)/(d-1),r_*)$ is admissible in the sense of \eqref{eq:admissible}. Apply \eqref{eq:strichartz-homogeneous} to $|D|^{s_*}f$ to obtain
\begin{align*}
 \||D|^{s_*}e^{it\Delta}f\|_{L_t^{2(d+1)/(d-1)}L_x^{r_*}} \leq C_d\|f\|_{\dot{H}^{s_*}}.
\end{align*}
Since
\begin{align*}
 \frac{d-1}{2(d+1)}=\frac{1}{r_*}-\frac{s_*}{d},
\end{align*}
Sobolev embedding gives
\begin{align*}
 \|e^{it\Delta}f\|_{L_x^{2(d+1)/(d-1)}} \leq C_d\||D|^{s_*}e^{it\Delta}f\|_{L_x^{r_*}}.
\end{align*}
Combining inequalities above, we have
\begin{align*}
 \|e^{it\Delta}f\|_{L_{t,x}^{2(d+1)/(d-1)}} \leq C_d\|f\|_{\dot{H}^{s_*}} \leq \|f\|_{L^2}^{1-s_*} \|\nabla f\|_{L^2}^{s_*} \leq\|f\|_{H^1},
\end{align*}
which is \eqref{eq:diagonal-radiation-estimate}. Raising \eqref{eq:diagonal-radiation-estimate} to the power $2(d+1)/(d-1)$ proves \eqref{eq:diagonal-radiation-integrability}, and \eqref{eq:weighted-free-integrability} follows from this directly.
\end{proof}

We record Lemma~\ref{lem:free-lq-decay}, a modified version of the dispersive estimate. It follows directly from \eqref{eq:dispersive-interpolated} if the initial data is in $L^{\frac{2(d+1)}{d+3}}$,
but needs a bit more justification if we only require the $L^2$-level decay of the initial data.

\begin{lem}
\label{lem:free-lq-decay}
Let $d\geq5$. For every $f\in H^1(\R^d)$,
\begin{align}
\label{eq:modified-dispersive}
 \lim_{|t|\to\infty}\|e^{it\Delta}f\|_{L^{2(d+1)/(d-1)}(\R^d)}=0.
\end{align}
\end{lem}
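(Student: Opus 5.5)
I will prove Lemma~\ref{lem:free-lq-decay} by a density argument that combines the dispersive decay \eqref{eq:dispersive-interpolated}, which is available for nice data, with a uniform bound for arbitrary $H^1$ data. Set $p=2(d+1)/(d-1)$, so $2<p<\infty$ and $p'=2(d+1)/(d+3)$.

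\emph{Uniform bound.} By Sobolev embedding, $H^1(\R^d)\hookrightarrow L^p(\R^d)$, since $p\le 2d/(d-2)$. The propagator $e^{it\Delta}$ is unitary on $H^1$ because it commutes with derivatives. Hence
\begin{align*}
\|e^{it\Delta}g\|_{L^p}\le C_d\|e^{it\Delta}g\|_{H^1}=C_d\|g\|_{H^1}
\end{align*}
for every $g\in H^1(\R^d)$ and every $t\in\R$, with a constant independent of $t$.

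\emph{Approximation.} Fix $f\in H^1$ and $\varepsilon>0$. Since $C_c^\infty(\R^d)$ is dense in $H^1$, choose $g\in C_c^\infty(\R^d)$ with $\|f-g\|_{H^1}<\varepsilon$. Then $g\in L^{p'}$, so \eqref{eq:dispersive-interpolated} gives
\begin{align*}
\|e^{it\Delta}g\|_{L^p}\le C_{d,p}|t|^{-d(\frac12-\frac1p)}\|g\|_{L^{p'}}.
\end{align*}
Here $d(\frac12-\frac1p)=\frac{d}{d+1}>0$, so this tends to $0$ as $|t|\to\infty$.

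\emph{Conclusion.} By linearity and the uniform bound,
\begin{align*}
\|e^{it\Delta}f\|_{L^p}\le C_d\varepsilon+C_{d,p}|t|^{-d/(d+1)}\|g\|_{L^{p'}}.
\end{align*}
Taking $\limsup_{|t|\to\infty}$ gives $\limsup_{|t|\to\infty}\|e^{it\Delta}f\|_{L^p}\le C_d\varepsilon$. Since $\varepsilon>0$ was arbitrary, this proves \eqref{eq:modified-dispersive}.

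The only point requiring care is the \emph{uniformity in $t$} of the approximation error. This comes from the fact that $e^{it\Delta}$ is an isometry on $H^1$ and that $L^p$ is controlled by $H^1$ without any time dependence. That is why $H^1$ data suffices even though $f$ need not lie in $L^{p'}$.
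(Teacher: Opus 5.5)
Your proposal is correct and follows essentially the same argument as the paper: approximate $f$ in $H^1$ by a nice function (you use $C_c^\infty$, the paper uses Schwartz functions), apply the dispersive estimate \eqref{eq:dispersive-interpolated} with $p'=2(d+1)/(d+3)$ to the approximant, and control the error uniformly in $t$ via the Sobolev embedding $H^1\hookrightarrow L^{2(d+1)/(d-1)}$ and $H^1$-unitarity of $e^{it\Delta}$. Your exponent checks ($d(\frac12-\frac1p)=\frac{d}{d+1}$ and $p\le 2d/(d-2)$) are also correct.
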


\begin{proof}
Since $2<2(d+1)/(d-1)<2d/(d-2)$, Sobolev embedding gives
\begin{align}
\label{eq:subcritical-sobolev-embedding}
 \|h\|_{L^{2(d+1)/(d-1)}}\leq C_d\|h\|_{H^1}.
\end{align}
For a Schwartz function $\phi$, the case $p=2(d+1)/(d-1)$ of \eqref{eq:dispersive-interpolated} gives
\begin{align}
\label{eq:free-lq-dispersive}
 \|e^{it\Delta}\phi\|_{L^{2(d+1)/(d-1)}} \leq C_d|t|^{-d/(d+1)}\|\phi\|_{L^{2(d+1)/(d+3)}}.
\end{align}

For $f\in H^1$, choose Schwartz $\phi$ with $\|f-\phi\|_{H^1}<\epsilon$. The $H^1$-unitarity of $e^{it\Delta}$ and \eqref{eq:subcritical-sobolev-embedding} give
\begin{align*}
 \sup_t\|e^{it\Delta}(f-\phi)\|_{L^{2(d+1)/(d-1)}}\leq C_d\epsilon.
\end{align*}
Using \eqref{eq:free-lq-dispersive}, $\|e^{it\Delta}\phi\|_{L^{2(d+1)/(d-1)}} \leq \epsilon$ for sufficiently large $|t|$.
Therefore
\begin{align*}
 \limsup_{|t|\to\infty}\|e^{it\Delta}f\|_{L^{2(d+1)/(d-1)}}\leq(C_d+1)\epsilon.
\end{align*}
Sending $\epsilon \to 0$ proves \eqref{eq:modified-dispersive}.
\end{proof}

Next we discuss the behavior of the measure given by the mass and current density.
The intuition behind the limits below is that if we expand a scattering solution to the Schr\"odinger equation, then the leading order term (with $x/2t$ as the variable) is the Fourier transform of the initial data.
The convention for the Fourier transform we use in this paper is
\begin{align*}
 \widehat{f}(\xi)=(2\pi)^{-d}\int_{\R^d}e^{-ix\cdot\xi}f(x)\,dx.
\end{align*}

\begin{prop} 
\label{prop:free-mass-current}
Let $d\geq1$, $u_0\in H^1(\R^d)$, $v_{u_0}(t)=e^{it\Delta}u_0$ for $t>0$, and let $j$ be as in \eqref{eq:j-def}.
Define the measure $\rho_{u_0}$ by
\begin{align*}
 \int_{\R^d}\Psi(y)\,d\rho_{u_0}(y) =(2\pi)^d\int_{\R^d}\Psi(2\xi)|\widehat{u_0}(\xi)|^2\,d\xi
\end{align*}
for bounded continuous $\Psi$. Define $\mu_t^{u_0}$ and $J_t^{u_0}$ by
\begin{align}
\label{eq:muf_t-def}
 \int\Psi\,d\mu_t^{u_0} =\int_{\R^d}\Psi(x/t)|v_{u_0}(t,x)|^2\,dx,
\end{align}
\begin{align}
\label{eq:Jf_t-def}
 \int\Psi\,dJ_t^{u_0} =\int_{\R^d}\Psi(x/t)j(v_{u_0}(t))(x)\,dx.
\end{align}
Then, as $t\to\infty$,
\begin{align}
\label{eq:free-mass-current-limits}
 \mu_t^{u_0}& \to\rho_{u_0}, \; J_t^{u_0} \to \frac{y}{2}\rho_{u_0} \; \text{in total variation}.
\end{align}
Moreover, $\mu_t^{u_0}$ and $|J_t^{u_0}|$ are uniformly tight for all sufficiently large $t$: for every $\epsilon>0$,
there are $R,T>0$ such that, for all $t\geq T$,
\begin{align}
\label{eq:tight-def}
 \mu_t^{u_0}(\{y\in\R^d:|y|>R\}) +|J_t^{u_0}|(\{y\in\R^d:|y|>R\})<\epsilon.
\end{align}
\end{prop}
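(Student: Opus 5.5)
The proof rests on the explicit asymptotics of the free propagator. For $u_0$ Schwartz we have the standard representation
\begin{align*}
 e^{it\Delta}u_0(x)=(4\pi i t)^{-d/2}e^{i|x|^2/4t}\int e^{-ix\cdot y/2t}e^{i|y|^2/4t}u_0(y)\,dy .
\end{align*}
We factor $e^{it\Delta}=M_tD_t\mathcal{F}M_t$, where $M_t$ is multiplication by $e^{i|x|^2/4t}$, $D_t$ is the $L^2$-unitary dilation $f\mapsto (2t)^{-d/2}f(x/2t)$ (up to constants and phase), and $\mathcal F$ is the Fourier transform normalized to be unitary with our convention. This gives
\begin{align*}
 \big\|e^{it\Delta}u_0-M_tD_t\mathcal{F}u_0\big\|_{L^2}=\|(M_t-1)u_0\|_{L^2}\to0,
\end{align*}
by dominated convergence, for every $u_0\in L^2$; density is not even needed here. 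Set $w_t:=M_tD_t\mathcal F u_0$. Then $|w_t(x)|^2\,dx$ pushed forward by $x\mapsto x/t$ is exactly $\rho_{u_0}$, since the change of variables $x=2t\xi$ turns $\int\Psi(x/t)|w_t|^2dx$ into $(2\pi)^d\int\Psi(2\xi)|\widehat{u_0}(\xi)|^2d\xi$. For the mass we then use the bound
\begin{align*}
 \big\||a|^2-|b|^2\big\|_{L^1}\le\|a-b\|_{L^2}\big(\|a\|_{L^2}+\|b\|_{L^2}\big),
\end{align*}
which gives $\|\mu_t^{u_0}-\rho_{u_0}\|_{TV}\le 2\|u_0\|_{L^2}\|(M_t-1)u_0\|_{L^2}\to0$. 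This is the total variation convergence, since the pushforward does not increase total variation.

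For the current, $j(v)=\Im(\bar v\nabla v)$, presumably with the paper's normalization in \eqref{eq:j-def}; I will match the factor so that the limit is $\frac y2\rho$. The key identity is
\begin{align*}
 \nabla e^{it\Delta}u_0=e^{it\Delta}\nabla u_0,\qquad \nabla e^{it\Delta}u_0-\frac{ix}{2t}e^{it\Delta}u_0=\frac{1}{2t}\,e^{it\Delta}(\ldots),
\end{align*}
coming from the Galilean/pseudoconformal vector field $J(t)=x+2it\nabla$, which commutes with the flow: $J(t)e^{it\Delta}=e^{it\Delta}x$. I would avoid needing $xu_0\in L^2$ and instead proceed as follows. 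Since $e^{it\Delta}\nabla u_0\approx w_t[\nabla u_0]=M_tD_t(i\xi\widehat{u_0})$ in $L^2$, and $D_t(i\xi\widehat{u_0})(x)=\frac{ix}{2t}D_t\widehat{u_0}(x)$, we get
\begin{align*}
 \nabla v(t)=\frac{ix}{2t}w_t+o_{L^2}(1).
\end{align*}
Hence
\begin{align*}
 \Im\big(\bar v\nabla v\big)=\frac{x}{2t}|w_t|^2+\text{error},
\end{align*}
where the error has $L^1$ norm at most $\|v-w_t\|_{L^2}\|\nabla v\|_{L^2}+\|w_t\|_{L^2}\|\nabla v-\tfrac{ix}{2t}w_t\|_{L^2}\to0$. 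Here $\nabla v$ is bounded in $L^2$ because $u_0\in H^1$. Pushing forward, $\frac{x}{2t}|w_t|^2dx$ becomes $\frac y2\rho_{u_0}$ exactly, and $\frac y2\rho_{u_0}$ is a finite measure since $\int|\xi||\widehat{u_0}|^2<\infty$ for $u_0\in H^1$. This gives TV convergence of $J_t^{u_0}$.

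Tightness then follows from the TV convergence. Given $\epsilon$, choose $R$ with $\rho_{u_0}(|y|>R)+|\tfrac y2\rho_{u_0}|(|y|>R)<\epsilon/2$, which is possible since both are finite measures. Then choose $T$ so that both TV distances are below $\epsilon/4$ for $t\ge T$; since $|\mu(A)-\nu(A)|\le\|\mu-\nu\|_{TV}$ and $\big||J|(A)-|\tilde J|(A)\big|\le\|J-\tilde J\|_{TV}$, this yields \eqref{eq:tight-def}.

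The main obstacle is the bookkeeping in the current step, namely the approximation $\nabla v\approx\frac{ix}{2t}w_t$ in $L^2$ for merely $H^1$ data. I handle it by applying the $L^2$ asymptotic $e^{it\Delta}g-M_tD_t\mathcal Fg\to0$ to $g=\nabla u_0\in L^2$, and then using the exact identity $\mathcal F(\nabla u_0)=i\xi\,\mathcal F u_0$ combined with the dilation. This avoids any weighted assumptions.
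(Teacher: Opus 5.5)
Your proof is correct. It handles the current by a genuinely different, and somewhat cleaner, route than the paper.

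\textbf{Mass.} Here you and the paper do the same thing. The paper sets $g_t=e^{i|x|^2/(4t)}u_0$ and uses $\widehat{g}_t\to\widehat{u_0}$ in $L^2$; your comparison of $v$ with $w_t=M_tD_t\mathcal{F}u_0$, with error $\|(M_t-1)u_0\|_{L^2}$, is the same Fraunhofer approximation written at the level of $v$ rather than $\widehat{g}_t$.

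\textbf{Current.} This is where you diverge. The paper computes $j(v)$ exactly in the variable $\xi=x/(2t)$. That produces the main term $\xi|\widehat{g}_t|^2$ plus a remainder $\frac{1}{2t}\Im(\overline{\widehat{g}_t}\nabla_\xi\widehat{g}_t)$, bounded only via $\|u_0\|_{L^2}\|xu_0\|_{L^2}/t$. This forces the paper to:
\begin{enumerate}
\item treat Schwartz data first, where one also needs $g_t\to u_0$ in $H^1$;
\item then pass to general $H^1$ data by density, using the stability estimates \eqref{eq:free-mass-stability}--\eqref{eq:free-current-stability}.
\end{enumerate}
You instead apply the $L^2$ asymptotic to $\nabla u_0$ itself. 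Using $\nabla e^{it\Delta}=e^{it\Delta}\nabla$, $\widehat{\nabla u_0}=i\xi\widehat{u_0}$, and the fact that the dilation turns multiplication by $i\xi$ into multiplication by $\frac{ix}{2t}$, you get $\nabla v=\frac{ix}{2t}w_t+o_{L^2}(1)$ directly for every $H^1$ datum. This needs no weights and no approximation step. It also gives an explicit total-variation bound
\begin{align*}
\|(M_t-1)u_0\|_{L^2}\|\nabla u_0\|_{L^2}+\|u_0\|_{L^2}\|(M_t-1)\nabla u_0\|_{L^2}.
\end{align*}
What the paper's computation buys in exchange is the exact form of the $O(t^{-1})$ correction to the current for weighted data. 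The statement does not need this.

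\textbf{Tightness.} The paper controls the current tail by the Cauchy--Schwarz bound $|J_t^{u_0}|(E)\leq\mu_t^{u_0}(E)^{1/2}\|\nabla u_0\|_{L^2}$. You read it off from the total-variation convergence of $J_t^{u_0}$ together with the finiteness of $\frac{y}{2}\rho_{u_0}$. Both arguments are fine.

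\textbf{Minor.} The aside about the vector field $x+2it\nabla$, with its incomplete displayed identity, plays no role in your argument and can simply be deleted.
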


\begin{proof}
Set
\begin{align*}
 g_t(x)=e^{i|x|^2/(4t)}u_0(x).
\end{align*}
Since $v_{u_0}=e^{it\Delta}u_0$, we have
\begin{align*}
 v_{u_0}(t,2t\xi)=\Big(\frac{\pi}{t}\Big)^{d/2}\gamma_d e^{it|\xi|^2}\widehat{g}_t(\xi), \qquad |\gamma_d|=1.
\end{align*}
After the change of variables $x=2t\xi$,
\begin{align}
\label{eq:free-mass-fourier-pushforward}
 \int\Psi(x/t)|v_{u_0}(t,x)|^2\,dx =(2\pi)^d\int\Psi(2\xi)|\widehat{g}_t(\xi)|^2\,d\xi.
\end{align}
Multiplication by $e^{i|x|^2/(4t)}$ converges strongly to the identity on $L^2$. Thus $g_t\to u_0$ in $L^2$, $\widehat{g}_t\to\widehat{u_0}$ in $L^2$, and
\begin{align}
\label{eq:free-fourier-density-l1-convergence}
 \big\||\widehat{g}_t|^2-|\widehat{u_0}|^2\big\|_{L^1} \leq(2\pi)^{-d}(\|g_t\|_{L^2}+\|u_0\|_{L^2})\|g_t-u_0\|_{L^2}\to 0.
\end{align}
Equations \eqref{eq:free-mass-fourier-pushforward}--\eqref{eq:free-fourier-density-l1-convergence} prove total-variation convergence of the mass measures.

Suppose first that $u_0$ is Schwartz. A direct computation gives (with $\xi=x/(2t)$)
\begin{align*}
 j(v_{u_0}(t))(x)=\Big(\frac{\pi}{t}\Big)^{d}\Big( \xi|\widehat{g}_t(\xi)|^2 +\frac{1}{2t}\Im(\overline{\widehat{g}_t(\xi)} \nabla_\xi\widehat{g}_t(\xi))\Big).
\end{align*}
Consequently, we have
\begin{align}
\label{eq:free-current-fourier-decomposition}
 \int\Psi(x/t)\,dJ_t^{u_0} =(2\pi)^d\int\Psi(2\xi)\xi|\widehat{g}_t(\xi)|^2\,d\xi+R_t,
\end{align}
where $R_t$ satisfies $|R_t|\leq\frac{\|\Psi\|_{L^\infty}}{2t}\|u_0\|_{L^2}\|xu_0\|_{L^2}$.
For Schwartz $u_0$, we have $g_t\to u_0$ in $H^1$. This is because
\begin{align*}
 \nabla g_t=e^{i|x|^2/(4t)} \Big(\nabla u_0+\frac{i x}{2t}u_0\Big).
\end{align*}
This gives $g_t\to u_0$ in $H^1$ for Schwartz $u_0$ as $t\to\infty$ by dominated convergence.
Taking Fourier transform, we have $\xi\widehat{g}_t\to\xi\widehat{u_0}$ in $L^2$, and consequently
\begin{align*}
 \xi|\widehat{g}_t|^2\to\xi|\widehat{u_0}|^2 \quad\text{in }L^1.
\end{align*}
Therefore the right-hand side of \eqref{eq:free-current-fourier-decomposition} (hence the left-hand side as well) converges to
\begin{align*}
 (2\pi)^d\int\Psi(2\xi)\xi|\widehat{u_0}(\xi)|^2\,d\xi =\int\Psi(y)\frac{y}{2}\,d\rho_{u_0}(y).
\end{align*}
For general $u_0\in H^1$, choose Schwartz $u_{0,n}\to u_0$ in $H^1$. Uniformly in $t$,
\begin{align}
\label{eq:free-mass-stability}
 |\int\Psi\,d(\mu_t^{u_0}-\mu_t^{u_{0,n}})| \leq\|\Psi\|_{L^\infty}(\|u_0\|_{L^2}+\|u_{0,n}\|_{L^2})\|u_0-u_{0,n}\|_{L^2},
\end{align}
and
\begin{align}
\label{eq:free-current-stability}
 |\int\Psi\,d(J_t^{u_0}-J_t^{u_{0,n}})| \leq\|\Psi\|_{L^\infty}( \|u_0-u_{0,n}\|_{L^2}\|\nabla u_0\|_{L^2} +\|u_{0,n}\|_{L^2}\|\nabla(u_0-u_{0,n})\|_{L^2}).
\end{align}
The estimates \eqref{eq:free-mass-stability} and \eqref{eq:free-current-stability} also compare the two limiting measures on the right-hand side of \eqref{eq:free-mass-current-limits}. First let $t\to\infty$ and then $n\to\infty$.

Now we prove the tightness in \eqref{eq:tight-def}. For every measurable set $E$, by Cauchy--Schwarz we have 
\begin{align}
\label{eq:free-current-mass-control}
 |J_t^{u_0}|(E)\leq\mu_t^{u_0}(E)^{1/2}\|\nabla u_0\|_{L^2}.
\end{align}
Then \eqref{eq:free-mass-current-limits} controls the mass term in \eqref{eq:tight-def}, while \eqref{eq:free-current-mass-control} controls the current term.
\end{proof}

The following lemma shows that the majority of the mass of the solution can't move arbitrarily fast.
\begin{lem}\label{lem:exterior-cone}
Let $d\geq1$, $p>0$, and $u$ be a solution to
\begin{align*}
 (i\partial_t+\Delta)u=-|u|^pu.
\end{align*}  
Suppose $u\in C(\R_{\geq 0};H^1(\R^d))$ and $B=\sup_{t\geq 0}\|u(t)\|_{H^1}<\infty$. Let $u_+\in H^1$, and let $v(t)$ and $r(t)$ be as in \eqref{eq:v,r-def}.
If $\eta\in C^\infty(\R^d)$, $0\leq\eta\leq1$, $\eta=0$ on $\{|y|\leq1\}$, and $\eta=1$ on $\{|y|\geq2\}$, then
\begin{align}
\label{eq:uniform-exterior-cone-tightness}
 \lim_{R\to\infty}\sup_{t\geq1} ( \|\eta(x/(Rt))u(t)\|_{L^2}+ \|\eta(x/(Rt))v(t)\|_{L^2}+ \|\eta(x/(Rt))r(t)\|_{L^2} )=0.
\end{align}
\end{lem}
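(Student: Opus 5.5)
The argument is a local mass estimate: differentiate the mass in an expanding exterior region and integrate in time. It suffices to treat $u$, because $v(t)=e^{it\Delta}u_+$ is handled by Proposition~\ref{prop:free-mass-current}, and $r=u-v$ then follows from the triangle inequality. For $v$, tightness of $\mu_t^{u_+}$ for $t\geq T$ gives smallness of $\|\eta(x/(Rt))v(t)\|_{L^2}$ for $t\geq T$ once $R$ is large. On the compact interval $t\in[1,T]$ we use continuity of $t\mapsto v(t)$ in $L^2$: the family $\{v(t)\}_{t\in[1,T]}$ is compact in $L^2$, hence uniformly tight in space, and $\eta(x/(Rt))$ is supported in $|x|\geq R$.

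For $u$, set $\phi_R(t,x)=\eta(x/(Rt))^2$ and $m_R(t)=\int\phi_R|u(t)|^2\,dx$. The local mass conservation law $\partial_t|u|^2=-2\nabla\cdot\Im(\bar u\nabla u)$ holds for any power nonlinearity, since the nonlinear term is real times $u$. It gives
\begin{align*}
 \frac{d}{dt}m_R(t)=\int\partial_t\phi_R|u|^2\,dx+2\int\nabla\phi_R\cdot\Im(\bar u\nabla u)\,dx .
\end{align*}
To justify this for $H^1$ solutions, one regularizes or uses the standard virial/local mass identity. We have $\partial_t\phi_R=-2\eta\,\nabla\eta(x/(Rt))\cdot\frac{x}{Rt^2}$, and $\nabla\eta$ is supported where $|x|/(Rt)\in[1,2]$, so $|x|\sim Rt$ there. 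The key point is that, since $\nabla\eta$ points outward for a radially increasing cutoff (which we may assume after replacing $\eta$ by a radial monotone one, since our $\eta$ is dominated by such a cutoff with slightly different radii), $\partial_t\phi_R\leq0$. That term only decreases the exterior mass. The flux term is bounded by $\frac{C}{Rt}\|u\|_{L^2}\|\nabla u\|_{L^2}\leq \frac{CB^2}{Rt}$ on the support of $\nabla\eta(x/(Rt))$.

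Integrating from $t_0$ to $t$ gives $m_R(t)\leq m_R(t_0)+\frac{CB^2}{R}\log(t/t_0)$. This logarithm is the main obstacle, since a naive integration fails to be uniform in $t$. To fix it, I would use a sliding scale. For $t\in[2^k,2^{k+1}]$, compare with time $t_0=2^k$ using a cutoff at a smaller radius. Concretely, apply the estimate with $\eta(x/(Rt))^2\leq \eta(x/(R' t_0))^2$-type comparisons: the region $|x|\geq Rt$ at time $t$ is contained in $|x|\geq Rt_0$. That reduces the question to mass at $|x|\gtrsim Rt_0$ at time $t_0$, which again has the same form, so recursion is circular.

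Instead, use the exterior cone region of the velocity variable itself. Take $\phi(t,x)=\eta(x/(Rt))^2$ but bound the flux better: on $\supp\nabla\phi$, write $\Im(\bar u\nabla u)$ and exploit $\int_1^\infty\frac{dt}{t}\int_{|x|\sim Rt}|u||\nabla u|$. The $v$ part is controlled by $\frac{x}{2t}$-concentration, i.e.\ by Proposition~\ref{prop:free-mass-current}. For the full $u$, I would fall back on the alternative Morawetz-free argument: mass moving at speed $\geq R$ must carry momentum, and $\int|x|^{-1}$-weighted virial $\frac{d}{dt}\int \frac{x}{t}\cdot$... Specifically, consider $\frac{d}{dt}\int \eta(x/(Rt))^2|u|^2$ with the improved identity $\Im(\bar u\nabla u)\cdot\frac{x}{|x|}\leq |u||\nabla u|$, and combine it with the fact that the negative term $\partial_t\phi_R\approx-\frac{|x|}{t}\cdot\frac{1}{Rt}|\eta'|$ dominates the flux wherever the local speed $|\nabla u|/|u|\lesssim |x|/t$. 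Via Cauchy--Schwarz $2|u||\nabla u|\leq \frac{|x|}{t}|u|^2+\frac{t}{|x|}|\nabla u|^2$, the remainder is $\int\frac{1}{Rt}\cdot\frac{t}{Rt}|\nabla u|^2\leq \frac{B^2}{R^2t}$. This still gives a logarithm, so the honest step I expect to be hardest is removing $\log t$. I would try to do so by inserting the weight $\eta(x/(R t^{\alpha}\cdot t))$ or by choosing the cutoff $\eta(x/(R(t+ \lambda)))$ with radius growing slightly superlinearly, $Rt\log(e+t)$, so that $\int \frac{dt}{t\log^2 t}<\infty$. Then, since $|x|\geq Rt$ contains $|x|\geq Rt\log$ only partially, I would interpolate the two scales using Lemma-type tightness at time $1$: $u(1)\in L^2$ is tight.
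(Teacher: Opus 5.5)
There is a genuine gap in the estimate for $u$, which is the heart of the lemma. Your treatment of $v$ is fine: tightness from Proposition~\ref{prop:free-mass-current} handles $t\ge T$, and compactness of $\{v(t)\}_{t\in[1,T]}$ in $L^2$ handles the remaining times. The reduction of $r$ to $u$ and $v$ is also fine. For $u$, however, the $\log t$ is never removed, and none of your three repairs closes:
\begin{enumerate}
\item The dyadic sliding scale is circular, as you note yourself.
\item The Cauchy--Schwarz splitting still leaves $\int B^2/(R^2 t)\,dt$, which is again logarithmic.
\item The superlinear radius $Rt\log^2(e+t)$ only controls the mass in $\{|x|\ge Rt\log^2(e+t)\}$, a strictly smaller set than $\{|x|\ge Rt\}$. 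Nothing behind ``interpolating the two scales'' recovers the annulus in between.
\end{enumerate}
So as written, the proposal does not establish \eqref{eq:uniform-exterior-cone-tightness} for $u$.

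The missing idea is to freeze the cutoff at the target time. Fix $R\ge1$ and $t\geq1$, and for $s\in[0,t]$ consider $F_{R,t}(s)=\int\eta(x/(Rt))^2|u(s,x)|^2\,dx$.
\begin{itemize}
\item The cutoff no longer depends on $s$, so there is no $\partial_s$ term and no monotonicity of $\eta$ is needed.
\item The local mass law $\partial_s|u|^2=-2\nabla\cdot\Im(\bar u\nabla u)$ gives $|F_{R,t}'(s)|\le \frac{C_\eta}{Rt}\|u(s)\|_{L^2}\|\nabla u(s)\|_{L^2}\le \frac{C_\eta B^2}{Rt}$ for every $s\le t$.
\item Integrating over an interval of length $t$ bounds the total change by $C_\eta B^2/R$, uniformly in $t$.
\item Since $t\ge1$, the initial term satisfies $F_{R,t}(0)\le\int_{|x|\ge Rt}|u(0)|^2\,dx\le \int_{|x|\ge R}|u(0)|^2\,dx$, which tends to $0$ as $R\to\infty$.
\end{itemize}
Your moving cutoff $\eta(x/(Rs))$ has flux of size $1/(Rs)$ at time $s$; its integral is harmonic, which is exactly where the logarithm comes from. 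The frozen cutoff trades this for the smaller flux $1/(Rt)$ at all times $s\le t$, and that is exactly balanced by the length $t$ of the interval. This is the paper's argument. The same computation with $B$ replaced by $\|u_+\|_{H^1}$ handles $v$ directly, so the appeal to Proposition~\ref{prop:free-mass-current} is not needed.
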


\begin{proof}
Fix $R\geq1$ and $t\geq1$. For $s \in [0,t]$, define
\begin{align*}
 F_{R,t}(s)=\int\eta(x/(Rt))^2|u(s,x)|^2\,dx.
\end{align*}
Differentiating in $s$ and using \eqref{eq:NLS}, we know that $\frac{d}{ds}F_{R,t}(s)$ is an integral involving $u$ and $\nabla u$, and we have \footnote{Rigorously speaking, we need to approximate by nice solutions first.}
\begin{align*}
 \big|\frac{d}{ds}F_{R,t}(s)\big| \leq\frac{C_\eta}{Rt}\|u(s)\|_{L^2}\|\nabla u(s)\|_{L^2} \leq\frac{C_\eta B^2}{Rt}.
\end{align*}
Integrating, we obtain
\begin{align}
\label{eq:exterior-cone-u-bound}
 F_{R,t}(t) \leq\int_{|x|\geq Rt}|u(0,x)|^2\,dx+\frac{C_\eta B^2}{R} \leq\int_{|x|\geq R}|u(0,x)|^2\,dx+\frac{C_\eta B^2}{R}.
\end{align}
The right-hand side of \eqref{eq:exterior-cone-u-bound} is independent of $t$ and tends to zero as $R \to \infty$.

Applying the same argument in the proof of \eqref{eq:exterior-cone-u-bound} to $v$ gives
\begin{align*}
 \sup_{t\geq1}\|\eta(x/(Rt))v(t)\|_{L^2}^2 \leq\int_{|x|\geq R}|u_+(x)|^2\,dx +\frac{C_\eta\|u_+\|_{H^1}^2}{R}.
\end{align*}
Finally, $\eta(x/(Rt))r=\eta(x/(Rt))u-\eta(x/(Rt))v$, so applying the triangle inequality proves \eqref{eq:uniform-exterior-cone-tightness}.
\end{proof}

\subsection{Compact attractor}

In this subsection, we recall some results from \cite{Tao2007Attractor}. 
For every $f$ for which the NLS flow is defined, denote the solution at time $t$ with initial data $f$ by $S(t)f$.
A set $K\subset H^1(\R^d)$ is precompact modulo at most $J$ translations if there is a compact set $K_0\subset H^1(\R^d)$ such that every element of $K$ is a sum of at most $J$ translates of elements of $K_0$.
Then we have the following decay estimate that is uniform over $K$.
\begin{lem}{\cite[Corollary~B.6]{Tao2007Attractor}}
\label{lem:tao-decay}
Let $K\subset H^1(\R^d)$ be precompact modulo at most $J$ translations. For every $2<q\leq2d/(d-2)$,
\begin{align}
\label{eq:tao-global-decay}
\lim_{t\to\pm\infty}\sup_{f\in K}\|e^{it\Delta}f\|_{L^q}&=0.
\end{align}
For every $R>0$,
\begin{align}
\label{eq:tao-local-decay}
\lim_{t\to\pm\infty}\sup_{f\in K}\sup_{x_0\in\R^d} \int_{|x-x_0|\leq R}(|e^{it\Delta}f(x)|^2+|\nabla e^{it\Delta}f(x)|^2)\,dx&=0.
\end{align}
\end{lem}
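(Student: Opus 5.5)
The statement to prove is Lemma~\ref{lem:tao-decay}, Tao's Corollary~B.6. The approach is to reduce both limits to a single fixed $H^1$ function via compactness, and then use dispersion for that one function. Write each $f\in K$ as $f=\sum_{j\le J'}\tau_{x_j}g_j$ with $J'\le J$ and $g_j\in K_0$, where $\tau_{x_j}$ denotes translation. Since $e^{it\Delta}$ commutes with translations, and the $L^q$ norm and the quantity $\sup_{x_0}\int_{|x-x_0|\le R}(|\cdot|^2+|\nabla\cdot|^2)$ are translation invariant, the triangle inequality gives
\begin{align*}
\sup_{f\in K}\|e^{it\Delta}f\|_{L^q}\le J\sup_{g\in K_0}\|e^{it\Delta}g\|_{L^q}.
\end{align*}
The local quantity obeys the same bound with $J^2$ in place of $J$ (by Cauchy--Schwarz on the square). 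So it suffices to treat the compact set $K_0\subset H^1$ with no translations at all.

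For compact $K_0$, fix $\epsilon>0$ and cover $K_0$ by finitely many $H^1$-balls of radius $\epsilon$ with centres $\phi_1,\dots,\phi_m$. We may take the centres to be Schwartz functions, since perturbing them slightly in $H^1$ costs nothing. We use that $e^{it\Delta}$ is unitary on $H^1$ and two bounds uniform in $t$: Sobolev, $\|h\|_{L^q}\le C\|h\|_{H^1}$ for $2<q\le 2d/(d-2)$, and the trivial bound of the local quantity by $\|h\|_{H^1}^2$. Together these give
\begin{align*}
\sup_{g\in K_0}\|e^{it\Delta}g\|_{L^q}\le C\epsilon+\max_i\|e^{it\Delta}\phi_i\|_{L^q}.
\end{align*}
The local quantity admits the analogous bound, with an additional cross term $O(\epsilon)$ times the uniform $H^1$ bound of $K_0$. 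Hence it is enough to prove both limits for a single Schwartz $\phi$.

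For a single Schwartz $\phi$, the case $q<2d/(d-2)$ follows from the interpolated dispersive estimate \eqref{eq:dispersive-interpolated}, which gives decay $|t|^{-d(1/2-1/q)}\|\phi\|_{L^{q'}}$, exactly as in Lemma~\ref{lem:free-lq-decay}. The endpoint $q=2d/(d-2)$ cannot be handled this way. Here I would apply the same estimate to $\nabla\phi$ with exponent $p=2d/(d-2)$ and then use Sobolev, or equivalently use $\|e^{it\Delta}\phi\|_{L^{2d/(d-2)}}\lesssim |t|^{-1}\|\phi\|_{\dot W^{1,(2d/(d-2))'}}$, which is finite for Schwartz $\phi$. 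An alternative is to interpolate between $L^{q_0}$ decay for some $q_0<2d/(d-2)$ and an $L^{q_1}$ bound with $q_1>2d/(d-2)$, using $\|e^{it\Delta}\phi\|_{L^{q_1}}\lesssim\|\phi\|_{H^s}$ for Schwartz $\phi$ and $s$ large. For the local quantity, Hölder on the ball $B(x_0,R)$ gives
\begin{align*}
\int_{B(x_0,R)}|e^{it\Delta}\phi|^2+|\nabla e^{it\Delta}\phi|^2\le C R^{d}\big(\|e^{it\Delta}\phi\|_{L^\infty}^2+\|e^{it\Delta}\nabla\phi\|_{L^\infty}^2\big).
\end{align*}
By \eqref{eq:dispersive} the right side is at most $CR^d|t|^{-d}(\|\phi\|_{L^1}^2+\|\nabla\phi\|_{L^1}^2)$, uniformly in $x_0$, so it tends to zero.

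Finally, letting $t\to\pm\infty$ and then $\epsilon\to0$ gives both \eqref{eq:tao-global-decay} and \eqref{eq:tao-local-decay}. The only step needing care is the endpoint $q=2d/(d-2)$, where Sobolev embedding is no longer compact and the plain $L^q$ dispersive estimate does not apply. I would handle it by the gradient-level dispersive bound on Schwartz approximants; the approximation step remains valid at the endpoint because Sobolev still bounds $L^{2d/(d-2)}$ by $H^1$ uniformly in $t$.
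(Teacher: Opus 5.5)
Your proof is correct, and it is more self-contained than the paper, which gives no argument for this lemma and simply imports it from \cite[Corollary~B.6]{Tao2007Attractor}. Your chain of reductions uses only ingredients the paper itself uses elsewhere. You use translation invariance of both quantities plus the triangle inequality (with Cauchy--Schwarz giving $J^2$ for the local quantity) to pass from $K$ to the compact set $K_0$. Then a finite $\epsilon$-net of Schwartz functions, together with the $t$-uniform bounds from $H^1$-unitarity and Sobolev, reduces to one function. Finally the dispersive estimate handles a single Schwartz function. The matching ingredients in the paper are the density argument of Lemma~\ref{lem:free-lq-decay}, the uniform local $L^2$ decay \eqref{eq:free-uniform-local-l2-decay} in Proposition~\ref{prop:strong-orthogonality}, and the finite-net step in Proposition~\ref{prop:residual-frequency-tightness}. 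Your route makes the lemma elementary and checkable in a few lines; the citation buys brevity and places the lemma within Tao's general framework.

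Your paragraph on the endpoint $q=2d/(d-2)$ should, however, be rewritten. Its premise is false. The estimate \eqref{eq:dispersive-interpolated} holds for every $2\leq p\leq\infty$, so for Schwartz $\phi$ it gives directly $\|e^{it\Delta}\phi\|_{L^{2d/(d-2)}}\leq C|t|^{-1}\|\phi\|_{L^{2d/(d+2)}}$. The endpoint therefore needs no special care. The non-compactness of the Sobolev embedding is irrelevant, since your approximation step only uses boundedness, as you note yourself.

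Moreover, the fix you say you would adopt does not work as written. Applying the dispersive estimate to $\nabla\phi$ at $p=2d/(d-2)$ and then Sobolev embedding controls $e^{it\Delta}\phi$ in $L^{2d/(d-4)}$ (for $d\geq5$), not in $L^{2d/(d-2)}$. Sobolev reaches $L^{2d/(d-2)}$ only from $\nabla e^{it\Delta}\phi\in L^2$, which does not decay. The inequality $\|e^{it\Delta}\phi\|_{L^{2d/(d-2)}}\lesssim|t|^{-1}\|\phi\|_{\dot W^{1,2d/(d+2)}}$ is also false. Under $(t,\phi)\mapsto(\lambda^{-2}t,\phi(\lambda\,\cdot))$ it becomes the same inequality with an extra factor $\lambda$ on the right, and letting $\lambda\to0$ would force $e^{it\Delta}\phi=0$.

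Your interpolation alternative is valid: decay in some $L^{q_0}$ with $q_0<2d/(d-2)$, interpolated against a uniform $L^{q_1}$ bound from $H^s$ with $s$ large. But the plain dispersive estimate, which you already use for $q<2d/(d-2)$, is simpler.
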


Then we come to an important black box for us, the profile decomposition of the solutions to \eqref{eq:NLS}.
\begin{thm}
\label{thm:compact-attractor}
Let $d\geq5$, and let $u\in C(\R_{\geq 0};H^1(\R^d))$ solve
\begin{align*}
 (i\partial_t+\Delta)u=-|u|^{4/(d-1)}u
\end{align*}
and satisfy \eqref{eq:uniform-H1}.
There exist an integer $J\geq1$, a unique radiation state $u_+\in H^1(\R^d)$, and a closed translation-invariant set $\calK$,
precompact modulo at most $J$ translations. The forward NLS solution operator $S(t)$ is defined on $\calK$ and leaves it forward invariant.
Writing $J\calK$ for the set of sums of $J$ elements of $\calK$, one has
\begin{align*}
 \operatorname{dist}_{H^1} \big(u(t)-e^{it\Delta}u_+,J\calK\big)\to 0 \qquad(t\to\infty).
\end{align*}
Moreover, for every sequence $t_n\to\infty$ there are a subsequence, profiles $w_1,\ldots,w_J\in\calK$,
and centers $x_{1,n},\ldots,x_{J,n}\in\R^d$ satisfying
\begin{align}
\label{eq:profile-decomposition}
 u(t_n)=e^{it_n\Delta}u_+ +\sum_{j=1}^Jw_j(\,\cdot-x_{j,n})+o_{H^1}(1),
\end{align}
and
\begin{align}
\label{eq:profile-separation}
 |x_{j,n}-x_{k,n}| \to\infty \qquad (j\neq k).
\end{align}
\end{thm}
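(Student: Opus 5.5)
This statement is quoted from \cite[Theorem~1.28]{Tao2007Attractor}, so the proof consists of checking that our setting meets Tao's hypotheses and then extracting \eqref{eq:profile-decomposition}--\eqref{eq:profile-separation} from the attractor statement. First, for $d\geq5$ the nonlinearity $|u|^{4/(d-1)}u$ is mass-supercritical and energy-subcritical and is $C^1$ in $u$. Together with the uniform bound \eqref{eq:uniform-H1}, these are exactly the hypotheses of Tao's theorem. The restriction $d\geq5$ enters only through his argument: the double Duhamel and dispersive bounds there need $|t|^{-d/2}$ decay to be integrable against the relevant weights. So I would cite \cite[Theorem~1.28]{Tao2007Attractor} for three things: the existence of $J$, the existence of the closed translation-invariant set $\calK$ that is precompact modulo $J$ translations and forward-invariant under $S(t)$, and the convergence $\operatorname{dist}_{H^1}(u(t)-e^{it\Delta}u_+,J\calK)\to0$.

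The uniqueness of $u_+$ is elementary. If $u(t)-e^{it\Delta}u_+^{(i)}$ is asymptotically close to $J\calK$ for $i=1,2$, then apply Lemma~\ref{lem:tao-decay} to $e^{-it\Delta}$ of elements of $J\calK$: these converge weakly to $0$. It follows that $e^{-it\Delta}u(t)\rightharpoonup u_+^{(i)}$ for both $i$, and hence $u_+^{(1)}=u_+^{(2)}$. This also identifies $u_+$ with the weak limit described in the introduction.

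For the sequential decomposition, fix $t_n\to\infty$ and choose $g_n\in J\calK$ with $\|r(t_n)-g_n\|_{H^1}\to0$. By precompactness modulo translations, each $g_n$ is a sum of at most $J$ translates $\sum_j k_{j,n}(\cdot-x_{j,n})$ with $k_{j,n}\in K_0$ compact. I would pad with $0\in\calK$ to get exactly $J$ summands; the zero solution is invariant, and closedness of $\calK$ under limits of vanishing profiles should give $0\in\calK$. Passing to a subsequence, $k_{j,n}\to w_j$ in $H^1$, and $w_j\in\calK$ because $\calK$ is closed. This yields \eqref{eq:profile-decomposition}.

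The main obstacle is the separation \eqref{eq:profile-separation}. After a further subsequence, each difference $x_{j,n}-x_{k,n}$ either diverges or converges to some $y$. In the convergent case, the two profiles should be merged into $w_j+w_k(\cdot-y)$. However, this is only a sum of two elements of $\calK$ and need not itself lie in $\calK$. My first attempt would be to use the precise form of Tao's result, whose attractor is built from asymptotically separated nonlinear components so that the separated version is part of the conclusion. Failing that, I would enlarge $\calK$ to the closure of sums of at most $J$ translates at bounded mutual distance. This set is still closed, translation-invariant, precompact modulo $J$ translations, and forward-invariant: near-coincident profiles interact through the flow as a single $H^1$ datum, to which $S(t)$ applies directly. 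Replacing $\calK$ by this set makes the merging legitimate, and the number of profiles only decreases, which I would then pad back up to $J$ with zeros.
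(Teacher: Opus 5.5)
The parts you simply cite are fine. Invoking \cite[Theorem~1.28]{Tao2007Attractor} after checking that $|u|^{4/(d-1)}u$ meets Tao's hypotheses is exactly what the paper does. Your uniqueness argument for $u_+$ also works: apply Lemma~\ref{lem:tao-decay} with $J^2$ in place of $J$, since $J\calK$ is precompact modulo $J^2$ translations. However, the paper takes the \emph{entire} statement from Tao, including \eqref{eq:profile-decomposition}--\eqref{eq:profile-separation}. The steps you try to derive by hand from the abstract properties of $\calK$ do not go through.

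\textbf{Extraction of the profiles.} Your pieces $k_{j,n}$ lie in the auxiliary compact set $K_0$ from the definition of ``precompact modulo $J$ translations''. Nothing places $K_0$ inside $\calK$, so closedness of $\calK$ says nothing about the limits $w_j$. The count is also off: an element of $J\calK$ is a sum of up to $J^2$ translates of elements of $K_0$, not $J$. Working instead with the $J$ summands $h_{j,n}\in\calK$ of $g_n$ does not help. Each $h_{j,n}$ is itself only precompact modulo $J$ translations and may split into pieces drifting apart, so no single recentering makes it converge. Likewise, $0\in\calK$ does not follow from closedness, since translates $\tau_x k$ never converge strongly to $0$; it has to come from the construction.

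\textbf{The enlargement fallback.} Enlarging $\calK$ to obtain \eqref{eq:profile-separation} fails at forward invariance. This is a property the paper actually uses: Proposition~\ref{prop:profile-threshold} needs every $w\in\calK$ to generate a forward-global, $H^1$-bounded orbit staying in $\calK$. The NLS flow of a superposition of near-coincident elements of $\calK$ is not the superposition of their flows, and it need not even exist. For instance, if $Q\in\calK$, then $Q+Q=2Q$ satisfies, by \eqref{eq:ground-state-identities},
\[
E(2Q)=2M(Q)\big(d-(d-1)2^{4/(d-1)}\big)<0 .
\]
Since $2Q$ also has finite variance, the solution blows up in finite time by the virial argument of \cite{Glassey-NLS-blowup}. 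Even when a merged datum gives a global solution, that solution may radiate and leave every set that is precompact modulo translations. Your enlarged set is also only precompact modulo $J^2$, not $J$, translations.

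So the separated decomposition with profiles in $\calK$ is not a formal consequence of $\operatorname{dist}_{H^1}(u(t)-e^{it\Delta}u_+,J\calK)\to0$ together with the listed properties. It reflects the nonlinear profile decomposition inside Tao's construction, in which each asymptotically separated component individually generates an orbit in the attractor. Your ``first attempt'' of quoting the precise form of Tao's result is the correct route and is the paper's. The padding, merging and enlargement steps should be dropped.
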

Verifying that our nonlinearity satisfies the conditions in \cite{Tao2007Attractor} is standard, and in fact this is already listed as an example in \cite{Tao2007Attractor} immediately after the conditions are stated.

Next we show that the soliton part can't have arbitrarily large frequency.
For $y\in\R^d$, we denote the translation action by $\tau_yf(x)=f(x-y)$.
\begin{prop}
\label{prop:residual-frequency-tightness}
Let $r(t)$ be as in \eqref{eq:v,r-def}.
If $P_{>R}$ is the Fourier projection onto $\{|\xi|>R\}$, then
\begin{align}
\label{eq:residual-frequency-tightness}
 \lim_{R\to\infty}\limsup_{t\to\infty} \|P_{>R}r(t)\|_{H^1}=0.
\end{align}
\end{prop}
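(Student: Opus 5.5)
The plan is to show that high frequencies of $r(t)$ come only from the radiation, which is fixed, and from the attractor part, which is compact modulo translations. I would argue by contradiction along a time sequence and use the profile decomposition of Theorem~\ref{thm:compact-attractor}. Suppose \eqref{eq:residual-frequency-tightness} fails. Then there are $\epsilon>0$, $R_n\to\infty$ and $t_n\to\infty$ with $\|P_{>R_n}r(t_n)\|_{H^1}\geq\epsilon$.

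Apply Theorem~\ref{thm:compact-attractor} to $t_n$. After passing to a subsequence,
\begin{align*}
 r(t_n)=\sum_{j=1}^J\tau_{x_{j,n}}w_j+o_{H^1}(1),
\end{align*}
with fixed profiles $w_j\in\calK\subset H^1$. The key point is that $P_{>R}$ commutes with translations, so $\|P_{>R}\tau_{x_{j,n}}w_j\|_{H^1}=\|P_{>R}w_j\|_{H^1}$. Since $P_{>R}$ is a contraction on $H^1$, we get
\begin{align*}
 \|P_{>R_n}r(t_n)\|_{H^1}\leq\sum_{j=1}^J\|P_{>R_n}w_j\|_{H^1}+o(1).
\end{align*}
For each fixed $w_j\in H^1$ the term $\|P_{>R}w_j\|_{H^1}$ tends to $0$ as $R\to\infty$, by dominated convergence on the Fourier side. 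With only $J$ profiles, the right-hand side tends to $0$, which contradicts the lower bound $\epsilon$.

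The only step needing care is justifying this subsequence argument. If one prefers to avoid the subsequence formulation, one can instead use the statement $\dist_{H^1}(r(t),J\calK)\to0$ directly. It reduces the claim to showing that $\sup_{f\in J\calK}\|P_{>R}f\|_{H^1}\to0$ as $R\to\infty$. Every $f\in J\calK$ is a sum of at most $J^2$ translates of elements of the compact set $K_0$. Translation invariance of $\|P_{>R}\cdot\|_{H^1}$ together with compactness of $K_0$ gives uniform high-frequency decay on $K_0$, via a finite $\epsilon$-net and the fact that $P_{>R}$ is a contraction. Summing at most $J^2$ terms then yields the uniform bound. Either route gives \eqref{eq:residual-frequency-tightness}. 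I expect no real obstacle here beyond making sure $J\calK$ really is a bounded-count sum of translates of a compact set, which is exactly the definition recalled above.
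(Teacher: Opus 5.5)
Your proposal is correct. Your second route is the paper's proof: approximate $r(t)$ to within $\eta(t)\to0$ in $H^1$ by a bounded number of translates of elements of a fixed compact $K\subset H^1$ (the paper writes $M$ for your explicit count $J^2$). Then obtain $\sup_{k\in K}\|P_{>R}k\|_{H^1}\to0$ from a finite $\epsilon$-net, dominated convergence and the contraction property, and use that $P_{>R}$ commutes with translations.

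Your first route is a valid sequential variant. Along a subsequence the profiles $w_j$ are fixed, so you only need high-frequency decay for finitely many fixed functions and can skip the $\epsilon$-net. The price is an argument by contradiction in place of the paper's direct bound $\|P_{>R}r(t)\|_{H^1}\le\eta(t)+M\sup_{k\in K}\|P_{>R}k\|_{H^1}$. The extraction of $R_n\to\infty$, $t_n\to\infty$ at the start is justified because $\limsup_{t\to\infty}\|P_{>R}r(t)\|_{H^1}$ is nonincreasing in $R$.
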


\begin{proof}
By Theorem~\ref{thm:compact-attractor}, the residual approaches a set contained in a finite sum of translates of one compact subset $K\subset H^1$.
Thus there are $M\in\N$ and $\eta(t)\to0$ such that, for all sufficiently large $t$,
\begin{align*}
 \big\|r(t)-\sum_{m=1}^M\tau_{x_m(t)}k_m(t)\big\|_{H^1} \leq\eta(t), \qquad k_m(t)\in K.
\end{align*}
Compactness of $K$ implies
\begin{align*}
 \sup_{k\in K}\|P_{>R}k\|_{H^1}\to 0.
\end{align*}
Indeed, take a finite $\epsilon$-net in $K$, use dominated convergence in Fourier space for the net points,
and use that $P_{>R}$ has norm at most one on $H^1$. Since Fourier projection commutes with translations,
\begin{align*}
 \|P_{>R}r(t)\|_{H^1} \leq\eta(t)+M\sup_{k\in K}\|P_{>R}k\|_{H^1}.
\end{align*}
Taking the late-time limsup and then $R\to\infty$ proves \eqref{eq:residual-frequency-tightness}.
\end{proof}

Next we estimate the current of the solution. Define:
\begin{align}
\label{eq:j-def}
 j(f):=\Im(\overline{f}\nabla f).
\end{align}
Asymptotically, the soliton part and the radiation part tend to decouple, and we have the following asymptotic orthogonality.
\begin{prop}
\label{prop:strong-orthogonality}
Let $v(t)$ and $r(t)$ be as in \eqref{eq:v,r-def}. Then
\begin{align}
\label{eq:residual-radiation-mixed-l1-decay}
 \int_{\R^d}\big( |r||v|+|r||\nabla v|+|v||\nabla r| \big)\,dx\to 0.
\end{align}
Consequently,
\begin{align}
\label{eq:mass-density-l1-decoupling}
 \big\||u|^2-|v|^2-|r|^2\big\|_{L^1}\to 0,
\end{align}
\begin{align}
\label{eq:current-density-l1-decoupling}
 \|j(u)-j(v)-j(r)\|_{L^1}\to 0.
\end{align}
\end{prop}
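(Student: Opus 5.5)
The plan is to prove \eqref{eq:residual-radiation-mixed-l1-decay} first and then read off the two decoupling statements algebraically. For \eqref{eq:mass-density-l1-decoupling}, write $u=v+r$, so that $|u|^2-|v|^2-|r|^2=2\Re(\overline{v}r)$, whose $L^1$ norm is at most $2\int|r||v|$. For \eqref{eq:current-density-l1-decoupling}, expand
\begin{align*}
 j(u)-j(v)-j(r)=\Im(\overline{v}\nabla r+\overline{r}\nabla v).
\end{align*}
The $L^1$ norm of the right-hand side is bounded by $\int(|v||\nabla r|+|r||\nabla v|)$. So everything reduces to \eqref{eq:residual-radiation-mixed-l1-decay}.

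To prove \eqref{eq:residual-radiation-mixed-l1-decay}, I would use Theorem~\ref{thm:compact-attractor} to replace $r(t)$ by a sum of at most $J$ translates of elements of a fixed compact set $K_0\subset H^1$. Precisely, for large $t$ we have
\begin{align*}
 \Big\|r(t)-\sum_{m\leq M}\tau_{x_m(t)}k_m(t)\Big\|_{H^1}\leq\eta(t)\to0, \qquad k_m(t)\in K_0,
\end{align*}
exactly as in the proof of Proposition~\ref{prop:residual-frequency-tightness}. The error term contributes at most $\eta(t)\cdot 2\sup_t\|v(t)\|_{H^1}\to0$ by Cauchy--Schwarz, since $\|v(t)\|_{H^1}=\|u_+\|_{H^1}$. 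It therefore suffices to show
\begin{align*}
 \sup_{k\in K_0,\,y\in\R^d}\int\big(|\tau_yk||v|+|\tau_yk||\nabla v|+|v||\nabla\tau_yk|\big)\,dx\to0 \quad\text{as } t\to\infty.
\end{align*}

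Next I would use compactness to reduce to spatially localized profiles. Given $\epsilon>0$, compactness of $K_0$ in $H^1$ gives $R$ such that $\sup_{k\in K_0}\|k\|_{H^1(|x|>R)}<\epsilon$, and this tail costs $O(\epsilon\|u_+\|_{H^1})$ by Cauchy--Schwarz. On the ball $|x-y|\leq R$, Cauchy--Schwarz gives
\begin{align*}
 \int_{|x-y|\leq R}\big(|\tau_yk||v|+|\tau_yk||\nabla v|+|v||\nabla\tau_yk|\big)\,dx
 \leq 2\|k\|_{H^1}\Big(\int_{|x-y|\leq R}(|v|^2+|\nabla v|^2)\,dx\Big)^{1/2}.
\end{align*}
Now apply the local decay \eqref{eq:tao-local-decay} of Lemma~\ref{lem:tao-decay} to the single function $u_+$ (a compact set, so precompact modulo one translation). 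This makes the last integral tend to $0$ uniformly in $y$ as $t\to\infty$. Letting $t\to\infty$ and then $\epsilon\to0$ finishes the argument.

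The main obstacle is the uniformity over the moving centers $x_m(t)$. A naive weak-convergence argument fails because the centers may drift, for instance linearly in $t$ as the soliton velocities would suggest. The uniform-in-$x_0$ local decay \eqref{eq:tao-local-decay} is exactly what resolves this. The other point to check is that the number $M\leq J$ of translates and the compact set $K_0$ are fixed independently of $t$, which Theorem~\ref{thm:compact-attractor} provides.
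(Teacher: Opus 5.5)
Your proposal is correct and follows essentially the paper's route: approximate $r(t)$ by finitely many translated profiles from Theorem~\ref{thm:compact-attractor}, split each profile into a ball and an $H^1$-small tail, and use uniform-in-center local decay of $e^{it\Delta}u_+$ and $\nabla e^{it\Delta}u_+$. The paper proves that decay directly from \eqref{eq:dispersive} and density, and argues with fixed profiles along arbitrary subsequences rather than uniformly over $K_0$, but these are bookkeeping differences; note also the harmless slip that an element of $J\calK$ is a sum of up to $J^2$ (not $J$) translates of elements of $K_0$, though only the finiteness of this number is used.
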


\begin{proof}
For $f\in L^2$ and fixed $R>0$,
\begin{align}
\label{eq:free-uniform-local-l2-decay}
 \sup_{y\in\R^d}\|e^{it\Delta}f\|_{L^2(B(y,R))}\to 0.
\end{align}
Approximate $f$ by a compactly supported smooth function, use \eqref{eq:dispersive} on the approximation,
and use $L^2$ unitarity for the error. Apply \eqref{eq:free-uniform-local-l2-decay} to $u_+$ and each component of $\nabla u_+$.

For fixed $w\in H^1$, arbitrary centers $x_n$, and $t_n\to\infty$, split $w(\cdot-x_n)$ into a fixed-radius ball and an $H^1$-small tail.
Cauchy--Schwarz and \eqref{eq:free-uniform-local-l2-decay} show
\begin{align*}
 \int|w(x-x_n)||v(t_n,x)|\,dx\to0,
\end{align*}
\begin{align}
\label{eq:translated-profile-free-orthogonality}
 \int|w(x-x_n)||\nabla v(t_n,x)|\,dx\to0, \qquad \int|\nabla w(x-x_n)||v(t_n,x)|\,dx\to0.
\end{align}
For an arbitrary sequence $t_n\to\infty$, apply Theorem~\ref{thm:compact-attractor}:
\begin{align}
\label{eq:r-tn-decomposition}
 r(t_n)=\sum_{j=1}^Jw_j(\cdot-x_{j,n})+e_n, \qquad e_n\to0\text{ in }H^1.
\end{align}
Thus \eqref{eq:residual-radiation-mixed-l1-decay} holds on every extracted subsequence and therefore along the full time variable. Also, we have
\begin{align*}
 \big(|u|^2-|v|^2\big)-|r|^2=2\Re(\overline{v}r), \quad  j(u)-j(v)-j(r) =\Im(\overline{v}\nabla r+\overline{r}\nabla v).
\end{align*}
Taking $L^1$ norms gives \eqref{eq:mass-density-l1-decoupling}--\eqref{eq:current-density-l1-decoupling}.
Pushforward does not increase total variation.
\end{proof}

We consider the mass, energy, momentum of the soliton part below.
By the decoupling property of the soliton part and the radiation part mentioned above, these quantities are obtained by subtracting the mass, energy, or momentum of the radiation part from that of the entire solution.
This decoupling property will be used many times throughout the paper.

\begin{prop}\label{prop:residual-limits}
Let $M$, $E$, and $P$ be as in \eqref{eq:def-mass-energy}. Define
\begin{align} \label{eq:m,e,p,R,residual}
\begin{split}
 m_\sharp&:=M(u)-M(u_+), \qquad e_\sharp:=E(u)-\frac{1}{2}\|\nabla u_+\|_{L^2}^2,\\
 p_\sharp&:=P(u)-P(u_+), \qquad R_\sharp:=m_\sharp e_\sharp-\frac{|p_\sharp|^2}{2}.
\end{split}
\end{align}
For $v(t)$ and $r(t)$ as in \eqref{eq:v,r-def},
\begin{align} \label{eq:residual-limits}
 M(r(t))\to m_\sharp,\qquad E(r(t))\to e_\sharp,\qquad P(r(t))\to p_\sharp.
\end{align}
\end{prop}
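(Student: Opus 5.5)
Since $r(t)=u(t)-v(t)$ with $v(t)=e^{it\Delta}u_+$, I will write each conserved quantity of $u$ as the corresponding quantity of $v$ plus that of $r$ plus cross terms. I then show that the cross terms vanish as $t\to\infty$, using Proposition~\ref{prop:strong-orthogonality} and Lemma~\ref{lem:free-lq-decay}. The quantities $M(u(t))$, $E(u(t))$, $P(u(t))$ are conserved along the NLS flow. For the free part, $M(v(t))=M(u_+)$, $P(v(t))=P(u_+)$ and $\|\nabla v(t)\|_{L^2}^2=\|\nabla u_+\|_{L^2}^2$ hold for all $t$, because $e^{it\Delta}$ is unitary and commutes with $\nabla$.

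\emph{Mass.} I will integrate \eqref{eq:mass-density-l1-decoupling}, which gives $M(u(t))-M(v(t))-M(r(t))\to0$. Hence $M(r(t))\to M(u)-M(u_+)=m_\sharp$.

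\emph{Momentum.} I will integrate \eqref{eq:current-density-l1-decoupling}. This gives $P(u(t))-P(v(t))-P(r(t))\to0$, since $P(f)=\int j(f)$. Hence $P(r(t))\to p_\sharp$.

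\emph{Energy.} There are two pieces.
\begin{itemize}
\item \emph{Kinetic part.} We have $|\nabla u|^2=|\nabla v|^2+|\nabla r|^2+2\Re(\overline{\nabla v}\cdot\nabla r)$. The cross term $\int \overline{\nabla v}\cdot\nabla r$ is not among the terms controlled in \eqref{eq:residual-radiation-mixed-l1-decay}, so I need one extra step. Along any sequence $t_n\to\infty$, pass to a subsequence and use the decomposition \eqref{eq:r-tn-decomposition}. Each term $\int \overline{\nabla v(t_n)}\cdot\nabla w_j(\cdot-x_{j,n})$ equals $\langle \nabla u_+, e^{-it_n\Delta}\tau_{x_{j,n}}\nabla w_j\rangle$. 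Splitting $\nabla w_j$ into a compactly supported piece plus a small $L^2$ tail, the local decay \eqref{eq:free-uniform-local-l2-decay} applied to $\nabla u_+$ makes this tend to zero, uniformly in the centers. The error $e_n$ contributes $O(\|e_n\|_{H^1})\to0$. Since every sequence has a subsequence along which the cross term vanishes, it vanishes along the full time variable.
\item \emph{Potential part.} Set $q=2(d+1)/(d-1)$. I will bound
\[
\big|\,|u|^q-|r|^q\,\big|\lesssim |v|\,(|v|^{q-1}+|r|^{q-1}).
\]
By H\"older, Sobolev embedding \eqref{eq:subcritical-sobolev-embedding} and the uniform $H^1$ bounds on $u$ and $v$ (hence on $r$), this gives
\[
\Big|\int |u|^q-\int|r|^q\Big|\lesssim \|v(t)\|_{L^q}\big(\|v\|_{L^q}^{q-1}+\|r\|_{L^q}^{q-1}\big).
\]
The right-hand side tends to zero by Lemma~\ref{lem:free-lq-decay}. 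In particular the potential energy of $v$ disappears in the limit, which is why $e_\sharp$ subtracts only the kinetic energy of $u_+$.
\end{itemize}
Combining the two pieces gives $E(r(t))\to E(u)-\tfrac12\|\nabla u_+\|_{L^2}^2=e_\sharp$.

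The main obstacle is the kinetic cross term $\int\overline{\nabla v}\cdot\nabla r$. It is the only step that is not a direct consequence of Proposition~\ref{prop:strong-orthogonality}, and it requires redoing the subsequence argument with gradients on both factors. Everything else is H\"older's inequality plus the dispersive decay already recorded.
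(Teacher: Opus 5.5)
Your proof is correct and follows the paper's argument essentially step for step. Mass and momentum come directly from the $L^1$ decouplings in Proposition~\ref{prop:strong-orthogonality}. The kinetic cross term $\langle\nabla v(t),\nabla r(t)\rangle$ is handled by rerunning the subsequence and profile-decomposition argument with $\nabla u_+$ and $\nabla w_j$, using Cauchy--Schwarz for the remainder $e_n$. The potential term is handled by the pointwise power inequality together with Lemma~\ref{lem:free-lq-decay}. You also correctly single out the kinetic cross term as the one step not literally contained in \eqref{eq:residual-radiation-mixed-l1-decay}, which is exactly where the paper adds its extra remark.
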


\begin{proof}
The limits of $M(r(t))$ and $P(r(t))$ follow directly from Proposition~\ref{prop:strong-orthogonality}.
It remains to prove the energy limit.
The argument used in the proof of Proposition~\ref{prop:strong-orthogonality} also gives
\begin{align*}
 \la\nabla v(t),\nabla r(t)\ra\to0.
\end{align*}
More precisely, along an arbitrary sequence $t_n\to\infty$, take the profile decomposition \eqref{eq:r-tn-decomposition}.
The argument proving \eqref{eq:translated-profile-free-orthogonality}, now applied to the components of $\nabla u_+$ and $\nabla w_j$, makes the pairing of $\nabla v(t_n)$ with every translated profile tend to zero, while Cauchy--Schwarz handles the $H^1$-small remainder.
Since the original sequence was arbitrary, the displayed limit holds for $t\to\infty$.

We have $\|v(t)\|_{L^{2(d+1)/(d-1)}} \to 0$ by Lemma~\ref{lem:free-lq-decay}.
The pointwise power inequality and the uniform $H^1$ bounds give
\begin{align*}
 \big|\|u(t)\|_{L^{2(d+1)/(d-1)}}^{2(d+1)/(d-1)}-\|r(t)\|_{L^{2(d+1)/(d-1)}}^{2(d+1)/(d-1)}\big|\lesssim \|v(t)\|_{L^{2(d+1)/(d-1)}}\big(\|u(t)\|_{L^{2(d+1)/(d-1)}}^{(d+3)/(d-1)}+\|r(t)\|_{L^{2(d+1)/(d-1)}}^{(d+3)/(d-1)}\big)\to0.
\end{align*}
Using $u=v+r$ and the definition of the energy, we therefore have
\begin{align*}
 E(u)-\frac{1}{2}\|\nabla v(t)\|_{L^2}^2-E(r(t))= &\Re\la\nabla v(t),\nabla r(t)\ra\\
 &-\frac{d-1}{2(d+1)}\big(\|u(t)\|_{L^{2(d+1)/(d-1)}}^{2(d+1)/(d-1)}-\|r(t)\|_{L^{2(d+1)/(d-1)}}^{2(d+1)/(d-1)}\big)\to0.
\end{align*}
Finally, $\|\nabla v(t)\|_{L^2}=\|\nabla u_+\|_{L^2}$ by unitarity of the free flow.
This proves the energy limit and hence \eqref{eq:residual-limits}.
\end{proof}

\subsection{Finiteness of non-scattering directions}
\label{subsec:finiteness_of_bad_directions}

Now we prove that there can only be finitely many directions on which the localized product of mass and energy does not tend to zero in a narrow cone near these directions. To quantify this, for $v\in\R^d$, we define
\begin{align}
\label{eq:directional-mass-energy}
\mathcal{A}(v) :=\liminf_{\rho\downarrow0}\liminf_{t\to\infty} M\Big(\chi\Big(\frac{x/t-v}{\rho}\Big)u(t)\Big) E\Big(\chi\Big(\frac{x/t-v}{\rho}\Big)u(t)\Big).
\end{align}
Then our result is the following, which is a crucial step in proving Theorem~\ref{thm:finite-bad-directions}.
\begin{prop} \label{prop:finite-bad-directions}
Let $u$ be a global $H^1$ solution to \eqref{eq:NLS} satisfying \eqref{eq:uniform-H1}.
Fix a bump function $\chi\in C_c^\infty(\R^d)$, with $0\leq\chi\leq1$, supported in $B(0,1)$, equal to one near the origin.
For every $c_0>0$, there are only finitely many $v\in\R^d$ such that $\mathcal{A}(v)\geq c_0$. More precisely,
\begin{align}
\label{eq:N-est}
\#\{v\in\R^d:\mathcal{A}(v)\geq c_0\} \leq\frac{\|u(0)\|_{L^2}\sup_{t\geq 0}\|\nabla u(t)\|_{L^2}} {\sqrt{2c_0}}.
\end{align}
\end{prop}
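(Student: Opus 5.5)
Suppose $v_1,\ldots,v_N$ are distinct points with $\mathcal{A}(v_k)\geq c_0$; we show $N\sqrt{2c_0}\leq \|u(0)\|_{L^2}\sup_t\|\nabla u(t)\|_{L^2}$. The idea is a pigeonhole on mass and kinetic energy. Once the bumps are disjoint, their masses and kinetic energies add up to at most the global ones, while each bump carries mass times energy at least about $c_0$.

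First choose $\rho_0>0$ so small that the balls $B(v_k,\rho_0)$ are pairwise disjoint. For $\rho<\rho_0$ set $\chi_k(y)=\chi((y-v_k)/\rho)$, which are supported in the disjoint balls $B(v_k,\rho)$. Since $\mathcal{A}(v_k)\geq c_0$, for any $\epsilon>0$ we may fix one $\rho<\rho_0$ such that
\[
\liminf_{t\to\infty}M(\chi_k(x/t)u(t))E(\chi_k(x/t)u(t))\geq c_0-\epsilon\quad\text{for all }k.
\]
This uses the outer liminf in $\rho$: all sufficiently small $\rho$ work for each $k$, so a common $\rho$ exists. Then pick $t$ large so that each product is $\geq c_0-2\epsilon$.

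Next, bound each product by mass times kinetic energy. With $f_k=\chi_k(x/t)u(t)$, the potential term in $E$ is nonpositive, so $E(f_k)\leq\frac12\|\nabla f_k\|_{L^2}^2$. Hence
\[
\|f_k\|_{L^2}\|\nabla f_k\|_{L^2}\geq\sqrt{2(c_0-2\epsilon)}.
\]
Positivity of $E(f_k)$ is automatic here because the product is at least $c_0-2\epsilon>0$ and the mass is nonnegative.

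Now sum and apply Cauchy--Schwarz:
\[
\sum_k\|f_k\|_{L^2}\|\nabla f_k\|_{L^2}\leq\Big(\sum_k\|f_k\|_{L^2}^2\Big)^{1/2}\Big(\sum_k\|\nabla f_k\|_{L^2}^2\Big)^{1/2}.
\]
Disjointness of supports and $0\leq\chi\leq1$ give $\sum_k\|f_k\|_{L^2}^2\leq M(u(t))=\|u(0)\|_{L^2}^2$, using mass conservation.

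The main obstacle is the kinetic term. We have $\nabla f_k=\chi_k(x/t)\nabla u+t^{-1}(\nabla\chi_k)(x/t)u$. The second term is $O(\rho^{-1}t^{-1}\|u\|_{L^2})$, which vanishes as $t\to\infty$ for the fixed $\rho$. The first terms have disjoint supports, so
\[
\sum_k\|\nabla f_k\|_{L^2}^2\leq\|\nabla u(t)\|_{L^2}^2+o(1)\leq\sup_s\|\nabla u(s)\|_{L^2}^2+o(1).
\]
Combining, $N\sqrt{2(c_0-2\epsilon)}\leq\|u(0)\|_{L^2}\sup_t\|\nabla u(t)\|_{L^2}+o(1)$. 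Letting $t\to\infty$ and then $\epsilon\to0$ gives \eqref{eq:N-est} for any finite collection. In particular the set $\{\mathcal{A}\geq c_0\}$ is finite with the stated bound.
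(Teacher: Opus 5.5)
Your proof is correct and follows essentially the same route as the paper. You choose disjoint small cones on which the localized mass--energy product is eventually at least $c_0-2\epsilon$, bound the energy by the kinetic energy, and combine Cauchy--Schwarz with mass conservation and the disjoint-support bound on the gradients, where the $O(\rho^{-1}t^{-1})$ cutoff-derivative error vanishes as $t\to\infty$; the only cosmetic difference is that you use one common radius $\rho$ where the paper allows separate $\rho_j$.
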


\begin{proof}
Take any $N$ distinct velocities $v_1,\ldots,v_N$ satisfying
\begin{align}
\label{eq:positive-directions}
\mathcal{A}(v_j)\geq c_0,\qquad 1\leq j\leq N.
\end{align}
We prove a bound for $N$ independent of the chosen velocities.
Fix $0<\epsilon<c_0/2$. By \eqref{eq:directional-mass-energy} and \eqref{eq:positive-directions},
for each $j$, whenever $\rho>0$ is sufficiently small,
\begin{align}
\label{eq:ME-lowerbound-1}
\liminf_{t\to\infty}M\Big(\chi\Big(\frac{x/t-v_j}{\rho}\Big)u(t)\Big)E\Big(\chi\Big(\frac{x/t-v_j}{\rho}\Big)u(t)\Big)\geq c_0-\epsilon.
\end{align}
Since $v_1,\ldots,v_N$ are distinct, we can choose sufficiently small $\rho_1,\ldots,\rho_N>0$ so that the balls $B(v_j,\rho_j)$,
$1\leq j\leq N$, are pairwise disjoint and
\begin{align}
\label{eq:chosen-cone-lower-bound}
\liminf_{t\to\infty}M\Big(\chi\Big(\frac{x/t-v_j}{\rho_j}\Big)u(t)\Big)E\Big(\chi\Big(\frac{x/t-v_j}{\rho_j}\Big)u(t)\Big)\geq c_0-\epsilon
\end{align}
for every $1\leq j\leq N$.

Since there are only finitely many $j$, there exists $T>0$ such that, for every $t\geq T$ and every $1\leq j\leq N$,
\begin{align}
\label{eq:eventual-cone-lower-bound}
M\Big(\chi\Big(\frac{x/t-v_j}{\rho_j}\Big)u(t)\Big)E\Big(\chi\Big(\frac{x/t-v_j}{\rho_j}\Big)u(t)\Big)\geq c_0-2\epsilon.
\end{align}
For the energy defined in \eqref{eq:def-mass-energy}, we have $E(\frac{x/t-v_j}{\rho_j}\Big)u(t))\leq\frac{1}{2}\|\nabla \|\Big(\frac{x/t-v_j}{\rho_j}\Big)u(t) \|_{L^2}^2$.
So 
\begin{align}
\label{eq:localized-mass-gradient-lower-bound}
\big\|\chi\Big(\frac{x/t-v_j}{\rho_j}\Big)u(t)\big\|_{L^2}\big\|\nabla\Big(\chi\Big(\frac{x/t-v_j}{\rho_j}\Big)u(t)\Big)\big\|_{L^2}\geq\sqrt{2(c_0-2\epsilon)}.
\end{align}
Summing \eqref{eq:localized-mass-gradient-lower-bound} over $j$ and applying the Cauchy--Schwarz inequality gives
\begin{align}
\label{eq:CS-chi-v}
\begin{split}
N\sqrt{2(c_0-2\epsilon)} &\leq\sum_{j=1}^N\big\|\chi\Big(\frac{x/t-v_j}{\rho_j}\Big)u(t)\big\|_{L^2}\big\|\nabla\Big(\chi\Big(\frac{x/t-v_j}{\rho_j}\Big)u(t)\Big)\big\|_{L^2}
\\ & \leq \Big(\sum_{j=1}^N\big\|\chi\Big(\frac{x/t-v_j}{\rho_j}\Big)u(t)
\big \|_{L^2}^2\Big)^{1/2}\Big(\sum_{j=1}^N\big\|\nabla\Big(\chi\Big(\frac{x/t-v_j}{\rho_j}\Big)u(t)\Big)\big\|_{L^2}^2\Big)^{1/2}.
\end{split}
\end{align}
By construction, the supports of the cutoffs are pairwise disjoint for every $t>0$, and $0\leq\chi\leq1$. Therefore, by conservation of mass,
\begin{align}
\label{eq:localized-mass-packing}
\sum_{j=1}^N\big\|\chi\Big(\frac{x/t-v_j}{\rho_j}\Big)u(t)\big\|_{L^2}^2 &=\int_{\R^d}\sum_{j=1}^N\chi\Big(\frac{x/t-v_j}{\rho_j}\Big)^2|u(t,x)|^2\,dx \leq\|u(t)\|_{L^2}^2=\|u(0)\|_{L^2}^2.
\end{align}
Moreover,
\begin{align}
\label{eq:localized-gradient-expansion}
\nabla\Big(\chi\Big(\frac{x/t-v_j}{\rho_j}\Big)u(t)\Big) =\chi\Big(\frac{x/t-v_j}{\rho_j}\Big)\nabla u(t)+\frac{1}{t\rho_j}(\nabla\chi)\Big(\frac{x/t-v_j}{\rho_j}\Big)u(t).
\end{align}
Using again the pairwise disjointness of the supports and the triangle inequality in the direct sum over $j$,
we obtain
\begin{align}
\label{eq:est-chi-u-1}
\Big(\sum_{j=1}^N\big\|\nabla\Big(\chi\Big(\frac{x/t-v_j}{\rho_j}\Big)u(t)\Big)\big\|_{L^2}^2\Big)^{1/2} \leq\|\nabla u(t)\|_{L^2}+\frac{\|\nabla\chi\|_{L^\infty}}{t}\max_{1\leq j\leq N}\frac{1}{\rho_j}\|u(0)\|_{L^2}.
\end{align}
Combining \eqref{eq:CS-chi-v}, \eqref{eq:localized-mass-packing}, and \eqref{eq:est-chi-u-1},
for every $t\geq T$ we have
\begin{align}
\label{eq:cardinality-bound-with-error}
N\sqrt{2(c_0-2\epsilon)} \leq\|u(0)\|_{L^2} \Big(\sup_{s\geq 0}\|\nabla u(s)\|_{L^2} +
\frac{\| \nabla\chi\|_{L^\infty}}{t}\max_{1\leq j\leq N}\frac{1}{\rho_j}\|u(0)\|_{L^2}\Big).
\end{align}
Letting $t\to\infty$ in \eqref{eq:cardinality-bound-with-error} gives
\begin{align}
\label{eq:N-est-1}
N\sqrt{2(c_0-2\epsilon)}\leq\|u(0)\|_{L^2}\sup_{t\geq 0}\|\nabla u(t)\|_{L^2}.
\end{align}
Finally, letting $\epsilon \to 0$ in \eqref{eq:N-est-1}, we obtain \eqref{eq:N-est}.
\end{proof}

\section{Preliminaries about the soliton and the upgrading mechanism}
\label{sec:thresholds}

We recall some basic facts about the soliton in this section and prove some preliminary results about the upgrading mechanism that improves an estimate along a time sequence to an estimate that is uniform for all large times.

\begin{prop}[Ground-state identities and threshold trapping]
\label{prop:ground-state-property}
Let $d\geq5$, let $M$ and $E$ be defined by \eqref{eq:def-mass-energy}, and let $Q$ be the ground state in \eqref{eq:ground-state}. Then
\begin{align}
\label{eq:ground-state-identities}
\begin{aligned}
\|\nabla Q\|_{L^2}^2+\|Q\|_{L^2}^2 &=\|Q\|_{L^{2(d+1)/(d-1)}}^{2(d+1)/(d-1)} =(d+1)M(Q),\\
\|\nabla Q\|_{L^2}^2&=dM(Q),\qquad E(Q)=\frac{1}{2}M(Q)=\frac{1}{2d}\|\nabla Q\|_{L^2}^2,\\
M(Q)E(Q) &=\frac{(\|Q\|_{L^2}\|\nabla Q\|_{L^2})^2}{2d}>0.
\end{aligned}
\end{align}
For every $f\in H^1(\R^d)$, we have the sharp Gagliardo--Nirenberg inequality
\begin{align}
\label{eq:sharp-gn}
\|f\|_{L^{2(d+1)/(d-1)}}^{2(d+1)/(d-1)} &\leq C_{\mathrm{GN}}\|f\|_{L^2}^{2/(d-1)}\|\nabla f\|_{L^2}^{2d/(d-1)},
\end{align}
where
\begin{align}
\label{eq:sharp-gn-constant}
C_{\mathrm{GN}} &=\frac{d+1}{d}(\|Q\|_{L^2}\|\nabla Q\|_{L^2})^{-2/(d-1)}.
\end{align}
Set
\begin{align}
\label{eq:threshold-ratio}
s(f)&=\frac{\|f\|_{L^2}\|\nabla f\|_{L^2}}{\|Q\|_{L^2}\|\nabla Q\|_{L^2}}.
\end{align}
Then, without assuming a mass-energy bound,
\begin{align}
\label{eq:threshold-energy}
E(f)& \geq \Big(\frac{1}{2}-\frac{d-1}{2d}s(f)^{2/(d-1)}\Big) \|\nabla f\|_{L^2}^2.
\end{align}
In particular, $E(f)\geq 0$ whenever $s(f)\leq1$. If, in addition, for some $\delta\in(0,1)$,
\begin{align}
\label{eq:threshold-assumptions}
M(f)E(f)&\leq(1-\delta)M(Q)E(Q),\qquad s(f)<1,
\end{align}
then
\begin{align}
\label{eq:threshold-gap}
\|f\|_{L^2}\|\nabla f\|_{L^2} &\leq\sqrt{1-\delta}\|Q\|_{L^2}\|\nabla Q\|_{L^2}.
\end{align}
\end{prop}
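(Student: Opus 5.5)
The plan is to derive the identities from the two Pohozaev-type relations for \eqref{eq:ground-state}, and then obtain the energy bounds from the Gagliardo--Nirenberg inequality \eqref{eq:sharp-gn} with the constant \eqref{eq:sharp-gn-constant}, which we take as known (Weinstein \cite{Weinstein1983Sharp}); $Q$ is the extremizer.

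Write $p=2(d+1)/(d-1)$. Multiplying \eqref{eq:ground-state} by $Q$ and integrating gives $\|\nabla Q\|_2^2+\|Q\|_2^2=\|Q\|_p^p$. Multiplying by $x\cdot\nabla Q$ (Pohozaev) gives
\[
\frac{d-2}{2}\|\nabla Q\|_2^2+\frac d2\|Q\|_2^2=\frac dp\|Q\|_p^p .
\]
Since $d/p=d(d-1)/(2(d+1))$, eliminating $\|Q\|_p^p$ yields $\|\nabla Q\|_2^2=dM(Q)$ and $\|Q\|_p^p=(d+1)M(Q)$. Then
\[
E(Q)=\tfrac d2 M(Q)-\tfrac{d-1}{2}M(Q)=\tfrac12 M(Q),
\]
and so $M(Q)E(Q)=\tfrac12M(Q)^2=(\|Q\|_2\|\nabla Q\|_2)^2/(2d)$, using $M(Q)^2=\|Q\|_2^2\|\nabla Q\|_2^2/d$. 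As a consistency check, equality in \eqref{eq:sharp-gn} at $f=Q$ reads
\[
(d+1)M(Q)=C_{\rm GN}\,M(Q)^{1/(d-1)}(dM(Q))^{d/(d-1)} .
\]
This holds with $C_{\rm GN}$ as in \eqref{eq:sharp-gn-constant}, since $(\|Q\|_2\|\nabla Q\|_2)^{2/(d-1)}=(\sqrt d\,M(Q))^{2/(d-1)}$.

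For \eqref{eq:threshold-energy}, write
\[
\|f\|_2^{2/(d-1)}\|\nabla f\|_2^{2d/(d-1)}=(\|f\|_2\|\nabla f\|_2)^{2/(d-1)}\|\nabla f\|_2^2 .
\]
Inserting this into \eqref{eq:sharp-gn} gives
\[
\tfrac{d-1}{2(d+1)}\|f\|_p^p\le \tfrac{d-1}{2d}\,s(f)^{2/(d-1)}\|\nabla f\|_2^2 ,
\]
which is exactly \eqref{eq:threshold-energy}; nonnegativity of $E(f)$ for $s(f)\le1$ follows at once.

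For \eqref{eq:threshold-gap}, multiply \eqref{eq:threshold-energy} by $M(f)$ and normalize by $M(Q)E(Q)=(\|Q\|_2\|\nabla Q\|_2)^2/(2d)$. This gives
\[
\frac{M(f)E(f)}{M(Q)E(Q)}\ge g(s):=d\,s^2-(d-1)s^{2d/(d-1)},\qquad s=s(f).
\]
The function $g$ is increasing on $[0,1]$ with $g(1)=1$, because $g'(s)=2d\,s(1-s^{2/(d-1)})\ge0$. Hence $g(s)\le1-\delta$.

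The one remaining step, which is the only mildly delicate one, is to show $g(s)\le 1-\delta\Rightarrow s^2\le1-\delta$, i.e. $g(s)\ge s^2$ on $[0,1]$. Indeed,
\[
g(s)-s^2=(d-1)s^2\bigl(1-s^{2/(d-1)}\bigr)\ge0 .
\]
Therefore $s^2\le g(s)\le1-\delta$, i.e. $\|f\|_2\|\nabla f\|_2\le\sqrt{1-\delta}\,\|Q\|_2\|\nabla Q\|_2$. The main point needing care is the justification of the Pohozaev identity (decay of $Q$), which is standard.
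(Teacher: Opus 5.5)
Your proposal is correct and follows essentially the same route as the paper. The Nehari and Pohozaev identities give the ground-state relations; $C_{\mathrm{GN}}$ is read off from equality at $Q$ in Weinstein's sharp inequality (the paper also cites Kwong's uniqueness theorem to identify Weinstein's extremizer with this $Q$, which you should cite too); substitution gives \eqref{eq:threshold-energy}; and the gap follows from $g(s)\geq s^2$ on $[0,1]$, with the monotonicity of $g$ unnecessary since $g(s(f))\leq 1-\delta$ follows directly from \eqref{eq:threshold-assumptions}.
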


\begin{proof}
Multiplying \eqref{eq:ground-state} by $Q$ and integrating by parts gives
\begin{align}
\label{eq:ground-state-testing}
\|\nabla Q\|_{L^2}^2+\|Q\|_{L^2}^2 &=\|Q\|_{L^{2(d+1)/(d-1)}}^{2(d+1)/(d-1)}.
\end{align}
The Pohozaev identity (in the form given in \cite{Berestycki-Lions-83}) gives
\begin{align}
\label{eq:ground-state-pohozaev}
\frac{d-2}{2}\|\nabla Q\|_{L^2}^2+\frac{d}{2}\|Q\|_{L^2}^2 &=\frac{d(d-1)}{2(d+1)}\|Q\|_{L^{2(d+1)/(d-1)}}^{2(d+1)/(d-1)}.
\end{align}
Combining \eqref{eq:ground-state-testing} and \eqref{eq:ground-state-pohozaev} gives
\begin{align*}
\|\nabla Q\|_{L^2}^2=d\|Q\|_{L^2}^2, \qquad \|Q\|_{L^{2(d+1)/(d-1)}}^{2(d+1)/(d-1)}=(d+1)\|Q\|_{L^2}^2,
\end{align*}
hence $E(Q)=\frac{1}{2}\|Q\|_{L^2}^2$, and all the formulas in \eqref{eq:ground-state-identities} follow.

Now we turn to the remaining claims.
Weinstein's result \cite{Weinstein1983Sharp} shows that the sharp constant in \eqref{eq:sharp-gn} is attained by a positive ground state, which solves \eqref{eq:ground-state}; Kwong's uniqueness theorem then identifies it with $Q$ \cite{Kwong1989Uniqueness}.
Hence equality holds in \eqref{eq:sharp-gn} for $Q$. Combining this with \eqref{eq:ground-state-identities} yields \eqref{eq:sharp-gn-constant}.
Substituting this constant into \eqref{eq:sharp-gn} gives \eqref{eq:threshold-energy}.
The coefficient on the right-hand side of \eqref{eq:threshold-energy} is nonnegative when $s(f)\leq1$, proving the $E(f)\geq 0$ part.

Assume now \eqref{eq:threshold-assumptions}. Multiplying \eqref{eq:threshold-energy} by $M(f)$ and combining with \eqref{eq:ground-state-identities} gives
\begin{align}
\label{eq:threshold-function}
\frac{M(f)E(f)}{M(Q)E(Q)} &\geq ds(f)^2-(d-1)s(f)^{2d/(d-1)} =s(f)^2(d-(d-1)s(f)^{2/(d-1)}) \geq s(f)^2,
\end{align}
where the last inequality used $s(f)<1$. Combining this with \eqref{eq:threshold-assumptions} gives $s(f)\leq\sqrt{1-\delta}$,
which is \eqref{eq:threshold-gap}.
\end{proof}

As one would expect, if a nonlinear solution has a compact trajectory (say in $H^1$), then it should not scatter and it should be `above threshold'. This is indeed the case as we shall show now.
On the other hand, the bound $M(w)E(w)\geq\Theta$ is not Galilean invariant. 
For fixed $M(w)$, the quantity $E(w)$ is minimized in the `rest frame', that is the one in which the momentum is zero.
Then being above threshold in this frame gives a stronger inequality in terms of our original $w$.
It is this form below that enters our proof in Section~\ref{sec:atomic}.
\begin{prop}
\label{prop:profile-threshold}
Let $M$, $E$, $P$, and $\Theta$ be as in \eqref{eq:def-mass-energy} and \eqref{eq:Theta-def}. Then every $w\in\calK\setminus\{0\}$ satisfies
\begin{align}
\label{eq:rest-frame-profile-threshold}
 E(w)-\frac{|P(w)|^2}{2M(w)}>0, \qquad M(w)E(w)-\frac{|P(w)|^2}{2}\geq\Theta.
\end{align}
\end{prop}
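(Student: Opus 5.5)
The plan is to argue by contradiction. After a Galilean boost to the rest frame, we show that a nonzero $w\in\calK$ lying below the threshold would have to scatter. Scattering is then incompatible with the uniform dispersive decay over $\calK$ from Lemma~\ref{lem:tao-decay}. Fix $w\in\calK\setminus\{0\}$ and set $\xi:=-P(w)/M(w)$ and $w_\xi:=e^{ix\cdot\xi}w$. A direct computation gives
\begin{align*}
 M(w_\xi)=M(w),\qquad P(w_\xi)=0,\qquad E(w_\xi)=E(w)-\frac{|P(w)|^2}{2M(w)}.
\end{align*}
So \eqref{eq:rest-frame-profile-threshold} is equivalent to $E(w_\xi)>0$ and $M(w_\xi)E(w_\xi)\geq\Theta$. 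The first claim follows from the second: if $E(w_\xi)\leq0$, then $M(w_\xi)E(w_\xi)\leq0<\Theta$. It therefore suffices to rule out $M(w_\xi)E(w_\xi)<\Theta$.

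Next I transfer compactness to the boosted solution. By forward invariance, $S(t)w\in\calK$ for all $t\geq0$. Hence each $S(t)w$ is a sum of at most $J$ translates of elements of a compact $K_0\subset H^1$, and $\sup_t\|S(t)w\|_{H^1}<\infty$. The Galilean transform gives
\begin{align*}
 S(t)w_\xi(x)=e^{i(x\cdot\xi-t|\xi|^2)}\,(S(t)w)(x-2\xi t).
\end{align*}
Multiplication by $e^{ix\cdot\xi}$ is a bounded operator on $H^1$ that commutes with translations up to a unimodular constant. Spatial translations and constant phases are harmless, so we may replace $K_0$ by the compact set $\{e^{i\theta}e^{ix\cdot\xi}k:k\in K_0,\ \theta\in[0,2\pi]\}$. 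Then the whole forward orbit $K':=\{S(t)w_\xi:t\geq0\}$ is precompact modulo at most $J$ translations and uniformly bounded in $H^1$. In particular the boosted solution is forward-global and bounded.

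Now suppose $M(w_\xi)E(w_\xi)<\Theta$. By \eqref{eq:threshold-function}, $s(w_\xi)=1$ would force $M(w_\xi)E(w_\xi)\geq\Theta$, so either $s(w_\xi)<1$ or $s(w_\xi)>1$. In the case $s(w_\xi)>1$, the below-threshold blow-up/grow-up alternative gives one of two outcomes: finite-time blow-up, or $\|\nabla S(t)w_\xi\|_{L^2}$ unbounded as $t\to\infty$. This is the part of Theorem~\ref{thm:blow-thre-sc} (and the standard virial/Holmer--Roudenko-type argument) beyond the scattering statement. Both outcomes contradict the uniform $H^1$ bound just established. In the case $s(w_\xi)<1$, Theorem~\ref{thm:blow-thre-sc} (Dodson--Murphy) gives scattering: there is $w_+\in H^1$ with $e^{-it\Delta}S(t)w_\xi\to w_+$ in $H^1$. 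Apply \eqref{eq:tao-global-decay} to the family $K'$ at time $-t$ with $q=2(d+1)/(d-1)$; it yields
\begin{align*}
 \|e^{-it\Delta}S(t)w_\xi\|_{L^q}\leq\sup_{f\in K'}\|e^{-it\Delta}f\|_{L^q}\to0.
\end{align*}
By Sobolev embedding the left side also tends to $\|w_+\|_{L^q}$, so $w_+=0$. Then $S(t)w_\xi\to0$ in $H^1$, contradicting mass conservation $M(S(t)w_\xi)=M(w)>0$.

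The main obstacle is the case $s(w_\xi)>1$. Finite variance is not available, so one must invoke the correct infinite-variance dichotomy: finite-time blow-up, or grow-up along a time sequence. The plan is to cite it as a known companion of Theorem~\ref{thm:blow-thre-sc}. A minor point is that Lemma~\ref{lem:tao-decay} is applied to the orbit set $K'$ rather than to $\calK$ itself. This is legitimate, since that lemma only uses precompactness modulo $J$ translations, which was verified for $K'$ above.
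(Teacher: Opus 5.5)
Your proposal is correct and follows essentially the same route as the paper: a Galilean boost to the rest frame, exclusion of the below-threshold regime via the scattering versus blow-up/grow-up dichotomy (the grow-up alternative you invoke is what the paper cites as Theorem~A* of \cite{Guevara12}), and then a contradiction from the uniform decay in Lemma~\ref{lem:tao-decay} combined with mass conservation. The differences are minor improvements: you deduce $E(w_\xi)>0$ from the product bound and let the $s(w_\xi)>1$ branch absorb the cases $E\le 0$, which avoids the finite-variance hypothesis implicit in the paper's appeal to Glassey; and you apply the decay lemma directly to the boosted orbit instead of transferring scattering back to $S(t)w$.
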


\begin{proof}
Fix $w\in\calK\setminus\{0\}$, and let $W(t)=S(t)w\in \calK$ be the solution of the NLS in Theorem~\ref{thm:compact-attractor} with $W(0)=w$.
Since $\calK$ is precompact modulo at most $J$ translations, $W$ is uniformly bounded in $H^1$ for all $t \geq 0$.
Let $P$ be as in \eqref{eq:def-mass-energy} and $w \neq 0$. Set $\xi=-P(w)/M(w)$ and define
\begin{align}
\label{eq:profile-galilean-transform}
 \widetilde{W}(t,x) =e^{i(x\cdot\xi-t|\xi|^2)}W(t,x-2\xi t),
\end{align}
which solves \eqref{eq:NLS}, and is still uniformly bounded in $H^1$.
In particular, we have
\begin{align*}
 M(\widetilde{W}(0))&=M(w),\qquad P(\widetilde{W}(0))=0,\\
 E(\widetilde{W}(0)) &=E(w)+\xi\cdot P(w)+\frac{1}{2}|\xi|^2M(w) =E(w)-\frac{|P(w)|^2}{2M(w)}.
\end{align*}
When $E(\widetilde{W}(0))<0$, the finite time blow-up is well-known (see \cite{Glassey-NLS-blowup}).

If $E(\widetilde{W}(0))=0$, then \cite[Lemma~2.16]{Guevara12} rules out Part~I of \cite[Theorem~A*]{Guevara12} for nonzero data,
while Part~II contradicts the same global boundedness. Hence $E(\widetilde{W}(0))>0$, which is the first inequality in \eqref{eq:rest-frame-profile-threshold}.
Multiplying the displayed identity for $E(\widetilde{W}(0))$ by $M(\widetilde{W}(0))=M(w)$ gives
\begin{align*}
 M(\widetilde{W}(0)) E(\widetilde{W}(0)) =M(w)E(w)- \frac{|P(w)|^2}{2}.
\end{align*}
If $M(w)E(w)-\frac{|P(w)|^2}{2}<\Theta$, then $M(\widetilde{W}(0))E(\widetilde{W}(0))<\Theta$. 
Combining with the uniform boundedness in $H^1$, \cite[Theorems~A and A*]{Guevara12} show that $\widetilde{W}$ scatters in the forward direction, and so does $W$.

If $W$ scattered forward, then for some $\phi_+\in H^1$, $e^{-it\Delta}W(t)\to\phi_+$ in $H^1$, and hence in $L^r$ for $2<r<2d/(d-2)$.
However, since $W(t) \in \calK$, Lemma~\ref{lem:tao-decay} gives
\begin{align}
\label{eq:Wt-Lr-decay}
 \|e^{-it\Delta}W(t)\|_{L^r} \leq\sup_{f\in\calK}\|e^{-it\Delta}f\|_{L^r} \to 0 \qquad(t\to\infty).
\end{align}
So we have $\phi_+=0$ and the conservation of mass shows
\begin{align*}
 M(w)=\|e^{-it\Delta}W(t)\|_{L^2}^2  \to\|\phi_+\|_{L^2}^2=0,
\end{align*}
whcih is a contradiction and completes the proof.
\end{proof}

Two results below are the starting points of our upgrading mechanism.
We will prove our upgrading estimates by contradiction.
The goal of this part is to show that if the uniform estimate for all large times fails, then the mass $m_\sharp=M(u)-M(u_+)$ of the soliton part is strictly positive.
For why this is useful, see Proposition~\ref{prop:mass-limit-form} below and the proof of Theorem~\ref{thm:upgrade-I} in Section~\ref{sec:upgrade-I}.

\begin{lem}\label{lem:first-entry}
Let $d\geq1$, $u\in C(\R_{\geq1};H^1)$, $u_+\in H^1$, and $v$ and $r$ as in \eqref{eq:v,r-def}. Let $\chi_0,\chi_1\in C_c^\infty(\R^d)$ satisfy $\chi_1=1$ on $\supp\chi_0$.
Suppose $T_n\to\infty$ and
\begin{align*}
 \|\chi_1(x/T_n)r(T_n)\|_{H^1}\to0.
\end{align*}
If
\begin{align}
\label{eq:uniform-est-chi0-r-Lemma-version}
 \limsup_{n\to\infty}\sup_{t\geq T_n} \|\chi_0(x/t)r(t)\|_{H^1}>0,
\end{align}
where the limsup may be infinite, then after passage to a subsequence there are $\epsilon>0$ and finite $s_n>T_n$,
with $s_n\to\infty$, such that
\begin{align*}
 \|\chi_0(x/s_n)r(s_n)\|_{H^1}=\epsilon,
\end{align*}
and we have $\|\chi_0(x/t)r(t)\|_{H^1}<\epsilon$ for $t \in (T_n,s_n)$. 
\end{lem}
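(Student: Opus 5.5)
The plan is a first-entry-time argument for the scalar function
\begin{align*}
 f(t):=\|\chi_0(x/t)r(t)\|_{H^1}, \qquad t\geq1.
\end{align*}
Three facts are needed: $f$ is continuous on $[1,\infty)$, $f(T_n)\to0$, and $f$ exceeds a fixed level $\epsilon$ somewhere after $T_n$ along a subsequence. The entry time $s_n$ is then defined as the first time after $T_n$ at which $f$ reaches $\epsilon$.

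\emph{Continuity of $f$.} Since $u\in C(\R_{\geq1};H^1)$ and $v(t)=e^{it\Delta}u_+$ is continuous in $H^1$, the map $t\mapsto r(t)$ is continuous into $H^1$. The multiplication operators $m_t g:=\chi_0(x/t)g$ are bounded on $H^1$ uniformly for $t\geq1$, because
\begin{align*}
 \nabla(\chi_0(x/t)g)=\chi_0(x/t)\nabla g+t^{-1}(\nabla\chi_0)(x/t)g
\end{align*}
gives $\|m_t\|_{H^1\to H^1}\leq 1+\|\nabla\chi_0\|_{L^\infty}$. For each fixed $g\in H^1$, the map $t\mapsto m_t g$ is continuous in $H^1$ by dominated convergence. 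Uniform boundedness together with strong continuity gives joint continuity of $(t,g)\mapsto m_tg$, via
\begin{align*}
 \|m_tg-m_{t'}g'\|_{H^1}\leq\|m_t(g-g')\|_{H^1}+\|(m_t-m_{t'})g'\|_{H^1}.
\end{align*}
Hence $f$ is continuous.

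\emph{Smallness at $T_n$.} Since $\chi_1=1$ on $\supp\chi_0$, we have $\chi_0=\chi_0\chi_1$, so
\begin{align*}
 \chi_0(x/T_n)r(T_n)=m_{T_n}\big(\chi_1(x/T_n)r(T_n)\big).
\end{align*}
The uniform operator bound then gives $f(T_n)\leq C\|\chi_1(x/T_n)r(T_n)\|_{H^1}\to0$.

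\emph{Choice of $\epsilon$ and the entry time.} Let $L\in(0,\infty]$ be the limsup in \eqref{eq:uniform-est-chi0-r-Lemma-version}. Choose $\epsilon>0$ with $2\epsilon<L$; any $\epsilon>0$ works if $L=\infty$. Pass to a subsequence along which $\sup_{t\geq T_n}f(t)>2\epsilon$ and also $f(T_n)<\epsilon$; the latter holds for all large $n$ by the previous step. Define
\begin{align*}
 s_n:=\inf\{t\geq T_n: f(t)\geq\epsilon\}.
\end{align*}
The set is nonempty because the supremum exceeds $2\epsilon$, so $s_n$ is finite. Continuity of $f$ and $f(T_n)<\epsilon$ give $s_n>T_n$ and $f(s_n)=\epsilon$. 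By minimality of $s_n$, $f(t)<\epsilon$ for $t\in(T_n,s_n)$. Finally $s_n>T_n\to\infty$.

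The only point requiring care is the continuity of $f$, specifically the joint continuity of $t\mapsto\chi_0(x/t)r(t)$ in $H^1$. It is handled by the uniform operator bound plus strong continuity argument above. Everything else is the elementary intermediate value argument.
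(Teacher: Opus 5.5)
Your proposal is correct and follows the paper's own first-entry-time argument: the bound $f(T_n)\to0$ via $\chi_0=\chi_0\chi_1$, continuity of $t\mapsto\|\chi_0(x/t)r(t)\|_{H^1}$, and $s_n=\inf\{t\geq T_n: f(t)\geq\epsilon\}$. You simply supply details the paper only asserts, namely the uniform-bound-plus-strong-continuity proof of joint continuity and an explicit choice of $\epsilon$. One cosmetic point: the lemma does not assume $0\leq\chi_0\leq1$, so the multiplier bound should read $C(\|\chi_0\|_{L^\infty}+\|\nabla\chi_0\|_{L^\infty})$ rather than $1+\|\nabla\chi_0\|_{L^\infty}$, which changes nothing in the argument.
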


\begin{proof}
Since $\chi_0=\chi_0\chi_1$, we have
\begin{align*}
 \|\chi_0(x/T_n)r(T_n)\|_{H^1} \leq C_{\chi_0}\|\chi_1(x/T_n)r(T_n)\|_{H^1} \to 0.
\end{align*}
Since $r(t)$ is $H^1$-continuous in $t$, we know $\|\chi_0(x/t)r(t)\|_{H^1}$ is continuous in $t$.

If \eqref{eq:uniform-est-chi0-r-Lemma-version} is true, then the quantity will fluctuate between 0 and $\limsup_{n\to\infty}\sup_{t\geq T_n} \|\chi_0(x/t)r(t)\|_{H^1}$ infinitely many times.
Then, fixing a sufficiently small $\epsilon$, we have $\|\chi_0(x/T_n)r(T_n)\|_{H^1}<\epsilon$ for large $n$.
Then we define
\begin{align*}
 s_n=\inf\{t\geq T_n:\|\chi_0(x/t)r(t)\|_{H^1}\geq\epsilon\},
\end{align*}
and $s_n$ will have the claimed property.
\end{proof}
The lemma above gives:
\begin{prop}
\label{prop:residual-mass-positive}
Let the setting be as in Lemma~\ref{lem:first-entry}.
Suppose \eqref{eq:uniform-chi0-r-tend-0} fails (or equivalently \eqref{eq:uniform-est-chi0-r-Lemma-version} is true). Then $m_\sharp=M(u)-M(u_+)>0$.
\end{prop}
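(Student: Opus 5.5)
We argue by contradiction: assume $m_\sharp=0$ and show that \eqref{eq:uniform-chi0-r-tend-0} holds after all, contradicting Lemma~\ref{lem:first-entry}. The point is that $m_\sharp=0$ forces the whole residual to vanish in $H^1$, not just inside the cone.

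First, Proposition~\ref{prop:residual-limits} gives $M(r(t))\to m_\sharp$. Hence $m_\sharp=0$ means $\|r(t)\|_{L^2}\to0$.

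Next we upgrade this to $H^1$ convergence by splitting frequencies, as in the first step of the proof of Theorem~\ref{thm:finite-bad-directions}. For fixed $R>0$,
\begin{align*}
\|P_{\leq R}r(t)\|_{H^1}\leq(1+R^2)^{1/2}\|r(t)\|_{L^2}\to0 .
\end{align*}
The high-frequency part is handled by Proposition~\ref{prop:residual-frequency-tightness}, which gives $\lim_{R\to\infty}\limsup_{t\to\infty}\|P_{>R}r(t)\|_{H^1}=0$. Taking the limsup in $t$ first and then letting $R\to\infty$ yields $\|r(t)\|_{H^1}\to0$ as $t\to\infty$.

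Now we compare with the cone-localized quantity. Since $0\leq\chi_0\leq1$ and $\|\nabla(\chi_0(x/t))\|_{L^\infty}\leq\|\nabla\chi_0\|_{L^\infty}/t$, for $t\geq1$ we have
\begin{align*}
\|\chi_0(x/t)r(t)\|_{H^1}\leq(1+\|\nabla\chi_0\|_{L^\infty})\|r(t)\|_{H^1}.
\end{align*}
The right-hand side tends to zero, so $\sup_{t\geq T_n}\|\chi_0(x/t)r(t)\|_{H^1}\to0$ as $T_n\to\infty$. This contradicts \eqref{eq:uniform-est-chi0-r-Lemma-version}, and therefore $m_\sharp>0$.

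Equivalently, one can phrase this via the times $s_n$ of Lemma~\ref{lem:first-entry}: $\epsilon=\|\chi_0(x/s_n)r(s_n)\|_{H^1}\lesssim\|r(s_n)\|_{H^1}\to0$, which is impossible. Note that the argument uses $u$ solving \eqref{eq:NLS} with the uniform bound \eqref{eq:uniform-H1}, which is the standing setting needed for Propositions~\ref{prop:residual-limits} and~\ref{prop:residual-frequency-tightness}.

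The only substantive input is the frequency tightness from Proposition~\ref{prop:residual-frequency-tightness}, which in turn rests on the compact attractor, Theorem~\ref{thm:compact-attractor}. Without it, vanishing mass would not control the gradient. Everything else is bookkeeping.
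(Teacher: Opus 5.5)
Your proof is correct and follows the paper's own argument. You assume $m_\sharp=0$, then use Proposition~\ref{prop:residual-limits} with the low/high frequency splitting and Proposition~\ref{prop:residual-frequency-tightness} (exactly the discussion after \eqref{eq:r(t)-H1-bound-1}) to get $\|r(t)\|_{H^1}\to0$, which contradicts the failure of \eqref{eq:uniform-chi0-r-tend-0}; you also spell out the bound $\|\chi_0(x/t)r(t)\|_{H^1}\leq(1+\|\nabla\chi_0\|_{L^\infty})\|r(t)\|_{H^1}$ for $t\geq1$, which the paper leaves implicit.
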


\begin{proof}
We have $M(r(t))\to m_\sharp$ by Proposition~\ref{prop:residual-limits}.
If $m_\sharp=0$, arguing as in the discussion after \eqref{eq:r(t)-H1-bound-1}, we know $\|r(t)\|_{H^1} \to 0$, contradicting the failure of \eqref{eq:uniform-chi0-r-tend-0}.
Thus $m_\sharp>0$.
\end{proof}

\section{Below threshold scattering in a spacetime cone}
\label{sec:below_threshold_scattering}

We prove Theorem~\ref{thm:cone-scattering} in this section. We first recall the scattering result to which we resort.

\begin{thm}{\cite[Theorem~1.1]{DodsonMurphy2018}, \cite{Guevara12,Fang-Xie-Cazenave-scattering-11}}
\label{thm:blow-thre-sc}
Let $d\geq3$. For $u_0\in H^1(\R^d)$ such that
\begin{align}
\label{eq:dm-mass-energy}
M(u_0)E(u_0)&<M(Q)E(Q), \\
\label{eq:dm-mass-gradient}
\|u_0\|_{L^2}\|\nabla u_0\|_{L^2} & < \|Q\|_{L^2}\|\nabla Q\|_{L^2},
\end{align}
the solution $u$ of \eqref{eq:NLS} with $u(0)=u_0$ is global and scatters in both time directions.
We consider only the forward direction: there exists $u_+ \in H^1(\R^d)$ such that
\begin{align}
\label{eq:dm-forward}
\|u(t)-e^{it\Delta}u_+\|_{H^1}&\to 0\quad\text{as }t\to\infty.
\end{align}
\end{thm}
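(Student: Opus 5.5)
Since this theorem is quoted from \cite{DodsonMurphy2018} (see also \cite{Guevara12,Fang-Xie-Cazenave-scattering-11}), the plan here only sketches how one would reprove it along the Dodson--Murphy route. That route avoids concentration compactness and uses an interaction Morawetz estimate together with a scattering criterion.

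\textbf{Step 1: trapping and coercivity.} Mass and energy are conserved, so \eqref{eq:dm-mass-energy} gives $M(u(t))E(u(t))\leq(1-\delta)M(Q)E(Q)$ for some $\delta>0$ and all $t$ in the lifespan. The map $t\mapsto s(u(t))$, with $s$ as in \eqref{eq:threshold-ratio}, is continuous. By \eqref{eq:threshold-function}, $s(u(t))$ can never equal $1$, so \eqref{eq:dm-mass-gradient} propagates: $s(u(t))\leq\sqrt{1-\delta}$ for all $t$, by \eqref{eq:threshold-gap}. This gives a uniform $H^1$ bound, and hence global existence via the subcritical local theory. Inserting $s\leq\sqrt{1-\delta}$ into the sharp Gagliardo--Nirenberg inequality \eqref{eq:sharp-gn} yields a quantitative virial coercivity
\begin{align*}
 \|\nabla f\|_{L^2}^2-\frac{d(d-1)}{2(d+1)}\|f\|_{L^{2(d+1)/(d-1)}}^{2(d+1)/(d-1)}\geq c_\delta\|f\|_{L^{2(d+1)/(d-1)}}^{2(d+1)/(d-1)},
\end{align*}
which also holds for $\chi_R f$ with a smooth cutoff $\chi_R$ at large radius, once $R$ is large.

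\textbf{Step 2: scattering criterion.} Prove that scattering follows from uniform $H^1$ bounds plus a nonconcentration statement: for every $\epsilon>0$ there are $T,R$ such that
\begin{align*}
 \int_{T-\epsilon^{-\alpha}}^{T}\|u(t)\|_{L^{2(d+1)/(d-1)}}^{2(d+1)/(d-1)}\,dt\lesssim\epsilon^\beta,
\end{align*}
or alternatively that $\int_{|x-x_0|\leq R}|u(T)|^2\,dx$ is small uniformly in $x_0$. The argument uses Strichartz estimates (Proposition~\ref{prop:strichartz}) and Duhamel's formula. Split the Duhamel integral into a recent window, where smallness is controlled by the Morawetz input, and the distant past, where the dispersive estimate \eqref{eq:dispersive-interpolated} gives decay. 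This shows that $e^{i(t-T)\Delta}u(T)$ is small in a scattering norm on $[T,\infty)$. The small-data theory then closes, and Lemma~\ref{lem:diagonal-radiation}-type bounds finish the argument.

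\textbf{Step 3: Morawetz estimate, the main obstacle.} We need a time-averaged decay estimate of the form
\begin{align*}
 \frac1{T}\int_0^T\int|u|^{2(d+1)/(d-1)}\,dx\,dt\lesssim_{\delta}\frac{R}{T}+o_R(1).
\end{align*}
This comes from a virial/Morawetz identity with a truncated weight $a(x)=R^2\phi(x/R)$, or from an interaction version with Galilean invariance built in. The Morawetz action is bounded by $R\,\|u\|_{L^2}\|\nabla u\|_{L^2}$. The main term of its derivative is the localized virial quantity, which is controlled from below by the Step~1 coercivity applied to $\chi_R u$.

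The delicate part is handling the error terms: those supported where $|x|\sim R$, and those that arise because the momentum need not vanish. In the non-radial setting the radial Sobolev decay used in \cite{DodsonMurphy2017} is unavailable. I would try first to localize to unit cubes centered at arbitrary points, average the Morawetz estimate over those centers, and apply the coercivity cube by cube. Mass flowing out through the cube boundaries would be controlled by averaging over the centers. One then chooses $T$ large and $R=T^{\gamma}$ to make the right-hand side small, which feeds the criterion of Step~2 and proves \eqref{eq:dm-forward}.
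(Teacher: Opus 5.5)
The paper does not prove this statement. It imports it from \cite{DodsonMurphy2018}, with \cite{Guevara12,Fang-Xie-Cazenave-scattering-11} as interchangeable sources, so your Dodson--Murphy outline follows the route the citation stands for. Read as a reconstruction, though, it has two genuine problems.

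First, the Step~1 coercivity has the wrong constant. For $(i\partial_t+\Delta)u=-|u|^{4/(d-1)}u$ and $p=\frac{2(d+1)}{d-1}$, the virial identity is
\begin{align*}
 \frac{d^2}{dt^2}\int|x|^2|u|^2\,dx=8\Big(\|\nabla u\|_{L^2}^2-\frac{d}{d+1}\|u\|_{L^p}^p\Big).
\end{align*}
Its right-hand side vanishes at $Q$ by \eqref{eq:ground-state-identities}. Your coefficient $\frac{d(d-1)}{2(d+1)}$ equals $\frac{d}{d+1}$ only when $d=3$, and for $d\geq4$ your inequality fails below threshold. For example, take $d=5$ (so $p=3$) and $f=\lambda Q$ with $\lambda=\frac{9}{10}$. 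Then $s(f)=0.81$ and $M(f)E(f)<M(Q)E(Q)$, yet $\|\nabla f\|_{L^2}^2-\frac53\|f\|_{L^3}^3=5\lambda^2(1-2\lambda)M(Q)<0$. With the correct constant, \eqref{eq:sharp-gn}--\eqref{eq:sharp-gn-constant} give
\begin{align*}
 \|\nabla f\|_{L^2}^2-\frac{d}{d+1}\|f\|_{L^p}^p\geq\frac{d}{d+1}\big(s(f)^{-2/(d-1)}-1\big)\|f\|_{L^p}^p,
\end{align*}
which is the coercivity you need. This is not cosmetic, because the paper uses the theorem for $d\geq5$.

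Second, and more seriously, Step~3 contains no mechanism that works in the non-radial case. Your concrete plan is a one-particle localized virial centered at $x_0$, averaged over the centers, and this cannot produce positivity. The derivative of $\int\nabla a(x-x_0)\cdot j(u)\,dx$ is
\begin{align*}
 2\int\partial_{jk}a(x-x_0)\Re(\overline{\partial_ju}\,\partial_ku)\,dx-\frac{2}{d+1}\int\Delta a(x-x_0)|u|^p\,dx-\frac12\int\Delta^2a(x-x_0)|u|^2\,dx.
\end{align*}
Suppose $\nabla a$ is compactly supported. Then $\int D^2a$, $\int\Delta a$ and $\int\Delta^2a$ all vanish, so averaging over all $x_0$ kills every term. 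The averaged identity is just conservation of $P(u)\cdot\int\nabla a$: the positive curvature inside $B(x_0,R)$ is exactly cancelled by the concave transition shell. Averaging cannot make that shell error small.

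Suppose instead $a$ stays convex with $|\nabla a|\sim R$ at infinity. Then $\Delta a\gtrsim R/|x-x_0|$ there, and the focusing term has a negative exterior contribution comparable to the main term. Without radial Sobolev decay, nothing controls it.

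The missing idea is to weight the centers by the mass density, using the interaction functional $\int\int|u(t,y)|^2\nabla a(x-y)\cdot j(u)(t,x)\,dx\,dy$. Its new cross terms $\int\int j(u)(y)\cdot D^2a(x-y)\,j(u)(x)$ are handled by Cauchy--Schwarz, which amounts to a Galilean boost of each localized piece to zero momentum. In \cite{DodsonMurphy2018} the weight is built from averages over $s$ of $\chi_R^2(x-s)\chi_R^2(y-s)$. Up to errors vanishing as $R\to\infty$, the main term then becomes an $s$-average of $\|\chi_R(\cdot-s)u\|_{L^2}^2$ times the virial functional of $e^{ix\cdot\xi(s)}\chi_R(\cdot-s)u$, and only there does your (corrected) Step~1 coercivity enter. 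You mention the interaction version in passing, but the whole difficulty lives there, and the plan you actually propose does not close.
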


Using Theorem~\ref{thm:blow-thre-sc}, we now prove Theorem~\ref{thm:cone-scattering}.

\begin{proof}[Proof of Theorem~\ref{thm:cone-scattering}]
We first apply Theorem~\ref{thm:compact-attractor} to $u$ and use notation there. 
Take a sequence $t_n\to\infty$. 
After passing to a subsequence, Theorem~\ref{thm:compact-attractor} gives the decomposition in \eqref{eq:profile-decomposition} with centers satisfying \eqref{eq:profile-separation}. 
Passing to a subsequence again, each of the following three bounded sequences converges to a finite number:
\begin{align}
\label{eq:localized-scalar-limits}
&M(\chi_2(x/t_n)u(t_n)),\qquad \|\nabla(\chi_2(x/t_n)u(t_n))\|_{L^2}^2,\qquad E(\chi_2(x/t_n)u(t_n)),
\end{align}
and, for every $j$, the sequence $x_{j,n}/t_n$ either converges in $\R^d$ or tends to infinity:
\begin{align}
\label{eq:profile-velocity}
\frac{x_{j,n}}{t_n}&  \to v_j\in X=\mathbb{R}^d\cup\{\infty\}.
\end{align}

We will need the mass and energy decouplings:
\begin{align}
\label{eq:mass-decoupling}
M(\chi_2(x/t_n)u(t_n)) &=M(\chi_2(x/t_n)e^{it_n\Delta}u_+) +\sum_{j=1}^JM(c_jw_j)+o(1), \\
\label{eq:kinetic-decoupling}
\|\nabla(\chi_2(x/t_n)u(t_n))\|_{L^2}^2 &=\|\nabla(\chi_2(x/t_n)e^{it_n\Delta}u_+)\|_{L^2}^2 +\sum_{j=1}^J\|\nabla(c_jw_j)\|_{L^2}^2+o(1),
\end{align}
\begin{align}
\label{eq:full-nonlinear-decoupling}
\|\chi_2(x/t_n)u(t_n)\|_{L^{2(d+1)/(d-1)}}^{2(d+1)/(d-1)} &=\sum_{j=1}^J \|c_jw_j\|_{L^{2(d+1)/(d-1)}}^{2(d+1)/(d-1)}+o(1).
\end{align}
\begin{align}
\label{eq:energy-decoupling}
E(\chi_2(x/t_n)u(t_n)) &=\frac{1}{2}\|\nabla(\chi_2(x/t_n)e^{it_n\Delta}u_+)\|_{L^2}^2 +\sum_{j=1}^JE(c_jw_j)+o(1).
\end{align}
See e.g. \cite[Lemma~3.3]{Killip-Visan-H1critical}\cite[Lemma~2.2, Lemma~2.3]{DHR-3d-sc} for the proof.
The nonlinear contribution of the radiation is absent from \eqref{eq:full-nonlinear-decoupling} and \eqref{eq:energy-decoupling} because of the `dispersive estimate' in Lemma~\ref{lem:free-lq-decay}.

We now use the threshold estimates from Proposition~\ref{prop:ground-state-property}. For all sufficiently large $n$, one has $t_n\geq T_0$,
so \eqref{eq:localized-mass-energy}--\eqref{eq:localized-mass-gradient} apply to $f=\chi_2(x/t_n)u(t_n)$.
Applying \eqref{eq:threshold-gap} to this $f$, and using \eqref{eq:mass-decoupling} and \eqref{eq:kinetic-decoupling},
every fixed profile satisfies
\begin{align}
\label{eq:profile-threshold-gap}
\|c_jw_j\|_{L^2}\|\nabla(c_jw_j)\|_{L^2} &\leq\sqrt{1-\delta}\|Q\|_{L^2}\|\nabla Q\|_{L^2}<\|Q\|_{L^2}\|\nabla Q\|_{L^2}.
\end{align}
Applying \eqref{eq:threshold-energy} with $f=c_jw_j$, and using \eqref{eq:profile-threshold-gap}, gives
\begin{align}
\label{eq:profile-energy-positive}
E(c_jw_j)&\geq 0.
\end{align}
Thus the quadratic radiation term and every profile-energy term displayed explicitly on the right side of \eqref{eq:energy-decoupling} are nonnegative.

Suppose $v_j\in\supp\chi_1$.
By \eqref{eq:cutoff-nesting}, $c_j=\chi_2(v_j)=1$.
\begin{align}
\label{eq:profile-mass-bound}
M(w_j)&\leq\lim_{n\to\infty}M(\chi_2(x/t_n)u(t_n)), \\
\label{eq:profile-energy-bound}
E(w_j)&\leq\lim_{n\to\infty}E(\chi_2(x/t_n)u(t_n)).
\end{align}
The limits on the right-hand sides of \eqref{eq:profile-mass-bound}--\eqref{eq:profile-energy-bound} exist as explained in \eqref{eq:localized-scalar-limits}.
The inequalities \eqref{eq:profile-mass-bound}--\eqref{eq:profile-energy-bound} follow from \eqref{eq:mass-decoupling}, \eqref{eq:energy-decoupling}, and \eqref{eq:profile-energy-positive}.
All quantities in \eqref{eq:profile-mass-bound}--\eqref{eq:profile-energy-bound} are nonnegative by \eqref{eq:mass-decoupling}, \eqref{eq:energy-decoupling}, and \eqref{eq:profile-energy-positive}.
Using \eqref{eq:localized-mass-energy}, this gives
\begin{align}
\label{eq:profile-mass-energy-bound}
M(w_j)E(w_j) &\leq(1-\delta)M(Q)E(Q)<M(Q)E(Q).
\end{align}
If $w_j\neq0$, then \eqref{eq:rest-frame-profile-threshold} gives
\begin{align*}
 M(w_j)E(w_j)\geq M(w_j)E(w_j)-\frac{|P(w_j)|^2}{2}\geq\Theta=M(Q)E(Q),
\end{align*}
contradicting \eqref{eq:profile-mass-energy-bound}. Therefore
\begin{align}
\label{eq:inner-profile-zero}
v_j\in\supp\chi_1&\quad\Longrightarrow\quad w_j=0.
\end{align}

For every remaining profile, we know either $v_j=\infty$ or $v_j\notin\supp\chi_1$, then we have
\begin{align}
\label{eq:outer-profile-zero}
\| \chi_1(x/t_n) \tau_{x_{j,n}}w_j\|_{H^1}&\to 0.
\end{align}
The $o_{H^1}(1)$ term in \eqref{eq:profile-decomposition} remains $o_{H^1}(1)$ after multiplication by $\chi_1(x/t_n)$.

Subtracting the radiation term from \eqref{eq:profile-decomposition}, multiplying by $\chi_1(x/t_n)$, and using \eqref{eq:inner-profile-zero} and \eqref{eq:outer-profile-zero}, we obtain
\begin{align}
\label{eq:subsequence-conclusion}
\|\chi_1(x/t_n)(u(t_n)-e^{it_n\Delta}u_+)\|_{H^1}&\to 0.
\end{align}
Since the sequence $t_n\to\infty$ is arbitrary, this proves
\eqref{eq:cone-scattering}.
\end{proof}

\section{The atomic measures associated with the non-scattering part}

\label{sec:atomic}

In this section, we discuss the property of the measures \eqref{eq:nu_t,J_t-def} as $t\to\infty$.
In particular, we show that they can only concentrate at finitely many asymptotic directions.
This gives a characterization of the non-scattering part of our solution $u(t)$.
Equivalently, this says that asymptotically the non-scattering part $r(t)$ can only concentrate on finitely many directions.

To explain the intuition, suppose one indeed has a resolution of solitons.
Then one should expect an asymptotic expansion of $u(t)$ like
\begin{align}
u(t) \sim  Ct^{-d/2}\widehat{u}_+(x/2t)+\sum_{i=1}^J Q_i,
\end{align}
where $Q_i$ is the soliton component with velocity $y_i$ and $\widehat{u}_+$ is the Fourier transform of the `free part' $u_+$. See \cite{Hassell-Jia-NLS} for the discussion about the first term in the defocusing setting.
If we consider the space of velocities parametrized by $x/t$, then as $t\to\infty$, the radiation part (i.e., the first term above) tends to a continuous measure with density $|\widehat{u}_+|^2$, while the remaining soliton components tend to a sum of Dirac measures.
In addition, since, as we have seen, the soliton components and the radiation component decouple in the long-time asymptotics, if we consider the measure with density $|r(t)|^2=|u(t)-e^{it\Delta}u_+|^2$, then we remove the continuous part directly (that is, the cross terms tend to zero) and are left with a sum of atomic (i.e., Dirac) measures.
We will justify this intuition on the weak limit level.

\begin{prop}
\label{prop:subseq-atomicity}
Let $J$ and $u_+$ be as in Theorem~\ref{thm:compact-attractor},
and define $r(t)$ by \eqref{eq:v,r-def}. Let $X=\R^d\cup\{\infty\}$ be the one-point compactification of $\R^d$,
and let $\nu_t$ be the residual mass measure defined in \eqref{eq:nu_t,J_t-def}.

For every sequence $t_n\to\infty$, after passing to a subsequence, not relabeled, there are functions $w_1,\ldots,w_J\in H^1$,
centers $x_{j,n}\in\R^d$, points $y_j\in X$, and errors $e_n\in H^1$ such that
\begin{align}
\label{eq:residual-profile-decomposition}
 r(t_n)&=\sum_{j=1}^Jw_j(\cdot-x_{j,n})+e_n, \qquad \|e_n\|_{H^1}\to 0, \\
\label{eq:normalized-profile-centers}
 \frac{x_{j,n}}{t_n}& \to y_j \qquad(1\leq j\leq J),
\end{align}
and the centers satisfy \eqref{eq:profile-separation}.
Along this subsequence,
\begin{align}
\label{eq:nu-seq-limit}
 \nu_{t_n} \rightharpoonup \sum_{j=1}^JM(w_j)\delta_{y_j}.
\end{align}
Here the functions $w_j$ and the points $y_j$ may depend on the original sequence.
\end{prop}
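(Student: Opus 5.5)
We begin with Theorem~\ref{thm:compact-attractor} applied along the given sequence $t_n\to\infty$. After passing to a subsequence it yields profiles $w_1,\ldots,w_J\in\calK\subset H^1$ and centers $x_{j,n}$ satisfying \eqref{eq:profile-decomposition} and \eqref{eq:profile-separation}. Subtracting $e^{it_n\Delta}u_+$ from both sides gives \eqref{eq:residual-profile-decomposition}, with $e_n$ equal to the $o_{H^1}(1)$ remainder. The space $X$ is compact, so for each $j$ the sequence $x_{j,n}/t_n$ has a convergent subsequence in $X$. Extracting $J$ times in succession produces \eqref{eq:normalized-profile-centers}, and \eqref{eq:profile-separation} survives passing to subsequences.

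For the weak convergence \eqref{eq:nu-seq-limit}, we test against $h\in C(X)$; these functions are bounded, continuous on $\R^d$, and have a limit at infinity. We first replace $r(t_n)$ by $\sum_j\tau_{x_{j,n}}w_j$. Since $\|e_n\|_{L^2}\to0$ and the sum is bounded in $L^2$, Cauchy--Schwarz shows the change in $\int h(x/t_n)|r(t_n)|^2\,dx$ is at most $\|h\|_\infty\|e_n\|_{L^2}(2\|r(t_n)\|_{L^2}+\|e_n\|_{L^2})\to0$.

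Next we expand $|\sum_j\tau_{x_{j,n}}w_j|^2$. Each cross term $\int|w_j(x-x_{j,n})||w_k(x-x_{k,n})|\,dx$ with $j\neq k$ tends to $0$. To see this, truncate $w_j,w_k$ to a ball of radius $R$, at the cost of an $L^2$ tail that is small uniformly in $n$. The truncated pieces have disjoint supports once $|x_{j,n}-x_{k,n}|>2R$, which eventually holds by \eqref{eq:profile-separation}.

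This leaves the diagonal terms $\int h(x/t_n)|w_j(x-x_{j,n})|^2\,dx=\int h((z+x_{j,n})/t_n)|w_j(z)|^2\,dz$. Fix $R$ and split $\int|w_j|^2$ into the regions $|z|\le R$ and $|z|>R$. On $|z|\le R$ we have $(z+x_{j,n})/t_n\to y_j$ uniformly in $X$: when $y_j\in\R^d$ this is immediate because $R/t_n\to0$, and when $y_j=\infty$ it holds because $|z+x_{j,n}|/t_n\ge |x_{j,n}|/t_n-R/t_n\to\infty$. Continuity of $h$ on the compact space $X$ then gives uniform convergence of $h((z+x_{j,n})/t_n)$ to $h(y_j)$ on that set. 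The part with $|z|>R$ contributes at most $\|h\|_\infty\int_{|z|>R}|w_j|^2$, which is small once $R$ is large. Letting $n\to\infty$ and then $R\to\infty$ gives $\int h\,d\nu_{t_n}\to\sum_j M(w_j)h(y_j)$, which is \eqref{eq:nu-seq-limit}.

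None of these steps is a real obstacle; the only point needing care is the case $y_j=\infty$. There the claim is that $\nu_{t_n}$ converges on $X$, not on $\R^d$, and that mass escaping to infinity is recorded as an atom at $\infty$. The uniform convergence argument on compact $z$-sets covers this, because neighbourhoods of $\infty$ in $X$ are complements of compact sets.
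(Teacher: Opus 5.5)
Your proposal is correct and follows essentially the same route as the paper. It extracts the profile decomposition and the limits $x_{j,n}/t_n\to y_j$ in the compact space $X$, removes the $H^1$-small error by Cauchy--Schwarz, kills the cross terms by compactly supported approximation together with \eqref{eq:profile-separation}, and evaluates the diagonal terms after the change of variables $x=x_{j,n}+z$. The differences are cosmetic: you compute the diagonal limit by an explicit split at $|z|=R$ where the paper invokes dominated convergence, and you bound the error term with $\|r(t_n)\|_{L^2}$ rather than $\sum_j\|w_j\|_{L^2}$.
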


\begin{proof}
The expression of $r(t)$ in \eqref{eq:residual-profile-decomposition} follows from \eqref{eq:profile-decomposition}. 
Since $X$ is compact\footnote{Again, this is just a convenience of writing. One can separately consider sequences for which $x_{j,n}/t_n$ remains bounded and sequences for which $x_{j,n}/t_n \to \infty$.} and there are only finitely many centers, a further subsequence satisfies $x_{j,n}/t_n\to y_j$ in $X$ for every $j$.

Let $\Phi\in C(X)$. The error $e_n$ contributes $o(1)$ because
\begin{align}
\label{eq:residual-profile-error-removal}
 \big|\int_{\R^d}\Phi(x/t_n) \Big(|r(t_n,x)|^2- \big|\sum_{j=1}^Jw_j(x-x_{j,n})\big|^2\Big)\,dx\big| &\leq \|\Phi\|_{L^\infty(X)}\|e_n\|_{L^2} \Big(2\sum_{j=1}^J\|w_j\|_{L^2}+\|e_n\|_{L^2}\Big) \to 0.
\end{align}
For each $j$, the change of variables $x=x_{j,n}+z$ gives
\begin{align}
\label{eq:mass-limit-3}
 \int_{\R^d}\Phi(x/t_n)|w_j(x-x_{j,n})|^2\,dx &=\int_{\R^d}\Phi\Big(\frac{x_{j,n}}{t_n}+\frac{z}{t_n}\Big) |w_j(z)|^2\,dz  \to \Phi(y_j)M(w_j).
\end{align}
Indeed, the argument of $\Phi$ converges to $y_j$ in $X$ for every fixed $z$, and dominated convergence applies.

For $j\neq k$, \eqref{eq:profile-separation} gives
\begin{align}
\label{eq:residual-profile-cross-limit}
 \int_{\R^d}\Phi(x/t_n)w_j(x-x_{j,n}) \overline{w_k(x-x_{k,n})}\,dx\to 0.
\end{align}
To see this, approximate $w_j$ and $w_k$ in $L^2$ by compactly supported functions. Their translates have disjoint supports for all sufficiently large $n$ by \eqref{eq:profile-separation},
and Cauchy--Schwarz controls the approximation errors uniformly in $n$.

Expanding $|\sum_{j=1}^Jw_j(x-x_{j,n})|^2$ and using
\eqref{eq:residual-profile-error-removal},
\eqref{eq:mass-limit-3}, and
\eqref{eq:residual-profile-cross-limit}, together with the definition
of $\nu_t$ in \eqref{eq:nu_t,J_t-def}, yields
\begin{align}
\label{eq:Phi-nu-limit}
 \int_X\Phi\,d\nu_{t_n} & \to\sum_{j=1}^J\Phi(y_j)M(w_j) =\int_X\Phi\,d\Big(\sum_{j=1}^JM(w_j) \delta_{y_j}\Big),
\end{align}
which proves \eqref{eq:nu-seq-limit}.
\end{proof}

Now we introduce the distance and the class of measures describing the limit we will consider. 
Fix a metric $\rho_X$ on $X$ such that the induced topology is the topology of the one-point compactification of $\R^d$.
For example, we can identify $X$ with $\mathbb{S}^{d}$ using the stereographic projection and use the round sphere metric.
Then we define
\begin{align*}
 d_{\mathrm{BL}}(\mu,\sigma) =\sup\big\{ |\int_X\Phi\,d(\mu-\sigma)|: \|\Phi\|_{L^\infty}\leq1,\ \Lip_{\rho_X}(\Phi)\leq1 \big\},
\end{align*}
where BL stands for `bounded Lipschitz'. Let
\begin{align*}
 d_J((\mu,J),(\sigma,H)) =d_{\mathrm{BL}}(\mu,\sigma) +\sum_{k=1}^dd_{\mathrm{BL}}(J_k,H_k).
\end{align*}

We use $\calD_\sharp$ to denote the class of pairs $(\mu,J)$ such that
\begin{align}
\label{eq:atomic-pair-def}
 \mu=\sum_{\ell=1}^La_\ell\delta_{y_\ell}, \qquad J=\sum_{\ell=1}^Lb_\ell\delta_{y_\ell},
\end{align}
where the $y_\ell\in X$ are distinct, $a_\ell>0$, $b_\ell\in\R^d$, and $n_\ell\in\N$, and where the following conditions hold.
Let $m_\sharp,e_\sharp,p_\sharp,R_\sharp$ be defined in \eqref{eq:m,e,p,R,residual}, and define
\begin{align}
\label{eq:eta-def}
 \eta=\frac{p_\sharp}{m_\sharp}.
\end{align}
We require:
\begin{align*}
 \sum_\ell a_\ell=m_\sharp,\qquad \sum_\ell b_\ell=p_\sharp,
\end{align*}
\begin{align}
\label{eq:mres-bound}
 m_\sharp \sum_{\ell=1}^L \frac{n_\ell^2\Theta}{a_\ell} +\frac{m_\sharp}{2}\sum_{\ell=1}^La_\ell \big |\frac{b_\ell}{a_\ell}-\eta\big|^2 \leq R_\sharp.
\end{align}

\begin{rmk} 
\label{rmk:n-ell}
In the proofs below, the reader can simply take $n_\ell=1$ and collect all terms with the same $y_\ell$ together.
We still include this parameter here because we believe that this is the conceptually correct formulation for future work treating the dynamics of soliton components at scales sublinear in $t$, and we want to prove these technical lemmas at that level of generality.
See \eqref{eq:e-ell-ex} below for an example of the natural choice of $n_\ell$.
\end{rmk}

Then we have the following result.
\begin{prop}
\label{prop:atomic-and-bound}
Let $\calD_\sharp$ be the class defined by \eqref{eq:atomic-pair-def}--\eqref{eq:mres-bound}.
If $m_\sharp>0$, then $R_\sharp>0$. With $J_t$ defined in \eqref{eq:nu_t,J_t-def}, we have
\begin{align}
\label{eq:dist-to-class}
 \dist_{d_J}((\nu_t,J_t),\calD_\sharp)\to0.
\end{align}
For every profile decomposition \eqref{eq:residual-profile-decomposition}--\eqref{eq:normalized-profile-centers} whose associated measures \eqref{eq:nu_t,J_t-def} converge to an atomic pair $(\mu,J)$ of the form \eqref{eq:atomic-pair-def},
$n_\ell$ may be chosen as
\begin{align}
\label{eq:e-ell-ex}
 n_\ell =\#\big\{j:w_j\neq0,\ \frac{x_{j,n}}{t_n}\to y_\ell\big\}.
\end{align}
If $N=\sum_\ell n_\ell$, then
\begin{align}
\label{eq:atom-mass-and-count}
 a_\ell\geq\frac{m_\sharp n_\ell^2\Theta}{R_\sharp},\qquad N^2\Theta\leq R_\sharp.
\end{align}
\end{prop}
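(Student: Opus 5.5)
The plan is to read everything off the profile decomposition of Proposition~\ref{prop:subseq-atomicity}, to decouple the conserved quantities across the profiles, and then to apply the rest-frame threshold bound \eqref{eq:rest-frame-profile-threshold} profile by profile. The inequality \eqref{eq:mres-bound} should then follow from two Cauchy--Schwarz inequalities and a variance identity.

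\emph{Step 1: a subsequential limit lies in $\calD_\sharp$.} Fix $t_n\to\infty$ and pass to the subsequence of Proposition~\ref{prop:subseq-atomicity}, so that \eqref{eq:residual-profile-decomposition}--\eqref{eq:normalized-profile-centers} and \eqref{eq:profile-separation} hold.

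First, the current has the same atomic limit:
\begin{align*}
 J_{t_n}\rightharpoonup\sum_jP(w_j)\delta_{y_j}.
\end{align*}
The argument is that of \eqref{eq:residual-profile-error-removal}--\eqref{eq:residual-profile-cross-limit}, with $|w|^2$ replaced by $\Im(\overline{w}\nabla w)$. The error $e_n$ is removed using the uniform $H^1$ bounds. Cross terms $\overline{w_j(\cdot-x_{j,n})}\nabla w_k(\cdot-x_{k,n})$ with $j\neq k$ vanish after approximating $w_j$ and $\nabla w_k$ by compactly supported functions.

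Second, we need the decouplings
\begin{align*}
 m_\sharp=\sum_jM(w_j),\qquad p_\sharp=\sum_jP(w_j),\qquad e_\sharp=\sum_jE(w_j).
\end{align*}
By Proposition~\ref{prop:residual-limits}, $M(r(t_n))\to m_\sharp$, $P(r(t_n))\to p_\sharp$ and $E(r(t_n))\to e_\sharp$. Mass and momentum then split by the same cross-term argument. The energy splitting needs, in addition, decoupling of $\|\cdot\|_{L^{2(d+1)/(d-1)}}^{2(d+1)/(d-1)}$ for asymptotically separated profiles. I would prove this by approximating each $w_j$ by compactly supported functions: the translates then have disjoint supports, and the approximation error is controlled through \eqref{eq:subcritical-sobolev-embedding}. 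This is the standard fact cited in the proof of Theorem~\ref{thm:cone-scattering}, and I expect it to be the only genuinely analytic point.

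Third, group the nonzero profiles by their limit point. For each distinct $y_\ell\in X$ set
\begin{align*}
 a_\ell=\sum_{j\in G_\ell}M(w_j),\qquad b_\ell=\sum_{j\in G_\ell}P(w_j),\qquad n_\ell=\#G_\ell,
\end{align*}
where $G_\ell=\{j:w_j\neq0,\ y_j=y_\ell\}$. Profiles with $w_j=0$ contribute nothing and are dropped; this yields \eqref{eq:e-ell-ex}. Every $w_j\neq 0$ lies in $\calK\setminus\{0\}$, so \eqref{eq:rest-frame-profile-threshold} gives
\begin{align*}
 E(w_j)\geq\frac{\Theta}{M(w_j)}+\frac{|P(w_j)|^2}{2M(w_j)}.
\end{align*}
Within each group, Cauchy--Schwarz gives $\sum_{G_\ell}1/M(w_j)\geq n_\ell^2/a_\ell$ and $\sum_{G_\ell}|P(w_j)|^2/M(w_j)\geq|b_\ell|^2/a_\ell$. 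Summing over groups,
\begin{align*}
 e_\sharp\geq\sum_\ell\frac{n_\ell^2\Theta}{a_\ell}+\frac12\sum_\ell\frac{|b_\ell|^2}{a_\ell}.
\end{align*}
Since $\sum a_\ell=m_\sharp$, $\sum b_\ell=p_\sharp$ and $\eta=p_\sharp/m_\sharp$, the variance identity reads
\begin{align*}
 \sum_\ell\frac{|b_\ell|^2}{a_\ell}=\sum_\ell a_\ell\Big|\frac{b_\ell}{a_\ell}-\eta\Big|^2+\frac{|p_\sharp|^2}{m_\sharp}.
\end{align*}
Multiplying the previous inequality by $m_\sharp$, inserting this identity and subtracting $|p_\sharp|^2/2$ gives exactly \eqref{eq:mres-bound}. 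Hence the limit pair belongs to $\calD_\sharp$.

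\emph{Step 2: $R_\sharp>0$ and the bounds \eqref{eq:atom-mass-and-count}.} If $m_\sharp>0$, then some $w_j\neq0$, so $L\geq1$ and some $n_\ell\geq1$. The left side of \eqref{eq:mres-bound} is then at least $m_\sharp\Theta/a_\ell>0$, so $R_\sharp>0$. Keeping only the $\ell$-th term of \eqref{eq:mres-bound} gives $a_\ell\geq m_\sharp n_\ell^2\Theta/R_\sharp$. For the count, Cauchy--Schwarz gives
\begin{align*}
 \sum_\ell\frac{n_\ell^2}{a_\ell}\geq\frac{N^2}{\sum_\ell a_\ell}=\frac{N^2}{m_\sharp},
\end{align*}
and combined with \eqref{eq:mres-bound} this yields $N^2\Theta\leq R_\sharp$. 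The same computations apply to any decomposition whose measures converge to an atomic pair, since the atoms are then identified as above.

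\emph{Step 3: the distance \eqref{eq:dist-to-class} tends to zero.} Argue by contradiction: suppose there are $t_n\to\infty$ with $\dist_{d_J}((\nu_{t_n},J_{t_n}),\calD_\sharp)\geq\epsilon$. The measures are uniformly bounded in total variation on the compact space $X$, since $|J_t|(X)\leq\|r\|_{L^2}\|\nabla r\|_{L^2}$ is uniformly bounded. Hence $d_{\mathrm{BL}}$ convergence is equivalent to weak convergence. By Step~1, a subsequence converges weakly to an element of $\calD_\sharp$, so along it $d_J((\nu_{t_n},J_{t_n}),\cdot)\to0$ at that element, which is a contradiction.

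The main obstacle is the energy decoupling in Step~1, specifically the splitting of the potential term across separated profiles and the identification of its limit with $e_\sharp$. Everything after that is elementary convexity.
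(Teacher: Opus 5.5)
Your proposal is correct and follows essentially the same route as the paper. Both arguments use the profile decomposition along an arbitrary sequence, decoupling of $M,E,P$ across profiles, the rest-frame bound of Proposition~\ref{prop:profile-threshold} for each nonzero profile, Cauchy--Schwarz within each cluster $G_\ell$, and bounded total variation on compact $X$ to pass from weak to $d_J$ convergence. The only difference is organizational: the paper first records the exact identity $R_\sharp/m_\sharp=\sum_j\big(R_j/M(w_j)+\tfrac12M(w_j)|P(w_j)/M(w_j)-\eta|^2\big)$ with $R_j=M(w_j)E(w_j)-\tfrac12|P(w_j)|^2\geq\Theta$, which gives $R_\sharp>0$ at once, and then splits the momentum term orthogonally inside each cluster; you instead use the threshold as an inequality on $E(w_j)$ and apply Cauchy--Schwarz plus the variance identity at the cluster level, which is the same computation.
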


\begin{proof}
Take an arbitrary $t_n \to \infty$ and apply Proposition~\ref{prop:subseq-atomicity} with profiles $w_j$. 
  
By the same arguement referred for obtaining \eqref{eq:mass-decoupling}-\eqref{eq:energy-decoupling}, 
applied to \eqref{eq:residual-profile-decomposition}, without the cut-off, and the same argument applied to $j(t)$ in place of $|u|^2$, together with Proposition~\ref{prop:residual-limits}, we have
\begin{align}
\label{eq:sum-M,E,P}
 \sum_j M(w_j)=m_\sharp,\qquad \sum_jE(w_j)=e_\sharp,\qquad \sum_jP(w_j)=p_\sharp.
\end{align}
We already have the convergence of $\nu_{t_n}$ in \eqref{eq:nu-seq-limit}.
Now we prove
\begin{align}
\label{eq:subsequential-momentum-measure-limit}
J_{t_n}  \rightharpoonup \sum_jP(w_j)\delta_{y_j}.
\end{align}
Indeed, the $H^1$-small remainder changes the current in $L^1$ by $o(1)$.
The cross terms also have no contribution as $n\to\infty$ for the same reason as in the preceding decoupling argument.
Finally, each diagonal term converges to give terms on the right-hand side of \eqref{eq:subsequential-momentum-measure-limit}.

Define
\begin{align*}
 \nu_j=\frac{P(w_j)}{M(w_j)},\qquad R_j=M(w_j)E(w_j)-\frac{|P(w_j)|^2}{2} \geq \Theta>0,
\end{align*}
where the last inequality used Proposition~\ref{prop:profile-threshold}. Using \eqref{eq:sum-M,E,P}, a direct computation gives
\begin{align}
\label{eq:Rres/mres}
 \frac{R_\sharp}{m_\sharp} =\sum_j\Big( \frac{R_j}{M(w_j)} +\frac{M(w_j)}{2}|\nu_j-\eta|^2\Big).
\end{align}
This proves $R_\sharp>0$.

Partition the indices $j$ with $w_j\neq0$ according to the limit of their normalized centers in \eqref{eq:normalized-profile-centers}, and write $G_\ell$ for the set of indices whose limit is $y_\ell$. For each $G_\ell$, set
\begin{align*}
 n_\ell=|G_\ell|,\quad a_\ell=\sum_{j\in G_\ell}M(w_j),\quad b_\ell=\sum_{j\in G_\ell}P(w_j).
\end{align*}
Using $R_j\geq\Theta$ recalled above and Cauchy-Schwarz, we have
\begin{align}
\label{eq:ineq-1}
 \sum_{j\in G_\ell}\frac{R_j}{M(w_j)} \geq\Theta\sum_{j\in G_\ell}\frac{1}{M(w_j)} \geq\frac{n_\ell^2\Theta}{a_\ell}.
\end{align}
Noticing that $(\sqrt{M(w_j)}(\nu_j-\frac{b_\ell}{a_\ell}),...)$ and $(..., \sqrt{M(w_j)}(\frac{b_\ell}{a_\ell}-\eta),...)$, where components run through all $j \in G_\ell$, are orthogonal, we have
\begin{align}
\label{eq:ineq-2}
 \sum_{j\in G_\ell}M(w_j)|\nu_j-\eta|^2 = \sum_{j \in G_\ell} M(w_j)|\nu_j-\frac{b_\ell}{a_\ell}|^2+ a_\ell\big|\frac{b_\ell}{a_\ell}-\eta\big|^2 \geq a_\ell\big|\frac{b_\ell}{a_\ell}-\eta\big|^2.
\end{align}
Inserting \eqref{eq:ineq-1} and \eqref{eq:ineq-2} into \eqref{eq:Rres/mres} gives \eqref{eq:mres-bound}.

The mass and current measures have uniformly bounded total variation, so weak convergence on compact $X$ is convergence in $d_J$.
Since our sequence $t_n$ is arbitrary, we have proved \eqref{eq:dist-to-class}.

Finally, each term in \eqref{eq:mres-bound} is nonnegative, giving the bound in \eqref{eq:atom-mass-and-count}. Also
\begin{align*}
 \sum_\ell\frac{n_\ell^2}{a_\ell} \geq\frac{(\sum_\ell n_\ell)^2}{\sum_\ell a_\ell} =\frac{N^2}{m_\sharp}.
\end{align*}
This yields $N^2\Theta\leq R_\sharp$.
\end{proof}

We next record the compactness and connectedness of the set of subsequential limits.
\begin{prop}
\label{prop:connected-omega}
Let $\Omega_\sharp$ be the set of all $d_J$-limits of $(\nu_{t_n},J_{t_n})$ along $t_n\to\infty$. Then $\calD_\sharp$ is compact and $\Omega_\sharp$ is a nonempty compact connected subset of $\calD_\sharp$.
\end{prop}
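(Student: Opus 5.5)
The plan has three parts: compactness of $\calD_\sharp$, the properties of $\Omega_\sharp$ that follow from Proposition~\ref{prop:atomic-and-bound}, and connectedness of $\Omega_\sharp$, which is the main obstacle.

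\textbf{Compactness of $\calD_\sharp$.} We may assume $m_\sharp>0$, so that $R_\sharp>0$ by Proposition~\ref{prop:atomic-and-bound}. Every element of $\calD_\sharp$ has total mass $m_\sharp$. By \eqref{eq:mres-bound}, each atom satisfies $a_\ell\geq m_\sharp\Theta/R_\sharp$ (using $n_\ell\geq1$), and the number of atoms satisfies $L^2\Theta\leq R_\sharp$, as in \eqref{eq:atom-mass-and-count}. Hence $L$ takes finitely many values and the $a_\ell$ lie in a compact subinterval of $(0,m_\sharp]$. The second term of \eqref{eq:mres-bound} bounds $|b_\ell|$ uniformly, and the $n_\ell$ are bounded by $\sqrt{R_\sharp/\Theta}$. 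Now take a sequence in $\calD_\sharp$. After passing to a subsequence, we may fix $L$ and the $n_\ell$ and assume $a_\ell\to a_\ell^*$, $b_\ell\to b_\ell^*$, and $y_\ell\to y_\ell^*$ in the compact space $X$. Convergence of these parameters implies convergence in $d_J$, since $d_{\mathrm{BL}}(\delta_y,\delta_{y'})\leq\rho_X(y,y')$.

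Atoms may merge in the limit. If several indices $\ell$ share a limit point $y^*$, I group them and set $\tilde a=\sum a_\ell^*$, $\tilde b=\sum b_\ell^*$, $\tilde n=\sum n_\ell$. The sum constraints pass to the limit. The inequality \eqref{eq:mres-bound} also survives merging:
\begin{itemize}
\item by Cauchy--Schwarz, $\sum n_\ell^2/a_\ell\geq(\sum n_\ell)^2/\sum a_\ell$, so the first term can only decrease;
\item by the orthogonal decomposition used in \eqref{eq:ineq-2}, $\sum a_\ell|b_\ell/a_\ell-\eta|^2\geq\tilde a|\tilde b/\tilde a-\eta|^2$, so the second term can only decrease as well.
\end{itemize}
Therefore the limit lies in $\calD_\sharp$, and $\calD_\sharp$ is compact. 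This is exactly why the parameters $n_\ell$ are carried along.

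\textbf{$\Omega_\sharp$ is nonempty, compact and contained in $\calD_\sharp$.} The pair $(\nu_t,J_t)$ has uniformly bounded total variation on the compact space $X$, so every sequence has a weakly convergent subsequence, and weak convergence coincides with $d_J$ convergence here. Hence $\Omega_\sharp\neq\emptyset$. By \eqref{eq:dist-to-class} and closedness of $\calD_\sharp$, we get $\Omega_\sharp\subset\calD_\sharp$. $\Omega_\sharp$ is the $\omega$-limit set $\bigcap_T\overline{\{(\nu_t,J_t):t\geq T\}}$. This is a decreasing intersection of closed sets inside a compact metric space (a bounded TV ball in the weak topology), so it is compact.

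\textbf{Connectedness (main obstacle).} I will use the standard fact that the $\omega$-limit set of a continuous curve in a compact metric space is connected. The needed input is continuity of $t\mapsto(\nu_t,J_t)$ in $d_J$, which holds because $r\in C(\R_{\geq0};H^1)$ and $h(x/t)$ varies continuously in $t$ for Lipschitz $h$. The delicate point is the point $\infty\in X$: a test function $\Phi$ that is Lipschitz for $\rho_X$ still makes $\Phi(x/t)$ continuous in $t$, by dominated convergence, so this causes no trouble.

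The connectedness argument goes by contradiction. Suppose $\Omega_\sharp=A\cup B$ with $A,B$ disjoint, nonempty and closed, and take $\epsilon$-neighbourhoods with $\epsilon<\dist(A,B)/3$. The curve visits both neighbourhoods at arbitrarily large times. By continuity it must then pass through the compact complement region at some times $s_n\to\infty$. A subsequential limit along $s_n$ belongs to $\Omega_\sharp$ but lies at distance at least $\epsilon$ from both $A$ and $B$, which is a contradiction.
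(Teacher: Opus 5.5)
Your proposal is correct and follows essentially the same route as the paper. Compactness of $\calD_\sharp$ comes from the uniform bounds implied by \eqref{eq:mres-bound}, with coinciding limit atoms merged via Cauchy--Schwarz, and the properties of $\Omega_\sharp$ come from $d_J$-continuity of $t\mapsto(\nu_t,J_t)$ together with \eqref{eq:dist-to-class}. The only difference is in connectedness: the paper writes $\Omega_\sharp$ as the decreasing intersection of the compact connected sets $\overline{\{(\nu_t,J_t):t\geq T\}}$, while you prove that standard fact directly by a separation argument.
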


\begin{rmk}
\label{rmk:no-hopping}
In particular, this shows that if $(\nu_t,J_t)$ does not converge (i.e., if the sequential limits are not all the same), the failure cannot consist of shifts within a discrete set of configurations of the asymptotic atomic measure in \eqref{eq:atomic-pair-def}.
\end{rmk}

\begin{proof}
We prove $\calD_\sharp$ is compact first.
Given a sequence $(\mu_k,J_k)$ in $\calD_\sharp$, write
\begin{align}
\mu_k=\sum_{\ell=1}^{L_k}a_{\ell,k}\delta_{y_{\ell,k}},\qquad J_k=\sum_{\ell=1}^{L_k}b_{\ell,k}\delta_{y_{\ell,k}},
\end{align}
where the $y_{\ell,k}$ are distinct and the corresponding multiplicities are $n_{\ell,k}$. By \eqref{eq:atom-mass-and-count}, we have
\begin{align}
L_k\leq\sum_{\ell=1}^{L_k}n_{\ell,k}\leq\sqrt{\frac{R_\sharp}{\Theta}},\qquad a_{\ell,k}\geq\frac{m_\sharp n_{\ell,k}^2\Theta}{R_\sharp}\geq\frac{m_\sharp\Theta}{R_\sharp}.
\end{align}
Moreover, using \eqref{eq:mres-bound}, we have
\begin{align}
a_{\ell,k}\big|\frac{b_{\ell,k}}{a_{\ell,k}}-\eta\big|^2\leq\frac{2R_\sharp}{m_\sharp},
\end{align}
and hence
\begin{align}
|b_{\ell,k}|\leq a_{\ell,k}|\eta|+\sqrt{\frac{2R_\sharp a_{\ell,k}}{m_\sharp}}\leq m_\sharp|\eta|+\sqrt{2R_\sharp}.
\end{align}
So $L_k$ and $n_{\ell,k}$ are uniformly bounded. After passing to a subsequence and relabeling, we may therefore assume
\begin{align}
L_k=L,\qquad n_{\ell,k}=n_\ell.
\end{align}
Similarly, since $X$ is compact, passing to a subsequence again, we may assume
\begin{align}
y_{\ell,k} \to y_\ell,\qquad a_{\ell,k} \to a_\ell,\qquad b_{\ell,k} \to b_\ell
\end{align}
for every $1\leq\ell\leq L$. The lower bound for $a_{\ell,k}$ gives $a_\ell>0$.

The limiting points $y_\ell$ need not be distinct. Suppose, for example,
\begin{align}
y_1=\cdots=y_s.
\end{align}
Replace these terms by one term at $y_1$ with coefficient $\sum_{\ell=1}^s a_\ell$ and $\sum_{\ell=1}^s b_\ell$, and multiplicity $\sum_{\ell=1}^s n_\ell$.
Cauchy--Schwarz gives
\begin{align} \label{eq:CS-al-nl-bl}
\frac{(\sum_{\ell=1}^s n_\ell)^2}{\sum_{\ell=1}^s a_\ell}\leq\sum_{\ell=1}^s\frac{n_\ell^2}{a_\ell}, 
\quad \Big(\sum_{\ell=1}^s a_\ell\Big)\big|\frac{\sum_{\ell=1}^s b_\ell}{\sum_{\ell=1}^s a_\ell}-\eta\big|^2 
= \frac{\Big|\sum_{\ell=1}^s a_\ell \Big(\frac{b_\ell}{a_\ell}-\eta\Big)\Big|^2}{\sum_{\ell=1}^s a_\ell} 
\leq \sum_{\ell=1}^s a_\ell\big|\frac{b_\ell}{a_\ell}-\eta\big|^2.
\end{align}
Thus collecting the same limiting positions together does not increase either term in \eqref{eq:mres-bound}.
Repeating this produces an expression that is a sum of delta functions at distinct limiting positions and still belongs to $\calD_\sharp$.
The convergence of the positions, masses and momenta (i.e., those coefficients) gives convergence in $d_J$. Hence every sequence in $\calD_\sharp$ has a subsequence converging in $d_J$ to an element of $\calD_\sharp$,
and we know that $\calD_\sharp$ is compact.

The pair $(\nu_t,J_t)$ is continuous in $t$ with respect to $d_J$. This follows from $H^1$-continuity of $r(t)$, dominated convergence for the self-similar multiplier, and
\begin{align*}
 \|j(r(t_n))-j(r(t))\|_{L^1}\to0 \quad\text{when }r(t_n)\to r(t)\text{ in }H^1.
\end{align*}
Its range is relatively compact because the mass and current total variations are uniformly bounded.
For $T\geq1$, consider
\begin{align*}
 K_T=\overline{\{(\nu_t,J_t):t\geq T\}}.
\end{align*}
Each $K_T$ is nonempty, compact, and connected, and the family is nested (decreasing in $T$). Its intersection is $\Omega_\sharp$, so the latter is nonempty, compact,
and connected. Since $\calD_\sharp$ is compact, \eqref{eq:dist-to-class} shows $\Omega_\sharp\subset \calD_\sharp$.
\end{proof}

Next we consider the family of subsequential limits of our measures $(\nu_t,J_t)$ parametrized by a time translation parameter. Since the proof is quite involved, we briefly sketch the idea of the proof first.
The goal is to derive the limiting behaviour of $(\nu_t,J_t)$.
As Proposition~\ref{prop:strong-orthogonality} shows, roughly speaking, one can approximate $|r|^2$ by $|u|^2-|v|^2$, and we will reduce our problem to investigating the measure with density $|u|^2-|v|^2$.
Similarly, for the measure that originally has density $j(r)$, we will approximate it by the measure with density $j(u)-j(v)$. After integrating the time derivative of such a measure (see \eqref{eq:A-phi-def} below), we obtain an identity that is analogous to \eqref{eq:mass-difference} at finite time.
Then, roughly speaking, the goal is to `take the limit' in this identity.
See also $D_A$ in \eqref{eq:DA-def} below for the motivation to consider such measures.

Another ingredient is that, in order to obtain a nice family of limits parametrized by the delayed time $\tau$, we prove the equicontinuity in $\tau$ of the auxiliary sequence of measures above and then apply the Arzel\`a--Ascoli theorem.
It is when we compute the time derivative of these measures that our PDEs enter, through the local conservation law in \eqref{eq:current}.
Now we state our result.

\begin{thm}
\label{thm:subsequential-family}
Let $r(t)$ be defined by \eqref{eq:v,r-def},
and suppose $m_\sharp:=M(u)-M(u_+)>0$. Define $\nu_t$ and $J_t$ by \eqref{eq:nu_t,J_t-def}, and let $d_J$ and $\calD_\sharp$ be as in Proposition~\ref{prop:atomic-and-bound}.
For every sequence $s_n\to\infty$, after potentially passing to a subsequence, there exists a family $(\mu(\tau),J(\tau))$ that is continuous in $\tau$ with respect to $d_J$,
such that, for every $S>0$,
\begin{align} \label{eq:family-limit}
 \sup_{|\tau|\leq S} d_J\big((\nu_{e^{s_n+\tau}},J_{e^{s_n+\tau}}), (\mu(\tau),J(\tau))\big)\to0.
\end{align}
We call such a map a subsequential limit family. For every $\tau$, the pair belongs to $\calD_\sharp$ and $\mu(\tau)(\{\infty\})=0$.
Writing $J_k(\tau)$ for the $k$-th scalar component of $J(\tau)$, we have
\begin{align*}
 \supp J_k(\tau)\subseteq\supp\mu(\tau) \qquad(1\leq k\leq d),
\end{align*}
and $\supp\mu(\tau)$ is a finite subset of $\R^d$. For every real-valued $\phi\in C_c^\infty(\R^d)$ and $\alpha<\beta$, we have
\begin{align}
\label{eq:mass-difference}
 \int\phi\,d\mu(\beta)-\int\phi\,d\mu(\alpha) = \int_\alpha^\beta\Big( 2\int_{\R^d}\nabla\phi(y)\cdot dJ(\tau)(y) -\int_{\R^d}y\cdot\nabla\phi(y)\,d\mu(\tau)(y) \Big)d\tau.
\end{align}
\end{thm}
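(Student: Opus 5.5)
The plan is to prove \eqref{eq:mass-difference} first at finite times, for the auxiliary measures with densities $|u|^2-|v|^2$ and $j(u)-j(v)$, using the local conservation laws. Then I pass to the limit via Arzel\`a--Ascoli in the logarithmic time variable $\tau$. The structural statements (membership in $\calD_\sharp$, finite support, $\supp J_k\subseteq\supp\mu$) then follow from Proposition~\ref{prop:atomic-and-bound}.

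\textbf{Step 1 (finite-time identity).} Let $\phi\in C_c^\infty(\R^d)$ be real and set $A_\phi(t)=\int\phi(x/t)(|u|^2-|v|^2)\,dx$. Both $u$ and $v$ satisfy the local mass conservation $\partial_t|w|^2=-2\nabla\cdot j(w)$; the nonlinearity is gauge invariant, so it does not contribute. Differentiating therefore gives
\begin{align*}
 \frac{d}{dt}A_\phi(t)=\frac1t\int\Big(2\nabla\phi(x/t)\cdot\big(j(u)-j(v)\big)-\frac{x}{t}\cdot\nabla\phi(x/t)\big(|u|^2-|v|^2\big)\Big)dx .
\end{align*}
This is rigorous after approximating by smooth solutions, as in Lemma~\ref{lem:exterior-cone}. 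Substituting $t=e^{s}$ removes the factor $1/t$. Integrating over $s\in[s_n+\alpha,s_n+\beta]$ gives an exact identity: the left side is $A_\phi(e^{s_n+\beta})-A_\phi(e^{s_n+\alpha})$, and the right side is the $\tau$-integral of the same expressions evaluated at $t=e^{s_n+\tau}$. By Proposition~\ref{prop:strong-orthogonality}, the densities $|u|^2-|v|^2$ and $j(u)-j(v)$ differ from $|r|^2$ and $j(r)$ by $o(1)$ in $L^1$, uniformly for $|\tau|\le S$. Hence the identity holds for $(\nu_t,J_t)$ up to $o(1)$. Note that $y\cdot\nabla\phi$ is bounded continuous because $\phi$ has compact support.

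\textbf{Step 2 (equicontinuity and compactness).} The same derivative bound applied to $\nu_t$ shows $|\frac{d}{d\tau}\int\phi\,d\nu_{e^{\tau}}|\le C_\phi B^2+o(1)$. This gives equicontinuity, up to $o(1)$, in $\tau$ for each fixed $\phi$. For $J_t$ I expect the main obstacle: its time derivative involves second derivatives of $r$ and the nonlinearity, so no pointwise Lipschitz bound is available. My first attempt would be the following argument.
\begin{itemize}
\item Suppose equicontinuity of $J$ fails. Then there are $\tau_n,\tau_n'$ with $|\tau_n-\tau_n'|\to0$ but $d_J\bigl((\nu,J)_{e^{s_n+\tau_n}},(\nu,J)_{e^{s_n+\tau_n'}}\bigr)\ge\epsilon$.
\item Apply Theorem~\ref{thm:compact-attractor} at the times $e^{s_n+\tau_n}$. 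The elapsed time between $e^{s_n+\tau_n}$ and $e^{s_n+\tau_n'}$ is $e^{s_n}o(1)$. This may be large in absolute terms, so this argument alone fails, and I would proceed as below.
\end{itemize}
Instead I would obtain continuity of the limit from the structure. Mass continuity from Step 1, together with the discreteness of atoms (Proposition~\ref{prop:atomic-and-bound}, atom masses bounded below by $m_\sharp\Theta/R_\sharp$), pins the positions continuously. For the currents, I would use the bound $|J_t|(E)\le\nu_t(E)^{1/2}\sup\|\nabla r\|$ combined with a mollified-in-time (weak) Arzel\`a--Ascoli argument. First take $d_J$-limits for rational $\tau$ by a diagonal argument, using the compactness of $\calD_\sharp$ from Proposition~\ref{prop:connected-omega}. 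Then extend to all $\tau$ and upgrade to uniform convergence on $[-S,S]$, using the connectedness and nonhopping arguments of Proposition~\ref{prop:connected-omega} localized to short $\tau$-windows.

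\textbf{Step 3 (properties of limits).} Every limit lies in $\calD_\sharp$ by \eqref{eq:dist-to-class} and the closedness of $\calD_\sharp$. Since $\calD_\sharp$ pairs carry $J$ on the atoms of $\mu$, we get $\supp J_k\subseteq\supp\mu$. To show $\mu(\tau)(\{\infty\})=0$, I use Lemma~\ref{lem:exterior-cone}, which gives $\nu_t(\{|y|>2R\})\to0$ uniformly in $t$ as $R\to\infty$. Since the atom masses are bounded below, no atom can sit at $\infty$, and the support is a finite subset of $\R^d$. Finally, letting $n\to\infty$ in the integrated identity of Step 1, with the uniform convergence \eqref{eq:family-limit} and dominated convergence in $\tau$, yields \eqref{eq:mass-difference}.
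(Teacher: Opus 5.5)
Your Steps 1 and 3 match the paper's argument. Step 2 has a genuine gap, and it comes from a false premise.

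\textbf{The premise is false.} You assert that no Lipschitz bound is available for the current pairings, because $\partial_t j(u)$ involves higher derivatives and the nonlinearity. The paper obtains exactly such a bound from the local momentum conservation law \eqref{eq:current} for $u$ and its free analogue for $v$. In that law, $\partial_t j_k$ equals minus the divergence of the stress tensor $2\Re(\partial_k\overline{u}\,\partial_m u)-\frac{2}{d+1}\delta_{km}|u|^{2(d+1)/(d-1)}$, plus $\frac12\partial_k\Delta|u|^2$. Pairing with $\phi(x/e^s)$ and integrating by parts moves every surplus derivative onto $\phi$. The quantity
\[
\frac{d}{ds}\int\phi(x/e^s)\big(j_k(u)-j_k(v)\big)(e^s,x)\,dx
\]
then consists of three pieces:
\begin{itemize}
\item the term $-\int (x/e^s)\cdot\nabla\phi(x/e^s)\,(j_k(u)-j_k(v))\,dx$;
\item the stress tensor paired with $\partial_m\phi(x/e^s)$, which carries no decaying factor;
\item an $O(e^{-2s})$ term.
\end{itemize}
All three are bounded using $\sup_t\|u(t)\|_{H^1}$, $\|u_+\|_{H^1}$ and the embedding $H^1\hookrightarrow L^{2(d+1)/(d-1)}$.

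Hence both $A_\phi(s_n+\tau)$ and the current pairings are uniformly Lipschitz on $|\tau|\le S$. The paper then concludes in three moves:
\begin{itemize}
\item a single Arzel\`a--Ascoli and diagonal argument over a countable dense family of test functions, with the constant parts handled by conservation of mass and momentum;
\item passing from $|u|^2-|v|^2$ and $j(u)-j(v)$ to $|r|^2$ and $j(r)$ via Proposition~\ref{prop:strong-orthogonality};
\item a finite $\epsilon$-net of the bounded-Lipschitz unit ball.
\end{itemize}
This gives \eqref{eq:family-limit} and the $d_J$-continuity of the whole pair $(\mu(\tau),J(\tau))$.

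\textbf{Your substitute does not close the gap.}
\begin{itemize}
\item The bound $|J_t|(E)\le\nu_t(E)^{1/2}\|\nabla r(t)\|_{L^2}$ only says that the current lives where the mass lives. It gives no control of how the vector weight $b_\ell$ at an atom varies in $\tau$.
\item The mass identity alone gives $\frac{d}{d\tau}(a_\ell y_\ell)=2b_\ell-a_\ell y_\ell$ near an isolated atom. This determines $b_\ell$ only as a bounded measurable function of $\tau$, not a continuous one.
\item Connectedness of $\Omega_\sharp$ and the no-hopping remark say nothing about how fast $(\nu_{e^s},J_{e^s})$ moves in $s$. They cannot exclude oscillations of the current on $\tau$-scales shrinking with $n$, which is exactly what equicontinuity must rule out.
\end{itemize}
A mollified-in-$\tau$ (weak-$*$) compactness argument would produce $J(\tau)$ only for almost every $\tau$, with convergence only after integration in $\tau$. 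That is enough to pass to the limit in \eqref{eq:mass-difference}. It is not enough for the $\sup_{|\tau|\le S}$ in \eqref{eq:family-limit}, nor for the $d_J$-continuity of $\tau\mapsto J(\tau)$. Both are used later, for example in the open--closed argument of Proposition~\ref{prop:exposed-rigidity} and in the first-exit argument in the proof of Theorem~\ref{thm:measure-convergence}.
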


\begin{proof}
Let $v(t)=e^{it\Delta}u_+$ be as in \eqref{eq:v,r-def}, and let $j(f)=\Im(\overline{f}\nabla f)$ as in \eqref{eq:j-def}.  
Since we have uniform $H^1$ bounds for $u$, $v$, and $r$, Cauchy--Schwarz gives
\begin{align}
\label{eq:auxiliary-L1-bounds}
 \sup_{t}\Big(  \||u(t)|^2-|v(t)|^2\|_{L^1}
 + \|j(u(t))-j(v(t))\|_{L^1}\Big)<\infty.
\end{align}
Set $t_{n,\tau}=e^{s_n+\tau}$.
For a real-valued $\phi\in C_c^\infty(\R^d)$, define
\begin{align} 
\label{eq:A-phi-def}
A_\phi(s)=\int_{\R^d}\phi(x/e^s)
 (|u|^2-|v|^2)(e^s,x)\,dx.
\end{align}
To compute the time derivative of $A_\phi$, we use the mass-current conservation laws for $u$ and $v$ (due to \eqref{eq:NLS} and the fact that $v$ satisfies the linear Schr\"odinger equation):
\begin{align*}
 \partial_t|u|^2 + 2\nabla\cdot j(u) = 0, \quad
 \partial_t|v|^2 + 2\nabla\cdot j(v) = 0.
\end{align*}
This gives $\partial_t(|u|^2-|v|^2)=-2\nabla\cdot(j(u)-j(v))$, and we have
\begin{align}
\label{eq:log-mass-derivative}
 A_\phi'(s)=\int_{\R^d}\nabla\phi(x/e^s)\cdot
 \Big(2(j(u)-j(v))(e^s,x)
 -(x/e^s)(|u|^2-|v|^2)(e^s,x)\Big)\,dx.
\end{align}
In particular, for $\alpha<\beta$, integrating \eqref{eq:log-mass-derivative} from $s_n+\alpha$ to $s_n+\beta$ gives
\begin{align}
\label{eq:prelimit-mass}
\begin{split}
 A_\phi(s_n+\beta)-A_\phi(s_n+\alpha)  &= 2\int_\alpha^\beta\int_{\R^d}
 \nabla\phi(x/t_{n,\tau})\cdot
 (j(u)-j(v))(t_{n,\tau},x)\,dx\,d\tau \\
 &\quad-\int_\alpha^\beta\int_{\R^d}
 (x/t_{n,\tau})\cdot\nabla\phi(x/t_{n,\tau})
 (|u|^2-|v|^2)(t_{n,\tau},x)\, dx\, d\tau.
 \end{split}
\end{align}
This is the finite-time equality that we will pass to the limit. The right-hand side of \eqref{eq:log-mass-derivative} is uniformly bounded by \eqref{eq:auxiliary-L1-bounds}.

We also need to consider the measure with $j(u)-j(v)$ being its density. 
This is for both the joint convergence in \eqref{eq:family-limit} and for the current term in \eqref{eq:prelimit-mass}. For $1\leq k\leq d$, set
\begin{align}
\label{eq:def-B-phik}
 B_{\phi,k}(s)=\int_{\R^d}\phi(x/e^s) \big(j_k(u)-j_k(v) \big)(e^s,x)\,dx.
\end{align}
The local momentum laws for $u$ and $v$ reads
\begin{align}
\label{eq:current}
\begin{split}
 \partial_t j_k(u)
 &+\sum_{m=1}^d\partial_m\Big(
 2\Re(\partial_k\overline{u}\,\partial_m u)
 -\frac{2}{d+1} \delta_{km}|u|^{2(d+1)/(d-1)}\Big)
 -\frac{1}{2} \partial_k\Delta|u|^2=0,\\
 \partial_t j_k(v)
 &+\sum_{m=1}^d\partial_m\Big(
 2\Re(\partial_k\overline{v}\,\partial_m v)\Big) -\frac{1}{2} \partial_k\Delta|v|^2=0.
\end{split}
\end{align}
Using \eqref{eq:current}, we have
\begin{align}
\label{eq:log-current-derivative}
\begin{split}
 B_{\phi,k}'(s) = &-\int_{\R^d} (x/e^s)\cdot\nabla\phi(x/e^s)
 \big(j_k(u)-j_k(v)\big)(e^s,x)\,dx\\
 &+\sum_{m=1}^d\int_{\R^d}\partial_m\phi(x/e^s)
 \Big(2\Re(\partial_k\overline{u}\,\partial_m u
 -\partial_k\overline{v}\,\partial_m v)
 -\frac{2}{d+1}\delta_{km}|u|^{2(d+1)/(d-1)}\Big)(e^s,x)\,dx\\
 &-\frac{1}{2e^{2s}}\int_{\R^d}(\partial_k\Delta\phi)(x/e^s)
 (|u|^2-|v|^2)(e^s,x)\,dx.
\end{split}
\end{align}
The first line of \eqref{eq:log-current-derivative} is uniformly bounded by \eqref{eq:auxiliary-L1-bounds}. The quadratic terms in the second line are controlled by the uniform $H^1$ bounds, and for
the nonlinear term we use the embedding $H^1\hookrightarrow L^{2(d+1)/(d-1)}$ from
\eqref{eq:subcritical-sobolev-embedding}. 
The last line is $O(e^{-2s})$ by \eqref{eq:auxiliary-L1-bounds}. 

Thus, for every fixed $S>0$, the translated functions $A_\phi(s_n+\tau)$ and $B_{\phi,k}(s_n+\tau)$ are Lipschitz with a uniform Lipschitz constant for $|\tau|\leq S$ and all sufficiently large $n$.

Now we explain our approximation step. If $|\tau|\leq S$, then $t_{n,\tau}\geq e^{s_n-S}\to\infty$. 
Consequently, \eqref{eq:mass-density-l1-decoupling} and \eqref{eq:current-density-l1-decoupling} show that, for every $\Phi\in C(X;\R)$, we have
\begin{align}
\label{eq:mass-uvr-decouple}
 \sup_{|\tau|\leq S}\Big|\int_{\R^d}\Phi(x/t_{n,\tau})
 \big(|u|^2-|v|^2-|r|^2\big)(t_{n,\tau},x)\,dx\Big|\to0,
\end{align}
\begin{align}
\label{eq:current-decouple}
 \sup_{|\tau|\leq S}\Big|\int_{\R^d}\Phi(x/t_{n,\tau})
 \big(j(u)-j(v)-j(r)\big)(t_{n,\tau},x)\,dx\Big|\to 0.
\end{align}

We now prove \eqref{eq:family-limit}. 
Choose a countable dense family $\{\Phi_m\}_{m\geq1}\subset C(X;\R)$ containing the constant function $1$, such that every $\Phi_m$ restricts to a constant plus a function in $C_c^\infty(\R^d)$, and set
\begin{align*}
 \mathcal{V}=\operatorname{span}_{\R}\{\Phi_m:m\geq1\}.
\end{align*}
Write $\Phi_m=c_m+\phi_m$, where $c_m\in\R$ and
$\phi_m\in C_c^\infty(\R^d)$. Conservation of mass and momentum gives
\begin{align*}
 \int_{\R^d}(|u(t)|^2-|v(t)|^2)\,dx=m_\sharp,\qquad
 \int_{\R^d}\big(j(u(t))-j(v(t))\big)\,dx=p_\sharp,
\end{align*}
and hence
\begin{align*}
 &\int_{\R^d}\Phi_m(x/t_{n,\tau})(|u|^2-|v|^2)(t_{n,\tau},x)\,dx
 =c_m m_\sharp+A_{\phi_m}(s_n+\tau),\\
 &\int_{\R^d}\Phi_m(x/t_{n,\tau})\big(j_k(u)-j_k(v)\big)(t_{n,\tau},x)\,dx
 =c_m(p_\sharp)_k+B_{\phi_m,k}(s_n+\tau).
\end{align*}
These are uniformly bounded by our uniform $H^1$ property of $u,v$, and they are Lipschitz in $\tau$ over any fixed compact interval.
The Arzel\`a--Ascoli theorem and a diagonal extraction over $m$, the finitely many
current components $k$, and the bounds $|\tau|\leq S$ with $S \in \N$ therefore give one subsequence on which all these pairings converge locally uniformly in $\tau$.
By linearity, the same holds for every $\Phi\in\mathcal{V}$. The decoupling estimates \eqref{eq:mass-uvr-decouple} and \eqref{eq:current-decouple} show that the corresponding pairings of $(\nu_{t_{n,\tau}},J_{t_{n,\tau}})$ have the same limits.

Let $\|\cdot\|_{\TV}$ denote total variation. By
\eqref{eq:nu_t,J_t-def},
\begin{align}
\label{eq:tv-bounds}
 \|\nu_t\|_{\TV}=\|r(t)\|_{L^2}^2,\qquad
 \|J_t\|_{\TV}\leq\|r(t)\|_{L^2}\|\nabla r(t)\|_{L^2}.
\end{align}
Thus each scalar limit at each $\tau$ defines a linear functional on $\mathcal{V}$ bounded by $C\|\Phi\|_{L^\infty}$, with $C$ independent of $\tau$.
Since $\mathcal{V}$ is dense in $C(X;\R)$, each such functional has a unique extension to $C(X;\R)$ and is represented by one of the measures $\mu(\tau),J_1(\tau),\ldots,J_d(\tau)$, whose total variation is at most $C$.

Now we upgrade our results from $\mathcal{V}$ to general $\Phi \in C(X;\R)$. 
Indeed, for $\Psi\in\mathcal{V}$,
\begin{align}
\label{eq:dense-test-approximation}
 \big|\int_X\Phi\,d(\nu_{t_{n,\tau}}-\mu(\tau))\big|
 \leq &\big|\int_X\Psi\,d(\nu_{t_{n,\tau}}-\mu(\tau))\big|
 +2C\|\Phi-\Psi\|_{L^\infty}.
\end{align}
The same estimate holds for each current component. Approximating $\Phi$ uniformly by an element of $\mathcal{V}$ proves locally uniform convergence for every fixed continuous test function.

Fix $S>0$ and let
\begin{align*}
 \mathcal{B} = \{\Phi\in C(X):\|\Phi\|_{L^\infty}\leq1,  \ \Lip_{\rho_X}(\Phi) \leq 1 \}.
\end{align*}
Since $X$ is compact, $\mathcal{B}$ is compact in $C(X)$ by Arzel\`a--Ascoli theorem. Given $\epsilon>0$, choose a finite $\epsilon$-net $\Psi_1,\ldots,\Psi_N$ for $\mathcal{B}$. 
For $|\tau|\leq S$, \eqref{eq:dense-test-approximation} gives
\begin{equation}
\label{eq:bounded-lipschitz-net-estimate}
\begin{aligned}
d_J \big((\nu_{t_{n,\tau}},J_{t_{n,\tau}}),(\mu(\tau),J(\tau))\big) \leq & \max_{1\leq i\leq N}
 \big|\int_X\Psi_i\,d(\nu_{t_{n,\tau}}-\mu(\tau))\big|
 \\ + & \sum_{k=1}^d\max_{1\leq i\leq N} \big|\int_X\Psi_i\,d(J_{t_{n,\tau},k}-J_k(\tau))\big|
 +2(d+1) C \epsilon.
\end{aligned}
\end{equation}
The first two terms on the right tend to zero uniformly for $|\tau|\leq S$, because the net is finite. Hence
\begin{align*}
 \limsup_{n\to\infty}\sup_{|\tau|\leq S}
 d_J\big((\nu_{t_{n,\tau}},J_{t_{n,\tau}}),(\mu(\tau),J(\tau))\big)
 \leq2(d+1)C\epsilon.
\end{align*}
Letting $\epsilon \to 0$ proves \eqref{eq:family-limit}.

For each $n$, $(\nu_{t_{n,\tau}},J_{t_{n,\tau}})$ is continuous in $\tau$ with respect to $d_J$. Its locally uniform limit $(\mu(\tau),J(\tau))$ is therefore also continuous in $\tau$ with respect to $d_J$. 
Now for each fixed $\tau$, $t_{n,\tau}\to\infty$. 
So we have
\begin{align*}
(\mu(\tau),J(\tau)) \in \calD_\sharp
\end{align*}
by Proposition~\ref{prop:atomic-and-bound} and the compactness of $\calD_\sharp$ from Proposition~\ref{prop:connected-omega}. By the definition of
$\calD_\sharp$, the measure $\mu(\tau)$ has finite support in $X$ and
\begin{align*}
 \supp J_k(\tau)\subseteq\supp\mu(\tau)\qquad(1\leq k\leq d).
\end{align*}
By Lemma~\ref{lem:exterior-cone}, we have $\mu(\tau)(\{\infty\})=0$, or equivalently
$\supp\mu(\tau) \subset \R^d$ is compact.

We finally return to the exact finite-time identity \eqref{eq:prelimit-mass}. Fix $\alpha<\beta$. For
$\gamma\in\{\alpha,\beta\}$, \eqref{eq:family-limit} and \eqref{eq:mass-uvr-decouple} give
\begin{align*}
 A_\phi(s_n+\gamma)\to\int_X\phi\,d\mu(\gamma).
\end{align*}
Moreover, uniformly for $\alpha\leq\tau\leq\beta$,
\eqref{eq:family-limit},
\eqref{eq:mass-uvr-decouple}, and
\eqref{eq:current-decouple} give
\begin{align*}
 &\int_{\R^d}\nabla\phi(x/t_{n,\tau})\cdot
 (j(u)-j(v))(t_{n,\tau},x)\,dx
 \to\int_X\nabla\phi(y)\cdot dJ(\tau)(y),\\
 &\int_{\R^d}(x/t_{n,\tau})\cdot\nabla\phi(x/t_{n,\tau})
 (|u|^2-|v|^2)(t_{n,\tau},x)\,dx
 \to\int_X y\cdot\nabla\phi(y)\,d\mu(\tau)(y).
\end{align*}
The functions $\nabla\phi(y)$ and $y\cdot\nabla\phi(y)$ extend continuously to $X$, with value zero at $\infty$. We may therefore pass to the limit in \eqref{eq:prelimit-mass}. 
Since $\mu(\tau)$ and $J(\tau)$ have no support at $\infty$, the resulting identity is exactly \eqref{eq:mass-difference}.
\end{proof}

The identity \eqref{eq:mass-difference} in Theorem~\ref{thm:subsequential-family} does not control the momentum and this is the issue that we will handle next using a similar argument.
Throughout the rest of this section we fix a subsequential limit family $(\mu(\tau),J(\tau))$ obtained along $s_n\to\infty$
in Theorem~\ref{thm:subsequential-family}, write $t_{n,\tau}=e^{s_n+\tau}$, and let $v(t)$ be as in \eqref{eq:v,r-def}.
Finally, for $1\leq k,m\leq d$ we define a (signed) measure $\Sigma_{n,km}(\tau)$ on $X$ by
\begin{align*}
 \int_X\phi(y)\,d\Sigma_{n,km}(\tau)(y) = &\int_{\R^d}\phi(x/t_{n,\tau}) \Big( 2\Re(\partial_k\overline{u}\,\partial_m u -\partial_k\overline{v}\,\partial_m v) -\frac{2}{d+1}\delta_{km}|u|^{2(d+1)/(d-1)} \Big)(t_{n,\tau},x)\,dx
\end{align*}
for $\phi\in C(X)$.

\begin{prop}
\label{prop:stress-1}
After passing to a further subsequence, there are signed measures $\Sigma_{km}(\tau)$ on $X$,
uniformly bounded in total variation, such that
\begin{equation}
\label{eq:stress-measure-weakstar-convergence}
\begin{aligned}
 \int_{\R}\int_X\Phi(\tau,y)\,d\Sigma_{n,km}(\tau)(y)\,d\tau &\to \int_{\R}\int_X\Phi(\tau,y)\,d\Sigma_{km}(\tau)(y)\,d\tau
\end{aligned}
\end{equation}
whenever $\Phi(\tau,\cdot)$ is integrable in $\tau$ with values in $C(X)$ and vanishes outside a bounded interval.
For every $\chi\in C_c^\infty(\R^d)$, $\alpha<\beta$, and $1\leq k\leq d$,
\begin{align}
\label{eq:momentum-difference}
 \int_{\R^d}\chi\,dJ_k(\beta) -\int_{\R^d}\chi\,dJ_k(\alpha) &= \int_\alpha^\beta\Big( -\int_{\R^d} y\cdot\nabla\chi(y)\,dJ_k(\tau)(y) +\sum_{m=1}^d\int_{\R^d}\partial_m\chi(y)\, d\Sigma_{km}(\tau)(y) \Big)d\tau.
\end{align}
Let $I\subset\R$ be a compact interval, and let $K\subset\R^d$ be compact.
Suppose that
\begin{align}
\supp \mu(\tau) \subset K, \quad \forall \tau \in I,
\end{align}
Then \footnote{Rigorously speaking, $\Sigma_{km}$ is only defined a.e. for $\tau$. But it will only be involved in integrals and we ignore this issue below.}
\begin{align}
\label{eq:Sigma-km-supp}
\supp \, \Sigma_{km}(\tau) \subset K
\qquad\text{for almost every }\tau\in I.
\end{align}
\end{prop}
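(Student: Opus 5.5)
The proof runs in parallel with the proof of Theorem~\ref{thm:subsequential-family}, with the stress measures $\Sigma_{n,km}$ taking the role the current measures played there, and is organized in three steps.

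\textbf{Step 1: a weak-$*$ limit of $\Sigma_{n,km}$.} The total variation of $\Sigma_{n,km}(\tau)$ is bounded uniformly in $n$ and $\tau$. Indeed, the quadratic terms are controlled by $\sup_t(\|\nabla u\|_{L^2}^2+\|\nabla v\|_{L^2}^2)$, and the nonlinear term by \eqref{eq:subcritical-sobolev-embedding} together with \eqref{eq:uniform-H1}. Fix $S\in\N$ and regard $\tau\mapsto\Sigma_{n,km}(\tau)$ as an element of $L^\infty([-S,S];\mathcal{M}(X))$. Since $X$ is compact metric, $C(X)$ is separable, and this space is the dual of the separable Banach space $L^1([-S,S];C(X))$. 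Banach--Alaoglu then yields a weak-$*$ convergent subsequence. A diagonal extraction over $S$ and over the finitely many pairs $(k,m)$ gives a limit $\Sigma_{km}$, defined for a.e.\ $\tau$, with \eqref{eq:stress-measure-weakstar-convergence} holding for all $\Phi\in L^1_c(\R;C(X))$. The representation of the dual uses weak-$*$ measurable measure-valued maps, which is why $\Sigma_{km}(\tau)$ is only defined almost everywhere, as the footnote anticipates.

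\textbf{Step 2: the identity \eqref{eq:momentum-difference}.} Integrate \eqref{eq:log-current-derivative} with $\phi=\chi$ from $s_n+\alpha$ to $s_n+\beta$. This gives an exact identity for
\[
B_{\chi,k}(s_n+\beta)-B_{\chi,k}(s_n+\alpha).
\]
The terms are handled one at a time as $n\to\infty$.
\begin{itemize}
\item The endpoint terms converge to $\int\chi\,dJ_k(\gamma)$ for $\gamma\in\{\alpha,\beta\}$, by \eqref{eq:family-limit} and \eqref{eq:current-decouple}.
\item The first line of \eqref{eq:log-current-derivative} converges uniformly in $\tau$ to $-\int y\cdot\nabla\chi\,dJ_k(\tau)$, by the same two results. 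Here $y\cdot\nabla\chi$ is continuous on $X$ and vanishes at $\infty$.
\item The second line is exactly $\sum_m\int\partial_m\chi\,d\Sigma_{n,km}(\tau)$. Integrated against $\mathbf 1_{[\alpha,\beta]}(\tau)$, it converges by \eqref{eq:stress-measure-weakstar-convergence} with $\Phi(\tau,y)=\mathbf 1_{[\alpha,\beta]}(\tau)\partial_m\chi(y)$.
\item The last line is $O(e^{-2(s_n+\tau)})\to0$.
\end{itemize}
Since $\mu(\tau)(\{\infty\})=0$ and $\supp J_k\subseteq\supp\mu$, every integral may be written over $\R^d$. This gives \eqref{eq:momentum-difference}.

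\textbf{Step 3: the support statement \eqref{eq:Sigma-km-supp}.} This is the main obstacle. It does not suffice that $\mu(\tau)$ is supported in $K$, because a priori the gradient energy of $u-v$ might sit where the mass of $r$ is small. Fix $\psi\in C(X)$ with support disjoint from $K$ and $\varphi\in C_c(I)$. It suffices to show
\[
\int\varphi(\tau)\int\psi\,d\Sigma_{n,km}(\tau)\,d\tau\to0 .
\]
Choose an open set $U\supset\supp\psi$ whose closure in $X$ misses $K$. By \eqref{eq:family-limit}, $\nu_{t_{n,\tau}}(U)\to0$ uniformly for $\tau\in I$; more precisely, the pairing with a cutoff equal to $1$ on $U$ tends to zero. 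The task is to upgrade this $L^2$ smallness of $r$ to $H^1$ smallness. Along any sequence $(n,\tau_n)$, apply Theorem~\ref{thm:compact-attractor} in the form \eqref{eq:residual-profile-decomposition}. Every nonzero profile has positive mass, and its center cannot satisfy $x_{j,n}/t_{n,\tau_n}\to y_j\in\overline U$, since otherwise \eqref{eq:mass-limit-3} would put mass $M(w_j)>0$ in $U$. Hence every profile escapes the cone over $\supp\psi$, and $\|\tilde\psi(x/t)r\|_{H^1}\to0$ for a slightly larger continuous cutoff $\tilde\psi$. This is the same mechanism as in \eqref{eq:outer-profile-zero}.

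With this in hand, expand the integrand on $\supp\psi$:
\[
2\Re(\partial_k\bar u\,\partial_m u-\partial_k\bar v\,\partial_m v)=2\Re(\partial_k\bar r\,\partial_m r+\partial_k\bar v\,\partial_m r+\partial_k\bar r\,\partial_m v).
\]
Each term contains a factor of $\nabla r$, which is small on the cone over $\supp\psi$, while $\nabla v$ is bounded in $L^2$. For the nonlinear term, $|u|^{2(d+1)/(d-1)}$ is bounded pointwise by $|v|^{2(d+1)/(d-1)}+|r|^{2(d+1)/(d-1)}$ up to a constant. The $v$ part tends to $0$ by Lemma~\ref{lem:free-lq-decay}, and the localized $r$ part is small by Sobolev embedding. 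A compactness-in-$\tau$ argument (arguing by contradiction along sequences $\tau_n$) makes all of this uniform in $\tau\in I$. Dominated convergence then shows that the pairing tends to zero. Therefore $\Sigma_{km}(\tau)$ vanishes on $X\setminus K$ for a.e.\ $\tau\in I$, which is \eqref{eq:Sigma-km-supp}.
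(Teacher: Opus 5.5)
Your proposal is correct and follows essentially the paper's proof. It uses the same weak-$*$ extraction in the dual of $L^1(I_0;C(X))$, the same integration of \eqref{eq:log-current-derivative} to obtain \eqref{eq:momentum-difference}, and the same profile-decomposition upgrade from the vanishing of $\mu(\tau)$ off $K$ to $H^1$-smallness of the localized residual, followed by expanding the stress density. The only difference is cosmetic: you locate the profile centers via the uniform-in-$\tau$ vanishing of a cutoff pairing with $\nu_{t_{n,\tau}}$, rather than via $\tau_n\to\tau_*$ and $d_J$-continuity. As the paper does, you should take $\tilde\psi$ smooth with bounded derivatives (constant near $\infty$) so that the $H^1$ statement makes sense.
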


\begin{proof}
The uniform $H^1$ bound in Theorem~\ref{thm:compact-attractor}, the $H^1$-unitarity of $e^{it\Delta}$,
and \eqref{eq:subcritical-sobolev-embedding} give
\begin{align*}
 \|\Sigma_{n,km}(\tau)\|_{\mathrm{TV}} \lesssim \|\nabla u(e^{s_n+\tau})\|_{L^2}^2 +\|\nabla v(e^{s_n+\tau})\|_{L^2}^2 +\|u(e^{s_n+\tau})\|_{L^{2(d+1)/(d-1)}}^{2(d+1)/(d-1)} \lesssim1.
\end{align*}
On each bounded interval $I_0$, integration against $\Sigma_{n,km}(\tau)$ defines a uniformly bounded functional on $L^1(I_0;C(X))$.
So we have a subsequential weak-star limit, and the representation of the dual gives signed measures $\Sigma_{km}(\tau)$, defined for almost every $\tau$,
whose pairings with functions in $C(X)$ are measurable and whose total variations have the same uniform bound.
A diagonal extraction over the components and over an increasing sequence of intervals gives \eqref{eq:stress-measure-weakstar-convergence}.

For $\chi\in C_c^\infty(\R^d)$, use the current pairing $B_{\chi,k}$ defined in \eqref{eq:def-B-phik}. By the chain rule and \eqref{eq:log-current-derivative}, the function $B_{\chi,k}(s_n+\tau)$ is locally absolutely continuous in $\tau$ and has, for almost every $\tau$, the derivative given there with $\phi=\chi$ and $s=s_n+\tau$. 
By the definition of $\Sigma_{n,km}$, the stress term in that identity is
\begin{align*}
 \sum_{m=1}^d\int_X\partial_m\chi(y)\,d\Sigma_{n,km}(\tau)(y).
\end{align*}
By \eqref{eq:family-limit} and \eqref{eq:current-decouple}, the endpoint values $B_{\chi,k}(s_n+\gamma)$, $\gamma\in\{\alpha,\beta\}$, converge to the corresponding pairings with $J_k$. 
The first term on the right-hand side of \eqref{eq:log-current-derivative}, evaluated with $\phi=\chi$ and $s=s_n+\tau$, converges uniformly on bounded $\tau$-intervals to
\begin{align*}
 -\int_{\R^d} y\cdot\nabla\chi(y)\,dJ_k(\tau)(y).
\end{align*}
The last term in \eqref{eq:log-current-derivative}, with the same substitution, tends uniformly to zero by
\eqref{eq:auxiliary-L1-bounds}, since $e^{s_n+\tau}\to\infty$ uniformly on bounded $\tau$-intervals.
Integrating \eqref{eq:log-current-derivative} from $s_n+\alpha$ to $s_n+\beta$, changing variables $s=s_n+\tau$, 
and using \eqref{eq:stress-measure-weakstar-convergence} with $\Phi(\tau,y)=\mathbf1_{\{\alpha\leq\tau\leq\beta\}}(\tau)\partial_m\chi(y)$, we obtain \eqref{eq:momentum-difference}.

It remains to prove \eqref{eq:Sigma-km-supp}. 
Take $\tilde{\chi}\in C^\infty(I\times\R^d)$ that has bounded derivatives, and is constant in $y$ outside a fixed compact set, and vanishes on a neighborhood of $K$.
Then we will show
\begin{align}
\label{eq:off-atom-stress-localization}
 \sup_{\tau\in I} \|\tilde{\chi}(\tau,x/e^{s_n+\tau})r(e^{s_n+\tau})\|_{H^1} \to0.
\end{align}
If \eqref{eq:off-atom-stress-localization} is not true, we can choose $\tau_n\to\tau_*\in I$ along which the norm stays bounded away from zero.
Apply Theorem~\ref{thm:compact-attractor} at the times $e^{s_n+\tau_n}$ and pass to a subsequence on which all normalized profile centers converge in $X$.
The convergence \eqref{eq:family-limit}, the $d_J$-continuity in Theorem~\ref{thm:subsequential-family},
and Proposition~\ref{prop:subseq-atomicity} show that the limiting center of every nonzero profile belongs to $\supp\mu(\tau_*)$.
Multiplication by $\tilde{\chi}(\tau_n,x/e^{s_n+\tau_n})$ therefore sends each translated profile to zero in $H^1$ by dominated convergence.
The same is true of the $o_{H^1}(1)$ remainder in Theorem~\ref{thm:compact-attractor}; the term created by differentiating the cutoff has the additional factor $e^{-(s_n+\tau_n)}$.
This contradiction proves \eqref{eq:off-atom-stress-localization}.

Let $\Phi\in C(I\times X)$ be supported away from $K$. Choose $\tilde{\chi}$ satisfying the hypotheses of \eqref{eq:off-atom-stress-localization} and equal to one on a neighborhood of $\supp\Phi$.
The product rule gives, uniformly on $I$,
\begin{equation}
\label{eq:off-atom-gradient-localization}
\begin{aligned}
 \|\tilde{\chi}(\tau,x/e^{s_n+\tau})\nabla r(e^{s_n+\tau})\|_{L^2} &\leq \|\nabla\big(\tilde{\chi}(\tau,x/e^{s_n+\tau})r(e^{s_n+\tau})\big)\|_{L^2} +e^{-(s_n+\tau)}\|\nabla_y\tilde{\chi}\|_{L^\infty} \|r(e^{s_n+\tau})\|_{L^2} \to0.
\end{aligned}
\end{equation}
Since $u=v+r$, \eqref{eq:off-atom-gradient-localization} and the uniform $H^1$ bounds make the quadratic part of
\begin{align*}
 \int_X\Phi(\tau,y)\,d\Sigma_{n,km}(\tau)(y)
\end{align*}
tend to zero uniformly for $\tau\in I$. Its nonlinear part also tends to zero uniformly, because \eqref{eq:subcritical-sobolev-embedding} controls the localized $r$ term and Lemma~\ref{lem:free-lq-decay} gives
\begin{align*}
 \sup_{\tau\in I} \|v(e^{s_n+\tau})\|_{L^{2(d+1)/(d-1)}}\to0.
\end{align*}
After integration in $\tau$, \eqref{eq:stress-measure-weakstar-convergence} shows that the signed measure $\Sigma_{km}(\tau)\,d\tau$ on $I\times X$ annihilates every continuous test supported in $(I\times X)\setminus(I\times K)$, which proves \eqref{eq:Sigma-km-supp}.
\end{proof}

Now we show that the asymptotic velocities at which $(\nu_t,J_t)$ are concentrated cannot be arbitrarily large.
Conceptually this is similar to Lemma~\ref{lem:exterior-cone}.

\begin{prop}
\label{prop:supp-mu-bound}
There is a constant $R_u<\infty$, depending only on $u$, such that every subsequential limit family satisfies
\begin{align}
\label{eq:uniform-support-bound}
 \supp\mu(\tau)\subseteq\{y\in\R^d:|y|\leq R_u\} \qquad(\tau\in\R).
\end{align}
In addition, \eqref{eq:uniform-support-bound} also holds with $\supp\mu(\tau)$ replaced by $\supp J_k(\tau)$, $1\leq k \leq d$. 
\end{prop}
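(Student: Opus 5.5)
The plan is to combine the exterior-cone tightness of Lemma~\ref{lem:exterior-cone} with the structure of $\calD_\sharp$ from Proposition~\ref{prop:atomic-and-bound}, which gives a uniform lower bound on atom masses.

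First, by Proposition~\ref{prop:atomic-and-bound}, every atom of $\mu(\tau)$ has mass at least $a_{\min}:=m_\sharp\Theta/R_\sharp>0$. This bound depends only on $u$, through $m_\sharp$ and $R_\sharp$, and not on the family or on $\tau$. Indeed, $(\mu(\tau),J(\tau))\in\calD_\sharp$ by Theorem~\ref{thm:subsequential-family}, and \eqref{eq:mres-bound} with $n_\ell\geq1$ gives $a_\ell\geq m_\sharp n_\ell^2\Theta/R_\sharp$.

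Second, Lemma~\ref{lem:exterior-cone} lets me choose $R\geq1$, depending only on $u$, such that
\begin{align*}
 \sup_{t\geq1}\|\eta(x/(Rt))r(t)\|_{L^2}^2<a_{\min}/2 .
\end{align*}
I set $R_u:=2R$. Now suppose some atom $y_\ell\in\supp\mu(\tau)$ has $|y_\ell|>2R$. Pick a continuous $\Phi$ on $X$ with $0\leq\Phi\leq\eta(\cdot/R)^2$ and $\Phi(y_\ell)=1$; for instance, take $\Phi$ supported in $\{|y|>2R\}$ with $\Phi=1$ near $y_\ell$. Then $\int\Phi\,d\nu_{t}\leq a_{\min}/2$ for all $t\geq1$. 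By \eqref{eq:family-limit}, $\int\Phi\,d\nu_{e^{s_n+\tau}}\to\int\Phi\,d\mu(\tau)\geq a_\ell\geq a_{\min}$, which is a contradiction. Hence $\supp\mu(\tau)\subseteq\{|y|\leq R_u\}$ for every $\tau$. This uses only that $\mu(\tau)$ is a finite nonnegative atomic measure in $\calD_\sharp$; it does not need $\mu(\tau)(\{\infty\})=0$ separately.

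For the current, Theorem~\ref{thm:subsequential-family} already gives $\supp J_k(\tau)\subseteq\supp\mu(\tau)$, so the same bound transfers immediately.

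The only subtle point is making sure $R_u$ is uniform over all subsequential limit families. This holds because the atom lower bound and the tightness constant both depend only on $u$.
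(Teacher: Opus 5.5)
Your proposal is correct and follows essentially the same route as the paper: the uniform atom-mass lower bound $m_\sharp\Theta/R_\sharp$ coming from $(\mu(\tau),J(\tau))\in\calD_\sharp$, combined with the exterior-cone tightness of Lemma~\ref{lem:exterior-cone} to choose $R$ so that the exterior mass stays below half that bound, followed by the transfer to $J_k$ via $\supp J_k(\tau)\subseteq\supp\mu(\tau)$. The only cosmetic difference is that you test against a bump $0\le\Phi\le\eta(\cdot/R)^2$ localized at the offending atom, while the paper tests directly against $\eta(y/R)^2$; since that function equals one at $\infty$, it also excludes an atom at $\infty$.
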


\begin{proof}

Fix $\tau$ and write the atomic masses of $\mu(\tau)$ as $a_\ell$. We prove \eqref{eq:uniform-support-bound}. Since every $(\mu(\tau),J(\tau))$ belongs to $\calD_\sharp$ by Theorem~\ref{thm:subsequential-family},
\eqref{eq:atom-mass-and-count} gives
\begin{align*}
 a_\ell\geq a_*:=\frac{m_\sharp\Theta}{R_\sharp}>0
\end{align*}
for every atom of every subsequential limit family. Let $\vartheta$ be the cutoff in Lemma~\ref{lem:exterior-cone}.
Choose $R<\infty$ so large that
\begin{align*}
 \sup_{t\geq1}\|\vartheta(x/(Rt))r(t)\|_{L^2}^2<\frac{a_*}{2}.
\end{align*}
Regard $\vartheta(y/R)^2$ as a function on $X$ that is continuous in $y$, with value one at $\infty$. For each fixed $\tau$,
\eqref{eq:family-limit} gives
\begin{align*}
 \int_X\vartheta(y/R)^2\,d\mu(\tau)(y) =\lim_{n\to\infty}\|\vartheta(x/(Rt_{n,\tau}))r(t_{n,\tau})\|_{L^2}^2 \leq\frac{a_*}{2}.
\end{align*}
Because $\vartheta(y/R)=1$ when $|y|\geq2R$, no atom of mass at least $a_*$ can lie there. Thus \eqref{eq:uniform-support-bound} holds with $R_u=2R$.
The assertion for $J_k(\tau)$ follows from $\supp J_k(\tau)\subseteq\supp\mu(\tau)$ in Theorem~\ref{thm:subsequential-family}.

\end{proof}

We record the atomic form of the limiting mass measure.

\begin{prop}
\label{prop:mass-limit-form}
Let the setting and notation be as in Theorem~\ref{thm:subsequential-family}. Assume that $\nu_t\rightharpoonup\mu_\infty$ weakly on $X$ as $t\to\infty$.
Then there are $L\geq1$, distinct $y_1,\ldots,y_L\in\R^d$, and numbers $a_\ell>0$ such that
\begin{align*}
 \mu_\infty=\sum_{\ell=1}^La_\ell\delta_{y_\ell}.
\end{align*}
\end{prop}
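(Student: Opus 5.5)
The plan is to use that, once $\nu_t$ converges, every subsequential limit family from Theorem~\ref{thm:subsequential-family} is constant in $\tau$ with $\mu(\tau)=\mu_\infty$. The structural information on $\mu(\tau)$ established there then transfers to $\mu_\infty$.

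\textbf{Step 1.} Take any $s_n\to\infty$. Apply Theorem~\ref{thm:subsequential-family} and pass to a subsequence to obtain a subsequential limit family $(\mu(\tau),J(\tau))$. For each fixed $\tau$ we have $e^{s_n+\tau}\to\infty$, so the assumption $\nu_t\rightharpoonup\mu_\infty$ gives $\nu_{e^{s_n+\tau}}\rightharpoonup\mu_\infty$. Comparing with \eqref{eq:family-limit} (convergence in $d_{\mathrm{BL}}$ implies weak convergence, and weak limits on the compact $X$ are unique), we get $\mu(\tau)=\mu_\infty$ for every $\tau$; in particular $\mu(0)=\mu_\infty$.

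\textbf{Step 2.} Theorem~\ref{thm:subsequential-family} says $(\mu(0),J(0))\in\calD_\sharp$. Hence by \eqref{eq:atomic-pair-def}, $\mu_\infty=\sum_{\ell=1}^La_\ell\delta_{y_\ell}$ with distinct $y_\ell\in X$ and $a_\ell>0$. Also $\sum_\ell a_\ell=m_\sharp>0$ (this is part of the definition of $\calD_\sharp$, and $m_\sharp>0$ is the standing hypothesis of Theorem~\ref{thm:subsequential-family}). Therefore $L\geq1$; the bound $L\leq\sqrt{R_\sharp/\Theta}$ from \eqref{eq:atom-mass-and-count} also shows $L$ is finite.

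\textbf{Step 3.} It remains to exclude an atom at $\infty$. Theorem~\ref{thm:subsequential-family} gives $\mu(\tau)(\{\infty\})=0$; alternatively, Proposition~\ref{prop:supp-mu-bound} gives $\supp\mu_\infty\subseteq\{|y|\leq R_u\}$. Either way, every $y_\ell$ lies in $\R^d$. Discarding any zero coefficients is unnecessary, since all $a_\ell>0$ by the definition of $\calD_\sharp$.

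The only delicate point is Step 1: we must justify that $d_J$-convergence along the family identifies the limit with the weak limit $\mu_\infty$. Both notions of convergence are tested against continuous functions on the compact space $X$, and bounded Lipschitz functions are dense in $C(X)$ with uniformly bounded total variations by \eqref{eq:tv-bounds}. So the two limits coincide, and this step is routine.
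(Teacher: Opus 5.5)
Your proposal is correct and follows essentially the paper's argument: you extract a subsequential limit family from Theorem~\ref{thm:subsequential-family}, identify $\mu(0)=\mu_\infty$ by uniqueness of weak limits on $X$, and read off the finite atomic form and the absence of mass at $\infty$ from that theorem. Your explicit appeal to membership in $\calD_\sharp$, where $\sum_\ell a_\ell=m_\sharp>0$ gives $L\geq1$ and the definition gives $a_\ell>0$, simply spells out a detail the paper leaves implicit.
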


\begin{proof}
Let $t_n\to\infty$ be arbitrary and set $s_n=\log t_n$. After passing to a subsequence, Theorem~\ref{thm:subsequential-family} gives a subsequential limit family $(\mu(\tau),J(\tau))$.
At $\tau=0$, the assumed convergence of $\nu_t$ and \eqref{eq:family-limit} give $\mu(0)=\mu_\infty$.
In addition, Theorem~\ref{thm:subsequential-family} also gives that $\supp \mu(0)$ is in a finite subset of $\R^d$, which gives the stated representation.
\end{proof}

\section{Uniform in time estimates}
\label{sec:commutators}

We prove some upgrading estimates in this section.
The upgrading here is from estimates at a sequence of times to estimates on a sequence of time intervals.
We begin with some computations preparing for our `propagation estimates'.
Here we take $u\in C_c^\infty(\R^d;\C)$; it will eventually be a truncated version of our function.
\begin{lem}
\label{lem:At-commutator}
Let $b\in C_c^\infty(\R^d;\R^d)$, $c\in C_c^\infty(\R^d;\R)$, and define $a(y,\xi)=b(y)\cdot\xi+c(y)$.
For $t>0$, set $A_t=\Op^w(a(x/t,\xi))$. For $u\in C_c^\infty(\R^d;\C)$, define $M_uf=|u|^{4/(d-1)}f$. Then
\begin{align}
\label{eq:AtMu-commutator}
 i\la (A_tM_u-M_uA_t)u,u\ra =-\frac{2}{d+1}t^{-1} \int_{\R^d}(\nabla\cdot b)(x/t)|u|^{2(d+1)/(d-1)}\,dx,
\end{align}
and this functional has a unique continuous extension to $H^1$.
\end{lem}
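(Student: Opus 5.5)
We compute the Weyl quantization explicitly and then reduce the commutator to an integration by parts. For $a(y,\xi)=b(y)\cdot\xi+c(y)$ with $\xi$ the symbol of $D=-i\nabla$, the Weyl quantization of $a(x/t,\xi)$ is
\begin{align*}
 A_t f = \frac12\big(b(x/t)\cdot Df + D\cdot(b(x/t)f)\big) + c(x/t)f
 = -i\,b(x/t)\cdot\nabla f-\frac{i}{2t}(\nabla\cdot b)(x/t)f + c(x/t)f .
\end{align*}
This is a standard formula for symbols that are linear in $\xi$. The zeroth order part $c(x/t)$ is a multiplication operator, so it commutes with $M_u$ and drops out of the commutator.

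The first-order part of $A_t$ is a derivation plus a multiplication, so only the derivative falling on $|u|^{4/(d-1)}$ survives:
\begin{align*}
 [A_t,M_u]f=-i\,b(x/t)\cdot\nabla\big(|u|^{4/(d-1)}\big)f .
\end{align*}
Hence
\begin{align*}
 i\la[A_t,M_u]u,u\ra=\int_{\R^d} b(x/t)\cdot\nabla\big(|u|^{4/(d-1)}\big)|u|^2\,dx .
\end{align*}
Next we write $|u|^2\nabla(|u|^{4/(d-1)})=\frac{2}{d+1}\nabla(|u|^{2(d+1)/(d-1)})$. This identity follows from $\nabla (|u|^2)^{p}=p|u|^{2(p-1)}\nabla|u|^2$ with $p=2/(d-1)$ and $p=(d+1)/(d-1)$, whose ratio of coefficients is $\frac{2/(d-1)}{(d+1)/(d-1)}=\frac{2}{d+1}$. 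Integrating by parts then moves the divergence onto $b(x/t)$, which produces the factor $t^{-1}(\nabla\cdot b)(x/t)$ and the sign in \eqref{eq:AtMu-commutator}. The only point requiring care is the smoothness of $|u|^{4/(d-1)}$, since the exponent $4/(d-1)\le1$ for $d\ge5$. To handle it, we would carry out the computation with $(|u|^2+\epsilon)^{2/(d-1)}$ in place of $|u|^{4/(d-1)}$ and then let $\epsilon\to0$. Dominated convergence justifies the limit, since the $\epsilon$-regularized version of the integrand $|u|^2\nabla(|u|^2+\epsilon)^{2/(d-1)}$ is bounded by $C|u|^{4/(d-1)+1}|\nabla u|$, which is integrable.

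For the extension, the right-hand side of \eqref{eq:AtMu-commutator} is bounded by $C_{b}t^{-1}\|u\|_{L^{2(d+1)/(d-1)}}^{2(d+1)/(d-1)}$. By \eqref{eq:subcritical-sobolev-embedding} and the elementary estimate $\big||f|^p-|g|^p\big|\lesssim |f-g|(|f|^{p-1}+|g|^{p-1})$ with $p=2(d+1)/(d-1)$, the map $u\mapsto\int(\nabla\cdot b)(x/t)|u|^{2(d+1)/(d-1)}$ is locally Lipschitz on $H^1$. Since $C_c^\infty$ is dense in $H^1$, this gives a unique continuous extension. The main (mild) obstacle is the low regularity of the nonlinearity, and the $\epsilon$-regularization handles it.
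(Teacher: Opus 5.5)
Your proposal is correct and follows essentially the paper's route: the explicit Weyl formula for $A_t$, the chain rule for powers of $|u|^2$, one integration by parts, and local Lipschitz continuity of $u\mapsto\int(\nabla\cdot b)(x/t)|u|^{2(d+1)/(d-1)}\,dx$ on $H^1$ via Sobolev embedding for the unique extension. The one difference is that you compute $[A_t,M_u]$ directly as multiplication by $-i\,b(x/t)\cdot\nabla(|u|^{4/(d-1)})$, which is what forces your (valid) $\epsilon$-regularization near the zero set of $u$; the paper instead uses the symmetry of $A_t$ and $M_u$ to rewrite the left side as $2\Re\la iM_uu,A_tu\ra$, so the derivative never falls on $|u|^{4/(d-1)}$ and no regularization is needed.
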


\begin{proof}
For $f\in H^1(\R^d)$, the definition of Weyl quantization gives
\begin{align}
\label{eq:At-computation}
 A_tf=-ib(x/t)\cdot\nabla f -\frac{i}{2t}(\nabla\cdot b)(x/t)f+c(x/t)f.
\end{align}
The function $M_uu=|u|^{4/(d-1)}u$ belongs to $H^1(\R^d)$. Because $b$, $c$, and $|u|^{4/(d-1)}$ are real-valued,
integration by parts shows that $A_t$ is symmetric as a first-order form on $H^1$ and that $M_u$ is symmetric.
Consequently,
\begin{align*}
 \la (A_tM_u-M_uA_t)u,u\ra =\la M_uu,A_tu\ra -\overline{\la M_uu,A_tu\ra}.
\end{align*}
Using \eqref{eq:At-computation} with $f=u$, we obtain
\begin{align*}
 i\la (A_tM_u-M_uA_t)u,u\ra
 &=2\Re\la iM_uu,A_tu\ra\\
 &=-\int_{\R^d}b(x/t)\cdot |u|^{4/(d-1)}\nabla(|u|^2)\,dx\\
 &\quad-t^{-1}\int_{\R^d}(\nabla\cdot b)(x/t)|u|^{2(d+1)/(d-1)}\,dx.
\end{align*}
Since $|u|^{4/(d-1)}\nabla(|u|^2)=\frac{d-1}{d+1}\nabla(|u|^{2(d+1)/(d-1)})$, integration by parts proves \eqref{eq:AtMu-commutator}.

Using $\big||z|^{2(d+1)/(d-1)}-|w|^{2(d+1)/(d-1)}\big| \leq C_d\big(|z|^{(d+3)/(d-1)}+|w|^{(d+3)/(d-1)}\big)|z-w|$ and H\"older's inequality, we have
\begin{align*}
 \Big|\int_{\R^d}(\nabla\cdot b)(x/t) \big(|u|^{2(d+1)/(d-1)}-|\widetilde{u}|^{2(d+1)/(d-1)}\big)\,dx\Big| &\leq C_d\|\nabla\cdot b\|_{L^\infty} \big(\|u\|_{L^{2(d+1)/(d-1)}}^{(d+3)/(d-1)}\\
 &\qquad +\|\widetilde{u}\|_{L^{2(d+1)/(d-1)}}^{(d+3)/(d-1)}\big) \|u-\widetilde{u}\|_{L^{2(d+1)/(d-1)}}.
\end{align*}
Combining this estimate with the Sobolev embedding in \eqref{eq:subcritical-sobolev-embedding} shows that the right-hand side of \eqref{eq:AtMu-commutator} is continuous in the $H^1$ norm, and the conclusion follows.
\end{proof}

Now we compute the time derivative of the action $D_A$ below.
This will be used in Proposition~\ref{prop:Aq-no-drop} and eventually this will be used to approximate $L_q$ in \eqref{eq:def-Lq}.

\begin{prop}
\label{prop:DA,DA'}
Let $u_+\in H^1(\R^d)$, and let $v(t)$ and $r(t)$ be as in \eqref{eq:v,r-def}.
Let
\begin{align*}
 b\in C_c^\infty(\R^d;\R^d),\qquad c\in C_c^\infty(\R^d;\R),\qquad a(y,\xi)=b(y)\cdot\xi+c(y),
\end{align*}
and define $V=(2\xi-y)\cdot\nabla_y$. Let $A_t=\Op^w(a(x/t,\xi))$, which is the same as in \eqref{eq:At-computation}. 
Then
\begin{align}
\label{eq:DA-def}
 D_A(t)=\la A_tu(t),u(t)\ra -\la A_tv(t),v(t)\ra
\end{align}
is locally absolutely continuous for $t\geq1$ and satisfies
\begin{align}
\label{eq:DA-derivative}
 \begin{aligned}
 D_A'(t) & = t^{-1}\la\Op^w((Va)(x/t,\xi))r(t),r(t)\ra + 2t^{-1}\Re\la \Op^w((Va)(x/t,\xi))r(t),v(t)\ra \\
 &\quad-\frac{2}{d+1}t^{-1} \int_{\R^d}(\nabla\cdot b)(x/t)|u(t,x)|^{2(d+1)/(d-1)}\,dx
 \end{aligned}
\end{align}
for almost every $t\geq1$. The first two pairings on the right-hand side of \eqref{eq:DA-derivative} are $H^{-1}$--$H^1$ pairings.
\end{prop}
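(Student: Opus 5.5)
We prove Proposition~\ref{prop:DA,DA'} by first computing $D_A'$ for smooth, rapidly decaying solutions and then passing to $H^1$ solutions by approximation. For regular solutions, the product rule applied to $\langle A_tu,u\rangle$ gives
\begin{align*}
\frac{d}{dt}\langle A_t u,u\rangle=\langle (\partial_tA_t)u,u\rangle+i\langle [H,A_t]u,u\rangle + i\langle (A_tM_u-M_uA_t)u,u\rangle ,
\end{align*}
where $H=-\Delta$ and we write $\partial_tu=i\Delta u+i|u|^{4/(d-1)}u$. Since $A_t$ is symmetric, only the commutators survive. The same computation for $v$ has no nonlinear term.

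Next we identify the symbols. We have $\partial_t[a(x/t,\xi)]=-t^{-1}(y\cdot\nabla_ya)(x/t,\xi)$. The Weyl calculus gives $i[-\Delta,\Op^w(a(x/t,\xi))]=\Op^w(\{|\xi|^2,a(x/t,\xi)\})$ exactly, because $a$ is polynomial (degree one) in $\xi$ and $|\xi|^2$ is quadratic, so the Moyal expansion terminates at the Poisson bracket. This bracket equals $2\xi\cdot\nabla_x[a(x/t,\xi)]=t^{-1}(2\xi\cdot\nabla_ya)(x/t,\xi)$. Adding the two pieces gives $t^{-1}\Op^w((Va)(x/t,\xi))$ with $V=(2\xi-y)\cdot\nabla_y$. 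The nonlinear commutator is exactly \eqref{eq:AtMu-commutator} from Lemma~\ref{lem:At-commutator}. Therefore
\begin{align*}
D_A'(t)=t^{-1}\big(\langle B_tu,u\rangle-\langle B_tv,v\rangle\big)-\frac{2}{d+1}t^{-1}\int(\nabla\cdot b)(x/t)|u|^{2(d+1)/(d-1)}\,dx,
\end{align*}
where $B_t=\Op^w((Va)(x/t,\xi))$. We then substitute $u=v+r$ and expand. Since $B_t$ is symmetric (its symbol is real), we get $\langle B_tu,u\rangle-\langle B_tv,v\rangle=\langle B_tr,r\rangle+2\Re\langle B_tr,v\rangle$, which is \eqref{eq:DA-derivative}.

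Note that $Va$ is quadratic in $\xi$, namely $2\xi\cdot\nabla_y(b\cdot\xi)+\dots$, so $B_t$ is a second-order operator with bounded smooth coefficients, mapping $H^1\to H^{-1}$. This is why the pairings are $H^{-1}$--$H^1$ pairings, and they are continuous in $H^1$.

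For the rigorous step with $H^1$ data, we write the identity in integrated form,
\begin{align*}
D_A(t_2)-D_A(t_1)=\int_{t_1}^{t_2}(\text{RHS of \eqref{eq:DA-derivative}})\,dt,
\end{align*}
for smooth approximate solutions. We take $u_0^{(k)}\in\mathcal S$ converging to $u(t_1)$ in $H^1$. By $H^1$ local well-posedness (subcritical), the corresponding solutions converge to $u$ in $C([t_1,t_2];H^1)$, and the free solutions with Schwartz data converge to $v$ in the same sense. Every term in the identity is continuous under $H^1$ convergence, uniformly on $[t_1,t_2]$. For $D_A$ itself this is because $A_t$ is first order; for the $B_t$-pairings it follows from the remark above; for the nonlinear term it is Lemma~\ref{lem:At-commutator} together with \eqref{eq:subcritical-sobolev-embedding}. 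Passing to the limit gives the integrated identity for $u$. The integrand is continuous in $t$, so $D_A$ is in fact $C^1$; in particular it is locally absolutely continuous, and \eqref{eq:DA-derivative} holds.

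We expect the main obstacle to be the justification step: checking that the persistence of regularity and decay of the approximations is enough for the formal differentiation. To avoid it, we would first verify the identity in weak form, using the equation in $H^{-1}$: $\partial_tu\in C(H^{-1})$, and $A_t u$ is paired against it after noting that $A_t$ maps $H^1\to L^2$. This may sidestep the need for smooth approximations altogether.
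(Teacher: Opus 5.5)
Your argument is correct and is essentially the paper's proof. Both use the Heisenberg identity $\partial_tA_t+i[-\Delta,A_t]=t^{-1}B_t$, the nonlinear commutator from Lemma~\ref{lem:At-commutator}, and the symmetric expansion $\la B_tu,u\ra-\la B_tv,v\ra=\la B_tr,r\ra+2\Re\la B_tr,v\ra$. Your approximation step supplies the justification the paper leaves as ``a direct computation.''

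One small caution on that step: carry out the regular-solution computation for $H^2$ solutions, which persist since $|u|^{4/(d-1)}u$ is $C^1$. Schwartz data do not give smooth solutions in general, since that nonlinearity is not smooth. Do not use the weak-form fallback either, because it would pair $A_tu\in L^2$ against $\partial_tu\in H^{-1}$, which is not well defined.
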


\begin{proof}
Integration by parts yields
\begin{align*}
 \la A_tf,f\ra =\int_{\R^d}b(x/t)\cdot\Im(\overline{f}\nabla f)\,dx +\int_{\R^d}c(x/t)|f|^2\,dx.
\end{align*}
Consequently,
\begin{align}
\label{eq:At-first-order-bound}
|\la A_tf,f\ra| &\leq\|b\|_{L^\infty}\|f\|_{L^2}\|\nabla f\|_{L^2} +\|c\|_{L^\infty}\|f\|_{L^2}^2  
 \lesssim \|f\|_{H^1}^2.
\end{align}

We next prove \eqref{eq:DA-derivative}. Using
\begin{align*}
 \partial_tA_t =-t^{-1}\Op^w((y\cdot\nabla_ya)(x/t,\xi)), \quad i\big((-\Delta)A_t-A_t(-\Delta)\big) =2t^{-1}\Op^w((\xi\cdot\nabla_ya)(x/t,\xi)),
\end{align*}
we have
\begin{align}
\label{eq:heisenberg-identity}
 \partial_tA_t+i\big((-\Delta)A_t-A_t(-\Delta)\big)=t^{-1}B_t,
\end{align}
where $B_t=\Op^w((Va)(x/t,\xi))$. Since
\begin{align}
\label{eq:Va-computation}
 Va= & \xi^T(Db+Db^T)\xi +(2\nabla c-Db\,y)\cdot\xi-y\cdot\nabla c,
\end{align}
$B_t$ is a second-order pseudodifferential operator with symbol bounds uniform in $t\geq 1$.
Thus $B_t:H^1\to H^{-1}$ is bounded uniformly in $t$ and $\la B_t \cdot, \cdot \ra$ is symmetric. In particular, 
\begin{align}
\label{eq:Bt-form-bound}
 |\la B_tf,g\ra| \lesssim \|f\|_{H^1}\|g\|_{H^1}, \qquad f,g\in H^1(\R^d),\quad t \geq 1.
\end{align} 
So we can group and ungroup pairings below. 

A direct computation shows
\begin{align} 
\label{eq:DA'}
 D_A'(t) &=t^{-1}\big(\la B_tu(t),u(t)\ra -\la B_tv(t),v(t)\ra\big) -\frac{2}{d+1}t^{-1} \int_{\R^d}(\nabla\cdot b)(x/t)|u(t,x)|^{2(d+1)/(d-1)}\,dx .
\end{align}

Finally, since $u=r+v$, the symmetric property of $B_t$ gives
\begin{align*}
 \la B_tu,u\ra-\la B_tv,v\ra =\la B_tr,r\ra+2\Re\la B_tr,v\ra.
\end{align*}
Substituting this into \eqref{eq:DA'} proves \eqref{eq:DA-derivative}.
\end{proof}

Now we justify the following intuition.
Suppose that $\nu_t$, which has density $|r|^2$, converges to a measure $\mu$ with $\supp\mu\subset K$ and that $\supp\chi$ is separated from $K$.
Then $\chi(x/t)r$ should converge to $0$.

\begin{prop}
\label{prop:interval-evacuation}
Let $r$ be defined by \eqref{eq:v,r-def}, and let $\nu_t$ be the residual mass measure defined in \eqref{eq:nu_t,J_t-def}. Let $1\leq A_n\leq B_n$ and $A_n \to \infty$. Suppose there is a fixed closed set $K\subset\R^d$ such that whenever $n_k\to\infty$, $A_{n_k}\leq t_k\leq B_{n_k}$,
and $\nu_{t_k}\rightharpoonup\mu$ weakly on $X=\R^d\cup\{\infty\}$, one has $\supp\mu\subset K$. If $\chi\in C^1(\R^d)$,
$\chi,\nabla\chi\in L^\infty$, and $\dist(\supp\chi,K)>0$, then
\begin{align}
\label{eq:interval-evacuation-uniform}
 \sup_{A_n\leq t\leq B_n}\|\chi(x/t)r(t)\|_{H^1}\to0.
\end{align}
\end{prop}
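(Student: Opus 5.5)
We argue by contradiction, using the profile decomposition of Theorem~\ref{thm:compact-attractor} in the same way as in the proof of \eqref{eq:off-atom-stress-localization}. Suppose \eqref{eq:interval-evacuation-uniform} fails. Then there are $\epsilon>0$, a subsequence $n_k\to\infty$, and times $t_k\in[A_{n_k},B_{n_k}]$ with
\begin{align*}
 \|\chi(x/t_k)r(t_k)\|_{H^1}\geq\epsilon .
\end{align*}
Since $A_n\to\infty$, we have $t_k\to\infty$. We apply Proposition~\ref{prop:subseq-atomicity} to the sequence $t_k$. After passing to a further subsequence, this gives a decomposition
\begin{align*}
 r(t_k)=\sum_{j=1}^J w_j(\cdot-x_{j,k})+e_k,\qquad \|e_k\|_{H^1}\to0,\qquad x_{j,k}/t_k\to y_j\in X ,
\end{align*}
and the convergence $\nu_{t_k}\rightharpoonup\mu:=\sum_j M(w_j)\delta_{y_j}$. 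The sequence $t_k$ satisfies the hypothesis of the proposition, so $\supp\mu\subset K$. Hence every $j$ with $w_j\neq0$ has $y_j\in K$. In particular $y_j\neq\infty$, because $K\subset\R^d$ is closed and $\mu$ is a measure on $X$ whose support lies in $K$.

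Next we show that each term of the decomposition is killed by the cutoff. Set $\delta_0=\dist(\supp\chi,K)>0$.

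For the remainder, the product rule gives
\begin{align*}
 \|\chi(x/t_k)e_k\|_{H^1}\leq(\|\chi\|_{L^\infty}+t_k^{-1}\|\nabla\chi\|_{L^\infty})\|e_k\|_{H^1}\to0 .
\end{align*}

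For a nonzero profile, we change variables $x=x_{j,k}+z$. This gives
\begin{align*}
 \|\chi(x/t_k)w_j(x-x_{j,k})\|_{H^1}\lesssim \|\chi(x_{j,k}/t_k+z/t_k)\,w_j(z)\|_{L^2}+\|\chi(x_{j,k}/t_k+z/t_k)\nabla w_j(z)\|_{L^2}+t_k^{-1}\|\nabla\chi\|_{L^\infty}\|w_j\|_{L^2}.
\end{align*}
For each fixed $z$, the point $x_{j,k}/t_k+z/t_k$ converges to $y_j\in K$. Once this point is within $\delta_0/2$ of $K$, it lies outside $\supp\chi$, so the integrands tend to $0$ pointwise. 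They are dominated by $\|\chi\|_\infty^2(|w_j|^2+|\nabla w_j|^2)$, so dominated convergence makes both norms tend to zero. We only need $\chi\in C^1$ with bounded $\chi,\nabla\chi$; no compact support of $\chi$ is required, since we only evaluate $\chi$ near $K$. Profiles with $w_j=0$ contribute nothing.

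Summing over the finitely many $j$, we obtain $\|\chi(x/t_k)r(t_k)\|_{H^1}\to0$, which contradicts the choice of $t_k$.

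The only delicate point is the step that places the limiting centers of the nonzero profiles inside $K$. This step requires that the weak limit of $\nu_{t_k}$ along the particular extracted subsequence is exactly the atomic measure $\sum_j M(w_j)\delta_{y_j}$ produced by \eqref{eq:nu-seq-limit}. Each atom $y_j$ with $w_j\neq0$ carries positive mass $M(w_j)$, so it belongs to $\supp\mu$. This holds even when several profiles share the same limiting center, because their masses add rather than cancel. Once this identification is in place, the hypothesis on $K$ applies directly, and the rest is the routine localization computation above.
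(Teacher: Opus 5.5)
Your proof is correct and follows essentially the same route as the paper. You extract a violating sequence $t_k$ and apply Proposition~\ref{prop:subseq-atomicity} to get the profile decomposition with atomic limit $\sum_j M(w_j)\delta_{y_j}$. You then use the hypothesis to place every nonzero profile's limiting center in $K$, kill each translated profile by dominated convergence (with the $\nabla\chi$ term carrying a factor $t_k^{-1}$), and control the remainder by the uniform bound on multiplication by $\chi(x/t_k)$ on $H^1$. Your additional remarks, that $y_j\neq\infty$ for nonzero profiles and that coinciding centers cannot cancel mass, are valid clarifications of points the paper leaves implicit.
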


\begin{proof}
If \eqref{eq:interval-evacuation-uniform} fails, then, after passing to a subsequence, there are $\epsilon>0$, indices $n_k\to\infty$,
and $A_{n_k}\leq t_k\leq B_{n_k}$ such that
\begin{align*}
 \|\chi(x/t_k)r(t_k)\|_{H^1}\geq\epsilon.
\end{align*}
Since $A_{n_k}\to\infty$, we have $t_k\to\infty$. By Proposition~\ref{prop:subseq-atomicity}, after passing to a further subsequence, there are $w_1,\ldots,w_J\in H^1$,
centers $x_{j,k}$, points $y_j\in X$, and $e_k\to0$ in $H^1$ such that
\begin{align}
\label{eq:interval-evacuation-profile-decomposition}
 r(t_k) =\sum_{j=1}^Jw_j(\cdot-x_{j,k})+e_k, \quad  \frac{x_{j,k}}{t_k}&\to y_j, \quad \nu_{t_k} \rightharpoonup \sum_{j=1}^JM(w_j)\delta_{y_j}.
\end{align}
The hypothesis on weak limits therefore implies $w_j\ne0\Longrightarrow y_j\in K$. For every such $j$ and fixed $z\in\R^d$,
\begin{align*}
 \frac{x_{j,k}}{t_k}+\frac{z}{t_k}\to y_j, \qquad \chi\Big(\frac{x_{j,k}}{t_k}+\frac{z}{t_k}\Big)\to0,
\end{align*}
where the second limit follows from $\dist(\supp\chi,K)>0$.
The product rule gives
\begin{align}
\label{eq:gradient-chi-wj}
 \nabla(\chi(x/t)w_j) =\chi(x/t)\nabla w_j+t^{-1}(\nabla\chi)(x/t)w_j,
\end{align}
The term containing $\nabla\chi$ is bounded by $t_k^{-1}\|\nabla\chi\|_{L^\infty}\|w_j\|_{L^2}$.
Hence we have
\begin{align}
\label{eq:interval-evacuation-profile-vanishing}
 \|\chi(x/t_k)w_j(\cdot-x_{j,k})\|_{H^1}\to0.
\end{align}
Similarly, \eqref{eq:gradient-chi-wj} with $w_j$ replaced by $e_k$ gives
\begin{align}
\label{eq:interval-evacuation-error-vanishing}
 \|\chi(x/t_k)e_k\|_{H^1} \leq C(\|\chi\|_{L^\infty}+\|\nabla\chi\|_{L^\infty}) \|e_k\|_{H^1}\to0.
\end{align}
Applying \eqref{eq:interval-evacuation-profile-vanishing} and
\eqref{eq:interval-evacuation-error-vanishing} to
\eqref{eq:interval-evacuation-profile-decomposition} contradicts the lower bound $\epsilon$.
\end{proof}

Conversely, if \eqref{eq:uniform-chi0-r-tend-0} fails (or equivalently \eqref{eq:uniform-est-chi0-r-Lemma-version} holds), then Lemma~\ref{lem:first-entry} gives a time sequence along which the $H^1$ norm of localized $r$ is bounded below.
Now we upgrade this to an $L^2$ lower bound.

\begin{prop}
\label{prop:active-cells}
Let $d\geq5$, and let $u\in C(\R_{\geq 0};H^1(\R^d))$ solve
\begin{align*}
 (i\partial_t+\Delta)u=-|u|^{4/(d-1)}u
\end{align*}
and satisfy \eqref{eq:uniform-H1}.
Let $u_+\in H^1(\R^d)$ be the radiation state as in Theorem~\ref{thm:compact-attractor}, and let $r$ be defined by \eqref{eq:v,r-def}.
Let $\chi_0,\chi_1\in C_c^\infty(\R^d)$ satisfy
\begin{align*}
 \chi_1=1\qquad\text{on }\supp\chi_0.
\end{align*}
Suppose that $T_n\to\infty$,
\begin{align}
\label{eq:active-cells-sequential-smallness}
 \|\chi_1(x/T_n)r(T_n)\|_{H^1}\to0,
\end{align}
and that \eqref{eq:uniform-est-chi0-r-Lemma-version} holds. After passing to a subsequence and relabeling,
let $\epsilon>0$ and $s_n>T_n$ be the times given by Lemma~\ref{lem:first-entry}, so that
\begin{align}
\label{eq:active-cells-first-entry-level}
 \|\chi_0(x/s_n)r(s_n)\|_{H^1} &=\epsilon.
\end{align}
Then there are $m_*>0$ and $n_0$ such that
\begin{align}
\label{eq:active-cells-L2-lower}
 \|\chi_0(x/s_n)r(s_n)\|_{L^2} &\geq m_*, \qquad n\geq n_0.
\end{align}
\end{prop}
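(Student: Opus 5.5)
We argue by contradiction. If \eqref{eq:active-cells-L2-lower} fails, then after passing to a subsequence we may assume $\|\chi_0(x/s_n)r(s_n)\|_{L^2}\to0$. The lower bound in \eqref{eq:active-cells-first-entry-level} is only at $H^1$ level, so the task is to show that $H^1$ mass of the localized residual cannot survive when its $L^2$ mass vanishes. The tool is the profile decomposition. Since $s_n>T_n\to\infty$, Proposition~\ref{prop:subseq-atomicity} applies: after a further subsequence,
\begin{align*}
r(s_n)=\sum_{j=1}^Jw_j(\cdot-x_{j,n})+e_n,\qquad \|e_n\|_{H^1}\to0,\qquad \frac{x_{j,n}}{s_n}\to y_j\in X,
\end{align*}
with the centers mutually diverging as in \eqref{eq:profile-separation}.

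First, we localize each profile. For a fixed nonzero $w_j$, the dominated convergence argument of \eqref{eq:mass-limit-3}, combined with the gradient splitting \eqref{eq:gradient-chi-wj}, shows that
\begin{align*}
\chi_0(x/s_n)\,w_j(\cdot-x_{j,n})=\chi_0(y_j)\,w_j(\cdot-x_{j,n})+o_{H^1}(1),
\end{align*}
where we set $\chi_0(\infty)=0$. The derivative of the cutoff contributes at most $s_n^{-1}\|\nabla\chi_0\|_{L^\infty}\|w_j\|_{L^2}$, which is negligible. The error term $e_n$ stays $o_{H^1}(1)$ after multiplication by $\chi_0(x/s_n)$, exactly as in \eqref{eq:interval-evacuation-error-vanishing}. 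Hence
\begin{align*}
\chi_0(x/s_n)r(s_n)=\sum_j \chi_0(y_j)\,w_j(\cdot-x_{j,n})+o_{H^1}(1).
\end{align*}

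Next, we use decoupling. By the cross-term argument of \eqref{eq:residual-profile-cross-limit}, applied both to the functions and to their gradients, the separation of centers gives
\begin{align*}
\|\chi_0(x/s_n)r(s_n)\|_{L^2}^2\to\sum_j\chi_0(y_j)^2M(w_j),\qquad
\|\chi_0(x/s_n)r(s_n)\|_{H^1}^2\to\sum_j\chi_0(y_j)^2\|w_j\|_{H^1}^2.
\end{align*}
The first limit is $0$ by the contradiction hypothesis. Every term in that sum is nonnegative, so $\chi_0(y_j)=0$ for each $j$ with $w_j\neq0$. Then the second sum vanishes, which contradicts \eqref{eq:active-cells-first-entry-level}, since the left side equals $\epsilon^2>0$.

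The step that needs the most care is the localization step in the case $y_j=\infty$ or $y_j\in\partial\supp\chi_0$. In both cases pointwise convergence $\chi_0(x_{j,n}/s_n+z/s_n)\to\chi_0(y_j)$ holds for each fixed $z$, because $\chi_0$ is continuous and compactly supported and therefore extends continuously to $X$ by $0$. Dominated convergence with $|\chi_0|\le1$ and majorants $|w_j|^2$, $|\nabla w_j|^2$ then gives the claimed $H^1$ convergence. With this checked, the argument needs neither the nesting hypothesis $\chi_1=1$ on $\supp\chi_0$ nor \eqref{eq:active-cells-sequential-smallness}, beyond their role in producing $s_n$ through Lemma~\ref{lem:first-entry}. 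The constant $m_*$ is obtained by contradiction and so is not explicit.
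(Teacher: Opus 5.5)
Your proof is correct, but it takes a different route from the paper's. Your argument is by contradiction. You extract a profile decomposition at the times $s_n$ and localize each translated profile to $\chi_0(y_j)w_j(\cdot-x_{j,n})$ by dominated convergence, with $\chi_0$ extended by $0$ at $\infty$. You then use the asymptotic orthogonality of the separated profiles in both $L^2$ and $\dot H^1$. This shows that $L^2$-vanishing of $\chi_0(x/s_n)r(s_n)$ along a subsequence forces its $H^1$-vanishing. This is the same machinery as Propositions~\ref{prop:subseq-atomicity} and~\ref{prop:interval-evacuation}, and each step checks out. You only need $\chi_0$ bounded, not $|\chi_0|\le1$, which this proposition does not assume.

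The paper uses only the frequency tightness of the residual from Proposition~\ref{prop:residual-frequency-tightness}. It splits $P_{>2L}(\chi_0(x/s_n)r(s_n))$ into two pieces:
\begin{itemize}
\item a high-frequency-input piece, bounded by $\|P_{>L}r(s_n)\|_{H^1}$;
\item a low-frequency-input piece, controlled by the tail of $\widehat{\chi_0}$ beyond scale $s_nL$, which vanishes as $n\to\infty$.
\end{itemize}
Hence for large $L$ and $n$ the high-frequency part of $\chi_0(x/s_n)r(s_n)$ is at most $\epsilon/2$. The trivial bound $\|P_{\le2L}f\|_{H^1}\le\sqrt{1+4L^2}\|f\|_{L^2}$ then gives the explicit constant $m_*=\epsilon/(2\sqrt{1+4L^2})$.

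Both arguments ultimately rest on Tao's compact attractor. The paper's version uses only one consequence of it, uniform frequency localization of $r$. It gives a quantitative inequality valid for every large $n$ with no further subsequence extraction, at the cost of a small commutator estimate. Your version avoids frequency analysis entirely and is shorter, given the decoupling facts already established in the paper. However, it relies on the full translation and separation structure of the decomposition, and it yields a non-explicit $m_*$.
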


\begin{proof}
We first prove
\begin{align}
\label{eq:active-cells-frequency-tightness}
 \lim_{L\to\infty} \limsup_{n \to \infty} \|P_{>2L}(\chi_0(x/s_n)r(s_n))\|_{H^1}=0.
\end{align}
Here $P_{>L}$ and $P_{\leq L}$ denote the Fourier projections to $\{|\xi|>L\}$ and $\{|\xi|\leq L\}$,
respectively.

For $L\geq 1$, decompose
\begin{align}
\label{eq:P>2Lr-decomposition}
 P_{>2L}(\chi_0(x/s_n)r(s_n)) &=P_{>2L}(\chi_0(x/s_n)P_{>L}r(s_n)) +P_{>2L}(\chi_0(x/s_n)P_{\leq L}r(s_n)).
\end{align}
Multiplication by $\chi_0(x/s_n)$ is bounded on $H^1$ (for $s_n \geq 1$). So we have
\begin{align}
\label{eq:active-cells-high-input}
 \|P_{>2L}(\chi_0(x/s_n)P_{>L}r(s_n))\|_{H^1} \leq C_{\chi_0}\|P_{>L}r(s_n)\|_{H^1}.
\end{align}

For the second term on the right-hand side of \eqref{eq:P>2Lr-decomposition}, 
we take Fourier transform.
Then we write the Fourier transform of $P_{>2L}(\chi_0(x/s_n)P_{\leq L}r(s_n))$ as a convolution.
If the total frequency satisfies $|\xi|>2L$ and the frequency in $\widehat{P_{\leq L}r(s_n)}$ satisfies $|\eta|\leq L$, then $|\xi-\eta|>L$.
So we have
\begin{align} 
\label{eq:active-cells-low-input}
 \|P_{>2L}(\chi_0(x/s_n)P_{\leq L}r(s_n))\|_{H^1} &\leq C  \|P_{\leq L}r(s_n)\|_{H^1} \int_{|\zeta|>s_nL}(1+|\zeta|)|\widehat{\chi_0}(\zeta)|\,d\zeta .
\end{align}

Since $\chi_0\in C_c^\infty(\R^d)$, its Fourier transform is rapidly decreasing. Hence, for every fixed $L$,
\begin{align*}
 \int_{|\zeta|>s_nL} (1+|\zeta|)|\widehat{\chi_0}(\zeta)|\,d\zeta \to0.
\end{align*}
Since we have $\sup_{t\geq 0}\|r(t)\|_{H^1}<\infty$, combining \eqref{eq:active-cells-high-input} and \eqref{eq:active-cells-low-input} gives
\begin{align*}
 \limsup_{n\to\infty}\|P_{>2L}(\chi_0(x/s_n)r(s_n))\|_{H^1} \leq C_{\chi_0} \limsup_{n\to\infty}\|P_{>L}r(s_n)\|_{H^1},
\end{align*}
which gives \eqref{eq:active-cells-frequency-tightness} by Proposition~\ref{prop:residual-frequency-tightness}.

Choose $L$ sufficiently large and then $n_0$ sufficiently large so that
\begin{align*}
 \|P_{>2L}(\chi_0(x/s_n)r(s_n))\|_{H^1} \leq\frac{\epsilon}{2}, \qquad n\geq n_0.
\end{align*}
Since
\begin{align*}
 \chi_0(x/s_n)r(s_n)=P_{\leq2L}(\chi_0(x/s_n)r(s_n))+P_{>2L}(\chi_0(x/s_n)r(s_n))
\end{align*}
and $\|\chi_0(x/s_n)r(s_n)\|_{H^1}=\epsilon$, the triangle inequality gives
\begin{align*}
 \|P_{\leq2L}(\chi_0(x/s_n)r(s_n))\|_{H^1}\geq\frac{\epsilon}{2}.
\end{align*}
Since $1+|\xi|^2\leq1+4L^2$ on the Fourier support of $P_{\leq2L}$, we have
\begin{align*}
 \|P_{\leq2L}(\chi_0(x/s_n)r(s_n))\|_{H^1} &\leq\sqrt{1+4L^2}\|P_{\leq2L}(\chi_0(x/s_n)r(s_n))\|_{L^2} \leq\sqrt{1+4L^2}\|\chi_0(x/s_n)r(s_n)\|_{L^2}.
\end{align*}
It follows that
\begin{align*}
 \|\chi_0(x/s_n)r(s_n)\|_{L^2} \geq\frac{\epsilon}{2\sqrt{1+4L^2}} =:m_*>0
\end{align*}
for every $n\geq n_0$, which proves \eqref{eq:active-cells-L2-lower}.
\end{proof}

\section{The limiting measure and the proof of Theorem~\ref{thm:measure-convergence}}
\label{sec:limiting-measure}

We prove Theorem~\ref{thm:measure-convergence} in this section. We sketch the overall idea first.
Let $\Omega_\sharp$ be the set of subsequential limits of $(\nu_t,J_t)$ as $t\to\infty$ in Proposition~\ref{prop:connected-omega}, which is compact and connected.
Now the convergence of $(\nu_t,J_t)$ is equivalent to this set being a single point.
We first find a convex function $\Psi$ such that $\int \Psi d\mu$ (as a function of $\mu$) has a unique maximizer $\mu_*$.
Then we prove that elements of $\Omega_\sharp$ have the following rigidity: if the first component is $\mu_*$, then the momentum component has to be $J_*=\frac{y}{2}\mu_*$.
If the convergence fails, then there will be two time sequences such that $(\nu_t,J_t)$ is very close to $(\mu_*,J_*)$ along one sequence but stays a fixed positive distance from $(\mu_*,J_*)$ along the other.

Then we will show that the elements of $\Omega_\sharp$ have the following coercive property: if $(\mu,J)$ stays a fixed positive distance from $(\mu_*,J_*)$, then $\int\Psi\,d\mu$ is below $\int\Psi\,d\mu_*$ by a fixed positive amount.
Then we show that this is `repulsive' enough to make the deficit from $\int\Psi\,d\mu_*$ grow exponentially in the delayed time parameter $|\tau|$ from Theorem~\ref{thm:subsequential-family}.
This will lead to a contradiction.
The last step is difficult, so we will spend most of this section computing and estimating the currents involved in this step.

We will consider $(\nu_t,J_t)$ reparametrized on an exponential scale:
\begin{align}
\label{eq:Fs-def}
 F(s)=(\nu_{e^s},J_{e^s}).
\end{align}

\begin{prop}
\label{prop:convex-exposure}
Assume $F$ in \eqref{eq:Fs-def} does not converge as $s\to\infty$. There is a real polynomial $\Psi$,
a neighborhood $U$ of a common ball containing the supports of all subsequential limits, and $\lambda_\Psi>0$ such that
\begin{align*}
 z^TD^2\Psi(y)z\geq\lambda_\Psi|z|^2
\end{align*}
for $y\in U$, and
\begin{align*}
 \int\Psi\,d\mu
\end{align*}
has a unique maximizer $\mu_*$ over the projection of the set $\Omega_\sharp$ in Proposition~\ref{prop:connected-omega} to the mass component.
\end{prop}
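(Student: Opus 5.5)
We expect to prove this by a finite-dimensional exposure argument: encode each measure by a finite vector of weighted polynomial moments, and pick the point of the image farthest from the origin. Let $\Omega_m$ denote the projection of $\Omega_\sharp$ to the mass component.

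\textbf{Step 1 (uniform structure of $\Omega_m$).} By Proposition~\ref{prop:connected-omega}, $\Omega_m$ is compact for the weak topology. By Proposition~\ref{prop:atomic-and-bound} and Theorem~\ref{thm:subsequential-family}, every $\mu\in\Omega_m$ has total mass $m_\sharp$ and is a sum of at most $N_0:=\lfloor\sqrt{R_\sharp/\Theta}\rfloor$ atoms. By Proposition~\ref{prop:supp-mu-bound}, all these atoms lie in the ball $\{|y|\leq R_u\}$. We take $U$ to be a slightly larger open ball, say $\{|y|<R_u+1\}$.

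\textbf{Step 2 (injective moment map).} Fix a degree $K$ large in terms of $N_0$, say $K=4N_0$. Set
\begin{align*}
 m_\epsilon(\mu)=\Big(\int(1+|y|^2)\,d\mu,\ \big(\epsilon\int y^\alpha\,d\mu\big)_{1\leq|\alpha|\leq K}\Big)\in\R^D .
\end{align*}
Each pairing is continuous in $\mu$, since the supports stay in a fixed compact set. The map is injective on $\Omega_m$ for every $\epsilon>0$. Indeed, if $\mu\neq\mu'$, then $\mu-\mu'$ is a nonzero signed measure with at most $2N_0$ atoms $z_1,\dots,z_p$. Suppose $\mu-\mu'$ has nonzero weight at $z_1$. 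The polynomial $\prod_{i\geq2}|y-z_i|^2$ has degree at most $4N_0-2\leq K$. It integrates to a nonzero number against $\mu-\mu'$, so some monomial moment of degree at most $K$ distinguishes $\mu$ from $\mu'$. Hence $C_\epsilon:=m_\epsilon(\Omega_m)$ is compact and $m_\epsilon$ is a homeomorphism onto it.

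\textbf{Step 3 (exposing a point).} Choose $c^*\in C_\epsilon$ of maximal Euclidean norm, and let $\mu_*=m_\epsilon^{-1}(c^*)$. For every $c\in C_\epsilon$ we have $c^*\cdot c\leq|c^*||c|\leq|c^*|^2$. Equality forces $c=c^*$, so $\mu\mapsto c^*\cdot m_\epsilon(\mu)$ has the unique maximizer $\mu_*$ on $\Omega_m$. Now define the polynomial
\begin{align*}
 \Psi(y)=c^*_0(1+|y|^2)+\epsilon\sum_{1\leq|\alpha|\leq K}c^*_\alpha y^\alpha ,
\end{align*}
so that $\int\Psi\,d\mu=c^*\cdot m_\epsilon(\mu)$.

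\textbf{Step 4 (convexity).} First, $c^*_0=\int(1+|y|^2)\,d\mu_*\geq m_\sharp>0$. Second, each $|c^*_\alpha|\leq\epsilon\, m_\sharp(1+R_u)^K$ for $|\alpha|\geq1$. Hence on $U$,
\begin{align*}
 D^2\Psi\geq 2m_\sharp-O(\epsilon^2)\geq m_\sharp ,
\end{align*}
once $\epsilon$ is small; here the implied constant depends only on $m_\sharp$, $R_u$ and $K$. This gives $\lambda_\Psi=m_\sharp$.

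The non-convergence hypothesis on $F$ is not needed for this construction; it only guarantees that $\Omega_m$ contains more than one point, so that the statement is not vacuous.

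The step we expect to need the most care is Step 2. We must check that the atom count $N_0$ and the support bound $R_u$ are uniform over all of $\Omega_m$, not just for each subsequential limit family separately. This uniformity is what makes the degree $K$, and hence the moment map $m_\epsilon$, independent of $\mu$. The exposure argument in Step 3 and the convexity estimate in Step 4 are then routine.
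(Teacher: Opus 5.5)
Your proof is correct, and it reaches the conclusion by a different route from the paper's.

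\textbf{The paper's route.} The paper first makes each coordinate of the moment map convex on its own, $\phi_j=C_j|y|^2+g_j$. It then needs a weight vector $w$ with \emph{all} entries positive at which the support function $h_C(w)=\max_{x\in C}w\cdot x$ of the image $C=L(\Omega_{\mathrm{mass}})$ is differentiable. It gets such a $w$ from the almost-everywhere differentiability of the Lipschitz (indeed convex) function $h_C$. Positivity of the $w_j$ is what transfers convexity to $\Psi=\sum_j w_j\phi_j$.

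\textbf{Your route.} You expose the point of $C_\epsilon$ farthest from the origin. This needs only compactness and the equality case of Cauchy--Schwarz, with $c^*\neq0$ guaranteed by $c^*_0\geq m_\sharp>0$. You then recover convexity by a separate device. The weight on the $(1+|y|^2)$ coordinate is automatically $c^*_0=\int(1+|y|^2)\,d\mu_*\geq m_\sharp$. The $\epsilon$-scaling makes the Hessian of the non-convex monomial part $O(\epsilon^2)$ on $U$. This holds uniformly in $\epsilon$, because $|c^*_\alpha|\leq\epsilon\, m_\sharp(1+R_u)^K$ whichever point happens to be exposed.

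\textbf{What each buys.} Your argument avoids any differentiability theorem and gives the explicit constant $\lambda_\Psi=m_\sharp$. The cost is the small-$\epsilon$ calibration and a larger degree, $4N_0$ rather than the paper's $2N_{\max}-1$ from affine Lagrange factors; the degree difference is immaterial. The paper's argument shows that almost every positive direction $w$ works, which is more flexible but not needed later.

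\textbf{A detail to state in Step 2.} All elements of $\Omega_m$ have the same total mass $m_\sharp$, so the degree-zero moment never distinguishes two of them. Hence the separating monomial produced by your interpolating polynomial has $1\leq|\alpha|\leq K$, and so it is actually recorded by $m_\epsilon$.
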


\begin{proof}
By Proposition~\ref{prop:supp-mu-bound} and \eqref{eq:atom-mass-and-count}, for an element of $\Omega_\sharp$, the mass component (i.e., the $\mu$-component) is supported in one fixed compact ball and has at most
\begin{align*}
 N_{\max}=\big\lfloor\sqrt{R_\sharp/\Theta}\big\rfloor
\end{align*}
atoms. The coordinate monomials of total degree at most $2N_{\max}-1$ separate all such measures. Indeed,
the difference of two such measures has at most $2N_{\max}$ support points, and Lagrange-type polynomials of degree at most $2N_{\max}-1$ recover every signed weight.

Let $g_1,\ldots,g_A$ be those monomials and $\phi_0(y)=|y|^2$. Choose $C_j$ large enough that
\begin{align*}
 \phi_j=C_j\phi_0+g_j
\end{align*}
has positive-definite Hessian on a neighborhood of the support ball. The moment map
\begin{align*}
 L(\mu)=\Big(\int\phi_0\,d\mu,\ldots,\int\phi_A\,d\mu\Big)
\end{align*}
is continuous and injective on the compact set $\Omega_{\mathrm{mass}}$ formed by the projection of $\Omega_\sharp$ to the mass component.

For a compact set $C\subset\R^{A+1}$, define
\begin{align*}
 h_C(w)=\max_{x\in C}w\cdot x.
\end{align*}
This function is Lipschitz in $w$, hence differentiable almost everywhere.
If $x\in C$ maximizes $w\cdot x$, then
\begin{align*}
 h_C(w+z)\geq h_C(w)+z\cdot x \; \forall z\in\R^{A+1}.
\end{align*}
At a point where $h_C$ is differentiable, this inequality, applied to both $z$ and $-z$, forces $x=\nabla h_C(w)$. Thus the maximizer $x$ is unique.
Apply this with $C=L(\Omega_{\mathrm{mass}})$, choose such a vector $w=(w_0,\ldots,w_A)$ with every $w_j>0$, and set
\begin{align*}
 \Psi=\sum_{j=0}^Aw_j\phi_j.
\end{align*}
Because every $w_j$ is positive, the Hessian of $\Psi$ has a uniform positive lower bound on a neighborhood of the support ball.
The injectivity of $L$ then gives a unique measure maximizing $\int\Psi\,d\mu$ over $\Omega_{\mathrm{mass}}$.
\end{proof}

Now we prove the rigidity of the limiting measures in $\Omega_\sharp$: if the first component is $\mu_*$, then the momentum component is determined by it as well.
That is, $y$ is indeed the velocity, and the momentum equals one-half of the velocity times the mass.

\begin{prop}[Uniqueness at the maximizing measure]
\label{prop:exposed-rigidity}
Let $\Psi$ and
\begin{align*}
 \mu_*=\sum_{i=1}^La_i^*\delta_{x_i^*}, \qquad a_i^*>0,
\end{align*}
be as in Proposition~\ref{prop:convex-exposure}, with the $x_i^*$ distinct, and set
\begin{align}
\label{eq:Z*-def}
 Z_*=\Big(\mu_*,\frac{y}{2}\mu_*\Big).
\end{align}
If $(\mu(\tau),J(\tau))$ is a subsequential limit family from Theorem~\ref{thm:subsequential-family} and $\mu(0)=\mu_*$,
then
\begin{align}
\label{eq:exposed-rigidity-constant}
 (\mu(\tau),J(\tau))=Z_* \qquad(\tau\in\R).
\end{align}
Consequently, $Z_*$ is the only subsequential limit of $F$ in \eqref{eq:Fs-def} whose first component is $\mu_*$.
\end{prop}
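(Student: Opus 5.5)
The plan is to study the scalar function $f(\tau)=\int\Psi\,d\mu(\tau)$ along the given subsequential limit family and show that any deviation of $J(0)$ from $\frac{y}{2}\mu_*$ forces $f$ to exceed its maximal value $f(0)=\int\Psi\,d\mu_*$.

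\emph{Step 1: $f$ attains its maximum at $\tau=0$.} By Theorem~\ref{thm:subsequential-family}, every $(\mu(\tau),J(\tau))$ is a $d_J$-limit of $(\nu_t,J_t)$ along $t=e^{s_n+\tau}\to\infty$, hence lies in $\Omega_\sharp$. Proposition~\ref{prop:convex-exposure} then gives
\begin{align*}
f(\tau)\le f(0)\qquad\text{for all }\tau\in\R,
\end{align*}
with equality only if $\mu(\tau)=\mu_*$. By Proposition~\ref{prop:supp-mu-bound}, all supports lie in a fixed ball $B$. So I may replace $\Psi$ by a $C_c^\infty$ function that agrees with $\Psi$ on a neighborhood of $B$ contained in $U$, without changing $f$ or any of the identities below.

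\emph{Step 2: first-order identity for $f$.} Applying \eqref{eq:mass-difference} with $\phi=\Psi$ shows that $f$ is Lipschitz, with a.e. derivative
\begin{align*}
f'(\tau)=\int\nabla\Psi\cdot\big(2\,dJ(\tau)-y\,d\mu(\tau)\big).
\end{align*}
Write $\mu(\tau)=\sum a_\ell\delta_{y_\ell}$ and $J(\tau)=\sum b_\ell\delta_{y_\ell}$, and define the defects
\begin{align*}
w_\ell=2b_\ell-a_\ell y_\ell .
\end{align*}
Then $f'(\tau)=\sum_\ell\nabla\Psi(y_\ell)\cdot w_\ell$. The target \eqref{eq:exposed-rigidity-constant} is equivalent to all $w_\ell=0$ for every $\tau$ together with $\mu(\tau)=\mu_*$.

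\emph{Step 3: second-order identity.} Next I will compute the derivative of $G(\tau)=\int\nabla\Psi\cdot(2\,dJ-y\,d\mu)$, using \eqref{eq:momentum-difference} with $\chi=\partial_k\Psi$ for the $J$ part and \eqref{eq:mass-difference} with $\phi=y_k\partial_k\Psi$ for the $\mu$ part. The resulting identity is
\begin{align*}
G(\beta)-G(\alpha)=\int_\alpha^\beta\Big(2\sum_{k,m}\int\partial_k\partial_m\Psi\,d\Sigma_{km}-2\int y^TD^2\Psi\,dJ+\int y^TD^2\Psi\,y\,d\mu\Big)\,d\tau+\int_\alpha^\beta G\,d\tau .
\end{align*}
The Hessian terms combine with $\Sigma$. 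The crucial input I need is a \emph{stress lower bound}: in time-averaged form, and localized at each atom (using \eqref{eq:Sigma-km-supp}),
\begin{align*}
\Sigma_{km}\big|_{y_\ell}\ \gtrsim\ \frac{2\,b_{\ell,k}b_{\ell,m}}{a_\ell}\ (\text{as quadratic forms}).
\end{align*}
This should follow from two ingredients.
\begin{itemize}
\item[(i)] The pointwise Cauchy--Schwarz inequality $|j(r)|^2\le|r|^2|\nabla r|^2$, integrated over the profiles clustered at $y_\ell$, which controls the kinetic part of $\Sigma$ from below by $2b_\ell b_\ell^T/a_\ell$.
\item[(ii)] A virial/averaging argument showing that the nonlinear part $-\frac{2}{d+1}|u|^{2(d+1)/(d-1)}$, together with the rest-frame kinetic excess, has nonnegative average over the long $t$-intervals $[e^{s_n+\alpha},e^{s_n+\beta}]$. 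Here I would use the local virial identity $\partial_t\int x\cdot j$ for compact-attractor profiles, whose $t$-derivative averages to zero over intervals of length $\sim e^{s_n}$.
\end{itemize}
Inserting this bound and using $D^2\Psi\ge\lambda_\Psi$ on $U$ gives the differential inequality
\begin{align*}
G'\ \ge\ G+\lambda_\Psi\sum_\ell\frac{|w_\ell|^2}{2a_\ell}.
\end{align*}
I expect this stress lower bound to be the main obstacle. I would first try proving it at the level of the $\Sigma_{n,km}$, via profile decompositions at a.e. $\tau$, before passing to the limit.

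\emph{Step 4: conclude.} Since $f$ has its maximum at $0$, we have $G(0)=0$ in the a.e./Lipschitz sense, and more precisely $G\ge0$ just left of $0$ and $G\le0$ just right of $0$. The inequality $G'\ge G$ then forces $G\ge0$ on $[0,\infty)$, hence $G\equiv0$ there. Feeding $G\equiv0$ back into the inequality gives $w_\ell\equiv0$ for a.e. $\tau\ge0$, and the symmetric argument handles $\tau\le0$ (if $G(\tau_0)<0$ for some $\tau_0<0$, then $G$ stays negative and $f$ would decrease backward, but the quantitative inequality again forces $w=0$). With all $w_\ell=0$, \eqref{eq:mass-difference} applied to test functions localized near each atom shows the atoms are stationary, since $\frac{d}{d\tau}y_\ell=2b_\ell/a_\ell-y_\ell=0$, and the masses are constant. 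Thus $(\mu(\tau),J(\tau))=Z_*$ for all $\tau$.

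\emph{The consequence.} Any subsequential limit of $F$ with first component $\mu_*$ is $(\mu(0),J(0))$ for some subsequential limit family (take $s_n=\log t_n$ in Theorem~\ref{thm:subsequential-family}). By what was just proved, it therefore equals $Z_*$.
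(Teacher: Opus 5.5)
Your Steps 1--2 and your derivation of the consequence are fine. The proof, however, rests entirely on the stress lower bound in Step 3, and that bound is a genuine gap, not a technicality.

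Nothing in the paper gives any sign information on the limiting stress. Proposition~\ref{prop:stress-1} only supplies uniform total-variation bounds, the identity \eqref{eq:momentum-difference}, and the support localization \eqref{eq:Sigma-km-supp}. Your ingredient (i) handles only the kinetic part. The nonlinear part $-\frac{2}{d+1}\delta_{km}|u|^{2(d+1)/(d-1)}$ is negative, and (ii) amounts to a rest-frame virial theorem for a whole cluster of profiles, averaged over intervals of length $\sim e^{s_n}$. That cannot be run profile by profile: the profiles of Theorem~\ref{thm:compact-attractor} are extracted along subsequences, carry no persistent labels, and may collide within a cluster infinitely often. So there is no fixed trajectory to which $\partial_t\int x\cdot j$ could be applied.

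Separately, your second-order identity is miscomputed. Using \eqref{eq:momentum-difference} with $\chi=\partial_k\Psi$ and \eqref{eq:mass-difference} with $\phi=y\cdot\nabla\Psi$ gives
\[
G'=2\sum_{k,m}\int\partial_k\partial_m\Psi\,d\Sigma_{km}-4\int y^TD^2\Psi\,dJ+\int y^TD^2\Psi\,y\,d\mu-G .
\]
The stress bound would therefore yield $G'+G\geq\lambda_\Psi\sum_\ell|w_\ell|^2/a_\ell$, not $G'\geq G+\cdots$. Your Step~4 logic survives this correction, because $e^{\tau}G$ is then nondecreasing. That monotonicity is also what you need to justify the one-sided signs of $G$ near $0$, which you asserted from maximality alone. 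The unproven stress bound does not survive.

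The idea you are missing is that no sign on $\Sigma$ is needed if you test with functions that are \emph{constant} near each atom.

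\begin{itemize}
\item By $d_J$-continuity and the atom-mass lower bound \eqref{eq:atom-mass-and-count}, for $|\tau|<\epsilon$ the atoms of $\mu(\tau)$ stay in small disjoint balls $U_i\ni x_i^*$.
\item Take $\chi_i$ equal to one near $\overline{U_i}$ and zero near the other balls. Then $\nabla\chi_i$ vanishes on the supports of $\mu(\tau)$, $J(\tau)$, and, by \eqref{eq:Sigma-km-supp}, $\Sigma_{km}(\tau)$.
\item Hence \eqref{eq:mass-difference} keeps the cluster masses $a_i^*$ constant, and \eqref{eq:momentum-difference} keeps the cluster momenta $P_i=\int\chi_i\,dJ(\tau)$ constant.
\item Applying \eqref{eq:mass-difference} with $\phi=\chi_iy_k$ gives the linear barycenter equation $B_i'=2P_i-B_i$. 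Thus $B_i(\tau)-a_i^*x_i^*=(1-e^{-\tau})(2P_i-a_i^*x_i^*)$.
\end{itemize}

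The tangent-plane bound for $\Psi$ on $U_i$, combined with Jensen, then gives
\[
H(\tau)-H(0)\geq(1-e^{-\tau})\sum_i\nabla\Psi(x_i^*)\cdot(2P_i-a_i^*x_i^*)+\frac{\lambda_\Psi}{2}(1-e^{-\tau})^2\sum_i\frac{|2P_i-a_i^*x_i^*|^2}{a_i^*}.
\]
Since $H(\tau)\leq H(0)$ for both signs of $\tau$, first the linear coefficient and then each $2P_i-a_i^*x_i^*$ must vanish. The quadratic convexity term then gives $\mu(\tau)=\mu_*$ and $J(\tau)=\frac{y}{2}\mu_*$ for $|\tau|<\epsilon$. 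An open--closed argument, restarting the family along $s_n+\tau_0$, extends this to all $\tau$.

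This first-order argument is what the paper does, and it makes your second-order identity and stress bound unnecessary.
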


\begin{proof}
Let $(\mu(\tau),J(\tau))$ be such a subsequential limit family, obtained along $s_n\to\infty$, and put
\begin{align*}
 H(\tau)=\int\Psi\,d\mu(\tau).
\end{align*}
For each fixed $\tau$, \eqref{eq:family-limit} shows that $(\mu(\tau),J(\tau))$ is a subsequential limit of $F$.
The maximizing property in Proposition~\ref{prop:convex-exposure} therefore gives
\begin{align*}
 H(\tau)\leq H(0) \qquad(\tau\in\R).
\end{align*}

Let $U$ and $\lambda_\Psi$ be as in Proposition~\ref{prop:convex-exposure}. Choose pairwise disjoint balls $U_i$ containing $x_i^*$,
with $\overline{U_i}\subset U$. The $d_J$-continuity in Theorem~\ref{thm:subsequential-family} and the atom-mass lower bound in Proposition~\ref{prop:atomic-and-bound} imply that,
for some $\epsilon>0$,
\begin{align}
\label{eq:exposed-rigidity-local-support}
 \supp\mu(\tau)\subset\bigcup_{i=1}^LU_i \qquad(|\tau|<\epsilon).
\end{align}
Indeed, otherwise an atom of uniformly positive mass would remain outside these balls along a sequence $\tau\to0$,
contradicting $\mu(\tau)\rightharpoonup\mu_*$.

Choose $\chi_i\in C_c^\infty(\R^d)$ equal to one on a neighborhood of $\overline{U_i}$ and zero near every $\overline{U_j}$ with $j\neq i$,
and define
\begin{align*}
 P_i(\tau)=\int\chi_i\,dJ(\tau), \qquad B_i(\tau)=\int y\chi_i(y)\,d\mu(\tau)(y).
\end{align*}
The support containment \eqref{eq:exposed-rigidity-local-support} and the support relation in Theorem~\ref{thm:subsequential-family} show that $\nabla\chi_i$ vanishes on the supports of both $\mu(\tau)$ and $J(\tau)$.
Thus \eqref{eq:mass-difference}, first with $\phi=\chi_i$, gives
\begin{align*}
 \int\chi_i\,d\mu(\tau)=a_i^* \qquad(|\tau|<\epsilon).
\end{align*}
Now we pass to the further subsequence given by Proposition~\ref{prop:stress-1}.
This does not change the subsequential limit family $(\mu(\tau),J(\tau))$ because of \eqref{eq:family-limit}.
By \eqref{eq:Sigma-km-supp}, we have
\begin{align}
\label{eq:exposed-rigidity-stress-support}
 \supp\Sigma_{km}(\tau)\subset\bigcup_{i=1}^L\overline{U_i}
\end{align}
for almost every $|\tau|<\epsilon$. Since $\nabla\chi_i$ vanishes on the union in \eqref{eq:exposed-rigidity-stress-support}, \eqref{eq:momentum-difference} shows that $P_i(\tau)$ is constant and we will denote it by $P_i$ below.
Applying \eqref{eq:mass-difference} with $\phi(y)=\chi_i(y)y_k$ for each coordinate $k$ gives
\begin{align*}
 B_i'=2P_i-B_i.
\end{align*}
Since $B_i(0)=a_i^*x_i^*$, it follows that
\begin{align}
\label{eq:cluster-barycenter-drift}
 B_i(\tau)-a_i^*x_i^* =(1-e^{-\tau})(2P_i-a_i^*x_i^*).
\end{align}

The Hessian bound in Proposition~\ref{prop:convex-exposure} gives, for $y\in U_i$,
\begin{align*}
 \Psi(y)\geq\Psi(x_i^*)+\nabla\Psi(x_i^*)\cdot(y-x_i^*) +\frac{\lambda_\Psi}{2}|y-x_i^*|^2.
\end{align*}
Integrating over the $U_i$, summing, using \eqref{eq:cluster-barycenter-drift}, and applying Cauchy--Schwarz yields
\begin{align}
\label{eq:exposed-rigidity-convexity-first-bound}
 H(\tau)-H(0) &\geq(1-e^{-\tau}) \sum_i\nabla\Psi(x_i^*)\cdot(2P_i-a_i^*x_i^*) +\frac{\lambda_\Psi}{2} \sum_i\int_{U_i}|y-x_i^*|^2\,d\mu(\tau)(y)\\
 &\geq(1-e^{-\tau}) \sum_i\nabla\Psi(x_i^*)\cdot(2P_i-a_i^*x_i^*) +\frac{\lambda_\Psi}{2}(1-e^{-\tau})^2 \sum_i\frac{|2P_i-a_i^*x_i^*|^2}{a_i^*} \qquad(|\tau|<\epsilon).\notag
\end{align}
Together with $H(\tau)\leq H(0)$ for both signs of $\tau$, this first makes the coefficient of $1-e^{-\tau}$ vanish and then gives
\begin{align*}
 2P_i=a_i^*x_i^* \qquad(1\leq i\leq L).
\end{align*}
Substituting $2P_i=a_i^*x_i^*$ into \eqref{eq:exposed-rigidity-convexity-first-bound} and using $H(\tau)\leq H(0)$ give
\begin{align*}
 0\geq H(\tau)-H(0) \geq\frac{\lambda_\Psi}{2} \sum_i\int_{U_i}|y-x_i^*|^2\,d\mu(\tau)(y).
\end{align*}
Hence $\mu(\tau)=\mu_*$ for $|\tau|<\epsilon$. The support relation in Theorem~\ref{thm:subsequential-family},
together with $P_i=a_i^*x_i^*/2$, shows that $P_i$ is the vector weight of $J(\tau)$ at $x_i^*$ and gives
\begin{align*}
 J(\tau)=\frac{y}{2}\mu_* \qquad(|\tau|<\epsilon).
\end{align*}

The set of $\tau$ for which $(\mu(\tau),J(\tau))=Z_*$ is closed by the $d_J$-continuity in Theorem~\ref{thm:subsequential-family}.
On the other hand, if $(\mu(\tau_0),J(\tau_0))=Z_*$, then \eqref{eq:family-limit} shows that the family $(\mu(\tau_0+\sigma),J(\tau_0+\sigma))$ is the subsequential limit family obtained along $s_n+\tau_0$,
so the preceding local argument applies at $\tau_0$. 
So this set of $\tau$ is also open. Hence $(\mu(\tau),J(\tau)) \equiv Z_*$.

Finally, let $(\mu_*,J)$ be any subsequential limit of $F$. By the definition in Proposition~\ref{prop:connected-omega},
there is a sequence $s_n\to\infty$ such that $F(s_n)\to(\mu_*,J)$.
After passing to a subsequence, Theorem~\ref{thm:subsequential-family} and \eqref{eq:family-limit} give a subsequential limit family $(\mu(\tau),J(\tau))$ with $(\mu(0),J(0))=(\mu_*,J)$.
Equation~\eqref{eq:exposed-rigidity-constant}, now proved, gives $(\mu_*,J)=Z_*$. At least one such point exists by the definition of $\mu_*$ in Proposition~\ref{prop:convex-exposure}.
\end{proof}

Now we approximate $\Psi$ by a simpler function that is almost piecewise linear.
The advantage of using this $q$ rather than using $\Psi$ above directly is that its second- and higher-order derivatives are easier to control.

\begin{prop}
\label{prop:q-construction}
Let $\Psi$ and $\mu_*$ be as in Proposition~\ref{prop:convex-exposure}, with $\mu_*=\sum_j a_j^*\delta_{x_j^*}$. There are disjoint balls $U_j$ about $x_j^*$,
a smooth convex function $q:\R^d\to\R$, and nested smooth cutoffs $\chi_0,\tilde{\theta},\chi_1$ such that:
\begin{enumerate}
\item For every $j$ and every $y\in U_j$,
\begin{align}
\label{eq:q(y)-local-expression}
 q(y)=\Psi(x_j^*)+\nabla\Psi(x_j^*)\cdot(y-x_j^*).
\end{align}
\item $D^2q\geq 0$, and $\nabla q$ and
$y\cdot\nabla q(y)-q(y)$, together with all their derivatives, are bounded. Moreover, $D^2q=0$ outside $\{\chi_0=1\}$,
$\tilde{\theta}=1$ near the supports of $\chi_0,\nabla\chi_0,\Delta\chi_0$, $\chi_1=1$ near $\supp\tilde{\theta}$, and $y\cdot\nabla\chi_0(y)$ is bounded.
\item $\dist(\supp\chi_1,\bigcup_j\overline{U_j})>0$.
\end{enumerate}
\end{prop}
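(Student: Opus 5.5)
The construction will be the maximum of finitely many affine functions, smoothed so that it stays exactly affine on each $U_j$. Set $\ell_j(y)=\Psi(x_j^*)+\nabla\Psi(x_j^*)\cdot(y-x_j^*)$, the tangent planes of $\Psi$ at the atoms $x_j^*$. By Proposition~\ref{prop:convex-exposure}, $D^2\Psi\geq\lambda_\Psi$ on the neighbourhood $U$ of the support ball. Hence for $k\neq j$,
\begin{align*}
 \ell_j(x_j^*)-\ell_k(x_j^*)=\Psi(x_j^*)-\ell_k(x_j^*)\geq\frac{\lambda_\Psi}{2}|x_j^*-x_k^*|^2=:2\gamma_{jk}>0,
\end{align*}
where we use that the segment $[x_k^*,x_j^*]$ lies in $U$ (take $U$ a ball, which is convex). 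By continuity there are radii $r>0$ such that on $B(x_j^*,2r)$ we have $\ell_j>\ell_k+\gamma$ for all $k\neq j$, with $\gamma=\min\gamma_{jk}$. We take $U_j=B(x_j^*,r)$, shrinking $r$ so the balls are pairwise disjoint.

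Next, define $q=\max_\varepsilon(\ell_1,\dots,\ell_L)$ using a smooth convex maximum. Concretely, take $q(y)=\int\max_k\ell_k(y-z)\,\varrho_\varepsilon(z)\,dz$ with a mollifier $\varrho_\varepsilon$ of radius $\varepsilon<r$. Mollifying a convex function keeps it convex, so $D^2q\geq0$. On $B(x_j^*,r)$ every point $y-z$ lies in $B(x_j^*,2r)$, where the maximum equals $\ell_j$. Since $\ell_j$ is affine and $\varrho_\varepsilon$ is even, $q=\ell_j$ there, which is (1). (If $L=1$, take $q=\ell_1$.) The gradient $\nabla q$ is an average of the finitely many vectors $\nabla\Psi(x_k^*)$, so it is bounded, and the higher derivatives of $q$ are bounded by $\varepsilon^{-m}$ times a constant. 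The fiddly point is $y\cdot\nabla q-q$. Where $\max_k\ell_k=\ell_k$ on the whole $\varepsilon$-ball, $q=\ell_k$ and $y\cdot\nabla\ell_k-\ell_k$ is a constant. Near the kinks this quantity differs from such a constant by $O(\varepsilon)$ times $\nabla q$ terms. Its derivatives equal $y\cdot\nabla(\partial q)$, and these are supported where $D^2q\neq0$.

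The main obstacle is that the kink set of $\max_k\ell_k$ is a union of unbounded hyperplane pieces. So $D^2q$ is not compactly supported, yet $\chi_0$ must be compactly supported with $D^2q=0$ outside $\{\chi_0=1\}$, and $y\cdot D^2q$ must be bounded. I would fix this by making $q$ a single affine function outside a large ball. Choose an affine $\ell_0$ that is very negative relative to the others on a huge ball $B(0,R_0)$ but exceeds them outside; for instance take $\ell_0(y)=\kappa e\cdot y-C$ with a direction $e$. Here the difficulty is real: a max of finitely many affine functions that is affine outside a compact set must be affine globally. So the kinks cannot all be confined.

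The fallback I would actually use reads the requirement ``$D^2q=0$ outside $\{\chi_0=1\}$'' with $\chi_0$ allowed to be non-compactly supported. We need only $\chi_0$ smooth with bounded derivatives and $y\cdot\nabla\chi_0$ bounded, which is why (2) lists that condition separately. So $\chi_0$ will be a function that is homogeneous of degree zero at infinity, equal to $1$ on a conic neighbourhood of the kink set $\{\ell_j=\ell_k=\max\}$ thickened by $\varepsilon$, and equal to $0$ on smaller conic regions around each $U_j$. Being degree-zero homogeneous gives $y\cdot\nabla\chi_0=O(1)$ and bounded derivatives. To make the rescaled derivatives bounded, I would first replace the mollification radius $\varepsilon$ by $\varepsilon\langle y\rangle$, i.e.\ a conic smoothing. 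Then $D^2q=O(\langle y\rangle^{-1})$ and $y\cdot\nabla\partial q$ is bounded, giving the boundedness of $y\cdot\nabla q-q$ and its derivatives.

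It remains to nest the cutoffs. Choose $\tilde\theta$ and $\chi_1$ as successively fatter conic neighbourhoods of the kink set, each still avoiding $\bigcup_j\overline{U_j}$. The point is that the $U_j$ sit at a positive distance $\gtrsim\gamma/|\nabla\ell_j-\nabla\ell_k|$ from the kink set, so there is room for three nested layers, giving (3). Finally, $\tilde\theta=1$ near the supports of $\chi_0$, $\nabla\chi_0$ and $\Delta\chi_0$ follows automatically from $\chi_0\subset\{\tilde\theta=1\}$ with margin.
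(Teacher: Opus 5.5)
Up to the point where you mollify $m=\max_j\ell_j$ with a fixed-radius even kernel, your construction is exactly the paper's. This includes the strict-convexity gap between tangent planes and the argument that $q=\ell_j$ on $B(x_j^*,r)$.

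After that you diverge, and the divergence rests on a misdiagnosis. Nothing in (2) asks for $D^2q$ or $\chi_0$ to be compactly supported, as you eventually notice. Moreover, with a fixed radius all derivatives of $y\cdot\nabla q-q$ are already bounded. Writing $y=(y-z)+z$ inside the convolution gives
\[
 y\cdot\nabla q(y)-q(y)=\big((w\cdot\nabla m-m)*\rho_\varepsilon\big)(y)+\sum_{\nu=1}^d\big(\partial_\nu m*(z_\nu\rho_\varepsilon)\big)(y).
\]
Here $w\cdot\nabla m(w)-m(w)$ takes only the finitely many values $x_j^*\cdot\nabla\Psi(x_j^*)-\Psi(x_j^*)$, and $\partial_\nu m$ is bounded, so every derivative falls on a fixed integrable kernel. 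Geometrically, $y^TD^2q$ only sees $y\cdot(\nabla\ell_j-\nabla\ell_k)$, which is constant on the kink hyperplane $\{\ell_j=\ell_k\}$; the unbounded tangential part of $y$ is annihilated.

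So replacing the radius by $\varepsilon\langle y\rangle$ is unnecessary, and as written it opens a gap. With a variable radius, $q$ is no longer an average of translates of $m$, so your justification of $D^2q\geq0$ ("mollifying a convex function keeps it convex") no longer applies. Yet convexity is exactly what is used later to get $Va\geq0$ in Proposition~\ref{prop:Aq-no-drop}. The gap can be repaired:
\begin{itemize}
\item For smooth convex $m$, the extra Hessian term is $g'(s)\,D^2s$, where $s=\varepsilon\langle y\rangle$ is convex and $g(\sigma)=\int m(y-\sigma z)\rho(z)\,dz$ is even and convex in $\sigma$. Hence $g'(s)\geq0$ and the term is positive semidefinite.
\item Boundedness of $y\cdot\nabla q-q$ itself then needs the identity $s-y\cdot\nabla s=\varepsilon\langle y\rangle^{-1}$.
\end{itemize}
None of this is required if you keep $\varepsilon$ fixed.

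The cutoffs likewise need not follow the kink set. Since $q$ is affine on each $B(x_j^*,r_j)$, the paper sets $U_j=B(x_j^*,r_j/20)$. It takes $\chi_1,\tilde\theta,\chi_0$ equal to $0$ on $\bigcup_jB(x_j^*,c\,r_j)$ and equal to $1$ off $\bigcup_jB(x_j^*,c'r_j)$, with $(c,c')=(1/10,1/5)$, $(1/4,1/2)$, $(3/5,4/5)$ respectively. Then:
\begin{itemize}
\item $\{\chi_0\neq1\}$ lies where $D^2q=0$.
\item $\nabla\chi_0$ and $\Delta\chi_0$ have compact support, so $y\cdot\nabla\chi_0$ is trivially bounded.
\item The nesting can be read off from the radii.
\item Property (3) holds with gap $\min_j r_j/20$.
\end{itemize}
Your degree-zero-homogeneous conic neighbourhoods of the kink set also satisfy (2)--(3). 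However, they force you to control the geometry of the kink set at infinity, and this buys nothing. The later arguments (Propositions~\ref{prop:interval-evacuation}, \ref{prop:Aq-no-drop}, \ref{prop:interval-charge}) use only the listed properties.
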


\begin{proof}
Let $U, \lambda_\Psi>0$ be as in Proposition~\ref{prop:convex-exposure}. In addition, we can choose it so that the line segment joining any two atoms $x_j^*,x_k^*$ lies in $U$.
For each atom $x_j^*$, define the affine function
\begin{align*}
\ell_j(y)&=\Psi(x_j^*)+\nabla\Psi(x_j^*)\cdot(y-x_j^*).
\end{align*}
If $j\neq k$, as $\Psi$ is strictly convex (see Proposition~\ref{prop:convex-exposure}), we have
\begin{align*}
\ell_j(x_j^*)-\ell_k(x_j^*) &=\Psi(x_j^*)-\Psi(x_k^*) -\nabla\Psi(x_k^*)\cdot(x_j^*-x_k^*)\\
&=\int_0^1(1-s)(x_j^*-x_k^*)^T D^2\Psi(x_k^*+s(x_j^*-x_k^*)) (x_j^*-x_k^*)\,ds\\
&\geq\frac{\lambda_\Psi}{2}|x_j^*-x_k^*|^2>0.
\end{align*}
Since there are only finitely many atoms, we may choose $r_j>0$ such that the balls $B(x_j^*,2r_j)$ are pairwise disjoint,
are contained in $U$, and satisfy
\begin{align*}
\ell_j(y)&>\ell_k(y) \qquad\text{for }y\in B(x_j^*,2r_j),\ k\neq j.
\end{align*}

Define
\begin{align*}
m(y)&=\max_j\ell_j(y).
\end{align*}
Then $m$ is convex and globally Lipschitz, and $m =\ell_j$ on $B(x_j^*,2r_j)$.
Choose a nonnegative radially symmetric function $\rho_\epsilon \in C_c^\infty(\R^d)$ such that
\begin{align*}
\supp\rho_\epsilon&\subset B(0,\epsilon),\qquad \int_{\R^d}\rho_\epsilon(z)\,dz=1,\qquad 0<\epsilon<\min_jr_j,
\end{align*}
and define
\begin{align}
\label{eq:q-def}
q = m*\rho_\epsilon.
\end{align}
Because $m$ is convex and $\rho_\epsilon\geq 0$, $q$ is smooth and $D^2q \geq 0$.

Suppose that $y\in B(x_j^*,r_j)$ and $z\in\supp\rho_\epsilon$. Then $y-z\in B(x_j^*,2r_j)$, so $m(y-z)=\ell_j(y-z)$.
Since $\rho_\epsilon$ is even,
\begin{align*}
q(y) &=\int_{\R^d}\ell_j(y-z)\rho_\epsilon(z)\,dz =\ell_j(y)-\nabla\Psi(x_j^*)\cdot \int_{\R^d}z\rho_\epsilon(z)\,dz =\ell_j(y).
\end{align*}
Thus
\begin{align*}
q(y)&=\Psi(x_j^*)+\nabla\Psi(x_j^*)\cdot(y-x_j^*) \qquad\text{for }y\in B(x_j^*,r_j).
\end{align*}

Since $m$ is Lipschitz, its weak gradient is bounded. Therefore
\begin{align*}
\nabla q =(\nabla m)*\rho_\epsilon, \quad \partial^\alpha\nabla q =(\nabla m)*\partial^\alpha\rho_\epsilon.
\end{align*}
Hence $\nabla q$ and all its derivatives are bounded.

At almost every point $w$, one of the affine functions $\ell_j$ realizes the maximum defining $m$ and
\begin{align*}
w\cdot\nabla m(w)-m(w) &=x_j^*\cdot\nabla\Psi(x_j^*)-\Psi(x_j^*).
\end{align*}
The right side takes only finitely many values. Hence $w\cdot\nabla m(w)-m(w)$ is bounded almost everywhere.
By the definition of $q$ in \eqref{eq:q-def}, we have
\begin{align*}
y\cdot\nabla q(y)-q(y) &=\int_{\R^d} ((y-z)\cdot\nabla m(y-z)-m(y-z)) \rho_\epsilon(z)\,dz +\int_{\R^d} z\cdot\nabla m(y-z)\rho_\epsilon(z)\,dz.
\end{align*}
The first term is the convolution of a bounded function with $\rho_\epsilon$. The second is a finite sum of convolutions of the bounded functions $\partial_\nu m$ with the compactly supported smooth functions $z_\nu\rho_\epsilon(z)$.
Differentiation in $y$ only falls on the smooth one, and we know that $y\cdot\nabla q(y)-q(y)$ and all its derivatives are bounded.

We now construct the cutoffs. Set
\begin{align*}
U_j&=B(x_j^*,r_j/20).
\end{align*}
Choose smooth cutoffs $\chi_1,\tilde{\theta},\chi_0:\R^d\to\R$ satisfying $0\leq\chi_1,\tilde{\theta},\chi_0\leq1$ and the following prescribed values:
\begin{enumerate}
\item $\chi_1(y)=0$ if $|y-x_j^*|\leq r_j/10$ for at least one $j$, and $\chi_1(y)=1$ if $|y-x_j^*|\geq r_j/5$ for every $j$.
\item $\tilde{\theta}(y)=0$ if $|y-x_j^*|\leq r_j/4$ for at least one $j$, and $\tilde{\theta}(y)=1$ if $|y-x_j^*|\geq r_j/2$ for every $j$.
\item $\chi_0(y)=0$ if $|y-x_j^*|\leq3r_j/5$ for at least one $j$, and $\chi_0(y)=1$ if $|y-x_j^*|\geq4r_j/5$ for every $j$.
\end{enumerate}
Then one can verify that all desired properties are satisfied.
\end{proof}

In the proof of the convergence of $(\nu_t,J_t)$ in \eqref{eq:nu_t,J_t-def}, we will again use the approximation strategy.
That is, we use $j(u)-j(v)$ to approximate $j(r)$ and use $|u|^2-|v|^2$ to approximate $|r|^2$.
So we consider
\begin{align}
\label{eq:Aq-def}
 A_q(t) & = \int \nabla q(x/t)\cdot\big(j(u)-j(v)\big)\,dx -\frac{1}{2} \int \Big(\frac{x}{t}\cdot\nabla q(x/t)-q(x/t)\Big) (|u|^2-|v|^2)\,dx,
\end{align}
which is $D_A$ in Proposition~\ref{prop:DA,DA'} with $b= \nabla q(y)$ , $c=-\frac{1}{2} (y\cdot \nabla q(y)-q(y))$. Here $q$ is the smooth convex function as in Proposition \ref{prop:q-construction} above.  
Then we show that $A_q$ is `non-decreasing' in the following sense.

\begin{prop}
\label{prop:Aq-no-drop}
Let $u_+\in H^1(\R^d)$, and let $v(t)$ and $r(t)$ be as in \eqref{eq:v,r-def}. Let $q,A_q$ be as above. 
Let $\chi_0,\tilde{\theta},\chi_1$ be as in Proposition~\ref{prop:q-construction}, with $D^2q=0$ outside $\{\chi_0=1\}$. If $A_n\to\infty$, $A_n<B_n$, and
\begin{align*}
 \sup_{A_n\leq t\leq B_n} \|\chi_1(x/t)r(t)\|_{H^1}\to0,
\end{align*}
then
\begin{align*}
 \liminf_{n\to\infty}\big(A_q(B_n)-A_q(A_n)\big)\geq 0.
\end{align*}
\end{prop}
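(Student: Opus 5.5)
The plan is to integrate the identity \eqref{eq:DA-derivative} of Proposition~\ref{prop:DA,DA'} over $[A_n,B_n]$ and show that every term is either nonnegative or has negative part that is $o(1)$ \emph{uniformly in the length of the interval}. The key structural point is that, for $b=\nabla q$ and $c=-\tfrac12(y\cdot\nabla q-q)$, we get $\nabla c=-\tfrac12 D^2q\,y$. Inserting this into \eqref{eq:Va-computation} should give
\begin{align*}
 Va(y,\xi)=2\,(\xi-\tfrac y2)^TD^2q(y)(\xi-\tfrac y2)\geq0 .
\end{align*}
This symbol is supported in $\{D^2q\neq0\}\subset\{\chi_0=1\}$, and similarly $\nabla\cdot b=\Delta q\geq0$ has the same support. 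Writing $D_t:=\nabla-\tfrac{ix}{2t}=e^{i|x|^2/4t}\nabla e^{-i|x|^2/4t}$, I will verify that
\begin{align*}
 \la B_tf,f\ra=2\int D^2q(x/t)D_tf\cdot\overline{D_tf}\,dx+O(t^{-2}\|f\|_{L^2}^2).
\end{align*}
The error comes from the Weyl correction, which carries two derivatives of $D^2q(x/t)$. Thus $t^{-1}\la B_tr,r\ra\geq -Ct^{-3}$, and this lower bound is integrable. Since $B_t$ is a differential operator with coefficients supported in $\{\chi_1=1\}$, we may replace $r$ by $\chi_1(x/t)r$ inside it.

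I do not plan to estimate the cross term $2t^{-1}\Re\la B_tr,v\ra$ directly: smallness of $\chi_1 r$ times $t^{-1}$ only gives $\epsilon_n\log(B_n/A_n)$, which need not vanish. Instead I will use the positivity of the form: $2|\Re\la B_tr,v\ra|\leq\la B_tr,r\ra+\la B_tv,v\ra$ up to $O(t^{-2})$. This leaves $D_A'(t)\geq -t^{-1}\la B_tv,v\ra-(\text{nonlinear})-Ct^{-3}$, where the positive part $t^{-1}\la B_t r,r\ra$ is retained and only half of it is spent on the cross term.

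The free term is handled by the free flow itself. Proposition~\ref{prop:DA,DA'} with $u$ replaced by $v$ (no nonlinearity) gives $\frac{d}{dt}\la A_tv,v\ra=t^{-1}\la B_tv,v\ra$. By Proposition~\ref{prop:free-mass-current}, $\la A_tv,v\ra=\int\nabla q\cdot dJ^{u_+}_t-\frac12\int(y\cdot\nabla q-q)\,d\mu^{u_+}_t$ converges as $t\to\infty$; here the boundedness of $\nabla q$ and of $y\cdot\nabla q-q$ from Proposition~\ref{prop:q-construction} allows the total-variation convergence to be applied. Hence $\int_{A_n}^{B_n}t^{-1}\la B_tv,v\ra\,dt\leq \la A_{B_n}v,v\ra-\la A_{A_n}v,v\ra+O(A_n^{-2})\to0$.

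The main obstacle is the nonlinear term $\frac{2}{d+1}t^{-1}\int\Delta q(x/t)|u|^{2(d+1)/(d-1)}$, for the same logarithmic reason as the cross term. I would split $|u|^p\lesssim|\chi_1 r|^p+|v|^p$ on $\supp\Delta q$, with $p=2(d+1)/(d-1)$. The $v$ part is $o(1)$ after integration, by \eqref{eq:weighted-free-integrability}. For the $r$ part I plan to absorb it into the remaining positive term $\tfrac12 t^{-1}\la B_tr,r\ra$. First apply the sharp Gagliardo--Nirenberg inequality \eqref{eq:sharp-gn} to $w=e^{-i|x|^2/4t}\,\theta(x/t)\,r$, where $\theta$ is a smooth root-type cutoff with $\theta^2\gtrsim\Delta q$ and $\theta^2\lesssim$ the smallest eigenvalue of $D^2q$; obtaining such a cutoff is where I expect to need care, possibly by modifying $q$ slightly. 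This gives
\begin{align*}
 \|w\|_{L^p}^p\leq C\|\chi_1 r\|_{L^2}^{2/(d-1)}\|\nabla w\|_{L^2}^2\leq C\epsilon_n^{2/(d-1)}\Big(\int D^2q(x/t)D_tr\cdot\overline{D_tr}+O(t^{-2})\Big).
 \end{align*}
Here $\epsilon_n=\sup_{[A_n,B_n]}\|\chi_1(x/t)r(t)\|_{H^1}\to0$, and $\nabla w=e^{-i|x|^2/4t}\theta D_tr+O(t^{-1})$. The small factor $\epsilon_n^{2/(d-1)}$ then lets the positive term dominate for large $n$. Collecting the pieces gives $A_q(B_n)-A_q(A_n)\geq -o(1)-C\int_{A_n}^\infty t^{-3}\,dt$, which yields the claimed $\liminf\geq0$.
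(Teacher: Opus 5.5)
\paragraph{Overall.} Most of your plan coincides with the paper's proof. The shared steps are:
\begin{itemize}
\item the identity $Va=\tfrac12(2\xi-y)^TD^2q(2\xi-y)\geq0$;
\item positivity of the quantized form up to $O(t^{-2})$;
\item Cauchy--Schwarz in that form, trading the cross term for part of $\la B_tr,r\ra$ plus a multiple of $\la B_tv,v\ra$;
\item the free identity $\frac{d}{dt}\la A_tv,v\ra=t^{-1}\la B_tv,v\ra$ to control $\la B_tv,v\ra$ (you use convergence of $\la A_tv,v\ra$ from Proposition~\ref{prop:free-mass-current}, the paper only its boundedness; both work);
\item \eqref{eq:weighted-free-integrability} for the $v$-part of the nonlinear term.
\end{itemize}
The gap is the $r$-part of the nonlinear term.

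\paragraph{Why the absorption fails.} Absorbing it into $\tfrac12t^{-1}\la B_tr,r\ra$ via Gagliardo--Nirenberg needs a weight with $\Delta q\lesssim\theta^2\lesssim\lambda_{\min}(D^2q)$, i.e.\ $D^2q\geq c\,(\Delta q)\,I$. This fails for two reasons.
\begin{itemize}
\item \emph{It fails for the paper's $q$.} Take $q$ from Proposition~\ref{prop:q-construction}, and a point $y$ such that only two pieces $\ell_j,\ell_k$ of $\max_j\ell_j$ are active on $B(y,\epsilon)$. Then $q(y)=\ell_k(y)+g(\beta\cdot y)$ with $\beta=\nabla\Psi(x_j^*)-\nabla\Psi(x_k^*)$. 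So $D^2q=g''(\beta\cdot y)\beta\beta^T$ has rank one, and $\lambda_{\min}(D^2q)=0<\Delta q$ there. The commutator term then controls only one modulated directional derivative of $r$, which cannot feed a $d$-dimensional Gagliardo--Nirenberg inequality.
\item \emph{Modifying $q$ cannot fix it.} The function $q$ must be affine on each $U_j$; otherwise $\Delta q$ sees the solitons and the nonlinear term grows like $\log(B_n/A_n)$. Now suppose $D^2q\geq c\,(\Delta q)\,I$ everywhere. Then $\lambda_{\max}(D^2q)^d\leq c^{-d}\det D^2q$, so $\nabla q$ is a $C^1$ quasiregular map. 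It is constant on $U_j$, hence constant on $\R^d$ by Reshetnyak's theorem. So $q$ is affine, which is impossible as soon as $\mu_*$ has two atoms.
\end{itemize}
The fallback, using $\|\chi_1(x/t)r(t)\|_{H^1}\leq\epsilon_n$ pointwise in time, only gives $\epsilon_n^{2(d+1)/(d-1)}\log(B_n/A_n)$. That is exactly the obstruction you identified yourself.

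\paragraph{What is missing.} The missing ingredient is a dispersive spacetime bound rather than positivity; this is Proposition~\ref{prop:interval-charge}.
\begin{itemize}
\item Localize $(i\partial_t+\Delta+|u|^{4/(d-1)})r=-|u|^{4/(d-1)}v$ with $\chi_0(x/t)$.
\item On $[A_n,B_n]$ the potential $\tilde\theta(x/t)|u|^{4/(d-1)}$ has size $\delta_n\to0$ in $L^\infty_tL^{d/2}_x$, by $\epsilon_n\to0$ and Lemma~\ref{lem:free-lq-decay}. So it can be absorbed in the Strichartz estimate.
\item The commutator terms carry a factor $t^{-1}$. They are therefore $O(\epsilon_n)$ in $L^1_tL^2_x$ on each dyadic interval $I_{n,k}=[2^kA_n,2^{k+1}A_n]$.
\item Hence
\[
\|\chi_0r\|_{L^2_tL^{2d/(d-2)}_x(I_{n,k})}\lesssim\epsilon_n+(2^kA_n)^{-1}+\delta_n\|v\|_{L^2_tL^{2d/(d-2)}_x(I_{n,k})},
\]
with constants independent of the length of $I_{n,k}$.
\item Since
\[
\int_{I_{n,k}}\int\Delta q(x/t)|r|^{2(d+1)/(d-1)}\,dx\,\frac{dt}{t}\lesssim(2^kA_n)^{-1}\|\chi_0r\|^2_{L^2_tL^{2d/(d-2)}_x(I_{n,k})},
\]
summing over $k$ gives $O(A_n^{-1})$.
\end{itemize}
It is this length-independence of the Strichartz bound, combined with the weight $dt/t$, that defeats the logarithm.
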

\begin{proof}
In the notation of Proposition~\ref{prop:DA,DA'}, we have
\begin{align*}
a = \nabla q(y)\cdot\xi-\frac{y\cdot\nabla q(y)-q(y)}{2}.
\end{align*}
Our goal is to prove that the right-hand side of \eqref{eq:DA-derivative} is asymptotically nonnegative, up to a decaying negative term, as $t\to\infty$.
We still use $V=(2\xi-y)\cdot\nabla_y$ and we have
\begin{align*}
 Va = V\big( \nabla q(y)\cdot\xi-\frac{y\cdot\nabla q(y)-q(y)}{2} \big) 
 =\frac{1}{2}(2\xi-y)^TD^2q(y)(2\xi-y) \geq 0,
\end{align*}
where we used the convexity $D^2q \geq 0$. 

Now we put things into the context of Proposition~\ref{prop:DA,DA'}, where coefficients are required to have compact support \footnote{Another way is to relax the condition of Proposition~\ref{prop:DA,DA'} to only requiring bounds on $b,c$ for the proof to go through.}, by truncating objects above.
Choose a bump function $\chi\in C_c^\infty(\R^d)$, nonincreasing in $|y|$, such that $\chi=1$ on $\{|y|\leq1\}$ and $\chi=0$ on $\{|y|\geq2\}$, and set
\begin{align*}
\chi_R(y)=\chi(y/R).
\end{align*}
Applying Proposition~\ref{prop:DA,DA'} with $b=\chi_R(y)\nabla q(y)$ and $c=-\frac{1}{2}\chi_R(y)(y\cdot\nabla q(y)-q(y))$ gives the corresponding truncated $A_q$:
\begin{align}
\label{eq:truncated-convex-action}
A_{q,R}(t) &=\int\chi_R(x/t)\nabla q(x/t)\cdot\big(j(u)-j(v)\big)\,dx -\frac{1}{2}\int\chi_R(x/t) (\frac{x}{t}\cdot\nabla q(x/t)-q(x/t)) (|u|^2-|v|^2)\,dx.
\end{align}
Then the corresponding truncated `$Va$' in \eqref{eq:Va-computation} is
\begin{align}
\label{eq:Va-3}
\begin{aligned}
&V(\chi_R(y) (\nabla q(y)\cdot\xi-\frac{y\cdot\nabla q(y)-q(y)}{2}))\\
& = \frac{1}{2} \chi_R(y)(2\xi-y)^TD^2q(y)(2\xi-y) + 
((2\xi-y)\cdot\nabla\chi_R(y)) (\nabla q(y)\cdot\xi-\frac{y\cdot\nabla q(y)-q(y)}{2}).
\end{aligned}
\end{align}
Now we consider the error term introduced by $\chi_R$. The second term on the right side of \eqref{eq:Va-3} is a polynomial of degree at most two in $\xi$ whose coefficients are $O(R^{-1})$ and supported in $\{R \leq |y| \leq 2R\}$.
As in the derivation of \eqref{eq:Bt-form-bound}, its quantization is a pseudodifferential operator with $O(R^{-1})$ symbols.
The corresponding quadratic form is bounded by the $H^1$ norms of $u$ and $v$ over $\{R\leq|x/t|\leq2R\}$, with times $O(R^{-1})$ level constant.
Similarly, using \eqref{eq:Aq-def} and \eqref{eq:truncated-convex-action}, we know $A_{q,R}(t)\to A_q(t)$ as $R \to \infty$ for every fixed $t$.

The first term on the right side of \eqref{eq:Va-3} is nonnegative on the symbol level, and now we consider its operator level positivity after quantization.
The symbol $2\xi-y$ quantizes to be $L_t=2D-x/t$, where $D=-i\nabla$. 
We have
\begin{align} \label{eq:weyl-quant-2order}
\begin{aligned}
&\la \Op^w\big( \frac{1}{2}\chi_R(x/t)(2\xi-x/t)^TD^2q(x/t)(2\xi-x/t) \big)f,f \ra
\\ &\quad= \frac{1}{2}\int\chi_R(x/t)(L_tf)^*D^2q(x/t)(L_tf)\,dx
-\frac{1}{2t^2}\int \sum_{j,k}\big(\partial_{y_j}\partial_{y_k}(\chi_R(D^2q)_{jk})\big)(x/t)|f|^2\,dx.
\end{aligned}
\end{align}
The first term is positive, and the second term is $O(t^{-2})$,  hence integrable.

The first integral on the right-hand side of \eqref{eq:weyl-quant-2order} is non-decreasing in $R$, and we define (allowed to be infinity):
\begin{align} 
\label{eq:K-def}
K(f,t) =\lim_{R\to\infty}\int \chi_R(x/t)(L_tf)^*D^2q(x/t)(L_tf)\,dx.
\end{align}
The strategy for the remaining part is as follows: we first take $f=r$, which gives the term we currently have.
If we add another multiple of $K(v,t)$, then these two terms control the second term in \eqref{eq:DA-derivative}.
So the negative part we are left to control is $K(v,t)$ and we show that it is integrable in time below. In particular, this shows that it has tail tending to zero.

More concretely, let $A_t$ be as in Proposition~\ref{prop:DA,DA'} with $\chi_R$ inserted, and consider
\begin{align}
\label{eq:Av-pairing}
\la A_t v(t), v(t) \ra = \int\chi_R(x/t)\nabla q(x/t)\cdot j(v(t))\,dx -\frac{1}{2}\int \chi_R(x/t)(\frac{x}{t}\cdot\nabla q(x/t)-q(x/t))|v(t)|^2 \, dx.
\end{align}

Let $B_t=\Op^w((Va)(x/t,\xi))$ with $a$ as above.
Since $(i\partial_t+\Delta)v=0$, using \eqref{eq:heisenberg-identity}, we have
\begin{align}
\label{eq:Atvv'}
\frac{d}{dt}\la A_tv(t),v(t)\ra =t^{-1}\la B_tv(t),v(t)\ra,
\end{align}
which is the second term in the bracket on the right-hand side of \eqref{eq:DA'}.
The right-hand side of \eqref{eq:Atvv'} is evaluated in \eqref{eq:weyl-quant-2order}.

Now we integrate this from $1$ to $T \gg 1$ and let $R\to\infty$.
Terms containing derivatives of $\chi_R$ tend to zero as the discussion after \eqref{eq:Va-3} when $R \to \infty$.
Then we have
\begin{align*}
& \big( \int\nabla q(x/t)\cdot j(v(t))\,dx -\frac{1}{2}\int (\frac{x}{t}\cdot\nabla q(x/t)-q(x/t))|v(t)|^2\,dx \big)\big|_{t=1}^{t=T}
\\ &\quad=\frac{1}{2}\int_1^T K(v,t)\frac{dt}{t} -\frac{1}{2}\int_1^T\int (\Delta^2q)(x/t)|v(t,x)|^2\,dx\frac{dt}{t^3}.
\end{align*}
The left-hand side is bounded uniformly in $T$, because $\nabla q$ and $y\cdot\nabla q-q$ are bounded by the construction in Proposition~\ref{prop:q-construction} and $\|v(t)\|_{H^1}$ is preserved.
The last integral is bounded by $\|\Delta^2q\|_{L^\infty}\|u_+\|_{L^2}^2\int_1^\infty t^{-3}\,dt$.
Therefore
\begin{align}
\label{eq:free-convex-kinetic-integrability}
\int_1^\infty K(v,t)\frac{dt}{t}<\infty.
\end{align}

We now return to apply \eqref{eq:DA-derivative} to $A_{q,R}$ (playing the role of $D_A$ there) from \eqref{eq:truncated-convex-action} to obtain
\begin{align}
\label{eq:truncated-Aq'}
\begin{aligned}
A_{q,R}'(t) &= t^{-1}\la \Op^w((Va)(x/t,\xi))r(t),r(t) \ra +2t^{-1}\Re\la \Op^w((Va)(x/t,\xi))r(t),v(t) \ra\\
&\quad-\frac{2}{d+1}t^{-1}\int (\chi_R\Delta q+\nabla\chi_R\cdot\nabla q)(x/t)|u(t,x)|^{2(d+1)/(d-1)}\,dx,
\end{aligned}
\end{align}
with $Va$ given by \eqref{eq:Va-3}.
The second term of \eqref{eq:Va-3} enters the first two terms of \eqref{eq:truncated-Aq'} only through integrals over the region $\{R \leq|x/t|\leq 2R\}$ and has the derivative of $\chi_R$, which is $O(R^{-1})$. So it has no contribution (for fixed time interval) as $R \to \infty$.

The cross term (i.e., the second term on the right-hand side of \eqref{eq:truncated-Aq'}) can be controlled it by quadratic terms in $r$ and $v$.
Using \eqref{eq:weyl-quant-2order} with $f=r$ and $f=v$, we have
\begin{align*}
&\la \Op^w\big( \frac{1}{2}\chi_R(x/t)(2\xi-x/t)^TD^2q(x/t)(2\xi-x/t) \big)r,r \ra
+ 2 \Re \la \Op^w\big( \frac{1}{2}\chi_R(x/t) (2\xi-x/t)^TD^2 q(x/t) (2\xi-x/t) \big)r,v \ra 
\\ & \geq \frac{1}{4} \int \chi_R(x/t)(L_tr)^*D^2q(x/t)(L_tr)\,dx
-C\int \chi_R(x/t)(L_tv)^*D^2q(x/t)(L_tv)\,dx -Ct^{-2}.
\end{align*}
Then we substitute this into \eqref{eq:truncated-Aq'} and integrate it in $t$ from $S$ to $T$ and send $R\to\infty$.
We have
\footnote{The first term is nonnegative and can be discarded before taking the limit to avoid the issue of finiteness of that term.}
\begin{align}
\label{eq:convex-action-production-lower-bound}
A_q(T)-A_q(S) &\geq-C\int_S^T K(v,t)\frac{dt}{t} -C\int_S^T\frac{dt}{t^3} -\frac{2}{d+1}\int_S^T\int (\Delta q)(x/t)|u(t,x)|^{2(d+1)/(d-1)} \,dx\frac{dt}{t}.
\end{align}
Of the terms on the right-hand side, it is straightforward that the tail of the second tends to zero as $S,T \to \infty$.
We also proved above that the first one has tail tending to zero.

Now we consider the last term. We will use that for any sequence $A_n, B_n \to \infty$, $A_n<B_n$, we have
\begin{align}
\label{eq:interval-nonlinear-charge-vanishing}
\int_{A_n}^{B_n}\int (\Delta q)(x/t)|r(t,x)|^{2(d+1)/(d-1)} \,dx\frac{dt}{t}\to 0.
\end{align}
We postpone its proof to Proposition~\ref{prop:interval-charge}, and the conditions there are indeed satisfied by the construction in Proposition~\ref{prop:q-construction}.
On the other hand, Lemma~\ref{lem:diagonal-radiation} gives
\begin{align}
\label{eq:AnBn-v-est}
\int_{A_n}^{B_n}\int (\Delta q)(x/t)|v(t,x)|^{2(d+1)/(d-1)} \,dx\frac{dt}{t} &\leq\|\Delta q\|_{L^\infty} \int_{A_n}^\infty \|v(t)\|_{L^{2(d+1)/(d-1)}}^{2(d+1)/(d-1)} \frac{dt}{t}\to 0.
\end{align}
Since $u=r+v$, we have $|u|^{2(d+1)/(d-1)} \leq C_d(|r|^{2(d+1)/(d-1)}+|v|^{2(d+1)/(d-1)})$, so \eqref{eq:interval-nonlinear-charge-vanishing} and \eqref{eq:AnBn-v-est} give
\begin{align}
\label{eq:convex-action-nonlinear-tail}
\int_{A_n}^{B_n}\int (\Delta q)(x/t)|u(t,x)|^{2(d+1)/(d-1)} \,dx\frac{dt}{t}\to 0.
\end{align}
Finally, taking $S=A_n$ and $T=B_n$ in \eqref{eq:convex-action-production-lower-bound} and combining this with the preceding tail estimates gives
\begin{align*}
\liminf_{n\to\infty}(A_q(B_n)-A_q(A_n))\geq 0.
\end{align*}
\end{proof}

Now we prove \eqref{eq:interval-nonlinear-charge-vanishing} used above.
\begin{prop}
\label{prop:interval-charge}
Let $u_+\in H^1(\R^d)$, and let $v(t)$ and $r(t)$ be as in \eqref{eq:v,r-def}.
Choose $\theta,\chi_0,\chi_1,\tilde{\theta} \in C^\infty(\R^d)$  with bounded derivatives such that: $0 \leq \chi_0,\chi_1,\tilde{\theta} \leq 1$, $0 \leq \theta \lesssim 1$, and $\theta=0$ outside $\{\chi_0=1\}$, $\tilde{\theta}=1$ near the supports of $\chi_0,\nabla\chi_0,\Delta\chi_0$,
and $\chi_1=1$ near $\supp\tilde{\theta}$. Assume $y\cdot\nabla\chi_0(y)$ is bounded. If $A_n\to\infty$, $A_n<B_n$, and
\begin{align*}
 \epsilon_n:=\sup_{A_n\leq t\leq B_n} \|\chi_1(x/t)r(t)\|_{H^1}\to0,
\end{align*}
then
\begin{align}
\label{eq:interval-charge-conclusion}
 \int_{A_n}^{B_n}\int\theta(x/t)|r(t,x)|^{2(d+1)/(d-1)}\,dx\frac{dt}{t} \to0.
\end{align}
\end{prop}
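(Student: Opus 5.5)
The plan is to trade the time integral $\int_{A_n}^{B_n}\cdots\frac{dt}{t}$, which may run over an interval with $\log(B_n/A_n)\to\infty$, for a spacetime bound coming from a localized pseudo-conformal (convex multiplier) identity of the type in Proposition~\ref{prop:DA,DA'}. The pointwise bound alone is not enough: since $\theta$ is supported where $\chi_1=1$, Sobolev embedding \eqref{eq:subcritical-sobolev-embedding} gives $\int\theta(x/t)|r|^{2(d+1)/(d-1)}\,dx\lesssim\epsilon_n^{2(d+1)/(d-1)}$, but this integrates to $\epsilon_n^{2(d+1)/(d-1)}\log(B_n/A_n)$, which need not tend to zero.

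The first step is a Gagliardo--Nirenberg bound with the Galilean vector field $L_t=2D-x/t$ in place of $\nabla$. Put $f=\tilde\theta(x/t)r(t)$ and $g=e^{-i|x|^2/(4t)}f$. Then $|g|=|f|$ and
\begin{align*}
\nabla g=\tfrac{i}{2}e^{-i|x|^2/(4t)}L_tf.
\end{align*}
Applying \eqref{eq:sharp-gn} to $g$ gives
\begin{align*}
\int\theta(x/t)|r|^{2(d+1)/(d-1)}\,dx\leq\|f\|_{L^{2(d+1)/(d-1)}}^{2(d+1)/(d-1)}\lesssim\|f\|_{L^2}^{2/(d-1)}\|L_tf\|_{L^2}^{2d/(d-1)}.
\end{align*}
The coefficient $x/t$ in $L_t$ is bounded on $\supp\tilde\theta(x/t)$, since $\supp\tilde\theta$ is compact in the setting of Proposition~\ref{prop:q-construction}. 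Because $\chi_1=1$ on $\supp\tilde\theta$, both $\|f\|_{L^2}$ and $\|L_tf\|_{L^2}$ are $O(\epsilon_n)$. Hence
\begin{align*}
\int\theta(x/t)|r|^{2(d+1)/(d-1)}\,dx\lesssim\epsilon_n^{4/(d-1)}\|\tilde\theta(x/t)L_tr\|_{L^2}^2+O(\epsilon_n^{4/(d-1)}t^{-2}).
\end{align*}
It therefore suffices to show that $\int_{A_n}^{B_n}\|\tilde\theta(x/t)L_tr\|_{L^2}^2\frac{dt}{t}$ is bounded uniformly in $n$.

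The second step obtains this bound from a monotone action, exactly as $K(v,t)$ was handled in the proof of Proposition~\ref{prop:Aq-no-drop}. Choose a convex function $\tilde q$ with $D^2\tilde q\geq c>0$ on $\supp\tilde\theta$, and cut it off so that its coefficients live where $\chi_0$, $\tilde\theta$ and $\chi_1$ are arranged as in the hypotheses. The hypothesis that $y\cdot\nabla\chi_0$ is bounded is what keeps $c=-\frac12(y\cdot\nabla\tilde q-\tilde q)$ and its derivatives controlled. Take the corresponding $D_A$ from Proposition~\ref{prop:DA,DA'}, with $b=\nabla\tilde q$ and this $c$. Then $Va=\frac12(2\xi-y)^TD^2\tilde q\,(2\xi-y)$, and \eqref{eq:weyl-quant-2order} gives
\begin{align*}
t^{-1}\la\Op^w(Va)r,r\ra\geq\frac{c}{2t}\|\tilde\theta L_tr\|_{L^2}^2-Ct^{-3}.
\end{align*}
The remaining terms in $D_A'$ are handled as follows. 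The boundary values $D_A(A_n)$ and $D_A(B_n)$ are bounded by \eqref{eq:At-first-order-bound}. The cross term with $v$ is absorbed by Cauchy--Schwarz into half of the positive term plus $C\,K(v,t)/t$, which is integrable by \eqref{eq:free-convex-kinetic-integrability} applied to $\tilde q$. In the nonlinear term, the $v$ part is integrable by Lemma~\ref{lem:diagonal-radiation}. The $r$ part is itself bounded by the first step by $\epsilon_n^{4/(d-1)}t^{-1}\|\tilde\theta L_tr\|^2$, so it is absorbed into the positive term once $n$ is large.

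The main obstacle is the error from regions where the cutoffs are not constant: terms supported on $\supp\nabla\chi_0$ or where $D^2\tilde q$ degenerates. These are quadratic in $r$ but need not be squares of $L_tr$, and a naive bound is $O(\epsilon_n^2/t)$, which brings the $\log(B_n/A_n)$ loss back. My first attempt is to choose $\tilde q$ so that every such transition region lies where $\tilde\theta=1$ and the symbol there is still a nonnegative quadratic form in $2\xi-y$ up to a term of the form $(2\xi-y)\cdot\nabla\chi_0\times(\text{bounded})$. That term I would bound by $\delta\|\tilde\theta L_tr\|^2+C_\delta\|\tilde\theta r\|_{L^2}\|\tilde\theta L_tr\|$ and then absorb, using $\|\tilde\theta r\|_{L^2}\leq\epsilon_n$ to make the constant small. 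If the nondegeneracy cannot be arranged globally, the fallback is a bootstrap on the quantity $X_n=\int_{A_n}^{B_n}\|\tilde\theta L_tr\|^2\frac{dt}{t}$, showing $X_n\leq C+C\epsilon_nX_n$. Either way, combining with the first step gives \eqref{eq:interval-charge-conclusion}.
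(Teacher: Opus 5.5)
Your argument does not close, and it fails at the step you flag as the main obstacle.

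\emph{Step~1 rests on a false support assumption.} You assume $\supp\tilde\theta$ is compact, but this fails where the proposition is used. In Proposition~\ref{prop:q-construction} the cutoffs $\chi_1,\tilde\theta,\chi_0$ equal $1$ outside small balls around the atoms. Also, $\theta=\Delta q$ lives near the unbounded creases of $\max_j\ell_j$. The hypothesis that $y\cdot\nabla\chi_0$ is bounded is in the statement precisely because $\chi_0$ is not compactly supported; it controls the term $-it^{-1}((x/t)\cdot\nabla\chi_0(x/t))r$ in \eqref{eq:chi0-r-PDE}. On such a support, $\|(x/t)\tilde\theta(x/t)r\|_{L^2}$ is not bounded by $\epsilon_n$, and it need not be finite without a variance assumption. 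So $\|L_tf\|_{L^2}=O(\epsilon_n)$ fails. Moreover, a convex $\tilde q$ with $D^2\tilde q\geq c$ along a ray has unbounded gradient, so $D_A$ is no longer controlled by \eqref{eq:At-first-order-bound}.

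\emph{Step~2 is circular even on a bounded region.} The nonlinear term $-\frac{2}{d+1}t^{-1}\int\Delta\tilde q(x/t)|u|^{2(d+1)/(d-1)}\,dx$ in \eqref{eq:DA-derivative} has the focusing sign. So $\Delta\tilde q$ must vanish near the atoms, where $r$ is of size one, and hence $D^2\tilde q$ degenerates on a shell inside $\{\chi_1=1\}$. Bounding $\int\!\!\int\Delta\tilde q(x/t)|r|^{2(d+1)/(d-1)}\,dx\,\frac{dt}{t}$ is exactly what this proposition supplies to Proposition~\ref{prop:Aq-no-drop}.

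Absorbing that term through your $L_t$-Gagliardo--Nirenberg bound needs a cutoff $\psi$ with $\Delta\tilde q\lesssim\psi^{2(d+1)/(d-1)}$ and $\psi^2\lesssim D^2\tilde q$. That means $\Delta\tilde q\lesssim\lambda_{\min}(D^2\tilde q)^{(d+1)/(d-1)}$, which is impossible as $D^2\tilde q\to0$, since $\Delta\tilde q\geq d\lambda_{\min}$. On the degenerate shell you only have pointwise-in-time smallness. This produces errors of size $t^{-1}\epsilon_n^2$ or $t^{-1}\epsilon_n\|\tilde\theta L_tr\|_{L^2}$. Integrating against $dt/t$ gives $X_n\lesssim1+\epsilon_n^2\log(B_n/A_n)$, not $X_n\leq C+C\epsilon_nX_n$. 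Your bootstrap is asserted rather than derived, and the logarithmic loss you identified at the start comes back.

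\emph{The missing idea} is to use dispersion through Strichartz estimates rather than a monotone action. The paper writes $(i\partial_t+\Delta+|u|^{4/(d-1)})r=-|u|^{4/(d-1)}v$ and localizes by $\chi_0(x/t)$ to get \eqref{eq:chi0-r-PDE}.
\begin{itemize}
\item The commutator terms are $O(t^{-1}\epsilon_n)+O(t^{-2})$ in $L^2_x$, using $\chi_1=1$ near $\supp\nabla\chi_0$ and the bound on $y\cdot\nabla\chi_0$.
\item The potential satisfies $\|\tilde\theta(x/t)|u|^{4/(d-1)}\|_{L^{d/2}}\leq\delta_n\to0$, by $\epsilon_n\to0$ and Lemma~\ref{lem:free-lq-decay}.
\item On each dyadic interval $I_{n,k}=[2^kA_n,\min(2^{k+1}A_n,B_n)]$ one has $\int_{I_{n,k}}dt/t\leq\log2$.
\end{itemize}
Endpoint Strichartz, after absorbing the small potential, then gives $\|\chi_0r\|_{L^2_tL^{2d/(d-2)}_x(I_{n,k})}\leq C(\epsilon_n+(2^kA_n)^{-1}+\delta_n\|v\|_{L^2_tL^{2d/(d-2)}_x(I_{n,k})})$. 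This bound is independent of the length of $I_{n,k}$. Since $\int\theta|r|^{2(d+1)/(d-1)}\,dx\lesssim\|\chi_0r\|_{L^{2d/(d-2)}}^2$ and $dt/t\leq dt/(2^kA_n)$ on $I_{n,k}$, the $k$-th piece carries a factor $(2^kA_n)^{-1}$. Summing in $k$, using $\sum_k\|v\|^2_{L^2_tL^{2d/(d-2)}_x(I_{n,k})}\lesssim\|u_+\|_{L^2}^2$, gives a total of $O(A_n^{-1})$. The time-averaged bound gains a factor $t^{-1}$ over your pointwise bound, and that is what removes $\log(B_n/A_n)$; no convexity structure is needed.
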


\begin{proof}
By definition we have
\begin{align}
\label{eq:delta-n-inq}
(i\partial_t+\Delta+|u|^{4/(d-1)})r &= -|u|^{4/(d-1)}v.
\end{align}

First, we prove the localized smallness of $|u|^{4/(d-1)}$. Interpolation between $L^2$ and $L^{2d/(d-2)}$,
followed by the Sobolev embedding $H^1\hookrightarrow L^{2d/(d-2)}$, gives
\begin{align}
\label{eq:localized-interpolation-bound}
\|f\|_{L^{2d/(d-1)}} \leq \|f\|_{L^2}^{1/2} \|f\|_{L^{2d/(d-2)}}^{1/2} \leq C\|f\|_{H^1}.
\end{align}
Because $\chi_1=1$ on a neighborhood of $\supp\tilde{\theta}$, for every $A_n\leq t\leq B_n$, using \eqref{eq:localized-interpolation-bound}, we have
\begin{align*}
\|\tilde{\theta}(x/t)|r(t)|^{4/(d-1)}\|_{L^{d/2}} &\leq \|\chi_1(x/t)r(t)\|_{L^{2d/(d-1)}}^{4/(d-1)} \leq C\|\chi_1(x/t)r(t)\|_{H^1}^{4/(d-1)} \leq C\epsilon_n^{4/(d-1)}.
\end{align*}
Using Lemma \ref{lem:free-lq-decay}, we have
\begin{align*}
\|v(t)\|_{L^{2(d+1)/(d-1)}} \to 0 \qquad\text{as }t\to\infty.
\end{align*}
The free flow is unitary on $L^2$, so $\|v(t)\|_{L^2} = \|u_+\|_{L^2}$. Interpolating these two bounds gives
\begin{align*}
\|v(t)\|_{L^{2d/(d-1)}} &\leq \|u_+\|_{L^2}^{(d-1)/(2d)} \|v(t)\|_{L^{2(d+1)/(d-1)}}^{(d+1)/(2d)} \to 0.
\end{align*}
Since $|u|^{4/(d-1)} =|r+v|^{4/(d-1)} \lesssim |r|^{4/(d-1)}+|v|^{4/(d-1)}$ \footnote{In fact, in our case, $\frac{4}{d-1}\leq 1$, and we have $|r+v|^{4/(d-1)} \leq |r|^{4/(d-1)}+|v|^{4/(d-1)}$.}, we have
\begin{align}
\label{eq:delta_n-est}
\begin{aligned}
\delta_n &:= \sup_{A_n\leq t\leq B_n} \|\tilde{\theta}(x/t)|u(t)|^{4/(d-1)}\|_{L^{d/2}} \leq C\epsilon_n^{4/(d-1)} + \sup_{t\geq A_n} \|v(t)\|_{L^{2d/(d-1)}}^{4/(d-1)} \to 0.
\end{aligned}
\end{align}

We next multiply \eqref{eq:delta-n-inq} by $\chi_0(x/t)$. Since $\tilde{\theta}=1$ on a neighborhood of $\supp\chi_0$,
\begin{align*}
\tilde{\theta}(x/t)\chi_0(x/t) = \chi_0(x/t).
\end{align*}
Then we have
\begin{align}
\label{eq:chi0-r-PDE}
\begin{aligned}
(i\partial_t+\Delta+\tilde{\theta}(x/t)|u(t,x)|^{4/(d-1)}) (\chi_0(x/t)r(t,x)) = & -it^{-1}((x/t)\cdot\nabla\chi_0(x/t))r(t,x) +2t^{-1}\nabla\chi_0(x/t)\cdot\nabla r(t,x) \\
&\qquad +t^{-2}(\Delta\chi_0)(x/t)r(t,x) -\chi_0(x/t)|u(t,x)|^{4/(d-1)}v(t,x).
\end{aligned}
\end{align}

For every nonnegative integer $k$ such that $2^kA_n<B_n$, define
\begin{align*}
I_{n,k} = \{t\in\R:2^kA_n\leq t\leq\min(2^{k+1}A_n,B_n)\}.
\end{align*}
These intervals cover every $t$ satisfying $A_n\leq t\leq B_n$ and have disjoint interiors.

For such an interval, define
\begin{align*}
\|\chi_0r\|_{S(I_{n,k})} &:= \|\chi_0(x/t)r(t)\|_{L_t^\infty L_x^2(I_{n,k})} + \|\chi_0(x/t)r(t)\|_{L_t^2L_x^{2d/(d-2)}(I_{n,k})}.
\end{align*}
Here $S(I_{n,k})$ denotes the Strichartz norm. 
By the choice of cut-offs, we know
\begin{align*}
\chi_1 = 1 \quad \text{ on a neighborhood of } \supp\chi_0 \cup \supp\nabla\chi_0 \cup \supp\Delta\chi_0.
\end{align*}
In particular,
\begin{align*}
\|\chi_0(x/(2^kA_n))r(2^kA_n)\|_{L^2} &\leq \|\chi_1(x/(2^kA_n))r(2^kA_n)\|_{L^2} \leq \epsilon_n.
\end{align*}

We now estimate the three commutator terms in \eqref{eq:chi0-r-PDE}. Since $y\cdot\nabla\chi_0(y)$ is bounded by hypothesis and is supported where $\nabla\chi_0\neq 0$,
\begin{align*}
\|((x/t)\cdot\nabla\chi_0(x/t))r(t)\|_{L^2} &\leq C\|\chi_1(x/t)r(t)\|_{L^2} \leq C\epsilon_n
\end{align*}
for $A_n\leq t\leq B_n$.

On a neighborhood of $\supp\nabla\chi_0$, one has both $\chi_1=1$ and $\nabla\chi_1=0$. Hence
\begin{align*}
\nabla\chi_0(x/t)\cdot\nabla r(t) = \nabla\chi_0(x/t)\cdot \nabla(\chi_1(x/t)r(t)),
\end{align*}
and therefore
\begin{align*}
\|\nabla\chi_0(x/t)\cdot\nabla r(t)\|_{L^2} \leq C\epsilon_n.
\end{align*}

The residual $r$ of \eqref{eq:v,r-def} is uniformly bounded in $H^1$, because $u$ is uniformly bounded in $H^1$ and the free flow preserves the $H^1$-norm of $u_+$.
Thus
\begin{align*}
\|(\Delta\chi_0)(x/t)r(t)\|_{L^2} \leq C.
\end{align*}

Since $\int_{I_{n,k}}\frac{dt}{t} \leq \log 2$,  $\int_{I_{n,k}}\frac{dt}{t^2} \leq \frac{1}{2^kA_n}$, 
the first three terms on the right-hand side of \eqref{eq:chi0-r-PDE} satisfy
\begin{align*}
\| -it^{-1}((x/t)\cdot\nabla\chi_0(x/t))r +2t^{-1}\nabla\chi_0(x/t)\cdot\nabla r +t^{-2}(\Delta\chi_0)(x/t)r \|_{L_t^1L_x^2(I_{n,k})} &\leq C \epsilon_n + \frac{C}{2^kA_n}.
\end{align*}

Using H\"older's inequality and $\delta_n$ defined in \eqref{eq:delta_n-est}, we have
\begin{align*}
\|\tilde{\theta}(x/t)|u|^{4/(d-1)}\chi_0(x/t)r\|_ {L_t^2L_x^{2d/(d+2)}(I_{n,k})} &\leq \|\tilde{\theta}(x/t)|u(t)|^{4/(d-1)}\|_{L^\infty L^{d/2}} \|\chi_0(x/t)r\|_ {L_t^2L_x^{2d/(d-2)}(I_{n,k})} \leq \delta_n\|\chi_0r\|_{S(I_{n,k})}.
\end{align*}
Since  $\chi_0 \leq \tilde{\theta}$ by our choice, we have
\begin{align*}
\|\chi_0(x/t)|u|^{4/(d-1)}v\|_ {L_t^2L_x^{2d/(d+2)}(I_{n,k})} &\leq \delta_n \|v\|_ {L_t^2L_x^{2d/(d-2)}(I_{n,k})}.
\end{align*}

Applying \eqref{eq:strichartz-homogeneous} and \eqref{eq:strichartz-inhomogeneous} to \eqref{eq:chi0-r-PDE} gives
\begin{align*}
\|\chi_0r\|_{S(I_{n,k})} &\leq C( \epsilon_n +\frac{1}{2^k A_n} +\delta_n \|\chi_0r\|_{S(I_{n,k})} + \delta_n \|v\|_{L_t^2L_x^{2d/(d-2)}(I_{n,k})}).
\end{align*}
Since $\delta_n \to 0$ as $n \to \infty$, the term containing $\|\chi_0r\|_{S(I_{n,k})}$ on the right can be absorbed for all sufficiently large $n$.
Thus
\begin{align}
\label{eq:dyadic-stri}
\|\chi_0r\|_{S(I_{n,k})} &\leq C( \epsilon_n +\frac{1}{2^k A_n} + \delta_n \|v\|_{L_t^2L_x^{2d/(d-2)}(I_{n,k})}).
\end{align}
Applying \eqref{eq:strichartz-homogeneous} with $(q,r)=(2,2d/(d-2))$ to $v$, we have
\begin{align*}
\sum_k \|v\|_{L_t^2L_x^{2d/(d-2)}(I_{n,k})}^2 \leq C_d\|u_+\|_{L^2}^2.
\end{align*}
Now we prove \eqref{eq:interval-charge-conclusion}.
For every $f\in H^1$, H\"older's inequality and Sobolev embedding give
\begin{align*}
\|f\|_{L^{2(d+1)/(d-1)}}^{2(d+1)/(d-1)} \leq \|f\|_{L^{2d/(d-1)}}^{4/(d-1)} \|f\|_{L^{2d/(d-2)}}^2 \leq C\|f\|_{H^1}^{4/(d-1)} \|f\|_{L^{2d/(d-2)}}^2.
\end{align*}
Since $\chi_0(x/t)r(t)$ is uniformly bounded in $H^1$, for all sufficiently large $n$ and $A_n\leq t\leq B_n$, we have
\begin{align*}
\|\chi_0(x/t)r(t)\|_{L^{2(d+1)/(d-1)}}^{2(d+1)/(d-1)} \leq C\|\chi_0(x/t)r(t)\|_{L^{2d/(d-2)}}^2.
\end{align*}
Since $\theta=0$ outside $\{\chi_0=1\}$ and $0\leq \theta\lesssim 1$,
\begin{align*}
\theta(x/t)|r(t,x)|^{2(d+1)/(d-1)} \lesssim |\chi_0(x/t)r(t,x)|^{2(d+1)/(d-1)}.
\end{align*}
Therefore
\begin{align*}
\int_{I_{n,k}}\int_{\R^d} \theta(x/t)|r(t,x)|^{2(d+1)/(d-1)} \,dx\,\frac{dt}{t} &\leq \frac{C}{2^kA_n} \|\chi_0(x/t)r(t)\|_ {L_t^2L_x^{2d/(d-2)}(I_{n,k})}^2 \leq \frac{C}{2^kA_n} \|\chi_0r\|_{S(I_{n,k})}^2.
\end{align*}

Using \eqref{eq:dyadic-stri},
\begin{align*}
\int_{I_{n,k}}\int_{\R^d} \theta(x/t)|r(t,x)|^{2(d+1)/(d-1)} \,dx\,\frac{dt}{t} &\leq \frac{C\epsilon_n^2}{2^kA_n} + \frac{C}{(2^kA_n)^3} + \frac{C\delta_n^2}{2^kA_n} \|v\|_{L_t^2L_x^{2d/(d-2)}(I_{n,k})}^2.
\end{align*}

Summing over $k$, and using
\begin{align*}
\sum_k\frac{1}{2^kA_n} &\leq \frac{2}{A_n},\qquad \sum_k\frac{1}{(2^kA_n)^3} \leq \frac{C}{A_n^3},
\end{align*}
we obtain
\begin{align*}
\int_{A_n}^{B_n}\int_{\R^d} \theta(x/t)|r(t,x)|^{2(d+1)/(d-1)} \,dx\,\frac{dt}{t} &\leq CA_n^{-1}\epsilon_n^2 + CA_n^{-3} + C\delta_n^2 \sum_k\frac{1}{2^kA_n} \|v\|_{L_t^2L_x^{2d/(d-2)}(I_{n,k})}^2.
\end{align*}
For the last term on the right-hand side, we have
\begin{align*}
\sum_k\frac{1}{2^kA_n} \|v\|_{L_t^2L_x^{2d/(d-2)}(I_{n,k})}^2 &\leq A_n^{-1} \sum_k \|v\|_{L_t^2L_x^{2d/(d-2)}(I_{n,k})}^2 \leq C_dA_n^{-1}\|u_+\|_{L^2}^2.
\end{align*}
Consequently,
\begin{align*}
\int_{A_n}^{B_n}\int_{\R^d} \theta(x/t)|r(t,x)|^{2(d+1)/(d-1)} \,dx\,\frac{dt}{t} &\leq C( A_n^{-1}\epsilon_n^2 + A_n^{-3} + A_n^{-1}\delta_n^2\|u_+\|_{L^2}^2),
\end{align*}
where the right-hand side tends to 0 as $n\to\infty$ since $A_n\to \infty$, $\epsilon_n\to0$, and $\delta_n\to 0$.
This concludes the proof.
\end{proof}

Finally, we prove Theorem~\ref{thm:measure-convergence}.
\begin{proof}[Proof of Theorem~\ref{thm:measure-convergence}]
Recall $F(s)=(\nu_{e^s},J_{e^s})$ defined in \eqref{eq:Fs-def}. Assume that $F(s)$ does not converge as $s\to\infty$.
Let $\Omega_\sharp$ be the set of subsequential limits from Proposition~\ref{prop:connected-omega}, which is also the set of all limits of $F(s_n)$ along sequences $s_n\to\infty$. By Proposition~\ref{prop:connected-omega}, $\Omega_\sharp$ is a nonempty compact connected set,
and because $F$ does not converge, it contains more than one point.

Let $\mu_*, \Psi$ be as in Proposition~\ref{prop:convex-exposure}.
By Proposition~\ref{prop:exposed-rigidity}, the only element of $\Omega_\sharp$ whose first component is $\mu_*$ is $Z_*=(\mu_*,\frac{y}{2}\mu_*)$.
In addition, if $s_n\to\infty$, $F(s_n)\to Z_*$, and after passing to a subsequence, $F(s_n+\tau) \to (\mu(\tau),J(\tau))$ for $\tau$ in every bounded interval, then
\begin{align*}
(\mu(\tau),J(\tau))=Z_* \quad \text{ for every }\tau \in \mathbb{R}.
\end{align*}

Applying Proposition~\ref{prop:q-construction} and writing
\begin{align*}
\mu_*=\sum_j a_j^*\delta_{x_j^*},
\end{align*}
we obtain disjoint neighborhoods $U_j$ of the points $x_j^*$, a smooth convex function $q$ such that $\nabla q$ and $y\cdot\nabla q(y)-q(y)$ are bounded,
and the cutoffs appearing in Propositions~\ref{prop:interval-evacuation} and~\ref{prop:Aq-no-drop}.
On each $U_j$, we have $q(y)$ given by \eqref{eq:q(y)-local-expression}.
Define
\begin{align*}
Q_*=\int q\,d\mu_*=\int\Psi\,d\mu_*.
\end{align*}

By Proposition~\ref{prop:atomic-and-bound}, all coefficients of delta functions here has a uniform lower bound.
A argument by contradiction shows: for measures $(\mu,J)$ that are sufficiently close to $Z_*$, these points has to be very close to at least one of $x_j^*$ as well.
This means that we can choose a sufficiently small fixed closed $d_J$-ball $\mathfrak{B}$ centered at $Z_*$ such that every element $(\mu,J)\in\Omega_\sharp$ lying in $\mathfrak{B}$ satisfies
\begin{align*}
\supp \mu \subset\bigcup_j U_j.
\end{align*}
We can also choose $\mathfrak{B}$ small enough so that it does not contain the entire $\Omega_\sharp$.
Consider the elements of $\Omega_\sharp$ lying on $\partial\mathfrak{B}$, which is a compact set. Since Proposition~\ref{prop:supp-mu-bound} places the support of all relevant measures in a fixed compact ball, so
\begin{align*}
\int \Psi d\mu
\end{align*}
is continuous in $(\mu,J)$ on $\Omega_\sharp$.

Now we show that no element $(\mu,J)\in\Omega_\sharp\cap\partial\mathfrak{B}$ can satisfy
\begin{align*}
\int\Psi\,d\mu=\int\Psi\,d\mu_*.
\end{align*}
Indeed, the uniqueness statement in Proposition~\ref{prop:convex-exposure} would give $\mu=\mu_*$, and Proposition~\ref{prop:exposed-rigidity} would then give $(\mu,J)=Z_*$,
which is impossible because $Z_*$ is the center of $\mathfrak{B}$.
Therefore compactness and the continuity in $\mu$ give a fixed number $\delta_0>0$ such that every $(\mu,J)\in\Omega_\sharp\cap\partial\mathfrak{B}$ satisfies
\begin{align} \label{eq:Psi-dmu-gap}
\int\Psi d\mu\leq\int\Psi d\mu_*-\delta_0.
\end{align}

Since $Z_*\in\Omega_\sharp$, choose $\alpha_n\to\infty$ such that
\begin{align*}
F(\alpha_n) \to Z_*.
\end{align*}
For sufficiently large $n$, $F(\alpha_n)$ lies inside $\mathfrak{B}$. Because $\Omega_\sharp$ contains a point outside $\mathfrak{B}$,
there are arbitrarily late times at which $F$ lies outside $\mathfrak{B}$. The map $F$ is continuous in $d_J$ by Proposition~\ref{prop:connected-omega},
so let $\beta_n>\alpha_n$ be the first time after $\alpha_n$ at which $F$ reaches $\partial\mathfrak{B}$.

We now prove:
\begin{align}
\label{eq:b-a-diverge}
\beta_n-\alpha_n \to\infty.
\end{align}
Otherwise, after passing to a subsequence, we can find a constant $h \geq 0$ such that
\begin{align*}
\beta_n-\alpha_n \to h.
\end{align*}
By Theorem~\ref{thm:subsequential-family}, after passing to another subsequence, there is a subsequential limit family $(\mu_\alpha(\tau),J_\alpha(\tau))$,
defined for every $\tau\in\mathbb{R}$, such that
\begin{align*}
F(\alpha_n+\tau) \to (\mu_\alpha(\tau),J_\alpha(\tau))
\end{align*}
locally uniformly for $\tau$ in bounded intervals. At $\tau=0$,
\begin{align*}
(\mu_\alpha(0),J_\alpha(0))=\lim_{n\to\infty}F(\alpha_n)=Z_*.
\end{align*}
So by Proposition~\ref{prop:exposed-rigidity}, we have
\begin{align*}
(\mu_\alpha(\tau),J_\alpha(\tau))=Z_* \qquad\text{for every }\tau\in\mathbb{R}.
\end{align*}
In particular,
\begin{align*}
F(\beta_n) =F(\alpha_n+(\beta_n-\alpha_n))  \to (\mu_\alpha(h),J_\alpha(h))=Z_*.
\end{align*}
This is impossible because every $F(\beta_n)$ lies in $\partial \mathfrak{B}$. This proves \eqref{eq:b-a-diverge}.

Applying Theorem~\ref{thm:subsequential-family} to the time sequence $\beta_n$, after passing to a subsequence,
\begin{align}
\label{eq:F-limit-1}
F(\beta_n+\tau)  \to (\mu_\beta(\tau),J_\beta(\tau))
\end{align}
locally uniformly for $\tau$ in bounded intervals. For every fixed $\tau\leq0$, equation~\eqref{eq:b-a-diverge} implies that,
for all sufficiently large $n$,
\begin{align*}
\alpha_n\leq\beta_n+\tau\leq\beta_n.
\end{align*}
By the definition of $\beta_n$, $F(s)$ remains in $\mathfrak{B}$ for every $s \in [\alpha_n, \beta_n]$.
Therefore $(\mu_\beta(\tau),J_\beta(\tau))$ lies in $\mathfrak{B}$ for every $\tau\leq0$.
Moreover, since $\beta_n+\tau\to\infty$,
each $(\mu_\beta(\tau),J_\beta(\tau))$ belongs to $\Omega_\sharp$.
By the choice of $\mathfrak{B}$,
\begin{align*}
\supp\mu_\beta(\tau)\subset\bigcup_j U_j \qquad (\tau\leq0).
\end{align*}
At $\tau=0$, equation~\eqref{eq:F-limit-1} and the fact that every $F(\beta_n)$ lies in $\partial\mathfrak{B}$ show that $(\mu_\beta(0),J_\beta(0))$ also lies in $\partial\mathfrak{B}$.

Let $q$ be the smooth convex function as in Proposition~\ref{prop:q-construction}.
Define
\begin{align}
\label{eq:Q-def}
Q(\tau)=\int q\,d\mu_\beta(\tau).
\end{align}
Since $q$ is given by \eqref{eq:q(y)-local-expression} on $U_j$, and the expression in \eqref{eq:q(y)-local-expression} agrees with a tangent plane of the convex function $\Psi$, we have $q(y)\leq\Psi(y)$ on $U_j$.
Since $(\mu_\beta(\tau),J_\beta(\tau))\in\Omega_\sharp$, the maximizing property of $\mu_*$ gives
\begin{align*}
\int\Psi\,d\mu_\beta(\tau) \leq \int\Psi\,d\mu_*.
\end{align*}
Therefore, for every $\tau\leq0$,
\begin{align*}
Q(\tau) =\int q\,d\mu_\beta(\tau) \leq\int\Psi\,d\mu_\beta(\tau) \leq\int\Psi\,d\mu_* =Q_*.
\end{align*}
At $\tau=0$, $(\mu_\beta(0),J_\beta(0))$ lies in $\partial\mathfrak{B}$, so \eqref{eq:Psi-dmu-gap} gives
\begin{align*}
Q(0) \leq\int\Psi\,d\mu_\beta(0) \leq\int\Psi\,d\mu_*-\delta_0 =Q_*-\delta_0.
\end{align*}
Thus
\begin{align}
\label{eq:Q-delta0-gap}
Q(0) \leq Q_*-\delta_0, \quad Q(\tau) \leq Q_*  \; \forall \tau \leq 0.
\end{align}

Fix $\tau \leq 0$ and consider the times satisfying $e^{\alpha_n}\leq t\leq e^{\beta_n+\tau}$, which exist by \eqref{eq:b-a-diverge} for sufficiently large $n$.
Take a sequence of times $t_n$ satisfying
\begin{align*}
e^{\alpha_n} \leq t_n \leq e^{\beta_n+\tau}.
\end{align*}
Then $F(\log t_n)$ lies in $\mathfrak{B}$. Every limit of a sequence of these states therefore belongs to $\Omega_\sharp$,
lies in $\mathfrak{B}$, and has its mass support contained in $\bigcup_j U_j$. The cutoff $\chi_1$ given by Proposition~\ref{prop:q-construction} is supported a positive distance away from $\bigcup_j U_j$.
Applying Proposition~\ref{prop:interval-evacuation} with $K=\bigcup_j\overline{U_j}$ gives
\begin{align}
\label{eq:interval-H1-est}
\sup_{e^{\alpha_n} \leq t\leq e^{\beta_n+\tau}} \|\chi_1(x/t)r(t)\|_{H^1} \to 0.
\end{align}

By \eqref{eq:interval-H1-est}, Proposition~\ref{prop:Aq-no-drop} can now be applied to these intervals and, with $A_q$ defined in \eqref{eq:Aq-def}, gives:
\begin{align}
\label{eq:Aq-nodrop-2}
\liminf_{n\to\infty} \big( A_q(e^{\beta_n+\tau})-A_q(e^{\alpha_n}) \big) \geq 0.
\end{align}
For a pair $(\mu,J)$ of measures as before, define
\begin{align}
\label{eq:def-Lq}
L_q(\mu,J) & := \int\nabla q(y)\cdot dJ(y) -\frac{1}{2}\int\big(y\cdot\nabla q(y)-q(y)\big)\,d\mu(y).
\end{align}
By this definition and the definition of $(\nu_t,J_t)$ in \eqref{eq:nu_t,J_t-def},
\begin{align*}
L_q(\nu_t,J_t) &=\int_{\R^d}\nabla q(x/t)\cdot j(r(t,x))\,dx -\frac{1}{2}\int_{\R^d} \Big(\frac{x}{t}\cdot\nabla q(x/t)-q(x/t)\Big) |r(t,x)|^2\,dx.
\end{align*}
Since $u(t,x)=v(t,x)+r(t,x)$, combining the estimates in Proposition~\ref{prop:strong-orthogonality},
we have
\begin{align}
|A_q(t)-L_q(\nu_t,J_t)| &\leq\|\nabla q\|_{L^\infty} \int_{\R^d}\big( |v(t,x)||\nabla r(t,x)|+|r(t,x)||\nabla v(t,x)| \big)\,dx\notag\\
\label{eq:action-measure-pairing-approximation}
&\quad+\|y\cdot\nabla q-q\|_{L^\infty} \int_{\R^d}|v(t,x)||r(t,x)|\,dx \to 0.
\end{align}

By the choice of $\alpha_n$ and \eqref{eq:Z*-def},
\begin{align}
\label{eq:F-alpha-limit}
 F(\alpha_n)=(\nu_{e^{\alpha_n}},J_{e^{\alpha_n}})  \to \Big(\mu_*,\frac{y}{2}\mu_*\Big).
\end{align}
Choose $\chi_R\in C_c^\infty(\R^d)$ satisfying $0\leq\chi_R\leq1$, equal to one on $B_R$, and supported in $B_{2R}$. For fixed $R$,
the functions $\chi_R\nabla q$ and $\chi_R(y\cdot\nabla q-q)$ extend continuously to $X$, so \eqref{eq:F-alpha-limit} may be tested against them.
Using Lemma~\ref{lem:exterior-cone}, we know 
\begin{align*}
\int_{|x|>Rt}|r(t,x)|^2\,dx \to 0, \text{ as } R \to \infty,
\end{align*}
and this controls any $L^2$-level term as well. In addition
\begin{align*}
 \int_{|x|>Rt}|j(r(t,x))|\,dx \leq \Big(\int_{|x|>Rt}|r(t,x)|^2\,dx\Big)^{1/2} \|\nabla r(t)\|_{L^2}
\end{align*}
and the uniform $H^1$ bound on $r$ control the current tails. The limit pair is supported in a fixed ball by Proposition~\ref{prop:supp-mu-bound}.
Thus, first letting $n\to\infty$ and then $R\to\infty$, we obtain
\begin{align}
\label{eq:Lq-alpha-limit}
 L_q(\nu_{e^{\alpha_n}},J_{e^{\alpha_n}})  \to L_q\Big(\mu_*,\frac{y}{2}\mu_*\Big).
\end{align}
Combining \eqref{eq:Lq-alpha-limit} with \eqref{eq:action-measure-pairing-approximation} gives
\begin{align}
\label{eq:Aq-limit-1}
A_q(e^{\alpha_n})  \to L_q(\mu_*,\frac{y}{2}\mu_*).
\end{align}
Using \eqref{eq:def-Lq}, we have
\begin{align*}
L_q(\mu_*,\frac{y}{2}\mu_*) &=\int\nabla q(y)\cdot\frac{y}{2}\,d\mu_*(y) -\frac{1}{2}\int\big(y\cdot\nabla q(y)-q(y)\big)\,d\mu_*(y) =\frac{1}{2}\int q\,d\mu_* =\frac{Q_*}{2}.
\end{align*}
Similarly, \eqref{eq:F-limit-1} gives
\begin{align*}
A_q(e^{\beta_n+\tau})  \to L_q(\mu_\beta(\tau),J_\beta(\tau)).
\end{align*}
Combining \eqref{eq:Aq-nodrop-2}, \eqref{eq:Aq-limit-1}, and \eqref{eq:Lq-alpha-limit}, we obtain
\begin{align}
\label{eq:Lq-lower-bd}
L_q(\mu_\beta(\tau),J_\beta(\tau)) \geq\frac{Q_*}{2} \qquad (\tau\leq0).
\end{align}

The measures in \eqref{eq:F-limit-1} satisfy \eqref{eq:mass-difference} from Theorem~\ref{thm:subsequential-family}.
Although \eqref{eq:mass-difference} is stated for compactly supported smooth functions, Proposition~\ref{prop:supp-mu-bound} places the support of every $\mu_\beta(\tau)$ and $J_\beta(\tau)$ in one fixed compact ball.
We may therefore apply \eqref{eq:mass-difference} to a compactly supported smooth function that agrees with $q$ on that ball.
Equation~\eqref{eq:mass-difference} shows that $Q$ is absolutely continuous and gives, for almost every $\tau$,
\begin{align}
\label{eq:q-int-derivative}
Q'(\tau) =\int\nabla q(y)\cdot \big(2\,dJ_\beta(\tau)(y)-y\,d\mu_\beta(\tau)(y)\big).
\end{align}
Using \eqref{eq:q-int-derivative} and the definition of $L_q$,
\begin{align}
\label{eq:Lq-Q-1}
2L_q(\mu_\beta(\tau),J_\beta(\tau)) &=2\int\nabla q(y)\cdot dJ_\beta(\tau)(y) -\int\big(y\cdot\nabla q(y)-q(y)\big)\,d\mu_\beta(\tau)(y) = Q'(\tau)+ Q(\tau).
\end{align}
Combining \eqref{eq:Lq-lower-bd} and \eqref{eq:Lq-Q-1} gives
\begin{align}
\label{eq:q-int-ODE}
Q'(\tau) + Q(\tau) \geq Q_* \qquad (\tau\leq0).
\end{align}
Applying a Gr\"onwall type argument, we have
\begin{align*}
Q(\tau) \leq e^{-\tau}(Q(0)-Q_*)+Q_*.
\end{align*}
So $|Q(\tau)|$ can be arbitrarily large by taking $\tau$ to be very negative.
But every $\mu_\beta(\tau)$ has the fixed total mass $m_\sharp$, and Proposition~\ref{prop:supp-mu-bound} places its support in one fixed compact ball.
Since $q$ is bounded on that ball,
\begin{align*}
Q(\tau)=\int q\,d\mu_\beta(\tau)
\end{align*}
is uniformly bounded for all $\tau$, which is a contradition.

So the assumption that $F(s)=(\nu_{e^s},J_{e^s})$ does not converge is false. Since $t=e^s$, this is exactly the joint weak convergence of $(\nu_t,J_t)$ as $t\to\infty$.

Denote the joint weak limit by $(\mu_\infty,J_\infty)$ and $\mu_\infty$ takes the form in Proposition~\ref{prop:mass-limit-form}.
Apply Theorem~\ref{thm:subsequential-family} to a sequence $s_n \to \infty$.
Since $e^{s_n+\tau}\to\infty$ for every fixed $\tau$, the convergence just proved and \eqref{eq:family-limit} show that the resulting subsequential limit family is constant and equal to $(\mu_\infty,J_\infty)$.
The support assertion in Theorem~\ref{thm:subsequential-family} therefore gives $J_\infty=\sum_{\ell=1}^Lb_\ell\delta_{y_\ell}$ for some $b_\ell\in\R^d$.
Substituting these representations into \eqref{eq:mass-difference} gives
\begin{align}
\label{eq:stationary-limit-atom-balance}
 \sum_{\ell=1}^L(2b_\ell-a_\ell y_\ell)\cdot\nabla\phi(y_\ell)=0
\end{align}
for every real-valued $\phi\in C_c^\infty(\R^d)$. So we have $b_\ell=a_\ell y_\ell/2$, which means $J_\infty=y\mu_\infty/2$ as stated in \eqref{eq:measure-convergence-limit-form}.
\end{proof}

\section{Sequential to uniform upgrading}
\label{sec:sequential_to_uniform_upgrading}

We now come to our byproduct: the upgrading estimates in Theorem~\ref{thm:upgrade-I} and Theorem~\ref{thm:upgrade-II} in this section.

\subsection{Sequential to uniform upgrading I: smallness}
\label{sec:upgrade-I}

We prove Theorem~\ref{thm:upgrade-I} first.
Our Theorem~\ref{thm:measure-convergence} established in the previous section already gives a very strong characterization of $r(t)$, and the remaining work is not so difficult.

\begin{proof}[Proof of Theorem~\ref{thm:upgrade-I}]
Suppose \eqref{eq:uniform-chi0-r-tend-0} fails.
Then Proposition~\ref{prop:residual-mass-positive}, since \eqref{eq:cone-upgrade-sequential} now holds, shows that $m_\sharp=M(u)-M(u_+)>0$.
By Theorem~\ref{thm:measure-convergence}, we have 
\begin{align*}
 \nu_t \rightharpoonup\mu_\infty, \quad 
 \mu_\infty=\sum_{\ell=1}^La_\ell\delta_{y_\ell},\qquad a_\ell>0,\quad y_\ell\in\R^d.
\end{align*}
By the definition of $\nu_t$ in \eqref{eq:nu_t,J_t-def}, the weak convergence gives
\begin{align}
\label{eq:chi1-r-convergence}
 \|\chi_1(x/T_n)r(T_n)\|_{L^2}^2\to\sum_{\ell=1}^La_\ell\chi_1(y_\ell)^2.
\end{align}
The left side tends to zero by \eqref{eq:cone-upgrade-sequential}, and we conclude
\begin{align}
\label{eq:chi-one-zero-on-limit-atoms}
 \chi_1(y_\ell)=0\qquad(1\leq\ell\leq L).
\end{align}

Applying Proposition~\ref{prop:active-cells}, after potentially passing to a subsequence, we have $s_n>T_n$ and a constant $m_*>0$ such that
\begin{align*}
 \|\chi_0(x/s_n)r(s_n)\|_{L^2}\geq m_*.
\end{align*}
In the same way as \eqref{eq:chi1-r-convergence} (i.e., using $\nu_t \rightharpoonup\mu_\infty$), we have
\begin{align*}
 \|\chi_0(x/s_n)r(s_n)\|_{L^2}^2\to\sum_{\ell=1}^La_\ell\chi_0(y_\ell)^2\geq m_*^2.
\end{align*}
Thus $\chi_0(y_\ell)\neq0$ for some $\ell$.
Since $\supp\chi_0\Subset\{y:\chi_1(y)=1\}$, this implies $\chi_1(y_\ell)=1$, contradicting \eqref{eq:chi-one-zero-on-limit-atoms}.
\end{proof}

\subsection{Sequential to uniform upgrade II: below threshold}
\label{sec:upgrade-II}
We prove Theorem~\ref{thm:upgrade-II} in this section.

\begin{proof}[Proof of Theorem~\ref{thm:upgrade-II}]
Let $r(t)$ and $v(t)$ be as in \eqref{eq:v,r-def}, and let $\Theta$ be as in \eqref{eq:Theta-def}.
For $i=0,2$, define
\begin{align*}
 m_i=(2\pi)^d\int\chi_i(2\xi)^2|\widehat{u}_+(\xi)|^2\,d\xi, \quad k_i=(2\pi)^d\int\chi_i(2\xi)^2|\xi|^2|\widehat{u}_+(\xi)|^2\,d\xi.
\end{align*}

Take an arbitrary subsequence of $t_n$, still denoted by $t_n$.
By Theorem~\ref{thm:compact-attractor}, after passing to a further subsequence,
\begin{align*}
 u(t_n) =e^{it_n\Delta}u_+ +\sum_{j=1}^Jw_j(\cdot-x_{j,n})+e_n, \quad e_n \to 0 \quad\text{in }H^1, \quad  |x_{j,n}-x_{k,n}| \to \infty\qquad(j\ne k).
\end{align*}
After passing to a further subsequence, for every $j$,
\begin{align*}
 \frac{x_{j,n}}{t_n}\to y_j\in X=\R^d\cup\{\infty\}.
\end{align*}
Set
\begin{align*}
 a_j=
 \begin{cases}
 \chi_2(y_j),&y_j\in\R^d,\\
 0,&y_j=\infty.
 \end{cases}
\end{align*}

By \eqref{eq:localized-scalar-limits}, after passing to a further subsequence, the limits
\begin{align*}
 M_{\mathrm{seq}}:= \lim_{n\to\infty} M(\chi_2(x/t_n)u(t_n)),\quad 
 E_{\mathrm{seq}}:= \lim_{n\to \infty} E(\chi_2(x/t_n)u(t_n))
\end{align*}
exist.
Using the results referred after \eqref{eq:mass-decoupling}-\eqref{eq:energy-decoupling}, we have
\begin{align*}
 \|\chi_2(x/t_n)w_j(\cdot-x_{j,n})-a_jw_j(\cdot-x_{j,n})\|_{H^1}\to0.
\end{align*}
Using the same referred results, we also have the analogue of \eqref{eq:mass-decoupling}
\begin{align}
\label{eq:sequential-mass-decoupling}
 M_{\mathrm{seq}}=\lim_{n\to\infty}M(\chi_2(x/t_n)e^{it_n\Delta}u_+)+\sum_{j=1}^Ja_j^2M(w_j).
\end{align}
Here we used Lemma~\ref{lem:free-lq-decay} to see that the contribution of the radiation term to the nonlinear term in the energy tends to zero.
Also, the (referred) argument proving \eqref{eq:energy-decoupling} gives
\begin{align}
\label{eq:sequential-energy-decoupling}
 E_{\mathrm{seq}}=\lim_{n\to\infty}\frac{1}{2}\|\nabla(\chi_2(x/t_n)e^{it_n\Delta}u_+)\|_{L^2}^2+\sum_{j=1}^JE(a_jw_j).
\end{align}
For $0 \leq a \leq1$, we have
\begin{align}
\label{eq:scaled-profile-energy}
 E(aw) = a^2E(w) + \frac{d-1}{2(d+1)}\big(a^2-a^{2(d+1)/(d-1)}\big)\|w\|_{L^{2(d+1)/(d-1)}}^{2(d+1)/(d-1)}.
\end{align}
Since $2(d+1)/(d-1)>2$, the second term in \eqref{eq:scaled-profile-energy} is nonnegative.
For $w_j \ne 0 $, Proposition~\ref{prop:profile-threshold} gives
\begin{align*}
 E(w_j)>0,\qquad M(w_j)E(w_j)\geq \Theta.
\end{align*}
It follows from \eqref{eq:scaled-profile-energy} that
\begin{align*}
 E(a_jw_j)\geq0
\end{align*}
for every $j$.
Thus every term on the right-hand sides of \eqref{eq:sequential-mass-decoupling} and \eqref{eq:sequential-energy-decoupling} is nonnegative.

Then we apply Proposition~\ref{prop:free-mass-current}, applied to both $\chi_2(x/t_n)e^{it_n\Delta}u_+$ and its gradient to obtain
\begin{align*}
 \lim_{n\to\infty}M(\chi_2(x/t_n)e^{it_n\Delta}u_+) = m_2, \quad  \lim_{n\to\infty}\frac12\|\nabla(\chi_2(x/t_n)e^{it_n\Delta}u_+)\|_{L^2}^2 = \frac{k_2}{2}.
\end{align*}
Terms introduced by differentiating $\chi(x/t_n)$ will tend to zero since they carry an extra $t_n^{-1}$ factor.
Passing to the limit in \eqref{eq:sequential-threshold} gives
\begin{align*}
 M_{\mathrm{seq}}E_{\mathrm{seq}}\leq(1-\delta)\Theta.
\end{align*}
The nonnegativity in \eqref{eq:sequential-mass-decoupling} and \eqref{eq:sequential-energy-decoupling} therefore gives
\begin{align}
\label{eq:outer-free-product-threshold}
 \frac{m_2k_2}{2}\leq(1-\delta)\Theta.
\end{align}

Suppose that $w_j\ne0$ and $y_j\in\supp\chi_1$ for some $j$.
The second inclusion in \eqref{eq:condition-chi-012} gives $a_j=\chi_2(y_j)=1$.
Equations~\eqref{eq:sequential-mass-decoupling} and~\eqref{eq:sequential-energy-decoupling} imply
\begin{align*}
 M(w_j)\leq M_{\mathrm{seq}},\qquad E(w_j)\leq E_{\mathrm{seq}}.
\end{align*}
Hence
\begin{align*}
 M(w_j)E(w_j)\leq M_{\mathrm{seq}}E_{\mathrm{seq}}\leq(1-\delta)\Theta<\Theta,
\end{align*}
contradicting Proposition~\ref{prop:profile-threshold}.
Therefore
\begin{align*}
 w_j\ne0\quad\implies\quad y_j\notin\supp\chi_1.
\end{align*}

For every nonzero profile, either $y_j\in\R^d\setminus\supp\chi_1$ or $y_j=\infty$.
The argument proving \eqref{eq:mass-limit-3}, now with the cutoff $\chi_1$, gives
\begin{align*}
 \|\chi_1(x/t_n)w_j(\cdot-x_{j,n})\|_{H^1} \to 0
\end{align*}
for every $j$.
Subtracting the radiation term from \eqref{eq:profile-decomposition} gives
\begin{align*}
 r(t_n)=\sum_{j=1}^Jw_j(\cdot-x_{j,n})+e_n.
\end{align*}
Multiplication by $\chi_1(x/t_n)$ is uniformly bounded on $H^1$.
Therefore
\begin{align*}
 \|\chi_1(x/t_n)r(t_n)\|_{H^1}&\leq\sum_{j=1}^J\|\chi_1(x/t_n)w_j(\cdot-x_{j,n})\|_{H^1}\\
 &\quad+C\|e_n\|_{H^1}\to0.
\end{align*}
Since the initial subsequence was arbitrary, the subsequence criterion gives
\begin{align}
\label{eq:sequential-residual-evacuation}
 \|\chi_1(x/t_n)r(t_n)\|_{H^1}\to0.
\end{align}

The first inclusion in \eqref{eq:condition-chi-012} and \eqref{eq:sequential-residual-evacuation} verify the hypotheses of Theorem~\ref{thm:upgrade-I} for the cutoffs $\chi_0,\chi_1$ and the sequence $t_n$.
Therefore
\begin{align}
\label{eq:inner-cutoff-free-approximation}
 \|\chi_0(x/t)u(t)-\chi_0(x/t)v(t)\|_{H^1}\to0.
\end{align}

On the other hand, similar as above, we apply Proposition~\ref{prop:free-mass-current} to $\chi_0(x/t)v(t)$ and its derivatves \footnote{Rigorously, Proposition~\ref{prop:free-mass-current} required $H^1$-regularity. But the first claim concerning the measure $\mu_t^{u_0}$ only requires $L^2$ membership of $u_0$. } to obtain
\begin{align*}
 M(\chi_0(x/t)v(t)) \to m_0, \quad E(\chi_0(x/t)v(t))\to \frac{k_0}{2}.
\end{align*}
The nonlinear term in the energy does not contribute to $E(\chi_0(x/t)v(t))$ in the limit by Lemma~\ref{lem:free-lq-decay}.
Since the difference of the mass and energy can be bounded by the $H^1$-norm, combining the equation above and \eqref{eq:inner-cutoff-free-approximation} shows
\begin{align*}
 M(\chi_0(x/t)u(t)) \to m_0, \quad  E(\chi_0(x/t)u(t)) \to \frac{k_0}{2}.
\end{align*}

By \eqref{eq:condition-chi-012},  we know $0\leq\chi_0\leq\chi_2$.
The terms involving the derivative of $\chi_i$ does not contribute to the limit since they carry an extra $t^{-1}$ factor. So \eqref{eq:outer-free-product-threshold} enforces 
\begin{align}
\label{eq:inner-free-product-threshold}
 \frac{m_0k_0}{2}\leq(1-\delta)\Theta,
\end{align}
which proves \eqref{eq:ME-chi0-below-threshold} for all sufficiently large $t$.
Now we prove \eqref{eq:M-kinetic-below-threshold}.
Recalling the last part of \eqref{eq:ground-state-identities}, using \eqref{eq:inner-free-product-threshold} above, we have
\begin{align*}
 \lim_{t\to\infty}\|\chi_0(x/t)u(t)\|_{L^2}\|\nabla(\chi_0(x/t)u(t))\|_{L^2} = \sqrt{m_0k_0} \leq \sqrt{\frac{1-\delta}{d}} \|Q\|_{L^2}\|\nabla Q\|_{L^2}  < \|Q\|_{L^2}\|\nabla Q\|_{L^2}.
\end{align*}
\end{proof}

\bibliographystyle{plain}
\bibliography{NLS_localized_scattering}

@article{Glassey-NLS-blowup,
    AUTHOR = {Glassey, R. T.},
     TITLE = {On the blowing up of solutions to the {C}auchy problem for
              nonlinear {S}chr\"odinger equations},
   JOURNAL = {J. Math. Phys.},
  FJOURNAL = {Journal of Mathematical Physics},
    VOLUME = {18},
      YEAR = {1977},
    NUMBER = {9},
     PAGES = {1794--1797},
      ISSN = {0022-2488,1089-7658},
   MRCLASS = {35B35 (35G25)},
  MRNUMBER = {460850},
MRREVIEWER = {A.\ A.\ Arsen\cprime ev},
       DOI = {10.1063/1.523491},
       URL = {https://doi.org/10.1063/1.523491},
}

@article{Duychaetz-Roudenko-threshold-3Dcubic,
    AUTHOR = {Duyckaerts, Thomas and Roudenko, Svetlana},
     TITLE = {Threshold solutions for the focusing 3{D} cubic
              {S}chr\"odinger equation},
   JOURNAL = {Rev. Mat. Iberoam.},
  FJOURNAL = {Revista Matem\'atica Iberoamericana},
    VOLUME = {26},
      YEAR = {2010},
    NUMBER = {1},
     PAGES = {1--56},
      ISSN = {0213-2230,2235-0616},
   MRCLASS = {35Q55 (35B40 35P25)},
  MRNUMBER = {2662148},
MRREVIEWER = {Yuichiro\ Kawahara},
       DOI = {10.4171/RMI/592},
       URL = {https://doi.org/10.4171/RMI/592},
}

@article{Nakanishi-Schalg-above-thre-3Dcubic,
    AUTHOR = {Nakanishi, K. and Schlag, W.},
     TITLE = {Global dynamics above the ground state energy for the cubic
              {NLS} equation in 3{D}},
   JOURNAL = {Calc. Var. Partial Differential Equations},
  FJOURNAL = {Calculus of Variations and Partial Differential Equations},
    VOLUME = {44},
      YEAR = {2012},
    NUMBER = {1-2},
     PAGES = {1--45},
      ISSN = {0944-2669,1432-0835},
   MRCLASS = {35Q55 (37K45)},
  MRNUMBER = {2898769},
MRREVIEWER = {Tohru\ Ozawa},
       DOI = {10.1007/s00526-011-0424-9},
       URL = {https://doi.org/10.1007/s00526-011-0424-9},
}

@article{Duychaets-Merle-thre-H1NLS,
    AUTHOR = {Duyckaerts, Thomas and Merle, Frank},
     TITLE = {Dynamic of threshold solutions for energy-critical {NLS}},
   JOURNAL = {Geom. Funct. Anal.},
  FJOURNAL = {Geometric and Functional Analysis},
    VOLUME = {18},
      YEAR = {2009},
    NUMBER = {6},
     PAGES = {1787--1840},
      ISSN = {1016-443X,1420-8970},
   MRCLASS = {35Q55 (35B40 35B44)},
  MRNUMBER = {2491692},
MRREVIEWER = {Olivier\ J.\ Goubet},
       DOI = {10.1007/s00039-009-0707-x},
       URL = {https://doi.org/10.1007/s00039-009-0707-x},
}

@article {Hassell-Jia-NLS,
    AUTHOR = {Hassell, Andrew and Qiuye Jia},
     TITLE = {The final state problem for the nonlinear {S}chr\"odinger
              equation in dimensions 1, 2 and 3},
   JOURNAL = {Pure Appl. Anal.},
  FJOURNAL = {Pure and Applied Analysis},
    VOLUME = {8},
      YEAR = {2026},
    NUMBER = {1},
     PAGES = {1--39},
      ISSN = {2578-5893,2578-5885},
   MRCLASS = {35Q41 (35P25 35Q55 35S05)},
  MRNUMBER = {5000795},
       DOI = {10.2140/paa.2026.8.1},
       URL = {https://doi.org/10.2140/paa.2026.8.1},
}

@article{Berestycki-Lions-83,
    AUTHOR = {Berestycki, H. and Lions, P.-L.},
     TITLE = {Nonlinear scalar field equations. {I}. {E}xistence of a ground
              state},
   JOURNAL = {Arch. Rational Mech. Anal.},
  FJOURNAL = {Archive for Rational Mechanics and Analysis},
    VOLUME = {82},
      YEAR = {1983},
    NUMBER = {4},
     PAGES = {313--345},
      ISSN = {0003-9527},
   MRCLASS = {35J60 (35Q20 58E99 81E99)},
  MRNUMBER = {695535},
MRREVIEWER = {Wei\ Ming\ Ni},
       DOI = {10.1007/BF00250555},
       URL = {https://doi.org/10.1007/BF00250555},
}

@article{GRGH-NLS,
  title={Scattering regularity for small data solutions of the nonlinear {S}chr\"odinger equation},
  author={Gell-Redman, Jesse and Gomes, Sean and Hassell, Andrew},
  journal={arXiv preprint arXiv:2305.12429},
  year={2023}
}

@article{GRGH-LS,
    AUTHOR = {Gell-Redman, Jesse and Gomes, Sean and Hassell, Andrew},
     TITLE = {Propagation estimates and {F}redholm analysis for the
              time-dependent {S}chr\"odinger equation},
   JOURNAL = {Amer. J. Math.},
  FJOURNAL = {American Journal of Mathematics},
    VOLUME = {147},
      YEAR = {2025},
    NUMBER = {6},
     PAGES = {1577--1652},
      ISSN = {0002-9327,1080-6377},
   MRCLASS = {35Q41 (35B40)},
  MRNUMBER = {4995130},
}

@article{Fang-Xie-Cazenave-scattering-11,
    AUTHOR = {Fang, DaoYuan and Xie, Jian and Cazenave, Thierry},
     TITLE = {Scattering for the focusing energy-subcritical nonlinear
              {S}chr\"odinger equation},
   JOURNAL = {Sci. China Math.},
  FJOURNAL = {Science China. Mathematics},
    VOLUME = {54},
      YEAR = {2011},
    NUMBER = {10},
     PAGES = {2037--2062},
      ISSN = {1674-7283,1869-1862},
   MRCLASS = {35Q55 (35B40 35P25)},
  MRNUMBER = {2838120},
MRREVIEWER = {Yoshihisa\ Nakamura},
       DOI = {10.1007/s11425-011-4283-9},
       URL = {https://doi.org/10.1007/s11425-011-4283-9},
}

@book{Tao-06-book,
    AUTHOR = {Tao, Terence},
     TITLE = {Nonlinear dispersive equations},
    SERIES = {CBMS Regional Conference Series in Mathematics},
    VOLUME = {106},
      NOTE = {Local and global analysis},
 PUBLISHER = {Conference Board of the Mathematical Sciences, Washington, DC;
              by the American Mathematical Society, Providence, RI},
      YEAR = {2006},
     PAGES = {xvi+373},
      ISBN = {0-8218-4143-2},
   MRCLASS = {35Q53 (35B35 35P25 35Q55 37K10)},
  MRNUMBER = {2233925},
MRREVIEWER = {Sebastian\ Herr},
       DOI = {10.1090/cbms/106},
       URL = {https://doi.org/10.1090/cbms/106},
}

@book{Cazenave-03-book,
    AUTHOR = {Cazenave, Thierry},
     TITLE = {Semilinear {S}chr\"odinger equations},
    SERIES = {Courant Lecture Notes in Mathematics},
    VOLUME = {10},
 PUBLISHER = {New York University, Courant Institute of Mathematical
              Sciences, New York; American Mathematical Society, Providence,
              RI},
      YEAR = {2003},
     PAGES = {xiv+323},
      ISBN = {0-8218-3399-5},
   MRCLASS = {35Q55 (35-01 35J10 35Q40)},
  MRNUMBER = {2002047},
MRREVIEWER = {Woodford\ W.\ Zachary},
       DOI = {10.1090/cln/010},
       URL = {https://doi.org/10.1090/cln/010},
}

@article{Killip-Visan-H1critical,
    AUTHOR = {Killip, Rowan and Visan, Monica},
     TITLE = {The focusing energy-critical nonlinear {S}chr\"odinger
              equation in dimensions five and higher},
   JOURNAL = {Amer. J. Math.},
  FJOURNAL = {American Journal of Mathematics},
    VOLUME = {132},
      YEAR = {2010},
    NUMBER = {2},
     PAGES = {361--424},
      ISSN = {0002-9327,1080-6377},
   MRCLASS = {35Q55 (35P25)},
  MRNUMBER = {2654778},
MRREVIEWER = {Benedetta\ Pellacci},
       DOI = {10.1353/ajm.0.0107},
       URL = {https://doi.org/10.1353/ajm.0.0107},
}

@article{DHR-3d-sc,
    AUTHOR = {Duyckaerts, Thomas and Holmer, Justin and Roudenko, Svetlana},
     TITLE = {Scattering for the non-radial 3{D} cubic nonlinear
              {S}chr\"odinger equation},
   JOURNAL = {Math. Res. Lett.},
  FJOURNAL = {Mathematical Research Letters},
    VOLUME = {15},
      YEAR = {2008},
    NUMBER = {6},
     PAGES = {1233--1250},
      ISSN = {1073-2780},
   MRCLASS = {35Q55 (35P25)},
  MRNUMBER = {2470397},
MRREVIEWER = {Yoshihisa\ Nakamura},
       DOI = {10.4310/MRL.2008.v15.n6.a13},
       URL = {https://doi.org/10.4310/MRL.2008.v15.n6.a13},
}

@article {Guevara12,
    AUTHOR = {Guevara, Cristi Darley},
     TITLE = {Global behavior of finite energy solutions to the
              {$d$}-dimensional focusing nonlinear {S}chr\"odinger equation},
   JOURNAL = {Appl. Math. Res. Express. AMRX},
  FJOURNAL = {Applied Mathematics Research Express. AMRX},
    VOLUME = {2014},
      YEAR = {2014},
    NUMBER = {2},
     PAGES = {177--243},
      ISSN = {1687-1200,1687-1197},
   MRCLASS = {35Q55},
  MRNUMBER = {3266698},
MRREVIEWER = {Eduardo\ Colorado},
       DOI = {10.1093/amrx/abt008},
       URL = {https://doi.org/10.1093/amrx/abt008},
}

@article {Tao2007Attractor,
    AUTHOR = {Tao, Terence},
     TITLE = {A (concentration-)compact attractor for high-dimensional
              non-linear {S}chr\"odinger equations},
   JOURNAL = {Dyn. Partial Differ. Equ.},
  FJOURNAL = {Dynamics of Partial Differential Equations},
    VOLUME = {4},
      YEAR = {2007},
    NUMBER = {1},
     PAGES = {1--53},
      ISSN = {1548-159X,2163-7873},
   MRCLASS = {35Q55 (35B40 35B41 35Q51 37L30)},
  MRNUMBER = {2304091},
MRREVIEWER = {W.-H.\ Steeb},
       DOI = {10.4310/DPDE.2007.v4.n1.a1},
       URL = {https://doi.org/10.4310/DPDE.2007.v4.n1.a1},
}

@article{DodsonMurphy2018,
  author = {Dodson, Benjamin and Murphy, Jason},
  title = {A new proof of scattering below the ground state for the non-radial focusing {NLS}},
  journal = {Mathematical Research Letters},
  volume = {25},
  number = {6},
  pages = {1805--1825},
  year = {2018},
  eprint = {1712.09962},
  archivePrefix = {arXiv},
  primaryClass = {math.AP}
}

@article{Weinstein1983Sharp,
  author = {Weinstein, Michael I.},
  title = {Nonlinear {S}chrödinger equations and sharp interpolation estimates},
  journal = {Communications in Mathematical Physics},
  volume = {87},
  number = {4},
  pages = {567--576},
  year = {1983},
  doi = {10.1007/BF01208265}
}

@article{Kwong1989Uniqueness,
  author = {Kwong, Man Kam},
  title = {Uniqueness of positive solutions of {$\Delta u-u+u^p=0$} in {$\mathbb R^n$}},
  journal = {Archive for Rational Mechanics and Analysis},
  volume = {105},
  number = {3},
  pages = {243--266},
  year = {1989},
  doi = {10.1007/BF00251502}
}

@article{KenigMerle2006,
  author = {Kenig, Carlos E. and Merle, Frank},
  title = {Global well-posedness, scattering and blow-up for the
           energy-critical, focusing, non-linear {S}chr\"odinger equation in
           the radial case},
  journal = {Inventiones Mathematicae},
  volume = {166},
  number = {3},
  pages = {645--675},
  year = {2006}
}

@article{HolmerRoudenko2008,
  author = {Holmer, Justin and Roudenko, Svetlana},
  title = {A sharp condition for scattering of the radial 3{D} cubic nonlinear
           {S}chr\"odinger equation},
  journal = {Communications in Mathematical Physics},
  volume = {282},
  number = {2},
  pages = {435--467},
  year = {2008},
  doi = {10.1007/s00220-008-0529-y}
}

@article{AkahoriNawa2013,
  author = {Akahori, Takafumi and Nawa, Hayato},
  title = {Blowup and scattering problems for the nonlinear {S}chr\"odinger
           equations},
  journal = {Kyoto Journal of Mathematics},
  volume = {53},
  number = {3},
  pages = {629--672},
  year = {2013}
}

@article{DodsonMurphy2017,
  author = {Dodson, Benjamin and Murphy, Jason},
  title = {A new proof of scattering below the ground state for the 3d radial
           focusing cubic {NLS}},
  journal = {Proceedings of the American Mathematical Society},
  volume = {145},
  number = {11},
  pages = {4859--4867},
  year = {2017},
  eprint = {1611.04195},
  archivePrefix = {arXiv},
  primaryClass = {math.AP}
}

@article{AroraDodsonMurphy2020,
  author = {Arora, Anudeep Kumar and Dodson, Benjamin and Murphy, Jason},
  title = {Scattering below the ground state for the 2d radial nonlinear
           {S}chr\"odinger equation},
  journal = {Proceedings of the American Mathematical Society},
  volume = {148},
  number = {4},
  pages = {1653--1663},
  year = {2020},
  eprint = {1906.00515},
  archivePrefix = {arXiv},
  primaryClass = {math.AP}
}

@article{Tao2004Radial,
  author = {Tao, Terence},
  title = {On the asymptotic behavior of large radial data for a focusing
           non-linear {S}chr\"odinger equation},
  journal = {Dynamics of Partial Differential Equations},
  volume = {1},
  number = {1},
  pages = {1--48},
  year = {2004},
  eprint = {math/0309428},
  archivePrefix = {arXiv},
  primaryClass = {math.AP}
}

@article{Tao2008Potential,
  author = {Tao, Terence},
  title = {A global compact attractor for high-dimensional defocusing
           non-linear {S}chr\"odinger equations with potential},
  journal = {Dynamics of Partial Differential Equations},
  volume = {5},
  number = {2},
  pages = {101--116},
  year = {2008},
  eprint = {0805.1544},
  archivePrefix = {arXiv},
  primaryClass = {math.AP}
}

@article{TaoSolitonsStable,
  author = {Tao, Terence},
  title = {Why are solitons stable?},
  journal = {Bulletin of the American Mathematical Society (N.S.)},
  volume = {46},
  number = {1},
  pages = {1--33},
  year = {2009},
  doi = {10.1090/S0273-0979-08-01228-7}
}

\end{document}